\tikzset{main node/.style={circle,fill=blue!20,draw,minimum size=1cm,inner sep=0pt},
        }
\tikzstyle{every loop}=[]
\renewcommand{\subset}{\subseteq}
\newcommand{\cz}{K_0}
\newcommand{\ci}{K_1}
\newcommand{\cii}{K_2}
\newcommand{\ciii}{K_3}
\newcommand{\civ}{K_5}
\newcommand{\cv}{K_6}
\newcommand{\cvi}{K_4}
\newcommand{\cvii}{K_7}
\newcommand{\cviii}{K_{12}}
\newcommand{\cx}{K_{14}}
\newcommand{\cxi}{K_{15}}
\newcommand{\cxii}{K_{16}}
\newcommand{\cxiii}{K_{8}}
\newcommand{\cxiv}{K_{9}}
\newcommand{\cxv}{K_{10}}
\newcommand{\cxvi}{K_{11}}
\newcommand{\Eh}{E_{\phid}^{\geq h}}
\newcommand{\Enh}{E_{\psin}^{\geq h}}
\newcommand{\Emnh}{E_{\psin}^{\geq h}}
\newcommand{\Ch}{\cC_{\circ}^{h}}
\newcommand{\Chplus}{\cC_{\circ}^{h,+}}
\newcommand{\Chminus}{\cC_{\oc}^{h,-}}
\newcommand{\Chdplus}{\mathcal{C}_{\circ}^{ h+\delta}}
\newcommand{\Chlnplus}{\mathcal{C}_{\circ}^{ h+\log^{-1}n}}
\newcommand{\Chlnminus}{\mathcal{C}_{\circ}^{ h-\log^{-1}n}}
\newcommand{\dtd}{d_{\Td}}
\newcommand{\oc}{\overline{\circ}}
\newcommand{\oX}{\overline{X}}
\newcommand{\ER}{\text{ER}}
\newcommand{\Bin}{\emph{Bin}}
\newcommand{\stdom}{\underset{st.}{\leq}}
\newcommand{\stdomi}{\underset{st.}{\geq}}
\newcommand{\cvpann}{\overset{\dP_{ann}}{\vspace{3mm}\longrightarrow}}
\newcommand{\bK}{\mathbf{K}}
\newcommand{\psimnt}{\widehat{\psi_{\cM_n}}}
\newtheorem{theorem}{Theorem}[section]
\newtheorem{lemma}[theorem]{Lemma}
\newtheorem{remark}[theorem]{Remark}
\newtheorem{proposition}[theorem]{Proposition}
\newtheorem{corollary}[theorem]{Corollary}
\newcommand{\dE}{\mathbb {E}}
\newcommand{\dP}{\mathbb {P}}
\newcommand{\dZ}{\mathbb {Z}}
\newcommand{\dN}{\mathbb {N}}
\newcommand{\dR}{\mathbb {R}}
\newcommand{\cA}{\mathcal {A}}
\newcommand{\cC}{\mathcal {C}}
\newcommand{\cE}{\mathcal {E}}
\newcommand{\cF}{\mathcal {F}}
\newcommand{\cG}{\mathcal {G}}
\newcommand{\cM}{\mathcal {M}}
\newcommand{\cN}{\mathcal {N}}
\newcommand{\cS}{\mathcal {S}}
\newcommand{\cZ}{\mathcal {Z}}
\newcommand{\bP}{\mathbf{P}}
\newcommand{\bE}{\mathbf{E}}
\newcommand{\bC}{\mathbf{C}}
\newcommand{\kT}{\mathfrak{T}}
\newcommand{\lv}{\left\vert}
\newcommand{\rv}{\right\vert}
\newcommand{\tx}{\text{tx}}
\newcommand{\fT}{\mathfrak{T}}
\newcommand{\psin}{\psi_{\cG_n}}
\newcommand{\psimn}{\psi_{\cM_n}}
\newcommand{\phid}{\varphi_{\mathbb{T}_d}}
\newcommand{\Td}{\mathbb{T}_d}
\newcommand{\ggn}{G_{\cG_n}}
\newcommand{\gmn}{G_{\cM_n}}
\newcommand{\gtd}{G_{\Td}}
\newcommand{\Var}{\text{Var}}
\newcommand{\Cov}{\text{Cov}}
\title{Anatomy of a Gaussian giant: supercritical level-sets of the free field on regular graphs}
\author{Guillaume Conchon-\,\hspace{-0.3mm}-Kerjan\footnote{University of Bath, United Kingdom. E-mail: galck20@bath.ac.uk}}
\date{}
\begin{document}
\maketitle

\textbf{Abstract.} We study the level-set of the zero-average Gaussian Free Field on a uniform random $d$-regular graph above an arbitrary level $h\in (-\infty, h_{\star})$, where $h_{\star}$ is the level-set percolation threshold of the GFF on the $d$-regular tree $\Td$. We prove that w.h.p as the number $n$ of vertices of the graph diverges, the GFF has a unique giant connected component $\cC_1^{(n)}$ of size $\eta(h) n+o(n)$, where $\eta(h)$ is the probability that the root percolates in the corresponding GFF level-set on $\Td$.
This gives a positive answer to the conjecture of \cite{ACregulgraphs} for most regular graphs. We also prove that the second largest component has size $\Theta(\log n)$.
\\
Moreover, we show that $\cC_1^{(n)}$ shares the following similarities with the giant component of the supercritical Erd\H{o}s-R\'enyi random graph. First, the diameter and the typical distance between vertices are $\Theta(\log n)$. Second, the $2$-core and the kernel encompass a given positive proportion of the vertices. Third, the local structure is a branching process conditioned to survive, namely the level-set percolation cluster of the root in $\Td$ (in the Erd\H{o}s-R\'enyi case, it is known to be a Galton-Watson tree with a Poisson distribution for the offspring). 
\\
\\
\textbf{Keywords:} Gaussian free field, percolation, random graphs.
\\
\textbf{MSC2020 subject classifications:} 60K35, 60G15, 60C05, 05C80.


\section{Introduction}
\subsection{Overview}

The Gaussian Free Field (GFF) on a transient graph $\cG$ is a Gaussian process indexed by the vertices. Its covariance is given by the Green function of the simple random walk on $\cG$, hence the GFF carries a lot of information on the structure of $\cG$ and on the behaviour of random walks, giving a base motivation for its study.
\\
Level-set percolation of the GFF has been investigated since the 1980s (\cite{BricmontLebowitzMaes, MolchanovStepanovPerco}). Lately, one important incentive  has been to gain information on the vacant set of random interlacements (\cite{Lupu, sznitman2012}), via Dynkin-type isomorphism theorems (\cite{eisenbaum2000, SabotTarres}). 
It was subject to much attention in the last decade on $\dZ^d$ (\cite{DrewitzPrevostRodriguez,DrewitzPrevostRodriguez2,DuminilAlGFF1,DuminilAlGFF2, Lupu, PopovRath, Sznitman12RIandGFF}). On such a lattice where the Green function decays polynomially with the distance between vertices, it provides a percolation model with long-range interactions. 
\\
More recently, level-set percolation was studied on transient rooted trees (\cite{ACregultrees, AS2018, SZ2016}). 
 There is a phase transition at a critical threshold $h_{\star}\in \dR$: if $h<h_{\star}$, the connected component of the root in the level-set above $h$ of the GFF has a positive probability to be infinite, and if $h>h_{\star}$, this probability is zero. 
\\
One can define an analogous field on a finite connected graph, the \textbf{zero-average Gaussian Free Field}, whose covariance is given by the \textbf{zero-average Green function} (see Section~\ref{subsec:defs}). A natural question is whether some characteristics of the GFF on an infinite graph $\cG$ can be transferred to a sequence of finite graphs $(\cG_n)_{n\geq 0}$ whose local limit is $\cG$. For instance, one might ask whether a phase transition for the existence of an infinite connected component of the level-set in $\cG$ corresponds to a phase transition for the emergence of a ``macroscopic'' component of size $\Theta(\vert \cG_n\vert)$\footnote{for two sequences $(a_n)_{n\geq 1}$, $(b_n)_{n\geq 1}$, we say that $a_n=\Theta(b_n)$ if there exists $c>0$ such that for every large enough $n$, $c<a_n/b_n<1/c$} in the level set in $\cG_n$. For $\cG=\dZ^d$, a sharp phase transition was shown in~\cite{DuminilAlGFF1}, and Abächerli~\cite{Abacherli} studied the zero-average GFF on the torus. 
\\
Abächerli and  \v{C}ern\'y recently investigated the GFF on the $d$-regular tree $\Td$ \cite{ACregultrees}, and the zero-average GFF on some $d$-regular graphs  (large girth expanders) in a companion paper \cite{ACregulgraphs}. In this setting, many essential questions (such as the value of $h_{\star}$, or the sharpness of the phase transition at $h_{\star}$ for the zero-average GFF) remain open. In this paper, we answer some of them, and relate the percolation level-sets to other classical random graphs, in particular the Erd\H{o}s-R\'enyi model (Section~\ref{subsec:openquestions}).

\subsection{Setting}\label{subsec:defs}
\noindent
In all this work, we fix an integer $d\geq 3$. We denote $\Td$ the infinite $d$-regular tree rooted at an arbitrary vertex $\circ$, 
and $\cG_n$ a uniform $d$-regular random graph of size $n$ for $n\geq 1$ (if $d$ is odd, consider only even $n$). Let $V_n$ be its vertex set and $\pi_n$ be the uniform measure on $V_n$, i.e. $\pi_n(x)=1/n$ for every $x\in V_n$.
\\

\noindent
\textbf{Gaussian Free Field on regular trees}

\noindent
The GFF $\phid$ on $\Td$ is a centred Gaussian field $(\phid(x))_{x\in \Td}$ indexed by the vertices of $\Td$, and with covariances given by the Green function $\gtd$: for all vertices $x,y\in \Td$, we set $\text{Cov}(\phid(x),\phid(y))=\gtd(x,y)$. Recall that 
\[
\gtd(x,y)= \bE_x^{\Td}\left[\sum_{k\geq 0}\mathbf{1}_{\{X_k=y\}}\right]\]
where $(X_k)_{k\geq 0}$ is a discrete-time SRW (Simple Random Walk) on $\Td$. In general, we will denote $\bP^{\cG}_{\mu}$ the law of a SRW on a graph $\cG$ with initial distribution $\mu$, and $\bE^{\cG}_{\mu}$ the corresponding expectation.
\\

\noindent
\textbf{Gaussian Free Field on $d$-regular graphs}

\noindent
If $\cG_n$ is connected, the zero-average GFF $\psin$ on $\cG_n$ is a centred Gaussian field $(\psin(x))_{x\in \cG_n}$ indexed by the vertices of $\cG_n$, and with covariances given by the zero-average Green function $\ggn$ on $\cG_n$: for all $x,y\in \cG_n$, we set
\[
\text{Cov}(\psin(x),\psin(y))=\ggn(x,y):=\bE_x^{\cG_n}\left[ \int_{0}^{+\infty}\left( \mathbf{1}_{\{\overline{X}_t=y\}}-\frac{1}{n}\right) dt\right]
\]
\noindent
where $(\oX_t)_{t\geq 0}$ is a continuous time SRW on $\cG_n$ started at $x$ with Exp(1) independent jump-times. Precisely, let $(\zeta_i)_{i\geq 1}$ be a sequence of independent exponential variables of parameter 1. Let $(X_k)_{k\geq 0}$ be a SRW started at $x$, independent of $(\xi_i)_{i\geq 1}$. Then for all $t\geq 0$, we define $\oX_t:=X_{k(t)}$, with $k(t):=\sup\{k\geq 0,\, \sum_{i=1}^k\zeta_i \leq t\}$. 
\\
The function $\ggn$ is symmetric, finite and positive semidefinite. This ensures that $\psin$ is well-defined (see \cite{Abacherli} for details, in particular Remark 1.2).
\\
If $\cG_n$ is not connected, then we set arbitrarily $\psin(x)=0$ a.s.~for all $x\in \cG_n$ (this will not play a significant role for our purpose, as $\cG_n$ is connected w.h.p., see Proposition~\ref{prop:goodgraph}).
\\
\\
\textbf{Two layers of randomness}
\\
Denote $\dP_{ann}$ and $\dE_{ann}$ the annealed law and expectation for the joint realization of $\cG_n$ and of $\psin$ on it. For a fixed realization of $\cG_n$, denote $\dP^{\cG_n}$ and $\dE^{\cG_n}$ the quenched law and expectation.

\subsection{Results}\label{subsec:results}
Define the level set $\Eh:=\{x\in \Td \, \vert \, \phid(x)\geq h\}$. Let $\Ch$ be the connected component of $\Eh$ containing the root $\circ$. Similarly, define the level sets $\Enh:=\{x\in \cG_n \, \vert \, \psin(x)\geq h\}$ for $n\geq 1$. For $i\geq 1$, let $\cC_i^{(n)}$ be the $i$-th largest connected component of $\Enh$, by number of vertices. Break ties arbitrarily if several components have the same size (this will not play a significant role in the paper).
In \cite{SZ2016}, Sznitman showed that there exists a constant $h_{\star}\in (0, \infty)$ such that 
\begin{equation}
\text{if $h> h_{\star}$, $\eta(h):=\dP^{\Td}(\vert\Ch\vert = +\infty)=0$, and if $h<h_{\star}$, $\eta(h)>0$.}
\end{equation}
\noindent
In \cite{ACregultrees} (Theorems 4.3 and 5.1), Abächerli and  \v{C}ern\'y showed that if $h>h_{\star}$, the size of $\Ch$ has exponential moments, and if $h<h_{\star}$, $\Ch$ has a positive probability to grow exponentially. In \cite{ACregulgraphs} (Theorems 3.1 and 4.1), they found that if $\cG_n$ satisfies some deterministic conditions, which hold w.h.p.~(with high probability), then the following events hold $\dP^{\cG_n}$-w.h.p.: if $h>h_{\star}$, $\vert \cC_1^{(n)}\vert=O(\log n)$, and if $h<h_{\star}$, at least $\xi n$ vertices of $\Enh$ are in components of size at least $n^{\delta}$, for some
constants $\delta, \xi>0$ depending on $h$. 

\noindent
Thus, in the supercritical case $h<h_{\star}$, a positive proportion of the vertices is in at least ``mesoscopic'' components (there is no explicit lower bound for $\delta$). 
\\
This work focusses exclusively on the supercritical case; recall that $d$ can be any fixed integer larger than 2. In the following results (and in the remainder of the paper), the constants $K_i$, $i\geq 0$ depend only on $d$ and $h$.
\\
We first prove the existence of a giant component:
\begin{theorem}\label{thm:maingff}
Let $h<h_{\star}$. The following holds:
\begin{equation}\label{eqn:mainthm}
\frac{\vert \cC_1^{(n)}\vert}{n}\cvpann \eta(h),
\end{equation}
where $\cvpann$ stands for convergence in $\dP_{ann}$-probability as $n\rightarrow +\infty$. Moreover, there exists $\cz>0$ such that
\begin{equation}\label{eqn:secondcompo}
\dP_{ann}\left(\cz^{-1}\log n\leq \vert \cC_2^{(n)} \vert\leq \cz\log n\right)\underset{n\rightarrow +\infty}{\longrightarrow} 1.
\end{equation}
\end{theorem}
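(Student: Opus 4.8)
The plan is to prove the giant component result via a sprinkling/exploration argument that transfers the tree picture to $\cG_n$, using the local convergence $\cG_n \to \Td$ together with quantitative control on the GFF.

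\textbf{Step 1: local structure and the lower bound on $|\cC_1^{(n)}|$.} First I would establish that the level-set cluster of a typical vertex $x$ of $\cG_n$ looks, in a ball of radius $R = R(n) \to \infty$ slowly (say $R = \varepsilon \log\log n$ or just a large constant tending to infinity), like $\Ch$ on $\Td$. This requires two ingredients: (i) the local weak convergence of $(\cG_n, x)$ to $(\Td, \circ)$ in the Benjamini--Schramm sense, which holds for uniform random $d$-regular graphs (w.h.p. there are few short cycles), and (ii) a coupling of the restriction of $\psin$ to a ball $B_{\cG_n}(x, R)$ with the restriction of $\phid$ to $B_{\Td}(\circ, R)$. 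For (ii) one compares the zero-average Green function $\ggn$ on a tree-like ball with the tree Green function $\gtd$: both are determined by the walk behaviour, and on a ball that is a tree the discrepancy comes only from the $-1/n$ correction and from excursions that leave the ball, which are $O(1/n)$ and $O((d-1)^{-R})$ respectively; a Gaussian coupling lemma (as in \cite{Abacherli}, or via the finite-dimensional comparison of covariance matrices) then couples the two fields on the ball with probability $1 - o(1)$. Consequently, writing $p := \dP^{\Td}(\circ \conh \partial B_{\Td}(\circ,R))$ (which decreases to $\eta(h)$ as $R\to\infty$), the expected number of vertices whose cluster reaches distance $R$ is $(p + o(1))n$, and a second-moment / concentration argument (pairs of vertices at distance $> 2R$ behave almost independently by the same local coupling applied to a two-point neighbourhood) shows this count concentrates. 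Then one must argue these "deep" clusters merge into one giant of size $(\eta(h)-o(1))n$: here I would use the connectivity structure of $\cG_n$ — a sprinkling argument in the level, i.e. compare $\Enh$ with $\cG_n^{\geq h-\delta}$, and use that the vertices in deep clusters, being a positive-density set, are w.h.p. nearly all connected to each other through $\cG_n^{\geq h-\delta}$ by an expander-type argument, finally letting $\delta \downarrow 0$ using right-continuity (or monotonicity) of $\eta$ and of $h_\star$-subcriticality. This is essentially the strategy used for bond percolation giant components on random $d$-regular graphs, adapted to the dependent GFF field.

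\textbf{Step 2: upper bound $|\cC_1^{(n)}| \leq (\eta(h)+o(1))n$ and uniqueness.} For the matching upper bound, note first that by the local coupling of Step 1, the \emph{expected} number of vertices in level-set clusters of size $\geq R$ is at most $(p+o(1))n$ for any fixed $R$, so $\dP_{ann}(|\{x : |\cC(x)| \geq R\}| \geq (p+\varepsilon)n) \to 0$ by Markov; since $p \to \eta(h)$ as $R \to\infty$, and $|\cC_1^{(n)}| \leq |\{x : |\cC(x)|\geq R\}|$ whenever $|\cC_1^{(n)}|\geq R$, we get the upper bound. Uniqueness of the giant (i.e. that there is no \emph{second} linear-size component) is then automatic: two disjoint linear components would force $|\{x: |\cC(x)|\ge R\}| \geq 2\cdot(\text{something linear})$, contradicting the bound $(\eta(h)+o(1))n < 2\eta(h) n$ for small $\varepsilon$... more carefully, if $|\cC_2^{(n)}|$ were also $\Omega(n)$ then $|\cC_1^{(n)}|+|\cC_2^{(n)}| \le (\eta(h)+o(1))n$ still, which is fine, so uniqueness really needs the sprinkling argument of Step 1 showing \emph{all} deep vertices coalesce: once we know $|\cC_1^{(n)}| \geq (\eta(h)-o(1))n$ and the total mass in deep clusters is $\leq (\eta(h)+o(1))n$, the leftover is $o(n)$, hence $|\cC_2^{(n)}| = o(n)$.

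\textbf{Step 3: the size of $\cC_2^{(n)}$ is $\Theta(\log n)$.} The upper bound $|\cC_2^{(n)}| \leq \cz \log n$ w.h.p. should follow from the same exponential-tail estimate on tree clusters due to \cite{ACregultrees}: in a subcritical-from-above sense, although $h<h_\star$, the cluster of the root conditioned to be \emph{finite} still has exponential tails — more precisely one controls the number of vertices in clusters of size between $C\log n$ and $\varepsilon n$ by a first-moment bound, using that on tree-like balls the cluster-size distribution has exponentially decaying tails up to the scale where the giant takes over; a union bound over $\leq n$ vertices with a cluster-size tail $\leq e^{-c k}$ kills all clusters of size $\geq \cz \log n$ except the giant. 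The matching lower bound $|\cC_2^{(n)}| \geq \cz^{-1}\log n$ comes from a second-moment argument exhibiting, with probability $1-o(1)$, at least one vertex (indeed many) whose $R'$-neighbourhood is a tree, on which the coupled field produces a cluster of size exactly in $[\cz^{-1}\log n, \cz\log n]$ and which is \emph{isolated} from the giant — e.g. fix $R' = c'\log n$ with $c'$ small, show the expected number of such "medium isolated" clusters is $\to\infty$, and that they concentrate. One must also ensure such a medium cluster is not absorbed into $\cC_1^{(n)}$; this is guaranteed because on a tree-like ball of radius $R'=c'\log n$ with $c'$ small enough that $(d-1)^{R'} = n^{o(1)}$, the ball contains $n^{o(1)}$ vertices, so with the right tuning its boundary can be required to lie entirely below level $h$, sealing it off.

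\textbf{Main obstacle.} The hardest part is Step 1's sprinkling/coalescence argument: the GFF is a \emph{strongly correlated} field, so the usual independence arguments for bond percolation on random regular graphs do not apply directly, and one must either (a) use the domain-Markov / Gibbs--Markov property of the GFF to decouple the field inside explored regions from the rest conditionally on the values on an interface, or (b) exploit the stochastic-domination comparison between the GFF level-set and an independent percolation via the FKG inequality and the decoupling inequalities available for the GFF on trees/expanders (as developed in \cite{AS2018, ACregultrees}). Making the coalescence of the $\Omega(n)$ deep vertices rigorous — i.e. showing they genuinely lie in \emph{one} component and not in many $\Theta(\log n)$ pieces — while only sprinkling an arbitrarily small amount $\delta$ in the level, and then removing the sprinkling by continuity of $\eta$, is where the real work lies; everything else is a combination of local-limit bookkeeping and the exponential-tail inputs already available from \cite{ACregultrees}.
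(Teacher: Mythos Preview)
Your Step~1 and Step~2 together sketch a route to \eqref{eqn:mainthm} that is genuinely different from the paper's. The paper does \emph{not} use sprinkling in the level $h\mapsto h-\delta$; instead it grows the cluster of each vertex $x$ to size $\Theta(\sqrt{n}\,b_n)$ by coupling to $\Chlnplus$ (Section~\ref{sec:exploration}), and then connects two such explorations $T_x$, $T_y$ by an explicit construction of many disjoint ``joining balls'' of radius $\Theta(\log\log n)$ between $\partial T_x$ and $\partial T_y$ (Section~\ref{subsec:connexion}), showing that $\psimn$ percolates through at least one of them with probability $1-o(1)$. This yields directly $\dP_{ann}(x\conh y)\to\eta(h)^2$, and the giant follows by a second-moment count on pairs. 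Your sprinkling route might work, but the ``expander-type argument'' you invoke to merge the deep clusters through $E_{\psin}^{\geq h-\delta}$ is the whole difficulty: the level-set is a dependent subset of an expander, not the expander itself, and you would need a quantitative statement that two positive-density sets in the level-set are connected w.h.p., which you have not supplied.

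Your Step~3 upper bound on $\vert\cC_2^{(n)}\vert$ has a genuine gap. The exponential tail of $\vert\Ch\vert$ \emph{conditioned on being finite} (Proposition~\ref{prop:expomomentsCh}) only controls, after coupling, the event that the exploration from $x$ dies out late. It says nothing about the event that the exploration \emph{survives} (i.e.\ the coupled tree cluster is infinite) yet $\cC_x^{\cM_n,h}$ is of intermediate size $\in[C\log n,\varepsilon n]$ rather than the giant --- and this is precisely what must be ruled out with probability $o(1/n)$ for the union bound. The paper devotes all of Section~\ref{sec:uniqueness} to this: a three-phase exploration in which (i) one must tolerate one cycle (since with $\Theta(\log n)$ vertices explored, a cycle appears with probability $\Theta(\log^2 n/n)\gg 1/n$), (ii) the security radius must be upgraded from $a_n=\Theta(\log\log n)$ to $r_n=\Theta(\log n)$ so that the pseudo-GFF approximates $\psimn$ to precision $n^{-a}$ rather than $\log^{-1}n$, and (iii) every ``fertile'' branch of the exploration must be actively connected to $\Theta(n)$ vertices via a joining-ball argument, to certify that if the cluster is not small then it \emph{is} the giant. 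Your ``first-moment bound using tree tails'' skips all of this and would at best give $\dP_{ann}(\vert\cC_x\vert\in[C\log n,\varepsilon n])=o(1)$, not $o(1/n)$.
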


\noindent
Note that by Markov's inequality, for any $\varepsilon >0$ and any sequence of events $(\cE_n)_{n\geq 1}$ such that $\dP_{ann}(\cE_n)\rightarrow 1$, w.h.p. $\cG_n$ is such that $\dP^{\cG_n}(\cE_n)\geq 1-\varepsilon$. Thus, w.h.p. on $\cG_n$, the conclusions of Theorem~\ref{thm:maingff} hold with arbitrarily large $\dP^{\cG_n}$-probability. After a first preprint of this work, \v{C}ern\'y~\cite{Cernygiantcompo} proved via a different approach that (\ref{eqn:mainthm}) also holds under the deterministic conditions of~\cite{ACregultrees} and~\cite{ACregulgraphs}, see Section~\ref{subsec:openquestions} for a discussion.
\\
\\
We also establish some structural properties of $\cC_1^{(n)}$. Let $\mathbf{C}^{(n)}$ be the \textbf{2-core} of $\cC_1^{(n)}$, obtained by deleting recursively the vertices of degree $1$ of $\cC_1^{(n)}$ and their edges. Let $\mathbf{K}^{(n)}$ be the \textbf{kernel} of $\cC_1^{(n)}$, i.e. $\mathbf{C}^{(n)}$ where simple paths are contracted to a single edge, so that the vertices of $\mathbf{K}^{(n)}$ are those of $\mathbf{C}^{(n)}$ with degree at least~$3$. 

\begin{theorem}{\textbf{\emph{Global structure of $\cC_1^{(n)}$}}}\label{thm:globalstruct}
\\
Fix $h<h_{\star}$. There exist  $\ci,\cii >0$ such that
\begin{equation}\label{eqn:core}
\frac{\vert \mathbf{C}^{(n)}\vert}{n}\cvpann \ci
\end{equation}
and 
\begin{equation}\label{eqn:kernel}
\frac{\vert \mathbf{K}^{(n)}\vert}{n}\cvpann \cii.
\end{equation}
\noindent
Moreover, there exists $\ciii>0$ such that if $D_1^{(n)}$ is the diameter of $\cC_1^{(n)}$, then
\begin{equation}\label{eqn:diameter}
\dP_{ann}(D_1^{(n)} \leq \ciii\log n)\underset{n\rightarrow +\infty}{\longrightarrow} 1.
\end{equation}

\noindent
Last, there exists $\lambda_h>1$ such that for every $\varepsilon >0$,
\begin{equation}\label{eqn:typicaldistance}
\pi_{2,n}(\{(x,y)\in (\cC_1^{(n)})^2 , \,(1-\varepsilon)\log_{\lambda_h}n \leq d_{\cC_1^{(n)}}(x,y)\leq (1+\varepsilon)\log_{\lambda_h}n\})\cvpann 1,
\end{equation}
where $\pi_{2,n}$ is the uniform measure on $(\cC_1^{(n)})^2$ and $d_{\cC_1^{(n)}}$ the usual graph distance on $\cC_1^{(n)}$. 
\noindent
In other words, the typical distance between vertices of $\cC_1^{(n)}$ is $\log_{\lambda_h}n$. 
\end{theorem}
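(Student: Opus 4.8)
All four assertions will be read off from the description of the geometry of $\cC_1^{(n)}$ near a uniform vertex that already underlies the proof of Theorem~\ref{thm:maingff}: the exploration of the $\Enh$-cluster of a uniform $v_0\in V_n$ couples, as long as it has uncovered $o(\sqrt n)$ vertices (so the visited part of $\cG_n$ is a tree w.h.p.), with the exploration of $\Ch$ on $\Td$, so that conditionally on $\{v_0\in\cC_1^{(n)}\}$ the local limit of $(\cC_1^{(n)},v_0)$ is $\Ch$ conditioned on $\{\,\lvert\Ch\rvert=\infty\,\}$. By the Markov property of $\phid$ along the rays of $\Td$, $\Ch$ is a multitype Galton--Watson tree with type space $[h,\infty)$ (the value of $\phid$ at a vertex) whose mean offspring operator has spectral radius $\lambda_h$, with $\lambda_h>1$ exactly when $h<h_{\star}$ and, on $\{\,\lvert\Ch\rvert=\infty\,\}$, generation sizes growing like $\lambda_h^{\,k}$ (see~\cite{ACregultrees}); this is the $\lambda_h$ of~\eqref{eqn:typicaldistance}. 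We set $\ci=\dP^{\Td}(\circ\in\Eh\text{ and at least }2\text{ of the subtrees of }\circ\text{ in }\Eh\text{ are infinite})$ and $\cii=\dP^{\Td}(\circ\in\Eh\text{ and at least }3)$; these lie in $(0,\eta(h)]$ (positivity by unwinding the recursion) and $\ci=\eta(h)\,\dP^{\Td}(\ge2\text{ infinite subtrees}\mid\lvert\Ch\rvert=\infty)$.

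\textbf{The $2$-core and the kernel.} The point is that, up to an $o(n)$ number of vertices, membership in $\mathbf{C}^{(n)}$ and $\mathbf{K}^{(n)}$ is a bounded-range functional of $(\cG_n,\psin)$. For a large constant $K$, let $N_K(v)$ be the number of neighbours $w$ of $v$ in $\Enh$ such that the component of $v$ in $\Enh\setminus\{v\}$ containing $w$ has at least $K$ vertices, and for $v\in\Enh$ put $f^{(K)}_{\mathrm C}(v)=\I{N_K(v)\ge2}$, $f^{(K)}_{\mathrm K}(v)=\I{N_K(v)\ge3}$ (and $0$ otherwise). Every vertex of $\mathbf{C}^{(n)}$ has $\ge2$ such heavy neighbours except the $o(n)$ lying within distance $K$ of a cycle of length $\le2K$, while the vertices counted by $f^{(K)}_{\mathrm C}$ but not in $\mathbf{C}^{(n)}$ are: those in the $2$-core of a non-largest component, a negligible fraction since the non-giant part of $\Enh$ carries $o(n)$ total excess; those in a non-giant component of size $\ge K$, whose number is $\sim n\,\dP^{\Td}(K\le\lvert\Ch\rvert<\infty)=n\cdot o_K(1)$; and giant vertices carrying a pendant subtree of size $\ge K$, controlled by the exponential tail of the finite sub-trees of $\Ch$. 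Hence $n^{-1}\#\{v:f^{(K)}_{\mathrm C}(v)=1\}$ converges in mean to $\dP^{\Td}(\circ\in\Eh,\ \ge2\text{ subtrees of }\circ\text{ of size}\ge K)$, which decreases to $\ci$ as $K\to\infty$. Concentration around the mean holds because $f^{(K)}_{\mathrm C}$ is local: revealing the configuration-model pairings and the Gaussian coordinates one at a time, each revelation changes $f^{(K)}_{\mathrm C}(u)$ for only boundedly many $u$, so an Azuma bound (truncating on the rare accumulation of short cycles) gives fluctuations $O(\sqrt n\log n)$. Letting $K\to\infty$ yields~\eqref{eqn:core}, and the same scheme with ``$\ge3$'' gives~\eqref{eqn:kernel}.

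\textbf{Diameter and typical distances.} For the lower bound in~\eqref{eqn:typicaldistance} I would bound $\#\{(u,v)\in(\cC_1^{(n)})^2: d_{\cC_1^{(n)}}(u,v)\le(1-\varepsilon)\log_{\lambda_h}n\}$ by $\sum_u \lvert B_r(u)\rvert$, where $B_r(u)$ is the radius-$r$ ball of $u$ inside $\Enh$ and $r=(1-\varepsilon)\log_{\lambda_h}n$, and then bound $\E{\lvert B_r(u)\rvert}$ by summing $\dP_{ann}(\gamma\subset\Enh)$ over self-avoiding walks $\gamma$ from $u$ of length $\le r$: since the covariance of $\psin$ along a short induced path is $\lambda_h$-comparable to that of $\phid$ along a ray ($\ggn\approx\gtd$ in the tree-like range plus a Gaussian comparison inequality), this gives $\E{\lvert B_r(u)\rvert}\le C\lambda_h^{\,r}=Cn^{1-\varepsilon}$, so the fraction of such pairs is $O(n^{-\varepsilon})\to0$ in mean, hence in probability. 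For the matching upper bound and for~\eqref{eqn:diameter} I would run a two-sided exploration: a large-deviation union bound (the supercritical cluster has no long thin tentacles) first shows w.h.p.\ every vertex of $\cC_1^{(n)}$ lies within $C\log n$ of a ``good'' vertex $u$ whose forward $\Enh$-exploration reaches size $n^{1/2+o(1)}$ within $(\tfrac12+o(1))\log_{\lambda_h}n$ generations, the exponential growth persisting past the tree-like range because the level set keeps expanding at a definite rate until it is macroscopic; then, growing such explorations from two good vertices $u,v$ and revealing the remaining configuration-model pairings (or, if needed, sprinkling with the extra room obtained by lowering the threshold by a small $\delta>0$, in the spirit of~\cite{sznitman2012,DuminilAlGFF1}), the two fronts of size $n^{1/2+o(1)}$ are joined inside $\Enh$ with probability $1-o(n^{-2})$, small enough for a union bound over all pairs. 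Altogether $d_{\cC_1^{(n)}}(u,v)\le(1+\varepsilon)\log_{\lambda_h}n$ simultaneously for all $u,v\in\cC_1^{(n)}$, proving~\eqref{eqn:typicaldistance} and~\eqref{eqn:diameter}.

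\textbf{Main obstacle.} The crux is the passage from the clean tree-like local regime to the macroscopic regime, where $\cG_n$ has cycles and the non-independence of the zero-average GFF bites. Concretely, one must show that two explored pieces of the level set, each of polynomial size $\gg\sqrt n$, get joined inside $\Enh$ with probability $1-o(n^{-2})$: this requires controlling the exploration once $\cG_n$ is no longer tree-like, keeping track of the conditioning produced by restricting to the giant, and — where it cannot be avoided — a quantitative sprinkling/decoupling estimate for $\psin$ on the random $d$-regular graph, a tool not directly available in the literature. One should exploit the exponentially fast decorrelation of $\psin$ on the locally tree-like $\cG_n$ and the fact that the interface between the two fronts offers $\gg\sqrt n$ nearly independent potential connections, while carefully handling the weak global dependence induced by the zero-average constraint and the failure of the exact Markov property in the presence of cycles. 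Controlling the various $o(n)$ error terms (the coupling defect, short cycles, atypically large pendant structures) uniformly is a secondary, more routine task that should follow from the estimates already developed for Theorem~\ref{thm:maingff}.
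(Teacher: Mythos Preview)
Your definitions of $\ci,\cii,\lambda_h$ match the paper's, and your instinct that all four assertions flow from the exploration/coupling machinery of Theorem~\ref{thm:maingff} is correct. But on two of the four parts your proposed route either has a genuine gap or is needlessly hard.

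\textbf{Diameter.} You propose to join two polynomial-size explored fronts with probability $1-o(n^{-2})$ and then union-bound over pairs. You correctly flag this as the crux, and indeed the paper never achieves such a connection estimate: the joining-ball construction of Section~\ref{subsec:connexion} only gives success probability $\eta(h)^2+o(1)$, and pushing it to $1-o(n^{-2})$ would require precisely the decoupling/sprinkling tool you say is ``not directly available''. The paper sidesteps this entirely. It recycles the heavy three-phase exploration built for the uniqueness result~(\ref{eqn:secondcompo}) (Section~\ref{sec:uniqueness}), which already shows, with probability $1-o(n^{-1})$ for each $x$ and hence w.h.p.\ simultaneously, that every $x\in\cC_1^{(n)}$ satisfies $|B_{\cC_1^{(n)}}(x,\cxii\log n)|\ge \cxi n$. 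The diameter bound then follows from a five-line deterministic ball-packing argument: disjoint balls of radius $\cxii\log n$ each of mass $\ge\cxi n$ cannot be too numerous. So the ``main obstacle'' you identify is real for your approach, but the paper's route avoids it.

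\textbf{Typical distance, upper bound.} You do not need $1-o(n^{-2})$ here either. The paper just reruns the second-moment argument of Lemma~\ref{lem:connexionslowerbound} with the exploration capped at $(\tfrac12+\tfrac{\varepsilon}{3})\log_{\lambda_h}n$ generations; Proposition~\ref{prop:Chlargedevgrowthrate} guarantees the boundary still reaches size $n^{1/2}b_n$ by then, so the joining-ball machinery works unchanged and gives $\dP_{ann}(d_{\cC_1^{(n)}}(x,y)\le(1+\varepsilon)\log_{\lambda_h}n)\to\eta(h)^2$. Chebyshev then concentrates the count of good pairs. Your lower bound via first-moment on ball sizes is in the right spirit; the paper does it by showing the ``lower exploration'' from $x$ is aborted (fewer than $n^{1/2-\varepsilon/10}$ vertices seen by step $(\tfrac12-\tfrac{\varepsilon}{2})\log_{\lambda_h}n$) w.h.p., again via Proposition~\ref{prop:Chlargedevgrowthrate}, then a second-moment argument on pairs.

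\textbf{Core and kernel.} Your local-functional plus Azuma scheme is shakier than it looks: $\psimn$ is not a product measure, so ``revealing one Gaussian coordinate changes $f^{(K)}_{\mathrm C}(u)$ for boundedly many $u$'' is false as stated---the zero-average constraint and the covariance structure propagate. The paper avoids martingale concentration altogether: it proves $\dP_{ann}(x\in\mathbf{C}^{(n)})\to\ci$ directly, by (i) running the exploration of Section~\ref{sec:exploration} separately in two children of $x$, (ii) picking an auxiliary vertex $w$, exploring from it, and using joining balls to connect $T_{x_1}$ and $T_{x_2}$ to $T_w$, thereby exhibiting an actual cycle through $x$ in $\Enh$, and (iii) the lower exploration for the converse. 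Concentration is then a second-moment computation exactly as in Lemma~\ref{lem:connexionsupperbound}, estimating $\Cov_{ann}(\mathbf{1}_{x\in\mathbf{C}^{(n)}},\mathbf{1}_{y\in\mathbf{C}^{(n)}})$ by running the construction from $x$ and $y$ in parallel. This is both simpler and avoids the $K\to\infty$ limit.
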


\noindent
We will see in Section~\ref{sec:Td} that $\lambda_h$ is the growth rate of $\Ch$ conditioned on being infinite.
\\
Say that a random graph $G$ is the \textbf{local limit} of the random graph sequence $(G_n)_{n\geq 1}$ if $G_n$ converges to $G$ in distribution w.r.t to the local topology (see for instance the lecture notes of Curien~\cite{CurienLoccv} for a precise definition). We prove that the local limit of $\cC_1^{(n)}$ is $\Ch$ conditioned to be infinite. Say that two rooted trees $T,T'$ are \textbf{isomorphic} if there is a bijection $\Phi:T\rightarrow T'$ preserving the root, and such that for all vertices $x,y\in T$, there is an edge between $x$ and $y$ if and only if there is an edge between $\Phi(x)$ and $\Phi(y)$.

\begin{theorem}{\textbf{\emph{Local limit of $\cC_1^{(n)}$}}}\label{thm:locallimit}
\\
Fix $h<h_{\star}$. For every radius $k\geq 1$, for every rooted tree $T$ of height $k$, let $V_n^{(T)}:=\hspace{-1mm}\{x\in\cC_1^{(n)},B_{\cC_1^{(n)}}(x,k)\text{ is a tree isomorphic to }T\}$ and $p_T:=\dP^{\Td}(B_{\Ch}(\circ,k)=T\,\vert\,\,\vert \Ch\vert=+\infty)$. Then
$$\frac{\vert V_n^{(T)}\vert }{\vert \cC_1^{(n)}\vert}\cvpann  p_T$$
\end{theorem}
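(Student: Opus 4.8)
The plan is to leverage Theorem~\ref{thm:maingff} (so that $|\cC_1^{(n)}|=\eta(h)n+o_{\dP_{ann}}(n)$) together with a first- and second-moment computation for the counts $|V_n^{(T)}|$ over the \emph{whole} level set $\Enh$, then show that the contribution of the small components is negligible so that essentially all of $|V_n^{(T)}|$ comes from $\cC_1^{(n)}$. Concretely, fix $k\geq 1$ and a rooted tree $T$ of height $\le k$, and let $W_n^{(T)}:=\{x\in\Enh : B_{\cG_n}(x,k)\text{ is a tree isomorphic to }T\text{ and }B_{\cG_n}(x,k)\subset\Enh\}$, i.e. the same count but taken over all of $\Enh$ rather than over $\cC_1^{(n)}$. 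I would first prove $\dE_{ann}|W_n^{(T)}|=n\,q_T(h)+o(n)$ and $\Var_{ann}|W_n^{(T)}|=o(n^2)$, where $q_T(h):=\dP^{\Td}(B_{\Eh}(\circ,k)=T)$ is the unconditioned tree probability on $\Td$; hence $|W_n^{(T)}|/n\convprob q_T(h)$.

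The first moment is the crux of the local-limit input: by exchangeability, $\dE_{ann}|W_n^{(T)}|=n\,\dP_{ann}(B_{\cG_n}(\circ,k)\simeq T,\ \psin|_{B}\ge h)$. Here one uses two standard facts from the regular-graph/GFF toolbox of the paper. First, a uniform random $d$-regular graph is locally tree-like: w.h.p. $B_{\cG_n}(\circ,k)$ is the ball $B_{\Td}(\circ,k)$ of $\Td$ up to a vanishing-probability error (contiguity with the configuration model plus the standard cycle count), so the combinatorial part converges to the $\Td$-probability that $B(\circ,k)=T$. Second, the restriction of $\psin$ to a fixed finite neighbourhood converges in total variation to the restriction of $\phid$: the zero-average Green function $\ggn(x,y)$ on such graphs converges to $\gtd(x,y)$ for $x,y$ at bounded distance (this is exactly the kind of Green-function comparison already invoked to set up the framework, cf.\ the discussion around $\ggn\to\gtd$), and convergence of covariance matrices of Gaussian vectors gives convergence in distribution, hence — since the event $\{\psin|_B\ge h\}$ is a continuity set — convergence of the probability. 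Combining, $\dP_{ann}(B_{\cG_n}(\circ,k)\simeq T,\ \psin|_B\ge h)\to q_T(h)$, giving the first moment. For the variance, one writes $\dE_{ann}[|W_n^{(T)}|^2]=\sum_{x,y}\dP_{ann}(x,y\in W_n^{(T)})$ and argues that for the overwhelming majority of pairs $(x,y)$ the two $k$-neighbourhoods are disjoint and, by a slight extension of the above (the $2k$-neighbourhood of a pair still converges to two independent copies of $B_{\Td}(\circ,k)$, and the GFF on two far-apart balls decorrelates because $\ggn(x,y)\to0$), the events are asymptotically independent; the $O(n)$ pairs at bounded distance contribute $O(n)=o(n^2)$. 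This yields concentration of $|W_n^{(T)}|$.

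It remains to transfer from $\Enh$ to $\cC_1^{(n)}$. By Theorem~\ref{thm:maingff}, $|\cC_2^{(n)}|=O(\log n)$ w.h.p., and more is true from the machinery behind it: w.h.p. the total number of vertices of $\Enh$ lying in components of size $\le \cz\log n$ is $o(n)$ — indeed the size of $\Enh$ itself is $\Theta(n)$ and all but $\eta(h)n+o(n)$ of it is giant, but one actually needs the sharper statement that \emph{most} non-giant clusters are of bounded size, so that $\sum_{i\ge 2}|\cC_i^{(n)}|=o(n)$; this is available from the exponential-tail estimates for $|\Ch|$ in the subcritical-on-the-tree regime (Theorem 4.3 of \cite{ACregultrees}) together with the tree-like structure and a union bound, exactly as in the proof of (\ref{eqn:secondcompo}). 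Consequently $\bigl||W_n^{(T)}|-|V_n^{(T)}|\bigr|\le \sum_{i\ge2}|\cC_i^{(n)}|=o_{\dP_{ann}}(n)$, so $|V_n^{(T)}|/n\convprob q_T(h)$ as well. Finally, summing the already-established convergence over the finitely many trees $T'$ with $B_{\Ch}(\circ,k)=T'$ and $|\Ch|=\infty$, and using $\dP^{\Td}(|\Ch|=\infty)=\eta(h)>0$, one gets $\sum_{T'}q_{T'}(h)\,\mathbbm{1}[\text{$T'$ percolates}]$-type identities; dividing by $|\cC_1^{(n)}|/n\convprob\eta(h)$ and matching terms yields
\[
\frac{|V_n^{(T)}|}{|\cC_1^{(n)}|}=\frac{|V_n^{(T)}|/n}{|\cC_1^{(n)}|/n}\convprob \frac{q_T(h)\,\mathbbm{1}[\text{$T$ percolates in }\Td]}{\eta(h)}=p_T,
\]
where the indicator appears because a vertex of $\cC_1^{(n)}$ whose $k$-ball is $T$ must have that ball contained in the giant, which on the tree side corresponds to conditioning on $|\Ch|=\infty$. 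The main obstacle I expect is the last transfer step: rigorously identifying which $x\in W_n^{(T)}$ land in $\cC_1^{(n)}$ versus a small component, i.e. showing that a uniformly random vertex of $W_n^{(T)}$ is in the giant with probability $\to \dP^{\Td}(|\Ch|=\infty\mid B_{\Eh}(\circ,k)=T)$. This requires controlling, for a vertex with prescribed bounded-radius neighbourhood, the probability that its cluster is macroscopic — which is precisely the exploration/sprinkling argument underpinning Theorems~\ref{thm:maingff} and the earlier parts of Theorem~\ref{thm:globalstruct}, and one should simply condition on the $k$-ball at the start of that exploration rather than re-run it from scratch.
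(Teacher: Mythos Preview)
Your transfer step from $W_n^{(T)}$ to $V_n^{(T)}$ is wrong, and this is not a technicality but a genuine gap. You claim that $\sum_{i\geq 2}|\cC_i^{(n)}|=o(n)$ w.h.p., but this is false: by Lemma~\ref{lem:connexionsupperbound}, about $(1-\eta(h))n$ vertices lie in components of size $\leq n^{1/2}$, and among those roughly $\dP^{\Td}(\phid(\circ)\geq h,\ |\Ch|<\infty)\cdot n$ belong to $\Enh$. This is a positive fraction of $n$, since $\eta(h)<\dP^{\Td}(\phid(\circ)\geq h)$. So $|W_n^{(T)}|-|V_n^{(T)}|$ is $\Theta(n)$, not $o(n)$. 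Relatedly, your final formula $q_T(h)\,\mathbf{1}[T\text{ percolates}]/\eta(h)$ is ill-defined: $T$ is a fixed finite tree, and the event $\{|\Ch|=\infty\}$ is not measurable with respect to $B_{\Ch}(\circ,k)$. The correct numerator is the \emph{joint} probability $\dP^{\Td}(B_{\Ch}(\circ,k)=T,\ |\Ch|=\infty)$, which is not a product.

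You essentially identify the fix in your last paragraph, and it is what the paper does: rather than counting over all of $\Enh$ and then trying to excise the small components, the paper runs the exploration of Section~\ref{subsec:explo1vertexsuccess} from $x$ and defines $\cS_T(x)$ to be the event that the exploration is successful \emph{and} the $k$-ball of $x$ in the level set is isomorphic to $T$. The exploration being successful is exactly the proxy for $x$ landing in the giant, and the coupling with $\phid$ gives $\dP_{ann}(\cS_T(x))\to\dP^{\Td}(B_{\Ch}(\circ,k)=T,\ |\Ch|=\infty)$ directly. A second-moment argument as in Lemma~\ref{lem:connexionsupperbound} then concentrates $|\cS_T|$, and the sandwich $|\cS_T|\leq |V_n^{(T)}|\leq |\cS_T|+|\cC_1^{(n)}\cap\cS^c|$ (where $\cS$ is the set of all successful vertices, which also concentrates at $\eta(h)n$) closes the argument. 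Your first- and second-moment computations for $W_n^{(T)}$ are fine in spirit, but they compute the wrong quantity; you need to bake the ``in the giant'' event into the count from the start, via the exploration.
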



\subsection{Discussion and open questions}\label{subsec:openquestions}
\textbf{GFF percolation versus bond percolation} 
\\
The graph $\Enh$ undergoes the same phase transition as some classical bond percolation models for the size of the largest connected component. 
We draw a comparison with the Erd\H{o}s-Rényi random graph (i.e. bond percolation on the complete graph), introduced by Gilbert in~\cite{GilbertER}: for a constant $c>0$ and $n\in \dN$, $\ER(n,c/n)$ is the graph on $n$ vertices such that for every pair of vertices $x,y$, there is an edge between $x$ and $y$ with probability $c/n$, independently of all other pairs of vertices. Erd\H{o}s and R\'enyi~\cite{ERoriginal} showed that the supercritical regime corresponds to $c>1$ and the subcritical regime to $c<1$. Theorems~\ref{thm:maingff}, \ref{thm:globalstruct} and \ref{thm:locallimit} hold for $\ER(n,c/n)$ as $n\rightarrow +\infty$, for any fixed $c>1$, the tree $\Ch$ being replaced by a Galton-Watson tree whose offspring distribution is Poisson with parameter $c$, and $\lambda_h$ being replaced by $c$.
\\
As for Bernoulli bond percolation on $\cG_n$ (each edge of $\cG_n$ is deleted with probability $1-p$, independently of the others), the same phase transition holds for the size of the largest connected component, the critical threshold being $p=1/(d-1)$ (Theorem 3.2 of \cite{ABS04}).
\\
\\
\textbf{The structure of $\cC_1^{(n)}$}
\\
It was shown recently in \cite{DingLubetzkyPeres} that the distribution of the giant component of $\ER(n,c/n)$ is continuous w.r.t. to a random graph which can be explicitly described. Its kernel is a configuration model whose vertices have i.i.d. degrees with a Poisson distribution (conditioned on being at least 3). In particular, it is an expander. The lengths of the simple paths in the $2$-core are i.i.d. geometric random variables. See Theorem 1 of~\cite{DingLubetzkyPeres} for details. This implies a result analogous to Theorem~\ref{thm:globalstruct} for $\ER(n,c/n)$. 
\\
We conjecture that the kernel $\bK^{(n)}$ is an expander for every $h<h_{\star}$. The main obstacle to gathering information on its global structure is that if $\psin$ is revealed on a positive proportion of the vertices of $\bK^{(n)}$ (and hence of $\cG_n$), then it could affect substantially $\psin$ on the remaining vertices. In particular, if $h>0$ is large enough, we could imagine that the average of $\psin$ on the discovered vertices is positive. But by (\ref{eqn:zeroaverage}), the average of the GFF on the remaining vertices would be negative, hence below the threshold $h$.
\\
\\
\textbf{Deterministic regular graphs}

\noindent
The results of \cite{ACregulgraphs} and \cite{ABS04} hold in fact for any deterministic sequence of large-girth expanders (conditions \ref{expander1} and \ref{expander2} in Proposition~\ref{prop:goodgraph}), which is w.h.p. the case for $\cG_n$. Very recently, \v{C}ern\'y~\cite{Cernygiantcompo} gave another proof of (\ref{eqn:mainthm}) that holds under these deterministic conditions. He also showed that $\vert\cC_2^{(n)} \vert=o(n)$ w.h.p. His approach is very different, and uses notably a decomposition of the GFF as an infinite sum of fields with finite range interactions, introduced in~\cite{DuminilAlGFF2}. 
\\
In our proofs, averaging on the randomness of $\cG_n$ is a crucial ingredient to control the presence of cycles on large subgraphs of $\cG_n$, and allows us to extend some arguments of~\cite{ACregulgraphs}, where $\psin$ is locally approximated by $\phid$.
\\
We conjecture that those deterministic conditions are not sufficient for (\ref{eqn:secondcompo}) to hold. This was shown for the Bernoulli bond percolation in~\cite{krivelevichlubetzkysudakov} (Theorem 2): for every $a\in (0,1)$, one can build a sequence $(G_n)_{n\geq 1}$ satisfying \ref{expander1} and \ref{expander2} such that the second largest connected component has at least $n^a$ vertices (the second largest component first grows exponentially on a tree-like ball until it has a polynomial size, and then is ``trapped'' in zones where the expansion of the graph is close to an arbitrarily small constant).

\subsection{Proof outline}\label{subsec:proofstrategy}
\noindent
Our proofs rely on two main arguments:
\begin{enumerate}[label=\arabic*)]
\item\label{mainargt1} An annealed exploration of $\Emnh$ (Proposition~\ref{prop:gffann}), where the structure of $\cG_n$ is progressively revealed. There is a standard sequential construction of a uniform $d$-regular multigraph $\cM_n$, which, conditionned to be simple, yields $\cG_n$ (this conditionning has a non vanishing probability, see Section~\ref{sec:basicprop}). Each newly discovered vertex is given an independent standard normal variable. Then $\psin$ is built via a recursive procedure, using these Gaussian variables (Proposition~\ref{prop:condgff}).
\item\label{mainargt2} A comparison of $\psin$ and $\phid$ (Proposition~\ref{prop:couplinggffsexplo}): on a tree-like subgraph $T$ of $\cG_n$, such that there are no cycles in $\cG_n$ at distance $\kappa\log\log n$ of $T$ for a large enough constant $\kappa$, there is a bijective map $\Phi$ between $T$ and an isomorphic subtree of $\Td$ and a coupling of $\psin$ and $\phid$ 
so that
\[
\sup_{y\in T}\vert \psin(y)-\phid(\Phi(y))\vert \leq \log^{-1}n.
\]
\end{enumerate}

\noindent
We stress the fact that we reveal $\psin$ only \textit{after} having explored $\cG_n$: if we reveal $\psin$ at a given vertex, it conditions the structure of $\cG_n$ and thus the pairings of the still unmatched half-edges, so that we cannot use the sequential construction any more to further explore the graph. Hence, during the exploration, we will need to build an approximate version of $\psin$, depending on the Gaussian variables of \ref{mainargt1}. This makes some proofs tedious, in particular that of (\ref{eqn:secondcompo}).
\\
From Section~\ref{sec:basicprop} onwards, we will work exclusively on $\cM_n$ (and with the GFF on it, $\psimn$), proving that the statements of Theorems~\ref{thm:maingff}, \ref{thm:globalstruct} and \ref{thm:locallimit} hold w.h.p.~on $\cM_n$. Since the law of $\cG_n$ is that of $\cM_n$ under a non-vanishing conditioning, these statements hold w.h.p.~on $\cG_n$. For convenience, we continue this Section with $\cG_n$ and $\psin$ instead of $\cM_n$ and $\psimn$.
\\
\\
\textbf{The base exploration} \\
The exploration that we will perform in all proofs, with some modifications, is as follows: pick $x\in V_n$, and reveal its connected component $\cC_x^{\cG_n,h}$ in $\Enh$ in a breadth-first way, as well as its neighbourhood up to distance $a_n=\kappa\log\log n$. Until we meet a cycle, the explored zone is a tree $T_x$, growing at least like $\Chlnplus$, and at most like $\Chlnminus$ by \ref{mainargt2}.
\\
On one hand, $\Chlnplus$ has a probability $\simeq \eta(h+\log^{-1}n)=\eta(h)+o(1)$ to be infinite, with a growth rate $\lambda_h>1$ (Section~\ref{subsec:expogrowthCh}). On the other hand, the probability to create a cycle is $o(1)$ as long as we reveal $o(\sqrt{n})$ vertices (since we perform $o(\sqrt{n})$ pairings of half-edges having each a probability $o(\sqrt{n})/n$ to involve two already discovered vertices). Thus, with $\dP_{ann}$-probability at least $\eta(h)+o(1)$, $T_x$ and $\partial T_x$ will reach a size $\Theta(\sqrt{n}\log^{-\kappa'}n)$ for some constant $\kappa'>0$ (Proposition~\ref{prop:explo1vertex}). 
\\
Conversely, $\Chlnminus$ has a probability $1-\eta(h-\log^{-1}n)=1-\eta(h)+o(1)$ to be finite, and with $\dP_{ann}$-probability $1-\eta(h)+o(1)$, $\vert\cC_x^{\cG_n,h}\vert =o(\sqrt{n})$ (Proposition~\ref{prop:explo1vertexaborted}).
\\

\noindent
\textbf{Proof of (\ref{eqn:mainthm})}.

\noindent
First, we show that for any two vertices $x,y\in V_n$, there is a $\dP_{ann}$-probability $\eta(h)^2+o(1)$ that they are connected in $\Enh$. To do so, we explore $\cC_x^{\cG_n,h}$ and $\cC_y^{\cG_n,h} $, that we couple with independent copies of $\Chlnplus$, so that with probability $\eta(h)^2+o(1)$, $\partial T_x$ and $\partial T_y$ have $\Theta(\sqrt{n}\log^{-\kappa'}n)$ vertices. The explorations from $x$ and $y$ are disjoint with probability $1-o(1)$, since $o(\sqrt{n})$ vertices have been explored. Then, we draw multiple paths between $T_x$ and $T_y$ (with an ``envelope'' of radius $\Theta(\log\log n)$ around each of them to allow the use of the approximation \ref{mainargt2}), the \textbf{joining balls} (Section~\ref{subsec:connexion}). The probability that $\Enh$ percolates through at least one of these paths is $1-o(1)$. 
\\
Second, we prove by a second moment argument that $\dP_{ann}$-w.h.p., the number of couples $(x,y)\in V_n^2$ such that $y\in \cC_x^{\cG_n,h}$ is $(\eta(h)^2+o(1))n^2$ (Lemma~\ref{lem:connexionslowerbound}). 
\\ 
Third, knowing that $\vert\cC_x^{\cG_n,h}\vert =o(\sqrt{n})$ with $\dP_{ann}$-probability $1-\eta(h)+o(1)$, we deduce in the same way that at least $(1-\eta(h)+o(1))n$ vertices are in connected components of size $o(\sqrt{n})$ (Lemma~\ref{lem:connexionsupperbound}). 
\\
Those two facts together force the existence of a connected component of size $(\eta(h)+o(1))n$.
\\
\\
\textbf{Proof of (\ref{eqn:secondcompo}).}
\\
The most difficult part is the upper bound. We show that for $\cz$ large enough, for $x\in V_n$, $\dP_{ann}(\cz\log n\leq \vert \cC_x^{\cG_n,h}\vert \leq \cz^{-1}n )=o(1/n)$, and conclude by a union bound on $x$ and a corollary of the proof of (\ref{eqn:mainthm}), namely that $\vert\cC_2^{(n)}\vert /n \cvpann 0$. 
\\
The greater precision $o(1/n)$ requires three additional ingredients:
\begin{itemize}
\item the size of $\Ch$ conditioned on being finite has exponential moments (Proposition~\ref{prop:expomomentsCh}), in particular, $\dP^{\Td}(\vert\Ch\vert\geq c\log n, \, \vert \Ch\vert <+\infty)=o(1/n)$ for a large enough constant $c$;
\item when exploring $k$ vertices around $x$, there is a probability $\Theta(k^2/n)$ that a cycle arises, so that we will need to handle at least one cycle to fully explore $\cC_x^{\cG_n,h}$;
\item we need a better approximation of $\psin$ than $\log^{-1}n$ in \ref{mainargt2}: with probability at least $\Theta(1/n)$, we will meet too many vertices with an approximate value of $\psin$ that are in $[h-\log^{-1}n, h+\log^{-1}n]$, so that we can not tell whether they are in $\cC_x^{\cG_n,h}$ or not before the end of the exploration. To remedy this, we replace the ``security radius'' $a_n$ in \ref{mainargt2} by some $r_n=\Theta(\log n)$, so that we approximate $\psin$ up to a difference $n^{-\Theta(1)}$. 
\end{itemize}

\noindent
\textbf{Other proofs}.

\noindent
The proofs of Theorems~\ref{thm:globalstruct} and \ref{thm:locallimit} are based on slightly modified explorations, and are much simpler.

\subsection{Plan of the rest of the paper}
In Section~\ref{sec:basicprop}, we review some basic properties of $\cG_n$. In Section~\ref{sec:Td}, we study the GFF on $\Td$. In Section~\ref{sec:coupling}, we establish a coupling between recursive constructions of the GFF on $\Td$ and on a tree-like neighbourhood of $\cG_n$. As these three sections consist of preparatory work, most of their proofs are deferred to the \hyperref[appn]{Appendix}. The core ideas of the paper are in Sections~\ref{sec:exploration} and~\ref{sec:giant}. In Section~\ref{sec:exploration}, we explore the connected component of a vertex in $\Emnh$. In Section~\ref{sec:giant}, we prove \eqref{eqn:mainthm}. The most technical part of this article is Section~\ref{sec:uniqueness}, in which we prove (\ref{eqn:secondcompo}). In Section~\ref{sec:properties}, we prove Theorems~\ref{thm:globalstruct} and \ref{thm:locallimit}. 

\subsection{Further definitions}
In this paper, 
graphs are undirected.
For a graph $G$, denote $d_G$ the usual graph distance on its vertex set $V$, and for every vertex $x$ and every integer $R\geq 0$, let $B_G(x,R):=\{y, \, d_G(x,y)\leq R\}$ and $\partial B_G(x,R+1)=B_G(x,R+1)\setminus B_G(x,R)$. For any $S\subseteq V$, let similarly $B_G(S,R):=\cup_{x\in S}B_G(x,R)$ and $\partial B_G(S,R+1)=B_G(S,R+1)\setminus B_G(S,R)$. If $A$ is a subgraph of $G$ with vertex set $S$, let $B_G(A,R):=B_G(S,R)$. If $x$ and $y$ are neighbours, we denote $B_G(x,y,R)$ the subgraph of $G$ obtained by taking all paths of length $R$ starting at $x$ and not going through $y$.
\\
The \textbf{tree excess} of a finite graph $G$ is $\tx(G)=e-v+1$, where $v:=\vert V\vert$ and $e$ is the number of edges in $G$, see Section~\ref{subsec:AppendixSection4} for elementary facts on the tree excess that will be useful throughout the paper. 
\\
A \textbf{rooted tree} is a tree $T$ with a distinguished vertex $\circ$, the \textbf{root}. 
The \textbf{height} $\mathfrak{h}_T(x)$ of a vertex $x$ in $T$ is $d_T(\circ,x)$. If $T$ is finite, its \textbf{boundary} $\partial T$ is the set of vertices of maximal height. 
The \textbf{subtree from $x$} is the subtree made of the vertices $y$ such that $x$ is on any path from $\circ$ to $y$. The \textbf{offspring} of $x$ is the set of vertices of its subtree. For $r\geq 0$, the \textbf{$r$-offspring} of $x$ is its offspring at distance $r$ of $x$, and its \textbf{offspring up to generation $r$} is its offspring at distance at most $r$. If $y$ is in the $1$-offspring of $x$, then $y$ is a \textbf{child} of $x$, and $x$ is its \textbf{parent}. In this case, write $x=\overline{y}$.
\\
If $x,y$ are neighbours in $T$, the \textbf{cone from $x$ out of $y$} is the rooted subtree of $T$ with root $x$ and vertex set $\{z\in T\, \vert\, \text{$y$ is not on the shortest path from $x$ to $z$}\}$. 
\\
Unless mention of the contrary, all random walks are in discrete time. We will write $T_A$ (resp. $H_A$) for the first exit (resp. hitting) time of a set $A$ by a SRW. 
\\
For two probability distributions $\mu,\mu'$ on $\dR$, we write $\mu \stdom\mu'$ (or $\mu'\stdomi \mu$) if $\mu'$ dominates stochastically $\mu$, i.e. there exist two random variables $X\sim \mu$ and $X'\sim \mu'$ on the same probability space such that $X\leq X'$ a.s.

\section{Basic properties}\label{sec:basicprop}
\subsection{From $\cG_n$ to the multigraph $\cM_n$}\label{subsec:basicstructureGreen}
\noindent
The graph $\cG_n$ can be generated sequentially as follows: attach $d$ half-edges to each vertex of $V_n$. Pick an arbitrary half-edge, and match it to another half-edge chosen uniformly at random. Then, choose a remaining half-edge and match it to another unpaired half-edge chosen uniformly at random among the remaining half-edges, and so on until all half-edges have been paired. The resulting multi-graph $\cM_n$ is not necessary \textbf{simple}, i.e. it might have loops and multiple edges. The probability that $\cM_n$ is simple has a positive limit as $n\rightarrow +\infty$, and conditionally on $\{\cM_n\text{ is simple}\}$, $\cM_n$ is distributed as $\cG_n$ (see for instance  Section 7 of \cite{RemcoBook}, in particular Proposition 7.13 for a reference).
\\
In particular, an event true w.h.p. on $\cM_n$ is also true w.h.p. on $\cG_n$, so that it is enough to prove all our results on $\cM_n$. In the rest of the paper, we will exclusively work on $\cM_n$. 
\\
Note that $\cM_n$ is not necessarily connected, but that it is $d$-regular. The SRW on $\cM_n$ is as follows:  if a vertex $x$ has a loop, then a SRW starting at $x$ goes through the loop with probability $2/d$, and if there is an edge with multiplicity $\ell\geq 1$ between $x$ and an other vertex $y$, the SRW moves to $y$ with probability $\ell/d$.  The uniform measure $\pi_n$ on its vertices is still invariant for the SRW, and is the unique such probability measure if $\cM_n$ is connected. Also, the SRW is still reversible. These observations allow to transpose readily the definitions in Section~\ref{subsec:defs} from $\cG_n$ to $\cM_n$. When $\cM_n$ is connected, denote $\gmn$ the Green function (which is still symmetric, finite and positive semidefinite), and $\psimn$ the GFF. When $\cM_n$ is not connected, we also impose $\psimn(x)=0$ for all $x\in V_n$. 

\noindent
The following proposition is the main result of this Section (recall that the constants $K_i$'s implicitely depend on $d$ and $h$). We postpone its proof to the \hyperref[appn]{Appendix}.
\begin{proposition}\label{prop:goodgraph}
There exists $\ciii>0$ such that w.h.p. as $n\rightarrow +\infty$, $\cM_n$ satisfies:
\begin{enumerate}[label=(\Roman*)]
\item \label{expander1} $\cM_n$ is a \textbf{$\ciii$-expander}, i.e. the spectral gap $\lambda_{\cM_n}$ of $\cM_n$ is at least $\ciii$ (the spectral gap is $1-\lambda_2/d$, where $\lambda_2$ is the second largest eigenvalue of the adjacency matrix of $\cM_n$),
\item \label{expander2} for all $x\in \cM_n$, $B_{\cM_n}(x,\lfloor\ciii\log n\rfloor)$ contains at most one cycle.
\end{enumerate}
\noindent
Moreover, there exists $\cvi>0$ such that w.h.p. on $\cM_n$, the following holds: for all $x\in V_n$ such that \emph{tx}$(B_{\cM_n}(x,\lfloor \cvi\log\log n\rfloor))=0$, 
\begin{equation}\label{eqn:greenfunctionGnapprox1}
\left\vert\gmn(x,x)-\frac{d-1}{d-2}\right\vert\leq \log^{-6}n.
\end{equation}
If moreover $y$ is a neighbour of $x$,
\begin{equation}\label{eqn:greenfunctionGnapprox2}
\left\vert\gmn(x,y)-\frac{1}{d-2}\right\vert\leq \log^{-6}n.
\end{equation}
\end{proposition}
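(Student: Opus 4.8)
# Proof Proposal for Proposition~\ref{prop:goodgraph}

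\textbf{Approach.} The plan is to treat the three assertions separately, as they rely on different tools. Properties \ref{expander1} and \ref{expander2} are standard facts about random regular (multi)graphs; I would cite or re-derive them, while the Green function estimates \eqref{eqn:greenfunctionGnapprox1}--\eqref{eqn:greenfunctionGnapprox2} require a genuine comparison between the walk on $\cM_n$ and the walk on $\Td$, exploiting the tree-like structure of a small ball around $x$.

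\textbf{Steps for \ref{expander1} and \ref{expander2}.} First, the spectral gap: it is classical (Friedman's theorem, or the weaker but simpler bounds of Broder--Shamir / Friedman--Kahn--Szemer\'edi) that a uniform random $d$-regular graph has $\lambda_2 \le 2\sqrt{d-1}+o(1)$ w.h.p., hence a spectral gap bounded below by some constant $\ciii>0$; since $\cM_n$ equals $\cG_n$ under a non-vanishing conditioning (as recalled in Section~\ref{subsec:basicstructureGreen}), and a w.h.p.\ statement transfers under such conditioning, \ref{expander1} holds on $\cM_n$. Alternatively one can work directly with the configuration model and use a standard moment/switching argument. Second, for \ref{expander2}: using the sequential pairing construction, the expected number of vertices $x$ for which $B_{\cM_n}(x,\ell)$ contains at least two independent cycles is $O(n \cdot (d-1)^{4\ell}/n^2) = O((d-1)^{4\ell}/n)$, because each of the two cycles forces a pairing among the $O((d-1)^\ell)$ half-edges exposed in the ball, each such coincidence having probability $O((d-1)^\ell/n)$. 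Taking $\ell = \lfloor \ciii \log n\rfloor$ with $\ciii$ small enough that $(d-1)^{4\ell} = o(n)$, a first-moment bound and union bound over $x$ give \ref{expander2} (after possibly shrinking $\ciii$ to simultaneously serve both items).

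\textbf{Steps for the Green function estimates.} Fix $x$ with $\mathrm{tx}(B_{\cM_n}(x,m))=0$ where $m=\lfloor \cvi\log\log n\rfloor$, so this ball is a $d$-regular tree of depth $m$. I would compare the continuous-time (equivalently, discrete-time, since $\gmn$ is insensitive to the exponential clock — the zero-average Green function's diagonal and near-diagonal values can be written via return probabilities of the discrete walk) walk on $\cM_n$ started at $x$ with the walk on $\Td$. On $\Td$ one has the exact values $\gtd(x,x)=\frac{d-1}{d-2}$ and $\gtd(x,y)=\frac{1}{d-2}$ for neighbours $x,y$ (from the geometric escape probability $\frac{d-2}{d-1}$ of the walk on $\Td$). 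The two walks can be coupled to agree until the first time they exit $B(x,m)$; up to that time the return/visit counts to $x$ and to a neighbour $y$ coincide. The discrepancy between $\gmn(x,\cdot)$ and $\gtd(x,\cdot)$ is then controlled by (i) the probability that the $\Td$-walk ever reaches depth $m$ from $x$, which is $\le (d-1)^{-m} \le \log^{-c}n$ for an appropriate power, times a bounded factor coming from the expected number of additional returns after that event (bounded using \ref{expander1}: on the connected $\cM_n$ the walk has a uniform Green-function-type bound, e.g. $\bE_x[\text{number of returns to }x \text{ before mixing}] = O(1)$ and the zero-average correction $-1/n$ contributes $o(1)$); and (ii) the zero-average correction $-\frac1n\int_0^\infty \cdot\, dt$, which on a connected expander is $O(1)$ in total and contributes the stationary part that is already absorbed in the definition. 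Choosing $\cvi$ large enough relative to the constant in \ref{expander1} makes the total error $\le \log^{-6}n$. The near-diagonal estimate \eqref{eqn:greenfunctionGnapprox2} follows by the same coupling, tracking visits to the neighbour $y$ instead, using $\gmn(x,y) = \gmn(y,y)\cdot \bP_x(H_y < \infty \text{ effectively}) $-type decompositions, or more cleanly the identity relating $\gmn(x,y)$ to the expected time the $\Td$-coupled walk spends at $y$ before exiting the tree ball.

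\textbf{Main obstacle.} The delicate point is making the comparison between the finite-graph zero-average Green function and the infinite-tree Green function quantitative with the stated error $\log^{-6}n$: one must argue that conditionally on escaping the tree-like ball $B(x,m)$, the \emph{remaining} contribution to $\gmn(x,x)$ — from the possibly many excursions the walk makes back to $x$ through the non-tree-like part of $\cM_n$, plus the global $-1/n$ drift — is bounded by a constant uniformly in the (random) realization of $\cM_n$ outside the ball. This is exactly where \ref{expander1} is used: the spectral gap gives a uniform bound on return probabilities / local Green-function increments after exiting any fixed-radius ball, so the escape probability $(d-1)^{-m}$ times this $O(1)$ factor can be made $\le \log^{-6}n$ by choosing $\cvi$ appropriately. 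Assembling these pieces cleanly — and checking that the analogous statement for $\gmn(x,y)$ needs no extra work — is the bulk of the argument; everything else is bookkeeping with the sequential construction, and I would defer the full details to the Appendix as the authors announce.
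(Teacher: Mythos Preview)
Your overall strategy matches the paper's: treat \ref{expander1}, \ref{expander2} by standard random-regular-graph facts, and for the Green function estimates compare the walk on $\cM_n$ with the walk on $\Td$ inside the tree-like ball of radius $m=\lfloor \cvi\log\log n\rfloor$. The paper cites Bollob\'as for the isoperimetric constant plus Cheeger for \ref{expander1} (rather than Friedman), and for \ref{expander2} does exactly the sequential-pairing first-moment/union-bound computation you describe, via Lemma~\ref{lem:matchings} with $k=2$.

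There is one genuine slip in your Green-function argument: you write that the discrepancy is controlled by ``the probability that the $\Td$-walk ever reaches depth $m$ from $x$, which is $\le (d-1)^{-m}$''. On $\Td$ with $d\ge 3$ the walk is transient towards infinity and reaches depth $m$ with probability $1$; the quantity of order $(d-1)^{-m}$ is rather the probability of \emph{returning} to $x$ after first reaching depth $m$, or equivalently the Green function $\gtd(x,z)$ for $z$ at distance $m$. This is not fatal --- the right object is recoverable --- but as written the sentence is false and the argument does not close.

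The paper avoids this ambiguity by working not with a coupling bound on return counts but with an exact identity for the killed Green function: since $U:=B_{\cM_n}(x,m)$ and $W:=B_{\Td}(\circ,m)$ are isomorphic, $\gmn^U(x,x)=\gtd^W(\circ,\circ)$, and then two exact formulas (the strong Markov property on $\Td$, and Lemma~1.4 of \cite{Abacherli} on $\cM_n$, i.e.\ \eqref{eqn:lem14Aba}) give
\[
\left|\gmn(x,x)-\tfrac{d-1}{d-2}\right|\le |\bE_\circ^{\Td}[\gtd(\circ,X_{T_W})]|+|\bE_x^{\cM_n}[\gmn(x,X_{T_U})]|+\frac{\bE_x^{\cM_n}[T_U]}{n}.
\]
The first two terms are then bounded directly using the \emph{pointwise} Green-function decay \eqref{eqn:greenonTd} and \eqref{eqn:greenfunctionGn} (the latter is Proposition~1.1 of \cite{ACregulgraphs}, which is where the expander property \ref{expander1} actually enters), giving $\le \log^{-7}n$ for $\cvi$ large. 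The third term is the zero-average correction you allude to; the paper bounds $\bE_x^{\cM_n}[T_U]\le n^{1/2}$ by dominating $T_U$ with the hitting time of level $m$ by a biased walk on $\dZ$, so this term is $O(n^{-1/2})$ rather than merely ``$O(1)$''. This killed-Green-function route is cleaner than tracking ``escape probability $\times$ bounded returns after escape'', and in particular sidesteps the need to argue about excursions back to $x$ through the non-tree-like part of $\cM_n$. Your sketch would work once you replace the misstated escape probability by the correct Green-function-at-distance-$m$ bound, but the identity \eqref{eqn:lem14Aba} is what makes the bookkeeping painless.
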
 

\noindent
Say that a given realization of $\cM_n$ is a \textbf{good graph} when \ref{expander1}, \ref{expander2}, 
(\ref{eqn:greenfunctionGnapprox1}) and (\ref{eqn:greenfunctionGnapprox2}) hold. Remark that a good graph is necessarily connected, as the spectral gap of a non-connected graph is 0 (see Section~2.3 of~\cite{Hoory}). In particular, $\cM_n$ (and thus $\cG_n$) is connected w.h.p. 
The equations (\ref{eqn:greenfunctionGnapprox1}) and (\ref{eqn:greenfunctionGnapprox2}) illustrate the fact that $G_{\cM_n}$ is close to $G_{\Td}$ on a tree-like neighbourhood: it is well-known that for all $x,y\in \Td$, 
\begin{equation}\label{eqn:greenonTd}
G_{\Td}(x,y)= \frac{(d-1)^{1-\dtd(x,y)}}{d-2}.
\end{equation}
A quick computation can be found in \cite{Woess}, Lemma 1.24.

\noindent
By Proposition~1.1 of \cite{ACregulgraphs} (whose proof also works with loops and multiple edges) \ref{expander1} and \ref{expander2} imply that for some $\civ,\cv>0$ and for $n$ large enough, for all $x,y\in V_n$, 
\begin{equation}\label{eqn:greenfunctionGn}
\vert\gmn(x,y)\vert\leq \frac{\civ}{(d-1)^{d_{\cM_n}(x,y)}} \vee n^{-\cv}.
\end{equation}
\noindent
Throughout this paper, we will often make binomial estimates, because the number of edges between two sets of vertices in $\cM_n$ is close to a binomial random variable, as highlighted in the Lemma below. We will use repeatedly the following classical inequalities: for $n\geq m\geq 0$ and $p\in (0,1)$, if $Z\sim \text{Bin}(n,p)$, one has
\begin{equation}\label{eqn:binomiallittlethings}
\dP(Z\geq m)\leq {n\choose m}p^m, \,\,\dP(Z\leq m)\leq {n\choose m}(1-p)^m,\,\,{n\choose m}\leq\frac{n^m}{m!} \leq n^m.
\end{equation} 

\noindent
The following Lemma is an important consequence of the sequential construction of $\cM_n$.

\begin{lemma}[\textbf{Binomial number of connections}]\label{lem:matchings}
Let $m\in \dN$, let $W_0,W_1$ be disjoint subsets of $V_n$. Write $m_0:=\vert W_0\vert$ and $m_1:=\vert W_1\vert$. Suppose the only information we have on $\cM_n$ is a set $E$ of its edges that has been revealed. Let $m_E:=\vert E\vert$ and denote $\dP^E$ the law of $\cM_n$ conditionally on this information. 
Repeat the following operation $m$ times: pick an arbitrary vertex $v\in W_0$ having at least one unmatched half-edge, and pair it with an other half-edge. Add its other endpoint $v'$ in $W_0$, if it was not already in it. Let $s$ be the number of times that $v'\in W_1$. Suppose that $m_E+m+m_1<n$. Then
\begin{equation}\label{eqn:binomdifferent}
s \stdom\Bin\left(m, \frac{m_1}{n-(m_E+m)}\right).
\end{equation}
In particular,
\\
a) for any fixed $k\in\dN$, there exists $C(k)>0$ so that for $n$ large enough, if $m_E+m<n/2$,
\begin{equation}\label{eqn:binomdominationdiff}
\dP^E(s\geq k)\leq C(k)\left(\frac{m_1m}{n}\right)^k.
\end{equation}

\noindent
b) for $k=k(n)\rightarrow +\infty$ and $n$ large enough, if we have $m_E+m<n/2$ and $kn >6( m_1+m_0+m_E)m$, then
\begin{equation}\label{eqn:binomdominationatinfinitydiff}
\dP^E(s\geq k)\leq 0.99^k.
\end{equation}

\end{lemma}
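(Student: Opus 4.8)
The plan is to prove the stochastic domination \eqref{eqn:binomdifferent} first, and then derive the two tail estimates from it by elementary binomial bounds.

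\textbf{Step 1: the stochastic domination.} I would run the $m$ pairings one at a time and track, at each step $j=1,\dots,m$, the conditional probability that the new endpoint $v'$ lands in $W_1$. Before the $j$-th pairing, the total number of half-edges that have ever been ``used up'' is at most $2m_E + 2(j-1) + 1$ (the revealed edges, the $j-1$ previous pairings, plus the half-edge we are currently matching), so the number of still-unmatched half-edges is at least $dn - 2m_E - 2j + 1$. Among these, the half-edges sitting on vertices of $W_1$ that are still available number at least $dm_1 - (\text{half-edges of }W_1\text{ already used})$; since at most $m_E$ edges were revealed and at most $j-1$ pairings were made, at most $2m_E+2(j-1)$ half-edges total are gone, but more simply: the number of vertices of $W_1$ that have been ``touched'' so far is at most $m_E + (j-1)$ extra vertices beyond $W_1$ itself — actually the cleanest bound is to note that the half-edge we match goes to a \emph{uniformly random} unmatched half-edge, and a half-edge lies on $W_1$ with probability at least (number of $W_1$-half-edges still free)/(total free). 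Rather than chase the exact count, I would lower-bound crudely: at step $j$ the probability the new vertex $v'$ is in $W_1$ is \emph{at most} $\frac{dm_1}{dn - 2m_E - 2m} = \frac{m_1}{n - (m_E+m) - (m_E/...)}$; to get the clean denominator $n-(m_E+m)$ in \eqref{eqn:binomdifferent}, note $dn - 2m_E - 2m \ge d(n - m_E - m)$ when $d\ge 2$, so the probability is at most $\frac{m_1}{n-(m_E+m)}=:p$. Because this bound holds conditionally on the entire past at \emph{every} step, a standard coupling argument (reveal the pairings sequentially, and at each step couple the indicator $\I{v'\in W_1}$ to an independent $\mathrm{Bernoulli}(p)$ dominating it) shows that $s=\sum_{j=1}^m \I{v'\in W_1}$ is stochastically dominated by a sum of $m$ i.i.d.\ $\mathrm{Bernoulli}(p)$, i.e.\ by $\Bin(m,p)$. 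The hypothesis $m_E+m+m_1<n$ guarantees $p<1$ and all denominators are positive.

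\textbf{Step 2: part a).} Given $s\stdom Z\sim\Bin(m,p)$ with $p=\frac{m_1}{n-(m_E+m)}$ and $m_E+m<n/2$, we have $n-(m_E+m)>n/2$, hence $p< \frac{2m_1}{n}$. Applying the first and third inequalities of \eqref{eqn:binomiallittlethings},
\[
\dP^E(s\ge k)\le \dP(Z\ge k)\le \binom{m}{k}p^k \le m^k p^k \le \left(\frac{2m_1 m}{n}\right)^k,
\]
so \eqref{eqn:binomdominationdiff} holds with $C(k)=2^k$ (or, if one prefers a $k$-independent constant, absorb it: for fixed $k$ this is $O((m_1m/n)^k)$).

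\textbf{Step 3: part b).} Here $k\to\infty$. Again $p<2m_1/n$, so $mp < 2m_1m/n$. The hypothesis $kn>6(m_1+m_0+m_E)m\ge 6 m_1 m$ gives $2m_1 m/n < k/3$, hence $mp<k/3<k$. Using $\binom{m}{k}\le (em/k)^k$ and $p<2m_1m/(nm)$... more directly, $\dP(Z\ge k)\le \binom{m}{k}p^k \le \left(\frac{emp}{k}\right)^k \le \left(\frac{e\cdot k/3}{k}\right)^k = (e/3)^k$. Since $e/3<0.906<0.99$, we get $\dP^E(s\ge k)\le (e/3)^k\le 0.99^k$ for all $k$, which is \eqref{eqn:binomdominationatinfinitydiff}. (The extra room in the hypothesis, with $m_0$ and $m_E$ included, is harmless slack.)

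\textbf{Main obstacle.} The only genuinely delicate point is Step 1: getting the conditional one-step probability bounded by $p$ \emph{uniformly} over the past, with the precise denominator $n-(m_E+m)$ rather than something messier like $dn-2m_E-2m$. This requires being slightly careful about how many half-edges of $W_1$ could already have been consumed by the revealed set $E$ and by earlier pairings, and about the factor $d$ cancelling between numerator and denominator; once the one-step bound is in place, the coupling to $\Bin(m,p)$ is routine and parts a) and b) are immediate from \eqref{eqn:binomiallittlethings}.
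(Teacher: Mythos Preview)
Your approach is essentially the same as the paper's. In Step~1 you meander a bit before landing on the clean bound (numerator $\le dm_1$, denominator $\ge dn-2m_E-2m\ge d(n-m_E-m)$ for $d\ge 2$, hence one-step probability $\le p$), but this is exactly the paper's argument; the paper just states it more directly without the false starts. Steps~2 and~3 are likewise the same in spirit: the paper gets $C(k)=2^k/k!$ where you get $2^k$, and in part b) the paper invokes Stirling to reach $\bigl(\tfrac{(2e+0.1)m_1m}{kn}\bigr)^k$ while you use $\binom{m}{k}\le(em/k)^k$ to reach $(e/3)^k$ --- both well under $0.99^k$.
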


\begin{proof}
Pick $v\in W_0$, such that $v$ has an unmatched half-edge $e$. There are at most $m_1$ vertices in $W_1$, so that there are at most $dm_1$ unmatched half-edges that belong to its vertices. And the total number of unmatched half-edges is at least $dn-2(\vert E\vert +m)\geq d(n-m_E-m)$. Thus, the probability that $e$ is matched with a half-edge belonging to a vertex of $W_1$ is not greater than $\frac{dm_1}{d(n-m_E-m)}=\frac{m_1}{n-(m_E+m)}$, and this bound does not depend on the outcome of the previous matchings. (\ref{eqn:binomdifferent}) follows. 
\\
Let $Z\sim \Bin\left(m, \frac{m_1}{n-(m_E+m)}\right)$. By (\ref{eqn:binomiallittlethings}), for $k\in \dN$, we have
\begin{center}
$\dP^E(Z\geq k)\leq {m\choose k}\left(\frac{m_1}{n-(m_E+m)}\right)^k\leq {m\choose k}\left(\frac{m_1}{n/2}\right)^k\leq \frac{2^k}{k!}\frac{m_1^{k}m^k}{n^k}$. 
\end{center}
This yields (\ref{eqn:binomdominationdiff}). Moreover, if $k\rightarrow +\infty$ as $n\rightarrow +\infty$ and $kn >6( m_1+m_0+m_E)m$, by Stirling's formula, we have $\dP^E(Z\geq k)\leq \left(\frac{(2e+0.1)m_1m}{kn}\right)^k<0.99^k$  for $n$ large enough, and (\ref{eqn:binomdominationatinfinitydiff}) follows.
\end{proof}

\noindent
It is straightforward to adapt this when $s$ counts the number of times that $v'$ was in $W_0$ (and there is no set $W_1$). $m_1$ is replaced by $m_0+m$ in (\ref{eqn:binomdifferent}) and (\ref{eqn:binomdominationdiff}), and (\ref{eqn:binomdominationatinfinitydiff}) does not change. Throughout this paper, we will refer to these equations without mentioning explicitly if we count the connections from $W_0$ to $W_1$ or from $W_0$ to itself.

\subsection{GFF on $\cM_n$}

\noindent
The name ``zero-average'' for the GFF on $\cM_n$ (or $\cG_n$) comes from the fact that a.s., 
\begin{equation}\label{eqn:zeroaverage}
\sum_{x\in V_n}\psimn(x)=0
\end{equation}
since $\text{Var}\left(\sum_{x\in V_n}\psimn(x)\right)=\sum_{x,y\in V_n}\gmn(x,y)=0.$
\\
Hence, there is no domain Markov property. However, there is a recursive construction of $\psimn$:

\begin{proposition}[Lemma 2.6 in \cite{ACregulgraphs}]\label{prop:condgff}
Let $A\subsetneq V_n$, $x\in V_n\setminus A$. Write $\sigma(A):=\sigma(\{\psimn(y),\, y\in A\})$. Let $(X_k)_{k\geq 0}$ be a SRW on $\cM_n$ and let $H_A$ be the hitting time of $A$. Conditionally on $\sigma(A)$, $\psimn(x)$ is a Gaussian variable, such that
\begin{equation}\label{eqn:condexpect}
\dE^{\cM_n}[\psimn(x)\vert \sigma(A)]=\bE_x^{\cM_n}[\psimn(X_{H_A})]-\frac{\bE_x^{\cM_n}[H_A]}{\bE_{\pi_n}^{\cM_n}[H_A]}\bE^{\cM_n}_{\pi_n}[\psimn(X_{H_A})]
\end{equation}
and 
\begin{equation}\label{eqn:condvar}
\emph{Var}^{\cM_n}(\psimn(x)\vert \sigma(A))=\gmn(x,x)-\bE_x^{\cM_n}[\gmn(x,X_{H_A})]+\frac{\bE_x^{\cM_n}[H_A]}{\bE_{\pi_n}^{\cM_n}[H_A]}\bE_{\pi_n}^{\cM_n}[\gmn(x,X_{H_A})].
\end{equation}
\end{proposition}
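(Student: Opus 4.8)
The plan is to treat this as a statement about conditioning a centred Gaussian vector, and to reduce the two formulas to a single potential-theoretic identity for the zero-average Green function $\gmn$. The vector $(\psimn(x))_{x\in V_n}$ is centred Gaussian with covariance matrix $\gmn$. Assuming $\cM_n$ connected (otherwise $\psimn\equiv 0$ and there is nothing to prove), $\gmn$ is symmetric, positive semidefinite with kernel exactly $\mathrm{span}(\mathbf{1})$; since $A\subsetneq V_n$ we may pick $x_0\notin A$, and the zero-average relation $\sum_z\psimn(z)=0$ then forces any degenerate linear combination of $(\psimn(z))_{z\in A}$ to be a multiple of $\mathbf{1}$ vanishing at $x_0$, hence zero, so $(\psimn(z))_{z\in A}$ has non-degenerate law. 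Consequently the conditional law of $\psimn(x)$ given $\sigma(A)$ is Gaussian, with conditional mean equal to the $L^2$-orthogonal projection $M(x)$ of $\psimn(x)$ onto $\mathrm{span}\{\psimn(z):z\in A\}$ and deterministic conditional variance $\Var(\psimn(x)-M(x))=\gmn(x,x)-\Cov(M(x),\psimn(x))$. So it suffices to identify $M(x)$ with the right-hand side of (\ref{eqn:condexpect}); then (\ref{eqn:condvar}) follows by substitution.

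The key input is the identity: for every $y\in A$ and every $x\in V_n$,
\[
\gmn(x,y)=\bE_x^{\cM_n}[\gmn(X_{H_A},y)]-\frac{\bE_x^{\cM_n}[H_A]}{\bE_{\pi_n}^{\cM_n}[H_A]}\,\bE_{\pi_n}^{\cM_n}[\gmn(X_{H_A},y)].
\]
To prove it I would start from the fact that $\gmn$ inverts $I-P$ on the orthogonal complement of the constants and kills the constants, i.e. $(I-P)\gmn=I-\tfrac1n\mathbf{1}\mathbf{1}^{\top}$ and $\mathbf{1}^{\top}\gmn=0$, where $P$ is the transition matrix (this holds because the continuous-time rate-$1$ walk has the same Green function as the discrete walk). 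Fixing $y\in A$ and writing $g:=\gmn(\cdot,y)$, this gives $(I-P)g\equiv-1/n$ on $V_n\setminus A$. Subtracting the harmonic extension $f(x):=\bE_x^{\cM_n}[g(X_{H_A})]$ of $g|_A$ (which equals $g$ on $A$ and is harmonic off $A$), the difference $h=g-f$ vanishes on $A$ and solves $(I-P)h\equiv-1/n$ off $A$, so the standard occupation-time formula for functions vanishing on $A$, $h(x)=\bE_x^{\cM_n}[\sum_{k=0}^{H_A-1}(-1/n)]$, gives $h(x)=-\tfrac1n\bE_x^{\cM_n}[H_A]$. Averaging the resulting relation $\gmn(x,y)=\bE_x^{\cM_n}[\gmn(X_{H_A},y)]-\tfrac1n\bE_x^{\cM_n}[H_A]$ over $x\sim\pi_n$ and using $\sum_x\gmn(x,y)=0$ identifies $\tfrac1n$ with $\bE_{\pi_n}^{\cM_n}[\gmn(X_{H_A},y)]/\bE_{\pi_n}^{\cM_n}[H_A]$, which plugged back yields the displayed identity.

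To finish I would set $M(x):=\bE_x^{\cM_n}[\psimn(X_{H_A})]-\frac{\bE_x^{\cM_n}[H_A]}{\bE_{\pi_n}^{\cM_n}[H_A]}\bE_{\pi_n}^{\cM_n}[\psimn(X_{H_A})]$. This is a deterministic linear combination $\sum_{z\in A}c_z\psimn(z)$ with $c_z=\bP_x^{\cM_n}(X_{H_A}=z)-\frac{\bE_x^{\cM_n}[H_A]}{\bE_{\pi_n}^{\cM_n}[H_A]}\bP_{\pi_n}^{\cM_n}(X_{H_A}=z)$, so $M(x)$ lies in $\mathrm{span}\{\psimn(z):z\in A\}$; and by the identity above, $\Cov(M(x),\psimn(y))=\sum_{z\in A}c_z\gmn(z,y)=\gmn(x,y)=\Cov(\psimn(x),\psimn(y))$ for every $y\in A$, so $\psimn(x)-M(x)$ is uncorrelated with that span and $M(x)$ is the projection, which proves (\ref{eqn:condexpect}). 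For (\ref{eqn:condvar}) I would use $\Var(\psimn(x)\mid\sigma(A))=\Cov(\psimn(x)-M(x),\psimn(x))=\gmn(x,x)-\Cov(M(x),\psimn(x))$ and expand $\Cov(M(x),\psimn(x))$ using the symmetry $\gmn(X_{H_A},x)=\gmn(x,X_{H_A})$. There is no hard estimate in this argument; the one delicate point is that, since there is no domain Markov property, the naive harmonic extension is \emph{not} the conditional mean, and the $\pi_n$-averaging step is precisely what produces the correction term $\tfrac{\bE_x^{\cM_n}[H_A]}{\bE_{\pi_n}^{\cM_n}[H_A]}\bE_{\pi_n}^{\cM_n}[\cdot]$ — that, together with the non-degeneracy check needed to speak of conditional laws at all, is the only real obstacle.
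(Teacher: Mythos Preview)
Your argument is correct. The paper does not actually prove this statement: it is quoted verbatim as Lemma~2.6 of \cite{ACregulgraphs}, so there is no in-paper proof to compare against. What you have written is a clean, self-contained derivation along the lines one would expect the cited reference to follow: reduce to Gaussian conditioning, then identify the projection via a potential-theoretic identity for the zero-average Green function.

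Two remarks. First, the intermediate identity you establish,
\[
\gmn(x,y)=\bE_x^{\cM_n}\!\bigl[\gmn(X_{H_A},y)\bigr]-\tfrac{1}{n}\,\bE_x^{\cM_n}[H_A]\qquad(y\in A),
\]
is essentially the relation the paper later records as~(\ref{eqn:lem14Aba}) (there stated for the exit time of a set containing the starting point and attributed to \cite{Abacherli}); so your route is consistent with the toolbox the paper itself relies on. Second, your non-degeneracy check for $(\psimn(z))_{z\in A}$ is the right thing to verify and uses exactly the fact that $\ker\gmn=\mathrm{span}(\mathbf 1)$ together with $A\subsetneq V_n$; this point is often glossed over. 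The only implicit assumption is $A\neq\emptyset$ (so that $H_A<\infty$ almost surely and $\bE_{\pi_n}^{\cM_n}[H_A]>0$), which is how the proposition is used throughout the paper.
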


\noindent
Combining this Lemma and the sequential construction $\cM_n$, we obtain the following.

\begin{proposition}[\textbf{Joint realization of $\cM_n$ and $\psimn$}]\label{prop:gffann}
A realization of $(\cM_n,\psimn)$ is given by the following process.
Let $(\xi_i)_{i\geq 1}$ be a sequence of i.i.d.~$\cN(0,1)$ variables. A \textbf{move} consists~in:
\begin{itemize}
\item choosing an unpaired half-edge $e$ and matching it to another unpaired half-edge chosen uniformly at random (independently of $(\xi_i)_{i\geq 1}$), or
\item choosing $x\in V_n$ and $k\in \dN$ so that $\xi_k$ has not yet been attributed, and attributing $\xi_k$ to $x$.
\end{itemize}
At each move, the choice of $e,x$ or $k$ might depend in an arbitrary way on the previous moves, i.e. on the matchings and on the value of the normal variables attributed before, but \textbf{not} on the value of the remaining normal variables.
Perform moves until all half-edges are paired, and every vertex $x\in V_n$ has received a normal variable, that we denote $\xi_x$. 
\\
To generate $\psimn$, let $x_1, \ldots, x_n$ be the vertices of $V_n$, listed in the order in which they received their normal variable. Let $\psimn(x_1):=\sqrt{\gmn(x_1,x_1)}\xi_{x_1}$. For $i=2, \ldots, n$ successively, define $A_i:=\{x_1, \ldots,x_{i-1}\}$. Recall that we write $\sigma(A_i)$ for $\sigma(\{\psimn(y), y\in A_i \})$. Let 
\begin{center}
$\psimn(x_i):=\dE^{\cM_n}[\psimn(x_{i})\vert \sigma(A_{i})] + \xi_{x_{i}}\sqrt{\emph{Var}(\psimn(x_{i})\vert \sigma(A_{i}))}$.
\end{center}

%
\end{proposition}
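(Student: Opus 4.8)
The statement to prove is Proposition~\ref{prop:gffann}, asserting that the described sequential procedure yields a correct joint realization of $(\cM_n,\psimn)$.

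\medskip

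The plan is to verify the two halves of the construction separately, then argue their combination is valid. First I would check that the ``multigraph part'' is correct in isolation: if one ignores the normal variables and only performs the edge-matching moves, one recovers exactly the sequential pairing construction of $\cM_n$ described in Section~\ref{subsec:basicstructureGreen}. The only subtlety relative to the textbook construction is that the choice of which unpaired half-edge $e$ to match next is allowed to depend on previously attributed normal variables. But since the $\xi_i$ are independent of all the matchings, and the next pairing is performed uniformly among remaining half-edges regardless of that choice, the classical argument (each match uniform given the past, order of matching irrelevant, e.g. Proposition~7.13 of~\cite{RemcoBook}) goes through verbatim: the final multigraph has the law of $\cM_n$. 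Crucially, the list $x_1,\dots,x_n$ of vertices in order of receiving their normal variable, together with the $\xi_{x_i}$, is measurable with respect to the $\xi_i$'s and the choices, hence is independent of the realized graph $\cM_n$ conditionally on nothing — more precisely, conditionally on $\cM_n$, the vector $(\xi_{x_1},\dots,\xi_{x_n})$ is still i.i.d.\ $\cN(0,1)$ and the ordering $x_1,\dots,x_n$ is some (possibly random, graph-dependent through the choices) enumeration. The key point, which I would state carefully, is that conditionally on $\cM_n$ and on the ordering $(x_1,\dots,x_n)$, the variables $\xi_{x_1},\dots,\xi_{x_n}$ are i.i.d.\ standard normals: this holds because at the move where $\xi_k$ is attributed to a vertex, the choice of $k$ (and of that vertex) depends only on past moves and past normal values, not on $\xi_k$ itself, so a standard ``optional skipping''/exchangeability argument shows the attributed sequence is again i.i.d.\ $\cN(0,1)$.

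\medskip

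Second, I would fix a realization of $\cM_n$ and a deterministic enumeration $x_1,\dots,x_n$ of $V_n$, feed in $n$ i.i.d.\ $\cN(0,1)$ variables $\xi_{x_1},\dots,\xi_{x_n}$, and show the recursively defined vector $(\psimn(x_1),\dots,\psimn(x_n))$ is the zero-average GFF on $\cM_n$. This is a purely Gaussian computation: the recursion is exactly Gram--Schmidt / the Cholesky-type construction of a centred Gaussian vector with a prescribed covariance. Concretely, $\psimn(x_1)=\sqrt{\gmn(x_1,x_1)}\,\xi_{x_1}$ has the right variance, and by induction, given that $(\psimn(y))_{y\in A_i}$ already has the GFF law on $A_i$, Proposition~\ref{prop:condgff} identifies $\dE^{\cM_n}[\psimn(x_i)\mid\sigma(A_i)]$ as a specific linear functional $L_i$ of $(\psimn(y))_{y\in A_i}$ and $\Var^{\cM_n}(\psimn(x_i)\mid\sigma(A_i))$ as a constant $v_i\ge 0$; then setting $\psimn(x_i):=L_i((\psimn(y))_{y\in A_i})+\xi_{x_i}\sqrt{v_i}$ with $\xi_{x_i}$ independent of $\sigma(A_i)$ produces a Gaussian vector on $A_{i+1}$ whose law is characterized by: restriction to $A_i$ is the GFF on $A_i$, and the conditional law of the new coordinate given $A_i$ is $\cN(L_i,v_i)$ — which by Proposition~\ref{prop:condgff} is precisely the conditional law of $\psimn(x_i)$ under the true GFF. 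Since a Gaussian vector is determined by these conditional laws, $(\psimn(y))_{y\in A_{i+1}}$ has the GFF law on $A_{i+1}$. Taking $i=n$ finishes this half.

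\medskip

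Third, I would glue the two pieces: by the first step, the pair $(\cM_n,(x_1,\dots,x_n,\xi_{x_1},\dots,\xi_{x_n}))$ has the property that $\cM_n\sim$ uniform $d$-regular multigraph and, conditionally on $(\cM_n,(x_i)_i)$, the $\xi_{x_i}$ are i.i.d.\ $\cN(0,1)$; by the second step, applying the deterministic recursion to these conditionally-i.i.d.\ normals produces, conditionally on $(\cM_n,(x_i)_i)$, a vector distributed as $\psimn$ — and since the GFF law does not depend on the chosen enumeration, this conditional law is $\psimn$ regardless of $(x_i)_i$. Hence $(\cM_n,\psimn)$ has the correct joint (annealed) law. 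I expect the main obstacle to be the exchangeability argument in the first step: one must argue rigorously that allowing the selection of the index $k$ (and vertex) to depend on past normal values does not bias the attributed sequence away from i.i.d.\ $\cN(0,1)$, and simultaneously that allowing the selection of the half-edge $e$ to depend on past normal values does not disturb the uniform-pairing law of $\cM_n$ nor couple it to the normals. This is a standard ``predictable selection from an i.i.d.\ sequence'' / optional-skipping fact, but spelling it out cleanly — ideally by induction on the number of moves, maintaining the invariant ``the realized matchings so far have the law of a partial uniform pairing, the attributed normals so far are i.i.d.\ $\cN(0,1)$, the unattributed $\xi_i$ are i.i.d.\ $\cN(0,1)$ and independent of everything revealed'' — requires care; all other steps are routine.
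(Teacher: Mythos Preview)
Your proposal is correct and follows essentially the same approach as the paper; the paper's proof is a two-sentence sketch (``Clearly, the graph obtained after pairing all the half-edges is distributed as $\cM_n$. For every $i\geq 1$, $\xi_{x_{i}}$ is a standard normal variable independent of the realization of $\cM_n$, and of $\sigma(A_i)$ for $i\geq 1$, so that we can conclude by Proposition~\ref{prop:condgff}.''), and your three-step structure simply unpacks exactly these two assertions, being appropriately careful about the optional-skipping / predictable-selection point that the paper leaves implicit.
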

\noindent
It might be confusing that $\psimn(x_{i})$ appears on both sides of the equation. Note that the conditional expectation and variance on the RHS are $\sigma(A_i)$-measurable random variables.

\begin{proof}
Clearly, the graph obtained after pairing all the half-edges is distributed as $\cM_n$. 
For every $i\geq 1$, $\xi_{x_{i}}$ is a standard normal variable independent of the realization of $\cM_n$, and of $\sigma(A_i)$ for $i\geq 1$, so that we can conclude by Proposition~\ref{prop:condgff}.
\end{proof}

\noindent
Last, we prove that the maximum of $\vert \psimn\vert$ on $\cM_n$ has a subexponential tail.
\begin{lemma}[\textbf{Tail for the maximum of $\vert\psimn\vert$}]\label{lem:maxgff}
Suppose that $\max_{x\in V_n}G_{\cM_n}(x,x)\leq \civ$. Then for all $\Delta>0$, if $n$ is large enough,
\begin{equation}\label{eqn:maxgff}
\dP^{\cM_n}\left(\max_{x\in V_n}\vert\psimn(x)\vert\geq \log^{2/3} n\right)\leq n^{-\Delta}.
\end{equation} 
In particular, by Proposition~\ref{prop:goodgraph} and (\ref{eqn:greenfunctionGn}), w.h.p. $\cM_n$ satisfies (\ref{eqn:maxgff}).
\end{lemma}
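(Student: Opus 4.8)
The plan is to get a pointwise subgaussian tail for each $\psimn(x)$ and then apply a union bound over the $n$ vertices. Since $\psimn(x)$ is a centred Gaussian with variance $G_{\cM_n}(x,x)\le \civ$, the standard Gaussian tail gives
\[
\dP^{\cM_n}\left(\vert\psimn(x)\vert\ge t\right)\le 2\exp\left(-\frac{t^2}{2\civ}\right)
\]
for every $t>0$. I would then take $t=\log^{2/3}n$, so that $t^2=\log^{4/3}n$, and observe that
\[
2\exp\left(-\frac{\log^{4/3}n}{2\civ}\right)=2\,n^{-\frac{\log^{1/3}n}{2\civ}},
\]
which for $n$ large enough (so that $\log^{1/3}n/(2\civ)\ge \Delta+1$) is at most $n^{-\Delta-1}$.

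The union bound then finishes it:
\[
\dP^{\cM_n}\left(\max_{x\in V_n}\vert\psimn(x)\vert\ge \log^{2/3}n\right)\le \sum_{x\in V_n}\dP^{\cM_n}\left(\vert\psimn(x)\vert\ge \log^{2/3}n\right)\le n\cdot n^{-\Delta-1}=n^{-\Delta},
\]
valid once $n$ is large enough that $\log^{1/3}n\ge 2\civ(\Delta+1)$. For the final sentence of the statement, I would note that on a good graph Proposition~\ref{prop:goodgraph} together with \eqref{eqn:greenfunctionGn} (or directly \eqref{eqn:greenfunctionGnapprox1}) guarantees $G_{\cM_n}(x,x)\le \civ$ for all $x$ — indeed \eqref{eqn:greenfunctionGn} bounds $\vert G_{\cM_n}(x,x)\vert$ by $\civ\vee n^{-\cv}=\civ$ for $n$ large — so the hypothesis of the lemma holds w.h.p., and hence \eqref{eqn:maxgff} holds w.h.p.\ on $\cM_n$.

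There is essentially no obstacle here; the only mild point of care is that the decay rate in $n$ is only $n^{-\omega(1)}$ of the barely-superlinear form $n^{-c\log^{1/3}n}$, so one must be slightly careful to phrase everything as ``for $n$ large enough'' rather than claiming a clean power of $n$ — but since the statement only asks for $n^{-\Delta}$ for each fixed $\Delta$, the crude threshold $\log^{1/3}n\ge 2\civ(\Delta+1)$ is all that is needed. One could equivalently use any fixed positive power of $\log n$ in place of $\log^{2/3}n$; the exponent $2/3$ is not special, it is just convenient for later use when comparing against the $\log^{-1}n$-scale approximations of \ref{mainargt2}.
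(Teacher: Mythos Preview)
Your proof is correct and follows essentially the same approach as the paper: a pointwise Gaussian tail bound (the paper uses the exponential Markov/Chernoff form, yielding $4\civ$ in the denominator rather than your $2\civ$, but this is immaterial) followed by a union bound over the $n$ vertices. Your treatment of the ``In particular'' clause is also in line with the paper.
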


\begin{proof}
Let $N\sim \cN(0,\civ)$. If $n$ is large enough, then for all $x\in V_n$, 
\[
\dP^{\cM_n}\left(\vert\psimn(x)\vert\geq  \log^{2/3} n\right)\leq \dP^{\cM_n}\left(\vert N\vert  \geq \log^{2/3} n\right)\leq 2\exp\left(-\frac{\log^{4/3} n}{4\civ}\right)\leq n^{-\Delta-1}
\] 
by Markov's inequality applied to the function $u\mapsto \exp\left(\frac{\log^{2/3}n}{2\civ}u\right)$. By a union bound on all $x\in V_n$, we get 
$\dP^{\cM_n}\left(\max_{x\in \cM_n}\vert\psimn(x)\vert> \log^{2/3} n\right) \leq n^{-\Delta}.$
\end{proof}

\section{The Gaussian Free Field on $\Td$}\label{sec:Td}
\noindent
In Section \ref{subsec:branchingCh}, we characterize $\Ch$ as a branching process, with a recursive construction (Proposition~\ref{prop:recursivegfftrees}). Then, in Section~\ref{subsec:expogrowthCh}, we establish its exponential growth, conditionally on the event $\{\vert \Ch\vert = +\infty\}$. The main results are Propositions~\ref{prop:thm43adapted}, \ref{prop:expomomentsCh} and \ref{prop:Chlargedevgrowthrate}.


\subsection{$\Ch$ as a branching process}\label{subsec:branchingCh}
\noindent
There is an alternate definition of $\phid$, starting from its value at $\circ$ and expanding recursively to its neighbours. It shows that $\Ch$ is an infinite-type branching process, the type of a vertex $x$ being $\phid(x)$.

\begin{proposition}[\textbf{Recursive construction of the GFF},(1.4)-(1.9) in~\cite{ACregultrees}]\label{prop:recursivegfftrees}
Define a Gaussian field $\varphi$ on $\Td$ as follows: let $(\xi_y)_{y\in\Td}$ be a family of i.i.d. $\cN(0,1)$ random variables. Let $\varphi(\circ):=\sqrt{\frac{d-1}{d-2}}\xi_{\circ}$. For every $y\in \Td\setminus \{\circ\}$, define recursively $\varphi(y):=\sqrt{\frac{d}{d-1}}\xi_y+\frac{1}{d-1}\varphi(\overline{y}),$ where $\overline{y}$ is the parent of $y$.
Then 
\[\varphi \overset{d.}{=}\phid.
\]
\end{proposition}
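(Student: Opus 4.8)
The strategy is to exhibit the Gaussian field $\varphi$ defined by the recursion and check that it is centred with the right covariance, namely $\Cov(\varphi(x),\varphi(y)) = \gtd(x,y) = \frac{(d-1)^{1-\dtd(x,y)}}{d-2}$; since a centred Gaussian field is determined by its covariance, this gives $\varphi \overset{d}{=} \phid$. That $\varphi$ is centred is immediate from the recursion since each $\xi_y$ is centred and $\varphi(\circ)$ is a scalar multiple of $\xi_\circ$; by induction on the height, $\E{\varphi(y)} = \frac{1}{d-1}\E{\varphi(\overline y)} = 0$. So everything reduces to the covariance computation.

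The key structural observation is that for each $y \neq \circ$, the innovation $\xi_y$ is independent of $\sigma(\varphi(z): z \text{ on the path from } \circ \text{ to } \overline y)$ — indeed of all $\varphi(z)$ for $z$ not in the subtree rooted at $y$ — because the recursion expresses such $\varphi(z)$ only in terms of $\{\xi_w\}$ with $w$ outside that subtree. First I would compute the variances: set $v_k := \var{\varphi(x)}$ for $x$ at height $k$. Then $v_0 = \frac{d-1}{d-2}$, and the recursion gives $v_k = \frac{d}{d-1} + \frac{1}{(d-1)^2} v_{k-1}$; one checks $v_k \equiv \frac{d-1}{d-2}$ is the fixed point (and since $v_0$ equals it, $v_k = \frac{d-1}{d-2}$ for all $k$), matching $\gtd(x,x)$ in \eqref{eqn:greenonTd}. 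Next, for the off-diagonal terms, I would argue by induction on $\dtd(x,y)$. If $y$ is a child of $x$, then using independence of $\xi_y$ from $\varphi(x)$,
\[
\Cov(\varphi(x),\varphi(y)) = \Cov\!\left(\varphi(x),\, \sqrt{\tfrac{d}{d-1}}\,\xi_y + \tfrac{1}{d-1}\varphi(x)\right) = \tfrac{1}{d-1}\var{\varphi(x)} = \tfrac{1}{d-1}\cdot\tfrac{d-1}{d-2} = \tfrac{1}{d-2},
\]
as required. For general $x,y$, let $z$ be the vertex adjacent to $y$ on the geodesic from $x$ to $y$ (so $z = \overline y$ if $y$ is "further from the root'' along this path, and one may need to orient the induction along the path rather than by root-height). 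Since $\xi_y$ is independent of $\varphi(x)$, the recursion $\varphi(y) = \sqrt{\frac{d}{d-1}}\xi_y + \frac{1}{d-1}\varphi(z)$ gives $\Cov(\varphi(x),\varphi(y)) = \frac{1}{d-1}\Cov(\varphi(x),\varphi(z))$, and $\dtd(x,z) = \dtd(x,y) - 1$, so the induction hypothesis yields $\Cov(\varphi(x),\varphi(y)) = \frac{1}{d-1}\cdot\frac{(d-1)^{2-\dtd(x,y)}}{d-2} = \frac{(d-1)^{1-\dtd(x,y)}}{d-2} = \gtd(x,y)$.

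The one subtlety — and the main thing to get right rather than a genuine obstacle — is the bookkeeping in the induction step: one must ensure that the vertex $z$ adjacent to $y$ on the geodesic $[x,y]$ is indeed the parent of $y$ \emph{or} that the symmetric relation $\varphi(z) = \sqrt{\frac{d}{d-1}}\xi_z + \frac{1}{d-1}\varphi(y)$ can be used and inverted; equivalently, one peels off whichever of $x,y$ is a leaf of the path $[x,y]$ in $\Td$ and applies the recursion at that endpoint, whose innovation variable is independent of the field at the other endpoint. Once the independence of each innovation from the field on the complementary subtree is stated cleanly (a one-line consequence of the explicit form $\varphi(y) = \sum_{w} c_w \xi_w$ with $w$ ranging over the path $\circ \to y$), both the variance recursion and the distance induction go through with the elementary algebra above, and comparison with \eqref{eqn:greenonTd} completes the proof.
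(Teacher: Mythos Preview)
Your proof is correct and self-contained. The paper does not actually prove this proposition: it is quoted from \cite{ACregultrees}, and the only justification given is the sentence ``Proposition~\ref{prop:recursivegfftrees} is the corollary of a more general domain Markov property (see for instance Lemma 1.2 of \cite{RodriguezSznitman}\ldots).'' So your approach is genuinely different: rather than invoking the spatial Markov property of the GFF to identify the conditional law of $\phid(y)$ given its parent, you compute the covariance of the recursively defined field $\varphi$ directly and match it with the explicit Green function formula~\eqref{eqn:greenonTd}. This is more elementary --- it uses nothing beyond independence of the $\xi_y$'s and a one-line induction on $\dtd(x,y)$ --- and it is fully rigorous once you organise the induction as you indicate at the end: at each step peel off the endpoint of $[x,y]$ of larger height (call it $y$), so that $\overline y$ lies on the geodesic, $\xi_y$ is independent of $\varphi(x)$ because $x$ is not in the subtree below $y$, and the recursion gives $\Cov(\varphi(x),\varphi(y))=\tfrac{1}{d-1}\Cov(\varphi(x),\varphi(\overline y))$. (Your aside about ``inverting'' the recursion is unnecessary and should be dropped; the height-ordering already handles every case.) The domain-Markov route cited by the paper is more conceptual and generalises immediately to other transient graphs, whereas your computation exploits the explicit form of $\gtd$ specific to $\Td$; for the purposes of this paper either is perfectly adequate.
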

\noindent
Proposition~\ref{prop:recursivegfftrees} is the corollary of a more general domain Markov property (see for instance Lemma 1.2 of \cite{RodriguezSznitman} where it is stated for $\mathbb{Z}^d$, but the proof works for any transient graph). 
\\
%
Write $\dP^{\Td}$ for the law of $\phid$, and $\dP^{\Td}_a$ for $\dP^{\Td}(\,\cdot\,\vert \phid(\circ)=a)$, $a\in \dR$ (such conditioning is well-defined, $(\phid(x))_{x\in \Td}$ being a Gaussian process). This construction gives a monotonicity property for $\phid$. A set $S\subset \dR^{\Td}$ is said to be \textbf{increasing} if for any $(\Phi^{(1)}_z)_{z\in \Td}, (\Phi^{(2)}_z)_{z\in \Td}\in \dR^{\Td}$ such that $\Phi^{(1)}_z \leq \Phi^{(2)}_z$ for all $z\in \Td$, $  (\Phi^{(1)}_z)_{z\in \Td}\in S$ only if $(\Phi^{(2)}_z)_{z\in \Td}\in S$. Say that the event $\{\phid \in S\}$ is \textbf{increasing} if $S$ is increasing.

\begin{lemma}[\textbf{Conditional monotonicity}]\label{lem:monotonicityphid}
If $E$ is an increasing event, then the map $a \mapsto \dP_a^{\Td}(E)$ is non-decreasing on $\dR$.
\end{lemma}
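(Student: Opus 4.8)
The plan is to prove the monotonicity by a coupling argument based directly on the recursive construction of Proposition~\ref{prop:recursivegfftrees}. Fix $a \le b$ in $\dR$. I want to build, on a common probability space, two copies $\phi^{(a)}$ and $\phi^{(b)}$ of the GFF on $\Td$ such that $\phi^{(a)}$ has the law $\dP_a^{\Td}$, $\phi^{(b)}$ has the law $\dP_b^{\Td}$, and $\phi^{(a)}_z \le \phi^{(b)}_z$ for every $z \in \Td$ almost surely. Given such a coupling, if $E = \{\phid \in S\}$ is increasing then $\{\phi^{(a)} \in S\} \subseteq \{\phi^{(b)} \in S\}$, so $\dP_a^{\Td}(E) = \PP(\phi^{(a)} \in S) \le \PP(\phi^{(b)} \in S) = \dP_b^{\Td}(E)$, which is exactly the claim.

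To construct the coupling, I take a single family $(\xi_y)_{y \in \Td}$ of i.i.d.\ $\cN(0,1)$ variables. I set $\phi^{(a)}(\circ) := a$ and $\phi^{(b)}(\circ) := b$, and then propagate using the \emph{same} noise: for $y \ne \circ$,
\[
\phi^{(a)}(y) := \sqrt{\tfrac{d}{d-1}}\,\xi_y + \tfrac{1}{d-1}\,\phi^{(a)}(\overline{y}), \qquad \phi^{(b)}(y) := \sqrt{\tfrac{d}{d-1}}\,\xi_y + \tfrac{1}{d-1}\,\phi^{(b)}(\overline{y}).
\]
First I would check that each $\phi^{(a)}$ indeed has law $\dP_a^{\Td}$: by Proposition~\ref{prop:recursivegfftrees} the recursion with $\varphi(\circ) = \sqrt{(d-1)/(d-2)}\,\xi_\circ$ produces a field with the law of $\phid$, and since that field is Gaussian, conditioning on $\{\phid(\circ) = a\}$ amounts to replacing the single variable $\varphi(\circ)$ by the constant $a$ while leaving the fresh noise $(\xi_y)_{y \ne \circ}$ untouched — this is the standard description of the conditional law of a Gaussian vector given one coordinate, and one can verify it matches by computing conditional means and covariances, or simply invoke that $\dP_a^{\Td}$ is by definition the regular conditional law. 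Then I would prove $\phi^{(a)} \le \phi^{(b)}$ pointwise by induction on the height $\fh_T(y)$: at the root $\phi^{(a)}(\circ) = a \le b = \phi^{(b)}(\circ)$, and if $\phi^{(a)}(\overline{y}) \le \phi^{(b)}(\overline{y})$ then subtracting the two recursions gives $\phi^{(b)}(y) - \phi^{(a)}(y) = \tfrac{1}{d-1}(\phi^{(b)}(\overline{y}) - \phi^{(a)}(\overline{y})) \ge 0$, since $d \ge 3$ makes the factor $\tfrac{1}{d-1}$ positive. The common noise term cancels, which is the whole point of using the same $\xi_y$.

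The only genuine subtlety — and the step I would treat most carefully — is the justification that the conditional law $\dP_a^{\Td}$ is correctly realized by freezing $\varphi(\circ)$ to $a$ in the recursive construction. This is where one must be slightly careful because $\Td$ is infinite and the conditioning is on a single coordinate of an infinite Gaussian family. The cleanest route is to note that in the recursion, $\varphi(\circ) = \sqrt{(d-1)/(d-2)}\,\xi_\circ$ is independent of all the other noise variables $(\xi_y)_{y \ne \circ}$, and the entire field $(\varphi(y))_{y \in \Td}$ is a deterministic (affine) function of $\varphi(\circ)$ and $(\xi_y)_{y \ne \circ}$; hence the conditional law of $(\varphi(y))_{y \in \Td}$ given $\varphi(\circ) = a$ is obtained precisely by substituting the value $a$ for $\varphi(\circ)$ and keeping the independent noise $(\xi_y)_{y\ne\circ}$, and this is consistent on all finite-dimensional marginals, hence identifies the (Gaussian) regular conditional distribution. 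With this in hand, the monotonicity follows immediately from the pointwise domination established above, completing the proof.
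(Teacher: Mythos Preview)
Your proof is correct and follows essentially the same coupling argument as the paper: both freeze the root value and propagate with shared noise $(\xi_y)_{y\neq\circ}$, then use the recursion to deduce pointwise domination (the paper in fact writes the explicit identity $\phi^{(b)}(z)-\phi^{(a)}(z)=(b-a)(d-1)^{-\fh_{\Td}(z)}$, which is what your induction produces). Your extra care in justifying that $\dP_a^{\Td}$ is realized by substituting $a$ for $\varphi(\circ)$ is a welcome addition that the paper leaves implicit.
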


\begin{proof}
Let $a_1,a_2\in \dR$ such that $a_1>a_2$. It suffices to give a coupling between a GFF $\phid^{(1)}$ conditioned on $\phid^{(1)}(\circ)=a_1$ and a GFF $\phid^{(2)}$ conditioned on $\phid^{(2)}(\circ)=a_2$ such that a.s., for every $z\in \Td$, $\phid^{(1)}(z)\geq \phid^{(2)}(z)$. To do this, let $(\xi_y)_{y\in \Td}$ be i.i.d. standard normal variables, and define recursively $\phid^{(1)}$ and $\phid^{(2)}$ as in Proposition~\ref{prop:recursivegfftrees}. Then for every $z\in \Td$ of height $k\geq 0$, $\phid^{(1)}(z)=\phid^{(2)}(z)+(a_1-a_2)(d-1)^{-k}$.
\end{proof}

\subsection{Exponential growth}\label{subsec:expogrowthCh}

\noindent
All proofs of this section are postponed to the \hyperref[appn]{Appendix}, Section~\ref{subsec:AppendixSection3}. 
\\
Let $\cZ_k^h:=\Ch\cap \partial B_{\Td}(\circ,k)$ be the $k$-th generation of $\Ch$. We first characterize the growth rate of $\cZ_k^h$. The first statement of the proposition below is a variant of Theorem~4.3 in~\cite{ACregultrees}.
\begin{proposition}\label{prop:thm43adapted}
There exists $\lambda_h >1$ such that
\[
\lim_{k\rightarrow +\infty}\dP^{\Td}(\vert\cZ_k^h\vert>\lambda_{h}^k/k^2)= \eta(h)
\] 
and
\[
\lim_{k\rightarrow +\infty}\dP^{\Td}(\vert\cZ_k^h\vert < k\lambda_{h}^k)=1.
\] 
\noindent
Moreover, $h\mapsto \lambda_h$ is a decreasing homeomorphism from $(-\infty,h_{\star})$ to $(1,d-1)$. 
\end{proposition}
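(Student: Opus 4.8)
\textbf{Proof strategy for Proposition~\ref{prop:thm43adapted}.}

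The plan is to analyse the branching structure of $\Ch$ given by the recursive construction of Proposition~\ref{prop:recursivegfftrees}, viewed as a multi-type branching process where the type of a vertex $x$ is $\phid(x)$. First I would set up the relevant first-moment operator: for a test function $f$ on $\dR$, let $(Lf)(a) := \bE[\,(d-1)\,f(\phid(y))\,\mathbf{1}_{\{\phid(y)\geq h\}} \mid \phid(\overline y)=a\,]$, where $\phid(y)=\sqrt{d/(d-1)}\,\xi + a/(d-1)$ with $\xi\sim\cN(0,1)$; this governs $\dE_a^{\Td}[\,\text{number of type-}b\text{ children in }\Ch\,]$. The operator $L$ acts on a suitable weighted space (e.g. functions decaying fast enough to absorb the Gaussian tails), and I would invoke Krein--Rutman / Perron--Frobenius theory for positive compact operators to extract a leading eigenvalue $\lambda_h>0$ with a positive eigenfunction $\varphi_h$. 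The kernel of $L$ is smooth and has Gaussian-type decay in both variables, so compactness on the appropriate space is routine; positivity and irreducibility (any type can reach any type in one step with positive density) give simplicity of $\lambda_h$ and strict positivity of $\varphi_h$.

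Next I would establish $\lambda_h>1$ for $h<h_\star$ and the monotonicity/homeomorphism claim. That $\lambda_h>1$ precisely on $(-\infty,h_\star)$ should follow from the Abächerli--Černý characterization (Theorems 4.3 and 5.1 of~\cite{ACregultrees}): survival with positive probability is equivalent to the branching process being supercritical, i.e. $\lambda_h>1$, and subcriticality $\lambda_h<1$ forces exponential decay of $\vert\cZ_k^h\vert$ hence $\eta(h)=0$; at $h=h_\star$ one has $\lambda_{h_\star}=1$. Continuity of $h\mapsto\lambda_h$ follows from continuity of the kernel of $L$ in $h$ together with standard perturbation theory for the leading eigenvalue of a compact positive operator; strict monotonicity follows because increasing $h$ strictly decreases the kernel pointwise (the indicator $\mathbf{1}_{\{\phid(y)\geq h\}}$ shrinks), so $\lambda_h$ is strictly decreasing; the endpoints $(1,d-1)$ come from $\lambda_h\to d-1$ as $h\to-\infty$ (the indicator tends to $1$ and $L$ tends to $(d-1)\times$ the Markov operator of $\phid$ along a ray, whose leading eigenvalue is $1$, scaled by $d-1$) and $\lambda_h\to 1$ as $h\uparrow h_\star$ (continuity plus $\lambda_{h_\star}=1$). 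Combined with continuity and strict monotonicity, the intermediate value theorem gives the homeomorphism onto $(1,d-1)$.

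For the two limit statements about $\vert\cZ_k^h\vert$ I would run a standard supercritical multi-type branching argument. Define the martingale-type quantity $M_k := \lambda_h^{-k}\sum_{x\in\cZ_k^h}\varphi_h(\phid(x))$; using the eigenfunction property, $(M_k)$ is a nonnegative martingale and converges a.s.\ to a limit $M_\infty$, with $\{M_\infty>0\}$ having probability $\eta(h)$ (the limit is positive iff the process survives, by a Kesten--Stigum-type dichotomy — one checks an $x\log x$ / $L^2$ moment condition, which holds here since offspring numbers are deterministically bounded by $d-1$ and $\varphi_h$ is bounded on the relevant type range after a truncation argument). On survival $M_\infty>0$, so $\sum_x\varphi_h(\phid(x))\asymp\lambda_h^k$, and since types are a.s.\ eventually bounded (a percolating ray has types that cannot drift to $+\infty$, by recurrence-type estimates on the Gaussian AR(1) chain $a\mapsto \sqrt{d/(d-1)}\xi+a/(d-1)$ conditioned to stay $\geq h$), $\varphi_h$ is bounded above and below along the tree, giving $\vert\cZ_k^h\vert\asymp\lambda_h^k$ on survival; the polynomial corrections $k^{-2}$ and $k$ give slack to absorb the fluctuations of $M_k$ near $M_\infty$ and the (rare) large-type vertices. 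On extinction, $\vert\cZ_k^h\vert=0$ eventually, consistent with the first limit. To get $\dP^{\Td}(\vert\cZ_k^h\vert>\lambda_h^k/k^2)\to\eta(h)$ one combines the lower bound on survival with the fact that on extinction the probability is eventually $0$; for $\dP^{\Td}(\vert\cZ_k^h\vert<k\lambda_h^k)\to 1$ one uses the first-moment bound $\dE^{\Td}[\,\vert\cZ_k^h\vert\,]\asymp\lambda_h^k$ plus Markov's inequality to show $\vert\cZ_k^h\vert\leq k\lambda_h^k$ with probability $\to 1$.

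\textbf{Main obstacle.} The delicate point is controlling the type space: the types $\phid(x)$ are unbounded, so the branching process is genuinely infinite-type, and I must choose the right Banach space on which $L$ is compact and has a good spectral gap, and then show (i) the eigenfunction $\varphi_h$ is comparable to a constant on the range of types actually visited by $\Ch$ on survival, and (ii) the Kesten--Stigum moment condition holds. Both reduce to quantitative control of the conditioned Gaussian AR(1) chain $a_{n+1}=\sqrt{d/(d-1)}\,\xi_{n+1}+a_n/(d-1)$ killed below $h$ — in particular showing it does not have heavy upward excursions that would make $\varphi_h$ blow up or break integrability. This is the technical heart; everything else is standard branching-process and Perron--Frobenius machinery. (Since the proofs of this section are deferred to the Appendix, I would present the full details there, citing Theorem~4.3 of~\cite{ACregultrees} for the parts that overlap with existing literature.)
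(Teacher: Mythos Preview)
Your approach is correct in spirit and follows the same spectral/martingale framework (Perron--Frobenius eigenvalue $\lambda_h$, positive eigenfunction, nonnegative martingale $M_k$, Kesten--Stigum dichotomy). The difference is one of economy: the paper does not reconstruct this machinery. It imports $\lambda_h$, the eigenfunction $\chi_h$ (your $\varphi_h$), the martingale $M_k^h$, and the homeomorphism statement directly from Sznitman~\cite{SZ2016} (Propositions~3.1, 3.3) and Ab\"acherli--\v{C}ern\'y~\cite{ACregultrees} (Theorem~4.3, Proposition~4.2), all of which are stated for the \emph{cone} $\Td^+$ rather than the full tree. The paper's actual contribution in this proof is the reduction from $\Td$ to $\Td^+$: it splits $\{\vert\Ch\vert=+\infty\}=\cE^+\sqcup(\cE^-\cap\cE)$ according to whether the cluster escapes through the cone from $\circ$ or through the cone from $\overline{\circ}$, and patches the two pieces together. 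You work directly on $\Td$, which avoids this decomposition but forces you to redo~\cite{SZ2016} and~\cite{ACregultrees} from scratch --- in particular your ``main obstacle'' (unbounded type space, compactness of $L$, comparability of $\varphi_h$) is precisely what those references resolve. Your route is self-contained but much longer; the paper's is a short gluing argument on top of cited results.

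Two minor points. First, your Markov-inequality argument for $\dP^{\Td}(\vert\cZ_k^h\vert\geq k\lambda_h^k)\to 0$ is valid, but the paper instead uses the a.s.\ finiteness of $M_\infty^h$ together with the lower bound $M_k^h\geq\chi_{h,\min}\lambda_h^{-k}\vert\cZ_k^{h,+}\vert$ to get $k^{-1}\lambda_h^{-k}\vert\cZ_k^{h,+}\vert\to 0$ a.s., which is marginally cleaner since it avoids computing $\dE[\vert\cZ_k^h\vert]$. Second, your claim that ``types are a.s.\ eventually bounded'' is not needed and is not quite right (the conditioned AR(1) chain has an unbounded quasi-stationary distribution); what is actually used is only that $\chi_h$ has a \emph{positive minimum} on $[h,\infty)$, which suffices to compare $M_k^h$ with $\vert\cZ_k^{h,+}\vert$ from below.
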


\noindent
We now give finer results on the growth rate of $\vert \Ch\vert$. It turns out that $\vert \Ch\vert$ conditioned to be finite has exponential moments: 
\begin{proposition}\label{prop:expomomentsCh}
There exists a constant $\cvii>0$ such that as $k\rightarrow +\infty$,
\begin{equation}\label{eqn:expomoments}
\max_{a\geq h}\dP^{\Td}_a(k\leq \vert \Ch \vert <+\infty)=o(\exp(-\cvii k)).
\end{equation}
\end{proposition}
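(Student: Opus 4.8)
The plan is to exploit the recursive branching description of $\Ch$ from Proposition~\ref{prop:recursivegfftrees} together with the subcritical nature of the underlying type-dependent offspring distribution, and to get exponential tails for $|\Ch|$ on the finiteness event by a supermartingale/Chernoff argument on generation sizes, uniformly in the starting value $a\ge h$. First I would set up the multitype branching structure: under $\dP^{\Td}_a$, $\Ch$ is the cluster of the root in the level set, and conditionally on the value $\phid(x)=v$ of an explored vertex $x$, each of its (up to $d-1$, or $d$ for the root) children $y$ independently gets value $\phid(y)=\sqrt{d/(d-1)}\,\xi_y+v/(d-1)$, and $y\in\Ch$ iff $\phid(y)\ge h$. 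So a vertex with type $v$ has offspring distribution (in $\Ch$) that is a sum of i.i.d. Bernoulli's with success probability $q(v):=\Prob{\sqrt{d/(d-1)}\,\xi+v/(d-1)\ge h}$, where $\xi\sim\cN(0,1)$; note $q(v)$ is increasing in $v$ and $q(v)\to 0$ as $v\to-\infty$. The key structural fact, which must be extracted from $h<h_\star$ (equivalently from the strict inequality $\lambda_h<d-1$ and the analysis underlying Theorem~4.3 of~\cite{ACregultrees}), is that the branching process is ``uniformly subcritical after conditioning on extinction'': there is a bounded test function $w\colon\dR\to(0,\infty)$ and $\rho<1$ such that, on the extinction event, the process $M_k:=\sum_{x\in\cZ_k^h}w(\phid(x))$ is dominated by (a constant times) a supermartingale with contraction factor $\rho$. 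Concretely one wants $\dE[\sum_{y\text{ child of }x}w(\phid(y))\mid \phid(x)=v]\le \rho\, w(v)$ for all $v$ in the relevant range, after tilting by the Radon--Nikodym derivative of extinction; the existence of such $(w,\rho)$ is exactly the statement that the mean matrix of the extinction-tilted process has spectral radius $<1$, which follows from $h<h_\star$.

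Granting the supermartingale, the tail estimate is routine: first, $\dP^{\Td}_a(\vert\Ch\vert<+\infty)$ is bounded below uniformly for $a\ge h$ by a constant $c_0>0$ only for $a=h$ — actually one should be careful here, since for large $a$ extinction is \emph{rare}, not common. The right formulation is to bound $\dP^{\Td}_a(k\le\vert\Ch\vert<+\infty)$ directly: on the event of finiteness, $\cZ_k^h$ is empty for all large $k$, and $\{\vert\Ch\vert\ge k\}\cap\{\vert\Ch\vert<+\infty\}\subseteq\{\cZ_j^h\ne\emptyset\text{ for some }j\ge c k\}\cap\{\text{extinction}\}$ for a suitable constant $c>0$ (since a finite tree with $\ge k$ vertices and bounded degree $d$ has height $\ge \log_{d-1}k - O(1)$, but more simply: to have $\ge k$ vertices while eventually dying out, many generations must be nonempty, or a single generation must be large; both are exponentially costly). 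I would split into two cases: (i) some generation $j\le Ak$ has $\vert\cZ_j^h\vert\ge \beta k$ for a large constant; by Proposition~\ref{prop:thm43adapted} (the upper tail $\vert\cZ_k^h\vert<k\lambda_h^k$ w.h.p., combined with the subcriticality of the extinction-tilted walk) this has probability $o(\exp(-c'k))$ uniformly in $a$; (ii) all early generations are small but generation $j\approx c k$ is still nonempty while extinction occurs — here apply the exponential supermartingale: $\dP(M_j>0,\ \text{extinction})\le \dE[M_j\mathbf{1}_{\text{ext}}]/\min w \le \rho^{j} \dE[M_0\mathbf 1_{\text{ext}}]/\min w \le C\rho^{ck}$, since $M_0=w(a)$ and $w$ is bounded above, giving the $o(\exp(-\cvii k))$ bound with $\cvii$ depending only on $\rho,c,d$.

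The main obstacle I expect is \textbf{establishing the uniform contraction} $(w,\rho)$ — i.e. that the extinction-conditioned multitype process is genuinely subcritical with a spectral gap bounded away from criticality, \emph{uniformly over the non-compact type space} $\dR$. Two points need care. First, the type space is unbounded: although extinction forces types to drift downward (where $q(v)$ is small, so subcriticality is easy), a rare excursion to a high type $v$ can transiently make the local mean offspring $\approx (d-1)q(v)$ large, up to $d-1>1$; one controls this by choosing $w(v)$ to decay fast enough as $v\to+\infty$ (e.g. $w(v)=e^{-\theta v^+}$ for small $\theta$, or using that Gaussian tails beat the polynomial growth of $(d-1)q(v)$) so that $\dE[w(\phid(y))\mid \phid(x)=v]$ is small relative to $w(v)$ even for large $v$ — this is where the precise Gaussian recursion $\phid(y)=\sqrt{d/(d-1)}\,\xi+v/(d-1)$ with contraction factor $1/(d-1)<1$ in the mean is essential. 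Second, linking this to the hypothesis $h<h_\star$: the cleanest route is to \emph{not} re-derive subcriticality from scratch but to invoke Theorem~4.3 / Theorem~5.1 of \cite{ACregultrees} (quoted in the excerpt: for $h>h_\star$ the size of $\Ch$ has exponential moments) after a monotonicity/duality argument — namely, conditioned on extinction, the process started from \emph{any} $a\ge h$ is stochastically dominated, generation by generation, by an unconditioned level-set process at some higher level $h'>h_\star$ (or with a shifted root value), via the coupling of Lemma~\ref{lem:monotonicityphid}; then the exponential moment bound above $h_\star$ transfers. Making this domination precise — in particular that conditioning on global extinction can be absorbed into a pointwise upward shift of the threshold — is the delicate technical step, and I would expect the appendix proof to spend most of its effort there.
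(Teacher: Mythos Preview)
Your approach is genuinely different from the paper's and is overcomplicated in a way that leaves a real gap. You try to show that the \emph{extinction-conditioned} process is subcritical (uniformly in the type) and then apply a supermartingale/Chernoff bound. The paper does the opposite: it uses \emph{supercriticality} directly and never conditions on extinction.

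Concretely, the paper's argument is a short random-walk estimate. By Lemma~\ref{lem:uniformsupercritical} there is $\ell\in\dN$ with $\dE^{\Td}_a[|\cZ_\ell^{h,+}|]\ge\dE^{\Td}_h[|\cZ_\ell^{h,+}|]>1$ for every $a\ge h$. One then explores $\Ch$ in $\ell$-blocks: keep a frontier $F_j$, pick $z_j\in F_j$, reveal its $\ell$-offspring $O_j$, and set $F_{j+1}=F_j\cup O_j\setminus\{z_j\}$. Each step reveals at most $d^{\ell+1}$ vertices, so $\{k\le|\Ch|<\infty\}$ forces the frontier to empty only after at least $\lfloor k/d^{\ell+1}\rfloor$ steps. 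By Lemma~\ref{lem:monotonicityphid}, $|F_j|$ stochastically dominates a random walk $S_j$ whose increments are i.i.d.\ of law $\rho_{\ell,h}-1$ (bounded, positive mean). Hence $\dP^{\Td}_a(k\le|\Ch|<\infty)\le\sum_{j\ge\lfloor k/d^{\ell+1}\rfloor}\dP(S_j\le0)\le Ce^{-c'k}$ by a standard large-deviation bound. That is the entire proof: no extinction tilt, no eigenfunction $w$, no analysis of an unbounded type space.

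Your proposal, by contrast, never closes the gap you yourself identify. Route (i), building $(w,\rho)$ by hand, would require spectral control of the extinction-tilted kernel over a non-compact type space --- feasible, but more work than the paper's whole argument. Route (ii), that the extinction-conditioned process at level $h$ is stochastically dominated by the unconditioned process at some $h'>h_\star$, is not justified: Lemma~\ref{lem:monotonicityphid} gives a pointwise monotone coupling, but conditioning on a \emph{global} event (extinction) does not reduce to a local threshold shift, and you give no mechanism for this. There is also a subsidiary error in your second paragraph: the inclusion $\{|\Ch|\ge k,\ |\Ch|<\infty\}\subseteq\{\cZ_j^h\ne\emptyset\text{ for some }j\ge ck\}$ is false, since a bounded-degree tree of size $k$ can have height $\Theta(\log k)$; you notice this but the ensuing case split still rests on the unproven supermartingale.

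The idea you are missing is that subcriticality is unnecessary: supercriticality alone already makes ``large but finite'' exponentially unlikely, because a positively-drifted walk rarely returns to zero late.
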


\noindent
Since $\{\cZ_k^h\neq \emptyset\} \subset \{\vert \Ch\vert \geq k\} $, we have the following straightforward consequence:
\begin{corollary}\label{cor:expomomentsheight}
For $k$ large enough, for every $a\geq h$,\\
$\dP^{\Td}_a(\vert\Ch\vert=+\infty)\leq \dP^{\Td}_a(\cZ_k^h\neq \emptyset)\leq \dP^{\Td}_a(\vert\Ch\vert=+\infty)+e^{-\cvii k}$.
\end{corollary}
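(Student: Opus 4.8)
The plan is to deduce Corollary~\ref{cor:expomomentsheight} directly from Proposition~\ref{prop:expomomentsCh} together with the elementary set inclusion
\[
\{\vert \Ch \vert = +\infty\} \subset \{\cZ_k^h \neq \emptyset\} \subset \{\vert \Ch\vert \geq k\},
\]
which holds for every $k\geq 1$ (the first inclusion because an infinite tree with bounded degrees has nonempty generation at every level, and the second because any vertex at height $k$ in $\Ch$ witnesses $\vert \Ch\vert \geq k+1 > k$). Both inclusions are deterministic, so they hold $\dP^{\Td}_a$-almost surely for every $a\in\dR$.

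The first inequality of the Corollary is immediate: taking $\dP^{\Td}_a$-probabilities in $\{\vert \Ch\vert = +\infty\} \subset \{\cZ_k^h \neq \emptyset\}$ gives $\dP^{\Td}_a(\vert\Ch\vert=+\infty)\leq \dP^{\Td}_a(\cZ_k^h\neq \emptyset)$ for all $a$ and all $k$. For the second inequality, I would decompose on whether $\Ch$ is finite:
\[
\dP^{\Td}_a(\cZ_k^h\neq \emptyset) \leq \dP^{\Td}_a(\cZ_k^h\neq \emptyset,\,\vert\Ch\vert<+\infty) + \dP^{\Td}_a(\vert\Ch\vert=+\infty).
\]
Using the inclusion $\{\cZ_k^h\neq\emptyset\}\subset\{\vert\Ch\vert\geq k\}$, the first term on the right is bounded by $\dP^{\Td}_a(k\leq \vert\Ch\vert<+\infty)$, which by Proposition~\ref{prop:expomomentsCh} is at most $\max_{a'\geq h}\dP^{\Td}_{a'}(k\leq\vert\Ch\vert<+\infty) = o(\exp(-\cvii k))$, uniformly in $a\geq h$. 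In particular, for $k$ large enough this quantity is at most $e^{-\cvii k}$, which yields $\dP^{\Td}_a(\cZ_k^h\neq\emptyset)\leq \dP^{\Td}_a(\vert\Ch\vert=+\infty)+e^{-\cvii k}$ for all $a\geq h$, as claimed.

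There is essentially no obstacle here: this is a bookkeeping argument combining a deterministic tree fact with the already-established exponential moment bound, and the only mild point to be careful about is that the threshold ``$k$ large enough'' and the uniformity in $a\geq h$ are both inherited verbatim from the statement of Proposition~\ref{prop:expomomentsCh} (the $o(\cdot)$ there is uniform over $a\geq h$ by the $\max_{a\geq h}$), so no new estimate is needed.
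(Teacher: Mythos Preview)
Your proof is correct and follows exactly the approach indicated in the paper, which derives the corollary as a one-line consequence of Proposition~\ref{prop:expomomentsCh} via the inclusion $\{\cZ_k^h\neq\emptyset\}\subset\{\vert\Ch\vert\geq k\}$. The decomposition on $\{\vert\Ch\vert<+\infty\}$ and the remark on uniformity in $a\geq h$ are precisely the intended argument.
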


\noindent
In addition, there are large deviation bounds for the growth rate of $\cZ_k^h$:
\begin{proposition}\label{prop:Chlargedevgrowthrate}
For every $\varepsilon >0$, there exists $C> 0$ such that for every $k\in \dN$ large enough, 
\begin{equation}\label{eqn:expomomentssize}
\max_{a\geq h}\dP^{\Td}_a(k^{-1}\log\vert \cZ_k^h\vert \not\in [\log(\lambda_h-\varepsilon), \log(\lambda_h +\varepsilon)+k^{-1}\log\chi_h(a)]\,\vert \,\cZ_k^h \neq \emptyset)\leq \exp(-Ck). 
\end{equation}
\end{proposition}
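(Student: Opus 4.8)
The statement asserts that, conditionally on survival to generation $k$, the normalized log-size $k^{-1}\log|\cZ_k^h|$ is, up to exponentially small probability, confined to an interval whose lower end is $\log(\lambda_h-\varepsilon)$ and whose upper end is $\log(\lambda_h+\varepsilon)$ plus a boundary term $k^{-1}\log\chi_h(a)$ accounting for the dependence on the initial type $\phid(\circ)=a$. Here $\chi_h(a)$ should be a bound on a suitable ``harmonic'' or ``size-biasing'' weight measuring how much a large starting value $a$ inflates the expected offspring of the root (and it must be defined earlier, presumably in the appendix or in the proof of Proposition~\ref{prop:thm43adapted}).

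The natural approach is a first-and-second-moment argument on the martingale underlying the branching structure. Using the recursive description of $\phid$ (Proposition~\ref{prop:recursivegfftrees}) and the many-to-one / spinal decomposition for $\Ch$ viewed as a multitype branching process, one can compute $\dE_a^{\Td}[|\cZ_k^h|]$ and show it grows like $\lambda_h^k$ times a type-dependent factor bounded by $\chi_h(a)$; this is essentially the content behind the definition of $\lambda_h$ as the Perron eigenvalue of the mean-offspring operator. For the \emph{upper} deviation $\{|\cZ_k^h| \geq (\lambda_h+\varepsilon)^k\}$, I would apply Markov's inequality directly: $\dP_a^{\Td}(|\cZ_k^h| \geq (\lambda_h+\varepsilon)^k) \leq (\lambda_h+\varepsilon)^{-k}\dE_a^{\Td}[|\cZ_k^h|] \leq \chi_h(a)\big((\lambda_h+\varepsilon)/\lambda_h'\big)^{-k}$ for a suitable intermediate $\lambda_h' \in (\lambda_h,\lambda_h+\varepsilon/2)$, which after dividing by $\dP_a^{\Td}(\cZ_k^h\neq\emptyset)\geq \eta(h)>0$ (using Corollary~\ref{cor:expomomentsheight} and $a\ge h$, so this probability is bounded below uniformly) gives the claimed exponential bound --- note the $\log\chi_h(a)$ slack in the statement is exactly what absorbs the type factor here, and it is needed because $a$ ranges over all of $[h,\infty)$. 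For the \emph{lower} deviation $\{0<|\cZ_k^h| < (\lambda_h-\varepsilon)^k\}$, the standard route is to split generation $k$ into, say, $k/2 + k/2$: condition on $\cZ_{k/2}^h$, and on the event that it is nonempty but small we use that each surviving vertex independently (given its type, which is $\geq h$) has probability bounded below of producing $\gtrsim (\lambda_h-\varepsilon/2)^{k/2}$ descendants at generation $k$ (a consequence of Proposition~\ref{prop:thm43adapted} applied from each such vertex, together with monotonicity, Lemma~\ref{lem:monotonicityphid}); conversely on the event that $\cZ_{k/2}^h$ is empty we are already outside $\{\cZ_k^h\neq\emptyset\}$. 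Thus, conditionally on survival, $|\cZ_{k/2}^h|\ge 1$, and the conditional probability that none of the survivors at level $k/2$ achieves its typical exponential growth is at most $(1-c)^{|\cZ_{k/2}^h|}$ for some $c>0$ --- and since on survival one can further show $|\cZ_{k/2}^h|$ is itself exponentially large (iterate Proposition~\ref{prop:thm43adapted}, or bootstrap), the bad probability is doubly-exponentially small, in particular $\le \exp(-Ck)$.

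Concretely I would carry out the steps in this order. (1) Recall/define $\chi_h(a)$ and establish the mean-growth estimate $\dE_a^{\Td}[|\cZ_k^h|]\le \chi_h(a)\lambda_h^k$ (or with a polynomial correction absorbable into the $k^{-1}$ slack), via the many-to-one lemma applied to the recursive construction. (2) Prove the upper-deviation bound by Markov plus the uniform lower bound $\inf_{a\ge h}\dP_a^{\Td}(\cZ_k^h\neq\emptyset)\ge \eta(h)/2>0$ for large $k$ from Corollary~\ref{cor:expomomentsheight} and monotonicity. (3) Prove that on survival, $|\cZ_{\lfloor k/2\rfloor}^h|$ is at least $\exp(c'k)$ except with probability $\le \exp(-c''k)$ --- this follows by combining the first part of Proposition~\ref{prop:thm43adapted} with a union/recursion argument, or can be extracted from the proof of that proposition. (4) Condition on $\cF_{\lfloor k/2\rfloor}:=\sigma(\phid|_{B(\circ,\lfloor k/2\rfloor)})$; given the types $(\phid(v))_{v\in\cZ_{\lfloor k/2\rfloor}^h}$, the subtrees $\Ch$ from each $v$ are independent, and by monotonicity each (since its root type is $\ge h$) stochastically dominates a copy started from type exactly $h$, which by Proposition~\ref{prop:thm43adapted} reaches generation $k$ with $\ge (\lambda_h-\varepsilon)^{\lceil k/2\rceil}/k^2 \gg \dots$ vertices with probability $\ge \eta(h)-o(1)\ge c>0$. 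Hence $\dP_a^{\Td}(0<|\cZ_k^h|<(\lambda_h-\varepsilon)^k \mid \cF_{\lfloor k/2\rfloor}) \le (1-c)^{|\cZ_{\lfloor k/2\rfloor}^h|}$ on $\{\cZ_{\lfloor k/2\rfloor}^h\neq\emptyset\}$. (5) Combine (3) and (4), divide by $\dP_a^{\Td}(\cZ_k^h\neq\emptyset)\ge c>0$, and take $C$ small enough; the bounds are uniform in $a\ge h$ because every estimate after the first conditioning only used the one-sided constraint ``type $\ge h$'', and the $a$-dependence of the mean in step (1) is precisely the $k^{-1}\log\chi_h(a)$ term allowed in the statement.

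The main obstacle I expect is step (4) done \emph{uniformly and with the right dependence structure}: the subtrees from distinct level-$\lfloor k/2\rfloor$ survivors are only conditionally independent given the full configuration on $B(\circ,\lfloor k/2\rfloor)$, and one must be careful that the types at the boundary, although $\ge h$, are not adversarially correlated in a way that breaks the ``each survivor grows with probability $\ge c$'' claim --- this is where the monotonicity lemma (Lemma~\ref{lem:monotonicityphid}) and the fact that ``$|\cZ_k^h|$ large'' is an increasing event are essential, since they let us drop to the worst-case type $h$ cleanly. A secondary technical point is matching the polynomial corrections ($k^2$ factors, the $k^{-1}\log\chi_h(a)$ term) so that all the ``$(\lambda_h\pm\varepsilon)$'' thresholds genuinely separate from the ``$(\lambda_h\pm\varepsilon/2)$'' intermediate rates used in the Markov and domination steps; this is routine once $\varepsilon$ is fixed and $k$ is large, but requires writing the exponents carefully.
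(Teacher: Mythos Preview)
Your upper-deviation argument is essentially the paper's: the paper uses the eigen-martingale $M_k^h=\lambda_h^{-k}\sum_{x\in\cZ_k^{h,+}}\chi_h(\phid(x))$ from Lemma~\ref{lem:SZ16} and applies Markov to $M_k^h$, using $\dE_a^{\Td}[M_k^h]=\chi_h(a)$. Since $\chi_h\geq\chi_{h,\min}>0$, this is equivalent (up to the harmless constant $\chi_{h,\min}^{-1}$) to your first-moment bound on $|\cZ_k^{h,+}|$; the passage from $\cZ_k^{h,+}$ to $\cZ_k^h$ is handled via $\cZ_k^h\subseteq\cZ_k^{h,+}\cup\cZ_{k-1}^{h,-}$.

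The lower-deviation argument, however, has a genuine gap at your step~(3). You claim that on survival, $|\cZ_{\lfloor k/2\rfloor}^h|\geq\exp(c'k)$ except with probability $\leq\exp(-c''k)$, and that this ``can be extracted from the proof'' of Proposition~\ref{prop:thm43adapted}. But that proposition (and its proof) only gives convergence in probability, not an exponential tail; what you are asserting in step~(3) is precisely the lower-deviation half of the proposition you are proving, applied at scale $k/2$. The doubling recursion you sketch cannot be initialized: at the base scale you only know $|\cZ_{k_0}^h|\geq 1$ on survival, not $\geq(\lambda_h-\varepsilon)^{k_0}$, so the bootstrap never starts.

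The paper resolves this by proving a \emph{weaker} intermediate claim that can be established independently. First, using the uniform-supercriticality at scale $\ell$ (Lemma~\ref{lem:uniformsupercritical}: $\dE_h^{\Td}[|\cZ_\ell^{h,+}|]>1$) and the breadth-first exploration of Proposition~\ref{prop:expomomentsCh}, one shows that conditionally on $\cZ_{\ell n}^h\neq\emptyset$, the size $|\cZ_{\ell n}^h|$ is at least \emph{linear} in $n$ (namely $\geq\epsilon n$) except with probability $O(e^{-c'n})$; this is a large-deviation bound for a random walk with positive drift (the increments stochastically dominate $\rho_{\ell,h}-1$ with $\dE[\rho_{\ell,h}]>1$). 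Only then does one run your step~(4): each of these $\Theta(n)$ survivors has, by Proposition~\ref{prop:thm43adapted} and monotonicity, a uniformly positive probability of producing $\geq\lambda_h^{Kn}/n^3$ descendants after $Kn$ further generations, and the probability that none does is $\leq(1-p)^{\epsilon n}$. With $k=(K+\ell)n$ and $K$ large this yields $(\lambda_h-\varepsilon)^k$ and the exponential bound. Your architecture (intermediate generation, independence of subtrees, monotonicity to worst type $h$) is correct; what is missing is replacing the circular exponential input at the intermediate level by the linear-growth input coming from Lemma~\ref{lem:uniformsupercritical}.
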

Let $\oc$ be an arbitrary neighbour of $\circ$. Let $\Td^+$ be the cone from $\circ$ out of $\oc$. Write $\Chplus:=\Ch\cap \Td^+$. For $k\geq 1$, let $\cZ_k^{h,+}:=\Chplus \cap \partial B_{\Td^+}(\circ,k)$.  The last proposition also holds when replacing $\Ch$ by $\Chplus$, and $\cZ_k^h$ by $\cZ_k^{h,+}$.

\section{Approximate recursive construction of $\psimn$}\label{sec:coupling}

\noindent
Let $\kappa >0$ be a constant, and let
\begin{equation}\label{eqn:andef}
a_n:=\lfloor\kappa \log_{d-1} \log n\rfloor.
\end{equation}
The following statement is the main result of this section. 
It shows that a recursive construction of $\psimn$, under some assumptions on the subset $A\subset V_n$ of vertices where $\psimn$ is already known, is very close to the construction of $\phid$ in Proposition~\ref{prop:recursivegfftrees}. It is a crucial tool for comparing $\psimn$ and $\phid$ in the exploration in the next section.
It is analogous to Proposition~2.7 of \cite{ACregulgraphs}, where the assumptions on $A$ are slightly different: they are suited to a deterministic $d$-regular graph satisfying \ref{expander1} and \ref{expander2}, while ours will be adapted to an annealed exploration, where the randomness of $\cM_n$ plays a role. The proof is postponed to the \hyperref[appn]{Appendix}, Section~\ref{subsec:AppendixSection4}.

\begin{proposition}\label{prop:couplinggffsexplo}
If the constant $\kappa$ from (\ref{eqn:andef}) is large enough, then the following holds for $n$ large enough. Assume that $\cM_n$ is a good graph as defined in Proposition~\ref{prop:goodgraph}, and that $A\subset V_n$ satisfies
\begin{itemize}
\item $\vert A\vert \leq n\log^{-8}n$,
\item $\text{\emph{tx}}(B_{\cM_n}(A,a_n))=\text{\emph{tx}}(A)$, and
\item $ \max_{z\in A}\vert\psimn(z)\vert\leq \log^{2/3}n$.
\end{itemize}
For every $y\in \partial B_{\cM_n}(A,1)$, writing $\overline{y}$ for the unique neighbour of $y$ in $A$, we have:
\begin{equation}\label{eqn:condexpapprox}
\left\vert\dE^{\cM_n}[\psimn(y)\vert \sigma(A)]-\frac{1}{d-1}\psimn(\overline{y})\right\vert \leq \log^{-3}n
\end{equation}

and

\begin{equation}\label{eqn:condvarapprox}
\left\vert \emph{\text{Var}}^{\cM_n}(\psimn(y)\vert \sigma(A))-\frac{d}{d-1}\,\,\right\vert\leq \log^{-4}n.
\end{equation}

\end{proposition}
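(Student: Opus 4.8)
The strategy is to express $\dE^{\cM_n}[\psimn(y)\mid\sigma(A)]$ and $\mathrm{Var}^{\cM_n}(\psimn(y)\mid\sigma(A))$ via the exact formulas of Proposition~\ref{prop:condgff}, and then to show that each term on the right-hand side either is negligible or matches the corresponding coefficient in the recursive construction of Proposition~\ref{prop:recursivegfftrees} up to an error of size $O(\log^{-3}n)$ (respectively $O(\log^{-4}n)$). The key structural input is the second hypothesis $\mathrm{tx}(B_{\cM_n}(A,a_n))=\mathrm{tx}(A)$: it says that the $a_n$-neighbourhood of $A$ in $\cM_n$ introduces no new cycles, so that the ball around $y$ out of $\overline y$ (away from $A$) looks like a piece of $\Td$ down to depth roughly $a_n$. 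Combined with \ref{expander1}--\ref{expander2} and the Green function estimates (\ref{eqn:greenfunctionGnapprox1})--(\ref{eqn:greenfunctionGnapprox2}), this lets me compare hitting probabilities and Green function values on $\cM_n$ with those on $\Td$.

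First, for the conditional expectation: by Proposition~\ref{prop:condgff},
\[
\dE^{\cM_n}[\psimn(y)\mid\sigma(A)]=\bE_y^{\cM_n}[\psimn(X_{H_A})]-\frac{\bE_y^{\cM_n}[H_A]}{\bE_{\pi_n}^{\cM_n}[H_A]}\bE^{\cM_n}_{\pi_n}[\psimn(X_{H_A})].
\]
For the second term I bound $\bE_y^{\cM_n}[H_A]=O(1)$ (a SRW from $y$ hits $A$, which contains $\overline y$, a neighbour, quickly — more precisely $\bE_y^{\cM_n}[H_A]\le \bE_y^{\cM_n}[H_{\{\overline y\}}]$, controlled via the local tree structure and the spectral gap), while $\bE_{\pi_n}^{\cM_n}[H_A]=\Theta(n)$ since $A$ is small and the walk mixes in $O(\log n)$ steps by \ref{expander1}; also $|\bE^{\cM_n}_{\pi_n}[\psimn(X_{H_A})]|\le \max_{z\in A}|\psimn(z)|\le\log^{2/3}n$. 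Hence the whole second term is $O(\log^{2/3}n/n)=o(\log^{-3}n)$. For the first term $\bE_y^{\cM_n}[\psimn(X_{H_A})]$, I decompose over the first entry point of $A$: since $y$ has a unique neighbour $\overline y$ in $A$ and $B_{\cM_n}(A,a_n)$ is tree-like, the walk from $y$, before hitting $A$, wanders in a tree-like region; the probability that it enters $A$ at a vertex other than $\overline y$ — equivalently the probability it first climbs at least $a_n$ levels away and then returns to $A$ elsewhere — is exponentially small in $a_n$, hence at most $(d-1)^{-c a_n}=\log^{-c\kappa}n$, which is $\le\log^{-3}n$ if $\kappa$ is large. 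On the event that the walk enters $A$ at $\overline y$ it contributes $\psimn(\overline y)$ times the hitting probability of $\overline y$ from $y$ staying away from $A$; on a tree this probability is exactly $\tfrac1{d-1}$, and the tree-likeness plus Green-function estimates give it up to $O(\log^{-3}n)$ here (one can also get this directly: $\bE_y^{\cM_n}[\psimn(X_{H_A})]$ is a weighted average of values of $\psimn$ on $A$, all of absolute value $\le\log^{2/3}n$, with the weight on $\overline y$ being $\tfrac1{d-1}+O(\log^{-c\kappa}n)$ and the remaining weight being $O(\log^{-c\kappa}n)$). Combining gives (\ref{eqn:condexpapprox}).

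Second, for the conditional variance: again by Proposition~\ref{prop:condgff},
\[
\mathrm{Var}^{\cM_n}(\psimn(y)\mid\sigma(A))=\gmn(y,y)-\bE_y^{\cM_n}[\gmn(y,X_{H_A})]+\frac{\bE_y^{\cM_n}[H_A]}{\bE_{\pi_n}^{\cM_n}[H_A]}\bE_{\pi_n}^{\cM_n}[\gmn(y,X_{H_A})].
\]
By (\ref{eqn:greenfunctionGnapprox1}), $\gmn(y,y)=\tfrac{d-1}{d-2}+O(\log^{-6}n)$. The third term is $O(1)\cdot\Theta(1/n)\cdot O(1)=o(\log^{-4}n)$, using the same hitting-time bounds as before and $|\gmn(y,\cdot)|\le \civ$ from (\ref{eqn:greenfunctionGn}). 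For the middle term, $\bE_y^{\cM_n}[\gmn(y,X_{H_A})]$ is dominated by the contribution of $\overline y$: with probability $\tfrac1{d-1}+O(\log^{-c\kappa}n)$ the walk enters $A$ at $\overline y$, contributing $\gmn(y,\overline y)=\tfrac1{d-2}+O(\log^{-6}n)$ by (\ref{eqn:greenfunctionGnapprox2}); the remaining entry points are at distance $\ge 2$ from $y$, or are reached only via excursions of length $\ge a_n$, so by (\ref{eqn:greenfunctionGn}) they contribute $O((d-1)^{-2})\cdot(d-1)^{-c a_n}$ plus $O((d-1)^{-2})$ total — wait, more carefully, the full weighted sum of $|\gmn(y,z)|$ over $z\in A\setminus\{\overline y\}$ against the entry distribution is bounded by $\civ(d-1)^{-ca_n}$ since reaching any such $z$ forces the walk to travel $\ge a_n$ before hitting $A$. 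Hence $\bE_y^{\cM_n}[\gmn(y,X_{H_A})]=\tfrac1{d-1}\cdot\tfrac1{d-2}+O(\log^{-c\kappa}n)$, and
\[
\mathrm{Var}^{\cM_n}(\psimn(y)\mid\sigma(A))=\frac{d-1}{d-2}-\frac{1}{(d-1)(d-2)}+O(\log^{-4}n)=\frac{d}{d-1}+O(\log^{-4}n),
\]
using $\tfrac{d-1}{d-2}-\tfrac{1}{(d-1)(d-2)}=\tfrac{(d-1)^2-1}{(d-1)(d-2)}=\tfrac{d(d-2)}{(d-1)(d-2)}=\tfrac{d}{d-1}$. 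This is (\ref{eqn:condvarapprox}).

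The main obstacle is making rigorous the claim that, because $B_{\cM_n}(A,a_n)$ is tree-like, a SRW started at $y\in\partial B_{\cM_n}(A,1)$ enters $A$ at its unique neighbour $\overline y$ with probability $\tfrac{1}{d-1}+O(\log^{-c\kappa}n)$ and the total probability (weighted by Green function values) of entering elsewhere is exponentially small in $a_n$. One has to handle excursions that leave the tree-like ball $B_{\cM_n}(A,a_n)$ entirely, re-enter it, and then reach $A$ at a bad vertex: here I would use that to exit $B_{\cM_n}(A,a_n)$ the walk must travel $a_n$ steps, and standard estimates on SRW on $d$-regular trees (the probability of ever returning toward the root is $\tfrac1{d-1}$) together with the expander bound \ref{expander1} controlling the behaviour outside the ball. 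Quantifying the error as a power of $\log n$ with exponent growing in $\kappa$ — so that choosing $\kappa$ large beats the target exponents $3$ and $4$ — is the delicate bookkeeping; this is exactly where the analogue Proposition~2.7 of \cite{ACregulgraphs} is adapted, replacing the deterministic large-girth hypotheses there by the annealed tree-excess condition $\mathrm{tx}(B_{\cM_n}(A,a_n))=\mathrm{tx}(A)$ here.
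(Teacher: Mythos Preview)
Your overall decomposition via Proposition~\ref{prop:condgff} is the right starting point, and the variance algebra $\tfrac{d-1}{d-2}-\tfrac{1}{(d-1)(d-2)}=\tfrac{d}{d-1}$ is correct. But the core of your argument rests on two quantitative claims that are both false, and this is a genuine gap.

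First, you assert $\bE_y^{\cM_n}[H_A]=O(1)$ (via $H_A\le H_{\{\overline y\}}$) and $\bE_{\pi_n}^{\cM_n}[H_A]=\Theta(n)$. Neither holds. From $y$, with probability roughly $\tfrac{d-2}{d-1}$ the walk escapes into the tree $B_{\cM_n}(y,\overline y,a_n)$ and never returns to $\overline y$ before exiting the tree-like region; after that it must find $A$ from scratch, which takes time of order $\bE_{\pi_n}^{\cM_n}[H_A]$. So $\bE_y^{\cM_n}[H_A]$ is not $O(1)$ but rather $\sim\tfrac{d-2}{d-1}\,\bE_{\pi_n}^{\cM_n}[H_A]$. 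And since the hypothesis allows $|A|$ up to $n\log^{-8}n$, one only has $\bE_{\pi_n}^{\cM_n}[H_A]\ge n/(4|A|)\ge \tfrac14\log^{8}n$, not $\Theta(n)$. Hence the ratio $\bE_y^{\cM_n}[H_A]/\bE_{\pi_n}^{\cM_n}[H_A]$ is $\Theta(1)$, and multiplying by $|\bE_{\pi_n}^{\cM_n}[\psimn(X_{H_A})]|\le\log^{2/3}n$ gives a term of size up to $\log^{2/3}n$, not $o(\log^{-3}n)$.

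Second, and for the same reason, your claim that the walk enters $A$ at a vertex other than $\overline y$ only with probability $O((d-1)^{-ca_n})$ is wrong: that escape event of probability $\sim\tfrac{d-2}{d-1}$ leads (after mixing, using \ref{expander1}) to an entry into $A$ that is essentially the stationary entry distribution, which has no reason to concentrate on $\overline y$. So $\bE_y^{\cM_n}[\psimn(X_{H_A})]$ has a non-negligible piece $\sim\tfrac{d-2}{d-1}\,\bE_{\pi_n}^{\cM_n}[\psimn(X_{H_A})]$ that you have discarded. The point the paper exploits (and which your plan misses) is that this piece \emph{cancels} the correction term: one shows both $\bE_y^{\cM_n}[\psimn(X_{H_A})\mathbf{1}_{H_A>\tau}]\approx\tfrac{d-2}{d-1}\,\bE_{\pi_n}^{\cM_n}[\psimn(X_{H_A})]$ and $\bE_y^{\cM_n}[H_A]/\bE_{\pi_n}^{\cM_n}[H_A]\approx\tfrac{d-2}{d-1}$, so that their difference is $O(\log^{-5}n)$. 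The same cancellation is needed in the variance, where your bound on the third term is equally unjustified. Without this cancellation argument (which requires proving $\sup_{z\in\partial T_y}|\bE_z^{\cM_n}[H_A]/\bE_{\pi_n}^{\cM_n}[H_A]-1|\le C\log^{-6}n$ via the spectral gap), the proof does not go through.
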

\noindent
We stress that the result holds for a fixed realization of $\cM_n$.

\includegraphics[scale=0.5]{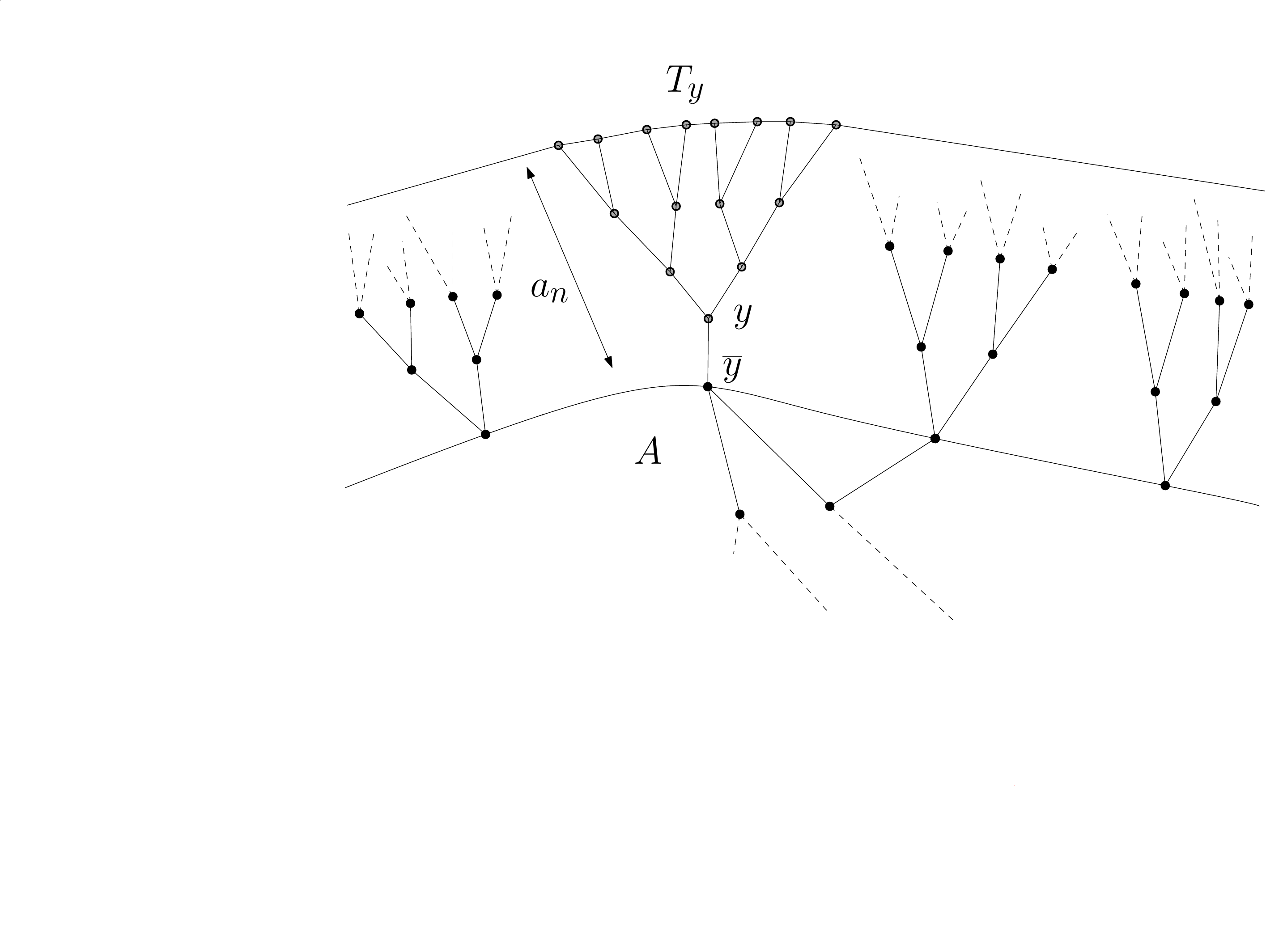}
\vspace{-28mm}
\begin{center}
Figure 1. The unicity of $\overline{y}$ comes from the fact that when building $B_{\cM_n}(A,a_n)$ from $A$, no cycle appears and no connected components of $A$ join, since $\tx(B_{\cM_n}(A,a_n))=\tx(A)$. 
\end{center}

\section{Exploration of $\psimn$ around a vertex}\label{sec:exploration} 
\subsection{Successful exploration}\label{subsec:explo1vertexsuccess}
In this section, we prove that with $\dP_{ann}$-probability arbitrarily close to $\eta(h)$ as $n\rightarrow +\infty$, $\vert \cC_x^{\cM_n,h}\vert \geq n^{1/2}\log^{-\kappa-6}n$, $\cC_x^{\cM_n,h}$ being the connected component of a given vertex $x$ in $\Enh$ (Proposition~\ref{prop:explo1vertex}), and $\kappa$ the constant defined in (\ref{eqn:andef}).
\\
To do so, we explore a tree-like neighborhood $T_x$ of $x$ in $\Enh$. We reveal $T_x$ generation by generation, and couple it with a realization of $\Chlnplus \subset \Td$ that is independent of the pairing of the half-edges of $\cM_n$. By Proposition~\ref{prop:gffann}, a realization of $\psimn$ is given by a recursive construction with the same normal variables as those of the realization of $\phid$.
When:
\begin{itemize}
\item that realization of $\Chlnplus$ is infinite (which happens with probability $\eta(h+\log^{-1}n)\simeq \eta(h)$), and 
\item the conditions of Proposition~\ref{prop:couplinggffsexplo} hold for each vertex of $T_x$, until a generation at which $\vert \partial T_x\vert \geq n^{1/2}\log^{-\kappa-6}n$ (which happens with probability $1-o(1)$, mainly because we have a probability $o(1)$ to create a cycle when pairing $o(\sqrt{n})$ half-edges),
\end{itemize}
we can apply Proposition~\ref{prop:couplinggffsexplo} to bound the difference between $\phid$ and $\psimn$ by $\log^{-1}n$, ensuring that $T_x\subseteq \cC_x^{\cM_n ,h}$, the connected component of $x$ in $\Enh$. 
%
%
\\

\noindent
\textbf{The exploration.} Fix $x\in V_n$. Let 
\begin{equation}\label{eqn:bndef}
b_n:=(d-1)^{-a_n}\log^{-6}n,
\end{equation}
where we recall the definition of $a_n$ from (\ref{eqn:andef}). Let $(\xi_{y})_{y \in \Td}$ be a family of independent variables, each of law $\cN(0,1)$, independent of the pairing of the half-edges in $\cM_n$. Define the GFF $\phid$ as in Proposition~\ref{prop:recursivegfftrees}.
\\
At every step of the exploration, $T_x $ will be a tree rooted at $x$, $\fT_x$ its respective counterpart in $\Td$, rooted at $\circ$, and $\Phi$ an isomorphism from $T_x$ to $\fT_x$. At step $k$, we will reveal the $k$-th generation of $T_x$ and $\fT_x$.
\\
Precisely, the \textbf{exploration from $x$} consists of the following steps: 
\begin{itemize}
\item at step $0$, $T_x=\{x\}$ and $\fT_x=\{\circ\}$. Reveal the pairings of the half-edges of $B_{\cM_n}(x,a_n)$. 
Stop the exploration if $\tx(B_{\cM_n}(x,a_n))>0$ or if $\phid(\circ)< h+\log^{-1}n$.

\item at step $k\geq 1$, reveal the edges of $B_{\cM_n}(T_x,a_n+1)$ that were not known at step $k-1$. Let $O_{k-1}$ be the set of the vertices of $T_x$ of height $k-1$. Stop the exploration if at least one of the following conditions holds:
\begin{enumerate}[label=C\arabic*]
\item\label{C1} a cycle appears in $B_{\cM_n}(T_x, a_n+1)$,

\item\label{C5} $\vert O_{k-1} \vert \geq n^{1/2}b_n$,

\item\label{C4} $O_{k-1} =\emptyset$ (i.e. no vertex was added to $T_x$ during the $(k-1)$-th step),
\item\label{C6} $k>\log_{\lambda_h}n$.
\end{enumerate} 
Else, denote $x_{k,1}, x_{k,2}, \ldots, x_{k,m}$ the neighbours (in $\cM_n$) of vertices of $O_{k-1}$ that are not in $O_{k-2}$, for some $m\in \dN$ (note that $m=(d-1)\vert O_{k-1}\vert$, each vertex of $O_{k-1}$ having one neighbour in $O_{k-2}$, its parent, and $d-1$ other neighbours at distance $k$ of $\circ$). 
%
%
Add to $T_x$ the vertices $x_{k,i}$ of $O_{k-1}$ such that $\phid(\Phi(x_{k,i}))\geq h+\log^{-1}n$. Add to $\kT_x$ the corresponding vertices $\Phi(x_{k,i})$.
\end{itemize}

\includegraphics[scale=1]{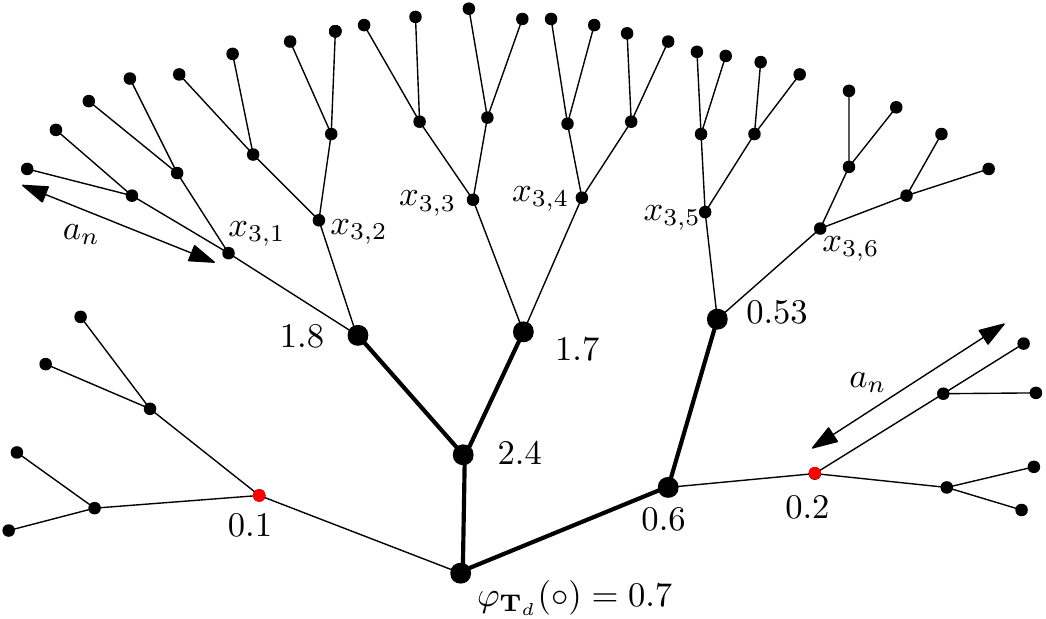}

\begin{center}
Figure 2. Illustration of the exploration with $k=3$, $a_n=2$, $h=0.3$ and $n=148$ (so that $\log^{-1}n\simeq 0.2$). Thick vertices and edges represent $T_x$ after two steps. Red vertices have not been included in $T_x$ because $\phid$ at their counterparts in $\Td$ is below $h+\log^{-1}n$. Any number near a vertex $v$ is $\phid(\Phi(v))$.
\end{center}
\vspace{2mm}

\noindent
If the exploration is stopped at some step $k$, at which only \ref{C5} is met, say that it is \textbf{successful}. In this case, by Proposition~\ref{prop:gffann}, we can sample $\psimn$ as follows: we reveal the remaining pairings of half-edges in $\cM_n$. We set $\psimn(x)=\xi_{\circ}\gmn(x,x)$. For all $k,i\geq 1$, if $A_{k,i}=\{x\}\cup\{x_{\ell,j}\vert (\ell,j) \prec (k,i)\}$ where $\prec$ is the lexicographical order on $\dN^2$, let
\begin{equation}\label{eqn:recursivegffgraph}
\psimn(x_{k,i})= \dE^{\cM_n}[\psimn(x_{k,i})\vert \sigma(A_{k,i})] + \xi_{k,i}\sqrt{\text{Var}(\psimn(x_{k,i})\vert \sigma(A_{k,i}))}.
\end{equation}
\noindent
Note that the conditional expectation and variance of the RHS are $\sigma(A_{k,i})$-measurable random variables, so that (\ref{eqn:recursivegffgraph}) makes sense, even if $\psimn(x_{k,i})$ appears on both sides of the equation.

\noindent
Let $\cS(x):=\{\text{the exploration from $x$ is successful}\}$. We prove the following:
\begin{proposition}\label{prop:explo1vertex}
\begin{equation}\label{eqn:explo1vertexsuccessful}
\dP_{ann}(\cS(x)\cap \{T_x\subseteq \cC_x^{\cM_n,h} \})\underset{n\rightarrow +\infty}{\longrightarrow} \eta(h).
\end{equation}
\end{proposition}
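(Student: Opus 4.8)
\textbf{Proof plan for Proposition~\ref{prop:explo1vertex}.} The strategy is to decompose the event $\cS(x)\cap\{T_x\subseteq\cC_x^{\cM_n,h}\}$ according to whether the coupled realization of $\Chlnplus$ in $\Td$ is infinite, and show that on the infinite event (which has probability $\eta(h+\log^{-1}n)=\eta(h)+o(1)$ by continuity of $\eta$, or more precisely by Corollary~\ref{cor:expomomentsheight} and Proposition~\ref{prop:thm43adapted}) the exploration is successful and faithfully embedded in $\cC_x^{\cM_n,h}$ with conditional probability $1-o(1)$. First I would record that, as long as the exploration has not stopped, the total number of revealed vertices is $O(\sqrt{n}\,b_n\cdot(d-1)^{a_n})=O(\sqrt{n}\log^{-6}n)=o(\sqrt{n})$: indeed by \ref{C5} each explored generation $O_{k-1}$ has fewer than $n^{1/2}b_n$ vertices, each spawning at most $(d-1)$ children, and we reveal an $a_n$-neighbourhood of each, contributing a further $(d-1)^{a_n}$ factor, while \ref{C6} caps the number of generations at $\log_{\lambda_h}n$; the dominant count is the last fully-revealed ball, of size $O(n^{1/2}b_n(d-1)^{a_n})=O(n^{1/2}\log^{-6}n)$.

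\textbf{Step 1: no cycle appears, w.h.p.} By Lemma~\ref{lem:matchings}\,a) with $k=1$, each half-edge we pair during the exploration has probability $O(m_1/n)=o(\sqrt n)/n=o(n^{-1/2})$ of landing on an already-revealed vertex; since we perform $o(\sqrt n)$ pairings in total, a union bound gives that condition \ref{C1} is never triggered with probability $1-o(1)$. In particular, on this event and as long as $\vert A\vert\le n\log^{-8}n$, the explored region $B_{\cM_n}(T_x,a_n)$ is a tree, so $\tx(B_{\cM_n}(T_x,a_n))=\tx(T_x)=0$, and the hypotheses of Proposition~\ref{prop:couplinggffsexplo} are met (the third hypothesis, $\max_{z}\vert\psimn(z)\vert\le\log^{2/3}n$, holding w.h.p.\ by Lemma~\ref{lem:maxgff}).

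\textbf{Step 2: coupling $T_x$ with $\Chlnplus$ and propagating the approximation.} Using the shared Gaussians $(\xi_y)$, I build $\psimn$ via the recursion \eqref{eqn:recursivegffgraph} along the exploration, simultaneously with $\phid$ via Proposition~\ref{prop:recursivegfftrees}. By Proposition~\ref{prop:couplinggffsexplo}, at each newly-revealed vertex $y$ the conditional mean of $\psimn(y)$ differs from $\frac1{d-1}\psimn(\overline y)$ by at most $\log^{-3}n$ and the conditional variance differs from $\tfrac{d}{d-1}$ by at most $\log^{-4}n$. Writing $D_y:=\psimn(y)-\phid(\Phi(y))$ and comparing the two recursions (with the same $\xi_y$) I get $\vert D_y\vert\le\frac1{d-1}\vert D_{\overline y}\vert+\log^{-3}n+C\log^{-4}n\cdot\vert\xi_y\vert$; iterating over the at most $\log_{\lambda_h}n$ generations and using that, w.h.p., $\max_y\vert\xi_y\vert\le\log^{2/3}n$ over the $o(\sqrt n)$ explored vertices (same argument as Lemma~\ref{lem:maxgff}), the geometric contraction by $1/(d-1)$ yields $\sup_{y\in T_x}\vert D_y\vert\le\log^{-1}n$ for $n$ large. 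Since a vertex $\Phi(y)$ is kept in $\fT_x$ iff $\phid(\Phi(y))\ge h+\log^{-1}n$, the bound $\vert D_y\vert\le\log^{-1}n$ forces $\psimn(y)\ge h$, hence every kept vertex is genuinely in $\Enh$ and connected through its parent; thus $T_x\subseteq\cC_x^{\cM_n,h}$ on this event. Conversely, $\fT_x$ grows exactly like the truncation of $\Chlnplus$ to its first $\log_{\lambda_h}n$ generations (with each generation capped at $n^{1/2}b_n$ vertices).

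\textbf{Step 3: on $\{\vert\Chlnplus\vert=+\infty\}$ the exploration is successful, w.h.p.} On the infinite event, by Proposition~\ref{prop:thm43adapted} (applied to the cone $\Td^+$, at level $h+\log^{-1}n$, whose growth rate $\lambda_{h+\log^{-1}n}=\lambda_h+o(1)>1$) we have $\vert\cZ_k^{h+\log^{-1}n,+}\vert\ge\lambda_{h}^{k}/k^{2}\to\infty$ with conditional probability $1-o(1)$, so the generation sizes $\vert O_{k-1}\vert$ strictly increase past every fixed bound and in particular reach $n^{1/2}b_n$ at some $k\le(1+o(1))\log_{\lambda_h}(n^{1/2}b_n)<\log_{\lambda_h}n$ before \ref{C6} or \ref{C4} can fire; combined with Step~1 (\ref{C1} does not fire) this means the exploration stops with \emph{only} \ref{C5} satisfied, i.e.\ $\cS(x)$ holds. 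Also one must check the running hypothesis $\vert A\vert\le n\log^{-8}n$ of Proposition~\ref{prop:couplinggffsexplo} stays valid throughout, which it does since $\vert A\vert=o(\sqrt n)$. Therefore $\dP_{ann}(\cS(x)\cap\{T_x\subseteq\cC_x^{\cM_n,h}\})\ge\dP^{\Td}(\vert\Chlnplus\vert=+\infty)-o(1)=\eta(h)-o(1)$.

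\textbf{Step 4: the upper bound.} For the reverse inequality I use that $\cS(x)\cap\{T_x\subseteq\cC_x^{\cM_n,h}\}$ implies (on the good-graph event) that $\fT_x$ survived $\log_{\lambda_h}n$ generations, which has probability at most $\dP^{\Td}(\cZ_{\lfloor\log_{\lambda_h}n\rfloor}^{h+\log^{-1}n,+}\ne\emptyset)\le\eta(h+\log^{-1}n)+o(1)=\eta(h)+o(1)$ by Corollary~\ref{cor:expomomentsheight} (the $e^{-K_7 k}$ error with $k=\log_{\lambda_h}n$ being $o(1)$). Combining with Step~3 gives the stated limit.

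\textbf{Main obstacle.} The delicate point is Step~2: controlling the accumulated discrepancy $\sup_{y\in T_x}\vert\psimn(y)-\phid(\Phi(y))\vert$ over a \emph{random} tree of depth $\Theta(\log n)$ and polynomial width, while simultaneously verifying at every vertex that the hypotheses of Proposition~\ref{prop:couplinggffsexplo} (bounded $\vert A\vert$, vanishing tree excess, bounded $\psimn$ on $A$) remain in force — these hypotheses are not automatic and their validity is coupled with the very event we are trying to estimate. The geometric contraction by $1/(d-1)$ at each level is what makes the error telescope to $o(1)$ rather than blow up, but one has to be careful that the per-step additive error ($\log^{-3}n$ plus a $\xi_y$-dependent term) is uniformly small across all $o(\sqrt n)$ revealed vertices, which is where the maximum-of-Gaussians tail bound (Lemma~\ref{lem:maxgff}) is essential.
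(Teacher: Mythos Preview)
Your approach is essentially the paper's, and Steps~1 and~2 are faithful to it (the paper organizes Step~2 slightly differently, defining the increment events $\cE(y):=\{|D_y|\ge|D_{\overline y}|+2\log^{-3}n\}$ and bounding each by $n^{-3}$ via $\dP(|\xi|\log^{-4}n\ge\log^{-3}n)$, then taking a union bound; your geometric-contraction summation with $\max_y|\xi_y|\le\log^{2/3}n$ is an equivalent packaging).

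There is, however, a genuine gap in Steps~3--4. You invoke Proposition~\ref{prop:thm43adapted} and Corollary~\ref{cor:expomomentsheight} at the \emph{$n$-dependent} level $h+\log^{-1}n$, and then take $k\asymp\log_{\lambda_h}n$: this is a diagonal limit that those statements do not provide directly (they are limits in $k$ for a \emph{fixed} threshold). The paper isolates exactly this issue into Lemma~\ref{lem:explo1vertex}, whose proof fixes a small $\delta>0$ with $\log^{-1}n<\delta$, uses the sandwich $\cC_\circ^{h+\delta}\subseteq\cC_\circ^{h+\log^{-1}n}\subseteq\cC_\circ^h$, applies Proposition~\ref{prop:thm43adapted} at the fixed level $h+\delta$ (where $\lambda_{h+\delta}$ is close to $\lambda_h$ by continuity of $h'\mapsto\lambda_{h'}$), and then sends $\delta\to0$ using the continuity of $h'\mapsto\eta(h')$. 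Without this fixed-$\delta$ device, your claim in Step~3 that ``on $\{|\cC_\circ^{h+\log^{-1}n}|=\infty\}$, generation sizes reach $n^{1/2}b_n$ before time $\log_{\lambda_h}n$ with conditional probability $1-o(1)$'' is not justified: being infinite does not by itself control the growth rate, and Proposition~\ref{prop:thm43adapted} at level $h+\log^{-1}n$ has no meaning as $n$ varies. For Step~4, the monotone inclusion $\cC_\circ^{h+\log^{-1}n}\subseteq\cC_\circ^h$ already gives $\dP(\cS(x))\le\dP(|\cC_\circ^h|\ge n^{1/2}b_n)\to\eta(h)$ via Proposition~\ref{prop:expomomentsCh}, so that direction is actually simpler than you wrote.

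A minor notational point: you write $\Chlnplus$ throughout, but the exploration from $x$ reveals all $d$ directions at the root, so the relevant tree object is $\cC_\circ^{h+\log^{-1}n}$ on the full $\Td$, not the cone $\Td^+$.
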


\begin{remark}[\textbf{Exploration size}]\label{rem:explorationsize}
Denote $R_x$ the set of vertices \emph{\textbf{seen}} during the exploration (i.e. such at least one of their half-edges has been paired). Note that for $n$ large enough, for every $x\in V_n$, by \ref{C5}, \ref{C6} and (\ref{eqn:bndef}), $T_x$ contains less than 
\begin{center}
$n^{1/2}(d-1)^{-a_n}\log^{-6}n\log_{\lambda_h}n\leq n^{1/2}(d-1)^{-a_n}\log^{-4}n$
\end{center}
vertices, so that 
\begin{center}
$\vert R_x\vert \leq n^{1/2}(d-1)^{-a_n}\log^{-4}n\times (1+(d-1)+\ldots+ (d-1)^{a_n+1})\leq  n^{1/2}\log^{-3}n$.
\end{center}
\end{remark}
\noindent
In order to prove Proposition~\ref{prop:explo1vertex}, we first show that $\Chlnplus$ either has an exponential growth at rate $>\sqrt{\lambda_h}$ with probability close to $\eta(h)$, or dies out before reaching height $\log_{\lambda_h}n$ with probability close to $1-\eta(h)$. Although the proof is slightly technical, it relies merely on Proposition~\ref{prop:thm43adapted} and on the continuity of the maps $h'\mapsto \lambda_{h'}$ and $h'\mapsto \eta(h')$. It can be skipped at first reading. Recall the definition of $\cZ_k^h$ at the beginning of Section~\ref{subsec:expogrowthCh}.

\begin{lemma}\label{lem:explo1vertex}
Let $\mathcal{F}^{(n)}_k:=\{n^{1/2}b_n\leq \vert\cZ_k^{h+\log^{-1}n}\vert\leq dn^{1/2}b_n\}$ and $\mathcal{F'}^{(n)}_k:=\{\cZ_{k-1}^{h+\log^{-1}n}=\emptyset\}$ for $k\geq 1$.
Let $k_0:= \inf \{k\geq 1, \,  \mathcal{F}^{(n)}_{k}\text{ or }\mathcal{F'}^{(n)}_{k} \text{ happens}\}$, $\cF^{*(n)}:=\{k_0\leq \log_{\lambda_h}n\}\cap \cF^{(n)}_{k_0}$ and $\mathcal{F'}^{*(n)}:=\{k_0\leq \log_{\lambda_h}n\}\cap \mathcal{F'}^{(n)}_{k_0}$. 
Then, as $n\rightarrow +\infty$:
\begin{equation}\label{eqn:growthtree}
\dP_{ann}(\cF^{*(n)})\rightarrow \eta(h) \,\text{ \emph{and} }\, \dP_{ann}(\mathcal{F'}^{*(n)})\rightarrow 1-\eta(h).
\end{equation}
\end{lemma}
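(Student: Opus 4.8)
\textbf{Proof plan for Lemma~\ref{lem:explo1vertex}.}
The plan is to reduce the statement to Proposition~\ref{prop:thm43adapted} by controlling how much the relevant quantities move when we perturb the level from $h$ to $h+\log^{-1}n$, and to show that the ``random stopping generation'' $k_0$ does not exceed $\log_{\lambda_h}n$ except on an event of vanishing probability. First I would fix a small $\varepsilon>0$. By the continuity (indeed, the homeomorphism property) of $h'\mapsto\lambda_{h'}$ and $h'\mapsto\eta(h')$ from Proposition~\ref{prop:thm43adapted} and the line $h<h_\star$, for $n$ large enough we have $\lambda_{h+\log^{-1}n}\in(\lambda_h-\varepsilon,\lambda_h+\varepsilon)$ with $\lambda_h-\varepsilon>\sqrt{\lambda_h}$ (shrinking $\varepsilon$ if necessary, using $\lambda_h>1$), and $|\eta(h+\log^{-1}n)-\eta(h)|\le\varepsilon$. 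This is the step that lets us treat the moving level $h+\log^{-1}n$ as essentially equal to $h$.

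Next I would analyse the growth of $\cZ_k^{h+\log^{-1}n}$. Apply Proposition~\ref{prop:thm43adapted} at level $h'=h+\log^{-1}n$: there is a generation $k_1=k_1(n)$ (which one checks is $o(\log_{\lambda_h}n)$, e.g. $k_1\asymp\log\log n$ suffices, since $b_n=(d-1)^{-a_n}\log^{-6}n$ is only polylogarithmically small) at which $|\cZ_{k_1}^{h'}|>\lambda_{h'}^{k_1}/k_1^2$ with probability at least $\eta(h')-\varepsilon$, and $|\cZ_{k}^{h'}|<k\lambda_{h'}^k$ for all $k$ up to $\log_{\lambda_h}n$ with probability $1-o(1)$ (by a union bound over the $O(\log n)$ generations, or by taking the statement of Proposition~\ref{prop:thm43adapted} at the largest generation and using monotonicity of the cone decomposition). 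On the event that both hold and that $\cZ_{k_1}^{h'}\neq\emptyset$ grows exponentially, the branching structure of $\Chplus$ (Proposition~\ref{prop:recursivegfftrees}, and the cone decomposition: each vertex of $\cZ_{k_1}^{h'}$ roots an independent copy of $\Chplus$ conditioned on its type $\geq h'$) forces $|\cZ_k^{h'}|$ to keep growing at rate in $(\lambda_h-\varepsilon,\lambda_h+\varepsilon)$, via Proposition~\ref{prop:Chlargedevgrowthrate} applied to each of the $\geq\lambda_{h'}^{k_1}/k_1^2$ subtrees (the exceptional probability $\exp(-Ck)$ summed over the subtrees is still $o(1)$ because there are only polylogarithmically many of them at generation $k_1$ and the sizes only grow). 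Therefore, on this event, $|\cZ_k^{h'}|$ moves continuously (it changes by a bounded factor per generation, since $|\cZ_{k+1}^{h'}|\le(d-1)|\cZ_k^{h'}|$) from below $n^{1/2}b_n$ to above $dn^{1/2}b_n$ before generation $\log_{\lambda_h}n$, so it must land in the window $[n^{1/2}b_n,dn^{1/2}b_n]$ at the first generation it exceeds $n^{1/2}b_n$: that is exactly the event $\cF^{(n)}_{k_0}$ with $k_0\le\log_{\lambda_h}n$, i.e.\ $\cF^{*(n)}$. This gives $\dP_{ann}(\cF^{*(n)})\ge\eta(h)-C\varepsilon-o(1)$. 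For the complementary bound: if $|\Chplus|=+\infty$ then with probability $1-o(1)$ the cluster reaches the window (same argument), so $\dP_{ann}(\cF^{*(n)})\le\eta(h')+o(1)\le\eta(h)+\varepsilon+o(1)$; and if $|\Chplus|<+\infty$ then $\Chplus$ dies out, and by Proposition~\ref{prop:expomomentsCh} it does so before generation $\log_{\lambda_h}n$ with probability $1-o(1)$ (since $\dP^{\Td}_a(|\Chplus|\ge c\log n,\,|\Chplus|<\infty)=o(1/n)=o(1)$), hence $\cF'^{*(n)}$ occurs; this yields $\dP_{ann}(\cF'^{*(n)})\ge 1-\eta(h)-\varepsilon-o(1)$ and the matching upper bound $\dP_{ann}(\cF'^{*(n)})\le 1-\eta(h')+o(1)$. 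Letting $\varepsilon\to0$ gives (\ref{eqn:growthtree}).

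The main obstacle I anticipate is making rigorous the ``the process must pass through the window'' argument uniformly in the randomness: I need that conditioned on survival, $|\cZ_k^{h'}|$ genuinely grows — it never collapses back below $n^{1/2}b_n$ after exceeding it — and that it grows slowly enough (sub-$(d-1)$) that it cannot jump over the window $[n^{1/2}b_n,dn^{1/2}b_n]$ in one generation. The first part is the real content and is where Proposition~\ref{prop:Chlargedevgrowthrate} (applied to the independent cone subtrees emanating from a moderately large early generation, so that a law-of-large-numbers effect kicks in) is essential; one has to be careful that the error terms $\exp(-Ck)$ and $k^{-1}\log\chi_h(a)$ there, once summed/maximised over the relevant subtrees and generations, remain $o(1)$. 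The second part is trivial since offspring sizes are bounded by $d-1$ per vertex. A secondary technical point is that $h'=h+\log^{-1}n$ depends on $n$, so one should phrase Propositions~\ref{prop:thm43adapted}, \ref{prop:expomomentsCh}, \ref{prop:Chlargedevgrowthrate} in the uniform form ``$\max_{a\ge h}$'' (which they already are) and note that $h+\log^{-1}n\ge h$ for all $n$, so all the conditional-on-type estimates apply with the fixed lower bound $h$.
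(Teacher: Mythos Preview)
Your overall architecture matches the paper's: split according to whether $\cC_\circ^{h+\log^{-1}n}$ survives or not, use the deterministic bound $|\cZ_{k}^{h'}|\le d\,|\cZ_{k-1}^{h'}|$ to show that the first crossing of $n^{1/2}b_n$ lands in the window $[n^{1/2}b_n,dn^{1/2}b_n]$, and check that $k_0\le\log_{\lambda_h}n$ on each side. The paper's proof also starts by observing that $\cF^{*(n)}$ and $\cF'^{*(n)}$ are disjoint, so only the two liminf lower bounds need to be established.

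There is however a genuine gap in your handling of the $n$-dependent threshold $h'=h+\log^{-1}n$. You write that Propositions~\ref{prop:thm43adapted}, \ref{prop:expomomentsCh}, \ref{prop:Chlargedevgrowthrate} ``are already'' in the form $\max_{a\ge h}$ and that this settles the issue because $h'\ge h$. This conflates two different parameters: the \emph{threshold level} (the $h$ in $\cZ_k^h$, which determines the whole percolation cluster) and the \emph{root type} $a=\phid(\circ)$ (the conditioning in $\dP^{\Td}_a$). The uniformity in those propositions is over $a$, with the threshold fixed; changing the threshold from $h$ to $h+\log^{-1}n$ changes the entire process, not just the root condition, and no uniformity in the threshold is claimed. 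The paper resolves this differently and more simply: it fixes a small $\delta>0$, uses the monotone sandwich $\cC_\circ^{h+\delta}\subseteq\cC_\circ^{h+\log^{-1}n}\subseteq\cC_\circ^{h}$ for large $n$, and applies Proposition~\ref{prop:thm43adapted} at the \emph{fixed} levels $h$ and $h+\delta$ (using continuity of $\eta$ and of $h'\mapsto\lambda_{h'}$ to control the $\delta$-error). For the lower bound on $\dP(\cF'^{*(n)})$, the paper does not invoke Proposition~\ref{prop:expomomentsCh} at all: it bounds $\dP(\cZ^{h'}_{\lfloor\log_{\lambda_h}n\rfloor-1}=\emptyset)\ge\dP(\cZ^{h}_{\lfloor\log_{\lambda_h}n\rfloor-1}=\emptyset)\to 1-\eta(h)$, and shows directly that $\dP(|\cC_\circ^{h'}|<\infty\mid \exists k,\,|\cZ^{h'}_k|\ge n^{1/2}b_n)\to 0$ by the branching argument (each of the $\ge n^{1/2}b_n$ vertices roots an independent subcluster surviving with probability at least $\dP^{\Td}_h(|\cC_\circ(h,\delta)|=\infty)>0$, a fixed constant). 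Your route via Propositions~\ref{prop:expomomentsCh} and \ref{prop:Chlargedevgrowthrate} can be made to work, but only after you first pass to a fixed threshold via the same $\delta$-sandwich; as written, the justification for applying them at $h'$ is incorrect. Also note that your appeal to Proposition~\ref{prop:Chlargedevgrowthrate} to ensure the process ``never collapses back'' is unnecessary: the window argument only needs the first up-crossing, and the deterministic per-generation factor $\le d$ already guarantees $\cF^{(n)}_{k'}$ at the first $k'$ with $|\cZ^{h'}_{k'}|\ge n^{1/2}b_n$.
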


\begin{proof} Remark first that by construction, $\dP_{ann}$ acts like $\dP_{\Td}$ on events that only depend on $\phid$.  Note that for every $n\geq 1$, $\mathcal{F'}^{*(n)}\cap  \cF
^{*(n)}=\emptyset$, implying $ \dP_{ann}(\mathcal{F'}^{*(n)})+ \dP_{ann}(\cF
^{*(n)})\leq 1$. Hence it is enough to prove that
\begin{equation}\label{eqn:Fstarn}
\underset{n\rightarrow +\infty}{\liminf}\, \dP_{ann}(\cF
^{*(n)})\geq \eta(h)
\end{equation}
and
\begin{equation}\label{eqn:Fstarprimen}
\liminf_{n\rightarrow +\infty} \dP_{ann}(\mathcal{F'}^{*(n)}) \geq 1-\eta(h)
\end{equation}

\noindent
Let $\varepsilon \in (0, \eta(h))$. Let $\delta >0$ be such that $\vert \eta(h+\delta) -\eta(h)\vert\leq \varepsilon$ and $\log{\lambda_{h+\delta}}>(9\log{\lambda_h})/10$. The map $h'\rightarrow \eta(h')$ is continuous on $\dR\setminus\{h_{\star}\}$ by Theorem 3.1 of~\cite{ACregultrees} and the map $h'\rightarrow \lambda_{h'}$ is an homeomorphism from $(-\infty,h_{\star})$ to $(1,d-1)$ by Proposition~\ref{prop:thm43adapted}, hence such $\delta$ exists. It is clear that
\begin{align*}
\liminf_{n\rightarrow +\infty} \dP_{ann}\left (\exists k\leq \log_{\lambda_h}n, \vert\cZ_k^{h+\log^{-1}n}\vert >n^{1/2}b_n \right)&\hspace{-1mm}\geq \liminf_{n\rightarrow +\infty} \dP_{ann}\left (\exists k\leq \log_{\lambda_h}n, \vert\cZ_k^{h+\delta}\vert \hspace{-0.5mm}>n^{1/2}b_n \right)
\\
&\hspace{-1mm}\geq \liminf_{n\rightarrow +\infty} \dP_{ann}\left ( \vert\cZ_{\lfloor \log_{\lambda_h}n\rfloor}^{h+\delta}\vert \hspace{-0.5mm}>n^{1/2}b_n \right)
\\
&\hspace{-1mm}\geq \liminf_{n\rightarrow +\infty} \dP_{ann}\left ( \vert\cZ_{\lfloor \log_{\lambda_h}n\rfloor}^{h+\delta}\vert \hspace{-0.5mm}>n^{9/10}/\log_{\lambda_h}^2n \right),
\end{align*}
hence by the first equation of Proposition~\ref{prop:thm43adapted} applied to $h+\delta$,
\begin{equation}\label{eq:expogrowthhdelta}
\liminf_{n\rightarrow +\infty} \dP_{ann}\left (\exists k\leq \log_{\lambda_h}n, \vert\cZ_k^{h+\log^{-1}n}\vert >n^{1/2}b_n \right)\geq \eta(h+\delta)\geq \eta(h)-\varepsilon.
\end{equation}
Since each vertex of $\Td$ has at most $d$ children, we have $\vert\cZ_k^{h+\log^{-1}n}\vert \hspace{-1mm}\leq \hspace{-1mm} d\vert\cZ_{k-1}^{h+\log^{-1}n}\vert$ deterministically for all $k\geq 1$. Hence, letting $k':=\inf\{k\geq 0,\vert \cZ_k^{h+\log^{-1}n}\vert \geq n^{1/2}b_n\}$ when this set is non-empty, $\cF^{(n)}_{k'}$ holds, so that $\{\exists k\leq \log_{\lambda_h}n, \vert\cZ_k^{h+\log^{-1}n}\vert >n^{1/2}b_n\}\subseteq \cup_{k\leq \log_{\lambda_h} \hspace{-1mm} n}\,\cF^{(n)}_k$. Thus 
\begin{center}
$\underset{n\rightarrow +\infty}{\liminf}\, \dP_{ann}(\cF
^{*(n)})\geq \underset{n\rightarrow +\infty}{\liminf}\, \dP_{ann}\left (\cup_{k\leq \log_{\lambda_h} \hspace{-1mm} n}\,\cF^{(n)}_k \right)\geq \eta(h)-\varepsilon$
\end{center}
by (\ref{eq:expogrowthhdelta}), and this shows (\ref{eqn:Fstarn}). 
\\
For $n\geq e^{1/\delta}$, $\Chdplus \subseteq \Chlnplus\subseteq \Ch$. Note that for $n\geq 1$, 
\begin{align*}
 &\dP_{ann}(\mathcal{F'}^{*(n)})\hspace{-1mm}
 \\
&\geq \dP_{ann}\hspace{-1mm}\left(\cZ_{\lfloor\log_{\lambda_h}n\rfloor-1}^{h+\log^{-1}n}=\emptyset\right)\hspace{-1mm}-\hspace{-1mm} \dP_{ann}\hspace{-1mm}\left(\{\cZ_{\lfloor\log_{\lambda_h}n\rfloor-1}^{h+\log^{-1}n}=\emptyset \} \cap \{\exists k\geq 1,\, \vert \cZ_k^{h+\log^{-1}n}\vert \geq n^{1/2}b_n \}\right)
\\
&\geq  \dP_{ann}\hspace{-1mm}\left(\cZ_{\lfloor\log_{\lambda_h}n\rfloor-1}^{h}=\emptyset\right)\hspace{-1mm}- \dP_{ann}\left(\{\vert \cC_{\circ}^{h+\log^{-1}n}\vert <+\infty \}  \cap \{\exists k\geq 1,\, \vert \cZ_k^{h+\log^{-1}n}\vert\hspace{-1mm} \geq n^{1/2}b_n \} \right)
\\
&\geq  \dP_{ann}\hspace{-1mm}\left(\cZ_{\lfloor\log_{\lambda_h}n\rfloor-1}^{h}=\emptyset\right) -  \dP_{ann}\left(\vert \cC_{\circ}^{h+\log^{-1}n}\vert <+\infty \,\,\vert\,\,\exists k\geq 1,\, \vert \cZ_k^{h+\log^{-1}n}\vert \geq n^{1/2}b_n\right).
\end{align*}

\noindent
The first term of the RHS converges to $1-\eta(h)$ as $n\rightarrow +\infty$. For any $k\geq 1$ and for any $v\in B_{\Td}(\circ,k)$, denoting $T_v$ the possible subtree from $v$ in $\cC_{\circ}^{h+\log^{-1}n}$ (if $v\in\cZ_k^{h+\log^{-1}n}$) and $\cC_{\circ}(h,\delta)$ the connected component of $\circ$ in $(\{\circ\}\cup E_{\phid}^{\geq h+\delta})\cap \Td^+$,
\begin{center}
$ \dP_{ann}(\vert T_v\vert <+\infty  \,\vert v\in\cZ_k^{h+\log^{-1}n} ) \leq \dP^{\Td}_{h}(\vert\cC_{\circ}(h,\delta)\vert <+\infty)$
\end{center} 
by Lemma~\ref{lem:monotonicityphid}, independently of the other vertices of $ \cZ_k^{h+\log^{-1}n}$. Thus, 
$$
 \dP_{ann}(\vert \Ch\vert <+\infty \,\,\vert\,\,\exists k\geq 1,\, \vert \cZ_k^{h}\vert \geq n^{1/2}b_n)\leq \dP^{\Td}_{h}(\vert\cC_{\circ}(h,\delta)\vert <+\infty)^{n^{1/2}b_n}.$$
\noindent
By a straightforward adaptation of Remark~\ref{rem:uniformsupercriticsurvival}, we have  $\dP^{\Td}_{h}(\vert\cC_{\circ}(h,\delta)\vert <+\infty)<1$, and \eqref{eqn:Fstarprimen} follows.
\end{proof}

\begin{proof}[Proof of Proposition~\ref{prop:explo1vertex}.]
We first establish that \ref{C1} happens with $\dP_{ann}$-probability $o(1)$. Then, if there is no cycle in $B_{\cM_n}(T_x,a_n)$, we can apply Proposition~\ref{prop:couplinggffsexplo}, to bound the difference between $\phid$ and $\psimn$.

\noindent
By Remark~\ref{rem:explorationsize}, at most $dn^{1/2}\log^{-3}n$ matchings of half-edges are performed during the exploration. By (\ref{eqn:binomdominationdiff}) with $k=m_0=1$, $m_E=0$ and $m\leq dn^{1/2}\log^{-3}n$, the probability to create at least one cycle during these matchings is less than $\log^{-1}n$ for large enough $n$. Therefore, 
\begin{equation}\label{eqn:C1happensexplo}
\dP_{ann}(\text{\ref{C1} happens})\rightarrow 0.
\end{equation}

\noindent
Note that if \ref{C1} does not happen, then on $\cF^{*(n)}$, (resp. $\mathcal{F'}^{*(n)}$), \ref{C4} (resp. \ref{C5}) is satisfied, but not \ref{C6}. Moreover, on $\cF^{*(n)}$, (resp. $\mathcal{F'}^{*(n)}$), \ref{C5} (resp. \ref{C4}) does not hold, so that we have 
$\dP_{ann}(\cS(x))\geq \dP_{ann}(\cF^{*(n)})-\dP_{ann}(\text{\ref{C1} happens}) 
$. Since $\mathcal{F}^{*(n)}\cap \mathcal{F'}^{*(n)} =\emptyset$, we get that

\begin{center}
$\dP_{ann}(\cS(x))
\leq \dP_{ann}((\mathcal{F}^{*(n)})^c)+\dP_{ann}(\text{\ref{C1} happens})\leq 1- \dP_{ann}(\mathcal{F'}^{*(n)})+\dP_{ann}(\text{\ref{C1} happens}).$
\end{center}
\noindent
Thus, by (\ref{eqn:growthtree}) and (\ref{eqn:C1happensexplo}),
\begin{equation}\label{eqn:growthsuccessful}
\dP_{ann}(\cS(x))\rightarrow \eta(h).
\end{equation}

\noindent
Suppose now that the exploration is over and that $\cS(x)$ holds. We compare $\psimn$ with $\phid$. Note that by \ref{C6}, $T_x$ has a maximal height $\log_{\lambda_h}n$ so that by the triangle inequality, 
\begin{center}
$\{ T_x\not\subseteq \cC_x^{\cM_n,h}\}\subseteq \{\exists y\in T_x,\, \vert \psimn(y)-\phid(\Phi(y)) \vert \geq \log^{-1}n\} \subseteq \cup_{y\in T_x}\cE(y)$,
\end{center}
\noindent
where $\cE(x):=\{\vert\psimn(x)-\phid(\Phi(x))\vert\geq \log^{-3}n\}$ and 
\begin{center}
$\cE(y):=\{\vert \psimn(y)-\phid(\Phi(y))\vert \geq \vert \psimn(\overline{y})-\phid(\overline{\Phi(y)})\vert +2\log^{-3}n\}$ for $y\neq x$.
\end{center}
Suppose that $\cM_n$ is a good graph. For $x_{k,i}\in T_x\setminus\{x\}$, we can apply Proposition~\ref{prop:couplinggffsexplo} on the event  $\cE_{k,i}^{(n)}:=\{\max_{y'\in A_{k,i}}\vert\psimn(y')\vert\hspace{-1mm}<\hspace{-1mm}\log^{2/3}\hspace{-0.5mm}n~\}$,  (since $\tx(B_{\cM_n}(A_{k,i},a_n))=\tx(A_{k,i})$ by \ref{C1} and since $\vert A_{k,i}\vert\leq n^{1/2}$ by Remark~\ref{rem:explorationsize}). Writing $y=x_{k,i}$ and $\xi=\xi_{\Phi(x_{k,i})}$, we get for $n$ large enough:
\begin{align*}
\vert \psimn(y)-\phid(\Phi(y))\vert \leq &\lv \frac{\psimn(\overline{y})-\phid(\overline{\Phi(y)})}{d-1}\rv +\log^{-3}n
\\
&+ \lv\left(\sqrt{\text{Var}^{\cM_n}(\psimn(y)\vert \sigma(A_{k,i}))}-\sqrt{\frac{d-1}{d}}\right)\xi\rv 
\end{align*}

\noindent
and 
\begin{align*}
\lv\sqrt{\text{Var}^{\cM_n}(\psimn(y)\vert \sigma(A_{k,i}))}-\sqrt{\frac{d-1}{d}}\rv&\leq \lv \sqrt{\frac{d-1}{d}-\log^{-4}n}-\sqrt{\frac{d-1}{d}}\rv 
\\
&\leq \frac{11}{10}\sqrt{\frac{d-1}{d}}\frac{d}{2(d-1)}\log^{-4}n
\\
&\leq\log^{-4}n.
\end{align*}
\noindent
Let $\mathcal{E}'(y):= \cE(y)\cap\cE_{k,i}^{(n)}$. We have
\begin{equation}\label{eqn:diffgffgraphtree}
\dP^{\cM_n}(\mathcal{E}'(y))\leq \dP(\vert \xi\vert\log^{-4}n\geq \log^{-3}n)\leq n^{-3}
\end{equation}
by the exponential Markov inequality used as in the proof of Lemma~\ref{lem:maxgff}. Moreover, by (\ref{eqn:greenfunctionGnapprox1}), if $\kappa$ is large enough, we obtain in the same manner:
\begin{equation}\label{eqn:diffgffgraphtreeroot}
\dP^{\cM_n} (\cE(x))\leq \dP^{\cM_n}(\vert \xi_{\circ}\vert \log^{-4}n\geq \log^{-3}n)\leq n^{-3}.
\end{equation}
By Remark~\ref{rem:explorationsize}, we have $\vert T_x\vert \leq n^{1/2}$. By (\ref{eqn:diffgffgraphtree}), (\ref{eqn:diffgffgraphtreeroot}) and a union bound on $y\in T_x$, we have that $\dP^{\cM_n}(\cup_{y\in T_x}\cE'(y))\leq n^{-5/2}$ with $\cE'(x):=\cE(x)$. And $\dP^{\cM_n}(\cup_{(k,i):x_{k,i}\in T_x}\cE^{(n)}_{k,i})\leq n^{-2}$ for large enough $n$, by Lemma~\ref{lem:maxgff}.
\\
Thus for $n$ large enough, if $\cM_n$ is a good graph,
\[
\dP^{\cM_n}(T_x\not\subseteq \cC_x^{\cM_n,h})\leq \dP^{\cM_n}(\cup_{y\in T_x}\cE(y))\leq n^{-5/2}+n^{-2}\leq n^{-1},
\]
\noindent
so that by Proposition~\ref{prop:goodgraph} and (\ref{eqn:growthsuccessful}):
\begin{center}
$\dP_{ann}(\cS(x)\cap \{T_x\subseteq \cC_x^{\cM_n,h} \})\rightarrow\eta(h).$
\end{center}
\end{proof}

\subsection{Aborted exploration}\label{subsec:explo1vertexaborted}
\noindent
For $x\in V_n$, the \textbf{lower exploration} is the exploration of Section~\ref{subsec:explo1vertexsuccess}, modified by replacing $h+\log^{-1}n$ by $h-\log^{-1}n$, so that we compare $T_x$ and $\Chlnminus$. If it is stopped at some step $k$ at which only \ref{C4} is met, say that it is \textbf{aborted}. Write
\\
$\cA(x):=\{\text{the lower exploration from $x$ is aborted}\}\cap \{ \cC_x^{\cM_n,h}\subseteq T_x\}$.

\begin{proposition}\label{prop:explo1vertexaborted}
\begin{equation}\label{eqn:explo1vertexaborted}
\dP_{ann}(\cA(x))\underset{n\rightarrow +\infty}{\longrightarrow} 1-\eta (h).
\end{equation}
\end{proposition}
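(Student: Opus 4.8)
The plan is to mirror the structure of the proof of Proposition~\ref{prop:explo1vertex}, replacing the event "the exploration is successful and $T_x\subseteq\cC_x^{\cM_n,h}$" by "the lower exploration is aborted and $\cC_x^{\cM_n,h}\subseteq T_x$". First I would invoke Lemma~\ref{lem:explo1vertex}, applied with $h$ replaced by $h$ but using the lower threshold $h-\log^{-1}n$ in place of $h+\log^{-1}n$; by the same argument (monotonicity of $\eta$ and $\lambda$, continuity away from $h_\star$), the analogue of the second convergence in~\eqref{eqn:growthtree} gives $\dP_{ann}(\mathcal{F'}^{*(n)})\to 1-\eta(h)$, where now $\mathcal{F'}^{(n)}_k$ refers to the level $h-\log^{-1}n$ cluster dying out. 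On this event, the lower exploration reaches condition~\ref{C4} before~\ref{C5} or~\ref{C6}, i.e.\ it is aborted, provided no cycle appears; by the same binomial estimate (Lemma~\ref{lem:matchings}, equation~\eqref{eqn:binomdominationdiff} with $k=m_0=1$, $m_E=0$, and $m$ at most the number of half-edge pairings performed, which is $o(\sqrt n)$ exactly as in Remark~\ref{rem:explorationsize}), condition~\ref{C1} happens with $\dP_{ann}$-probability $o(1)$. Hence $\dP_{ann}(\text{lower exploration aborted})\geq \dP_{ann}(\mathcal{F'}^{*(n)})-o(1)\to 1-\eta(h)$, and since the aborted and successful events are disjoint and $\dP_{ann}(\cS'(x))\to\eta(h)$ by the lower-threshold analogue of~\eqref{eqn:growthsuccessful}, in fact $\dP_{ann}(\text{lower exploration aborted})\to 1-\eta(h)$.

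It then remains to show that, on the event that the lower exploration is aborted (and $\cM_n$ is a good graph, away from a cycle in $B_{\cM_n}(T_x,a_n)$), we have $\cC_x^{\cM_n,h}\subseteq T_x$ with $\dP^{\cM_n}$-probability $1-o(1)$. The key point is a comparison of $\psimn$ with $\phid$ in the \emph{reverse} direction to Proposition~\ref{prop:explo1vertex}: here we must argue that every vertex $y$ that was \emph{excluded} from $T_x$ — i.e.\ $\phid(\Phi(y))<h-\log^{-1}n$ — satisfies $\psimn(y)<h$, and moreover that no vertex outside $T_x\cup\partial T_x$ can belong to $\cC_x^{\cM_n,h}$ because the boundary $\partial T_x$ of the aborted tree consists entirely of such excluded vertices. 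I would use Proposition~\ref{prop:couplinggffsexplo} applied along the same $A_{k,i}$-filtration as before: on the events $\cE^{(n)}_{k,i}=\{\max_{y'\in A_{k,i}}|\psimn(y')|<\log^{2/3}n\}$ (which hold with probability $1-n^{-2}$ by Lemma~\ref{lem:maxgff}), the recursive construction~\eqref{eqn:recursivegffgraph} together with~\eqref{eqn:condexpapprox} and~\eqref{eqn:condvarapprox} gives, for each $y\in T_x\cup\partial T_x$,
\[
|\psimn(y)-\phid(\Phi(y))|\leq \frac{|\psimn(\overline y)-\phid(\overline{\Phi(y)})|}{d-1}+\log^{-3}n+|\xi|\log^{-4}n,
\]
so that a union bound over the (at most $n^{1/2}$, by Remark~\ref{rem:explorationsize}) vertices of $T_x\cup\partial T_x$ and the exponential Markov inequality of Lemma~\ref{lem:maxgff} force $\sup_{y\in T_x\cup\partial T_x}|\psimn(y)-\phid(\Phi(y))|\leq\log^{-1}n$ off a set of probability $\leq n^{-1}$. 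On this good event, any $y\in\partial T_x$ has $\psimn(y)\leq\phid(\Phi(y))+\log^{-1}n<h$, so $\cC_x^{\cM_n,h}$ cannot escape through $\partial T_x$; and since $\cM_n$ is a good graph, $B_{\cM_n}(T_x,a_n)$ is a tree, so $\partial T_x$ genuinely separates $T_x$ from the rest of $\cM_n$. Hence $\cC_x^{\cM_n,h}\subseteq T_x$.

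Finally, combining the two pieces: $\dP_{ann}(\cA(x))\geq \dP_{ann}(\text{lower exploration aborted})-\dP_{ann}(\text{\ref{C1} happens})-\dP_{ann}(\cM_n\text{ not good})-\sup_{\text{good }\cM_n}\dP^{\cM_n}(\cC_x^{\cM_n,h}\not\subseteq T_x)$, and all subtracted terms are $o(1)$ by the above, so $\liminf_n\dP_{ann}(\cA(x))\geq 1-\eta(h)$; the matching upper bound follows since on $\cA(x)$ the exploration is aborted and $\dP_{ann}(\text{lower exploration aborted})\to 1-\eta(h)$. The main obstacle I anticipate is the bookkeeping for the boundary vertices $\partial T_x$: one must be careful that Proposition~\ref{prop:couplinggffsexplo} is applicable at those vertices too (they lie in $\partial B_{\cM_n}(A,1)$ for the appropriate $A$, and the tree-excess and size hypotheses are inherited from~\ref{C1} not happening and from Remark~\ref{rem:explorationsize}), and that the separation property "$\partial T_x$ disconnects $T_x$ from $\cM_n\setminus B_{\cM_n}(T_x,a_n)$" is exactly what being a good graph with no cycle in $B_{\cM_n}(T_x,a_n)$ provides. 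Everything else is a routine transcription of the proof of Proposition~\ref{prop:explo1vertex}.
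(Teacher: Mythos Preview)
Your proposal is correct and follows exactly the route the paper indicates: a direct transcription of Lemma~\ref{lem:explo1vertex} and Proposition~\ref{prop:explo1vertex} with $h+\log^{-1}n$ replaced by $h-\log^{-1}n$, the same cycle estimate for \ref{C1}, and the coupling of Proposition~\ref{prop:couplinggffsexplo} now used to show that the \emph{rejected} neighbours of $T_x$ have $\psimn<h$, which (absent a cycle) disconnects $T_x$ from the rest of $\cM_n$ and forces $\cC_x^{\cM_n,h}\subseteq T_x$. The only cosmetic point is that your use of ``$\partial T_x$'' for the rejected children clashes with the paper's convention (boundary $=$ vertices of maximal height in the rooted tree); writing instead ``the children in $\cM_n$ of vertices of $T_x$ that were not added to $T_x$'' avoids the ambiguity.
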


\noindent
The proof follows from a direct adaptation of Lemma~\ref{lem:explo1vertex} and Proposition~\ref{prop:explo1vertex}. Note in particular that $\dP_{ann}(\text{\ref{C1} happens})=o(1)$, and that $\dP_{ann}(\cA(x))=\dP_{ann}(\cZ_{k_0-1}^{h-\log^{-1}n}=\emptyset)+o(1)=1-\eta(h)+o(1)$.

\section{Existence of a giant component}\label{sec:giant}
\noindent
In Section~\ref{subsec:connexion}, we prove that two vertices $x,y\in V_n$ are in the same connected component of $\Enh$ with $\dP_{ann}$-probability $\underset{n\rightarrow +\infty}{\longrightarrow}\eta(h)^2$ (Proposition~\ref{prop:exploendjoin}). Then in Section~\ref{subsec:averageconnec}, we use a second moment argument to get concentration and to show (\ref{eqn:mainthm}).

\subsection{Connecting two successful explorations}\label{subsec:connexion}
\noindent
Let us describe our strategy to establish Proposition~\ref{prop:exploendjoin}. We perform explorations as in Section~\ref{subsec:explo1vertexsuccess} from $x$ and $y$. If they are both successful and do not meet (which happens with probability $\simeq \eta(h)^2$), we develop disjoint balls, denoted ``joining balls'', from $\partial T_x$ to $\partial T_y$ (Section~\ref{subsubsec:jointexplo}). Each of them is rooted at a vertex of $\partial T_x$, hits $\partial T_y$ at exactly one vertex, and has a ``security radius'' of depth $a_n$ around its path from $\partial T_x$ to $\partial T_y$ (see Figure 3). The construction of the joining balls only depends on the structure of $\cM_n$, and not on the values of $\psimn$. Then, we realize $\psimn$ on $T_x, T_y$ and those balls (Section~\ref{subsubsec:jointexploGFF}). If they are all disjoint and tree-like, once we have revealed $\psimn$ on $T_x$ and $T_y$, this security radius allows us to apply Proposition~\ref{prop:couplinggffsexplo} to approximate $\psimn$ on the paths from $\partial T_x$ to $\partial T_y$ by $\phid$.
\\
Let us explain with a back-of-the-envelope computation how the joining balls allow to connect $T_x$ and $T_y$ in $\Enh$. Since $\vert T_y\vert\simeq n^{1/2}b_n$ by \ref{C5}, the probability that for a given $z\in \partial T_x$, exactly one of the vertices at distance $\lfloor\gamma\log_{d-1}\log n\rfloor$ (and no vertex at distance $< \lfloor\gamma\log_{d-1}\log n\rfloor$) from $z$ is in $\partial T_y$ is
\begin{center}
$\simeq \dP(\text{Bin}((d-1)^{\gamma\log_{d-1}\log n},\frac{n^{1/2}b_n}{dn})=1)\simeq \log^{\gamma}n\times n^{-1/2}b_n$.
\end{center}
And there are $\simeq n^{1/2}b_n$ vertices in $\partial T_x$, hence we can expect that that the number of joining balls is at least $\simeq n^{1/2}b_n\times \log^{\gamma}n\times n^{-1/2}b_n= b_n^2\log^{\gamma}n$, provided that we can control some undesirable events (such as an intersection between balls, or a cycle in a ball). This is the purpose of Lemma~\ref{lem:joiningball}. 
\\
Moreover, we know that for large $r\in \dN$ and $v\in \partial B_{\Td}(\circ,r)$, $\dP^{\Td}(v\in \Ch)$ is of order $(\lambda_h/(d-1))^r$, by Proposition~\ref{prop:Chlargedevgrowthrate}. Taking $r=\gamma \log_{d-1}\log n$, the probability that $\Enh$ percolates from $\partial T_x$ to $\partial T_y$ through a given joining ball is $\simeq\log^{\gamma(\log_{d-1}\lambda_h\,-1)}n$, if we can approximate $\psimn$ by $\phid$. For $\gamma$ large enough w.r.t $\kappa$ (recall (\ref{eqn:bndef}) and (\ref{eqn:andef}), and recall that $\lambda_h>1$), 
\begin{center}
$b_n^2\log^{\gamma}n\times\log^{\gamma(\log_{d-1}\lambda_h\,-1)}n\geq \log^{-2\kappa-13}n \,\times\log^{\gamma\log_{d-1}\lambda_h}n >>1$,
\end{center}
so that with high probability, $\Enh$ percolates through at least one joining ball from $\partial T_x$ to~$\partial T_y$. 

\subsubsection{The joint exploration}\label{subsubsec:jointexplo}
For $x,y\in V_n$, write $x\overset{h}{\leftrightarrow}y$ if $y\in \cC_x^{\cM_n, h}$. Let $(\xi_{z,v})_{z\in \{x,y\},v\in \Td}$ be an array of i.i.d. standard normal variables independent from everything else. Define the \textbf{joint exploration from $x$ and $y$} as the exploration from $x$ (with the $(\xi_{x,v})$'s), then the exploration from $y$ (with the $(\xi_{y,v})$'s), as in Section~\ref{subsec:explo1vertexsuccess}, with the additional condition
\begin{enumerate}[label=C5]
\item \label{C7} the exploration is stopped as soon as $R_x\cap R_y\neq \emptyset$,
\end{enumerate}
\noindent
where $R_x$ (resp. $R_y$) is the set of vertices seen during the exploration from $x$ (resp. from $y$), as defined in Remark~\ref{rem:explorationsize}. Note that the families $(\xi_{x,v})_{v\in \Td}$ and $(\xi_{x,v})_{v\in \Td}$ generate two independent copies of $\phid$.
\\
\\
If both explorations are successful and \ref{C7} does not happen (denote $\mathcal{S}(x,y)$ this event), we add the following steps to the joint exploration. Let $\gamma>0$ and 
\begin{equation}\label{eqn:aprimendef}
a'_n:=\lfloor \gamma\log_{d-1}\log n\rfloor. 
\end{equation}
Denote $z_1, \ldots, z_{\vert \partial T_x\vert}$ the vertices of $\partial T_x$. 
For $j=1, 2, \ldots, \vert \partial T_x\vert$ successively, build $B^*(z_j,a'_n)$ the subgraph of $\cM_n$ obtained as follows (see Figure 3 for an illustration). Write
\begin{equation}\label{eqn:BjRjdef}
B^*_j:=\cup_{j'<j}B^*(z_{j'},a'_n)\text{ and }Q_j:=R_x\cup R_y \cup B^*_j,
\end{equation}
so that $Q_j$ is the set of vertices seen in the exploration before the construction of $B^*(z_j,a'_n)$.
\\
Let initially $B^*(z_j,a'_n)$ be the subtree from $z_j$ of height $a_n$ in the tree $B_{\cM_n}(T_x,a_n)$ (in blue in Figure 3). For $k\leq a'_n$, write $B^*(z_j,k):=B^*(z_j,a'_n)\cap B_{\cM_n}(z_j,k)$. If $B^*(z_j, a_n)\cap B^*_j\neq \emptyset$, say that $j$ is \textbf{spoiled}, and the construction of $B^*(z_j,a'_n)$ stops.
\\
Else, for $k=a_n, a_n+1, \ldots, a'_n-2a_n-2$ successively, while $\tx(B^*(z_j, k)\cup Q_j) =\tx(Q_j)$ (i.e. no cycle has been discovered) and $B^*(z_j, k)\cap B_{\cM_n}(T_y,a_n)=\emptyset$, add to $B^*(z_j,a'_n)$ the neighbours of $B^*(z_j,k)$ and the corresponding edges (in red in Figure 3). If for some $k\in \{a_n, \ldots, a'_n-2a_n-2\}$, $\tx(B^*(z_j, k)\cup Q_j) >\tx(Q_j)$ (i.e. at least one cycle is discovered) or $B^*(z_j, k)\cap B_{\cM_n}(T_y,a_n)\neq \emptyset$, the construction of $B^*(z_j, a'_n)$ stops.
\\
If the construction has not been stopped for some $k<a'_n-2a_n-1$, add the neighbours of $B^*(z_j, a'_n-2a_n-1)$ to $B^*(z_j,a'_n)$ (also in red in Figure 3). If 
\begin{center}
$\vert B^*(z_j, a'_n-2a_n)\cap B_{\cM_n}(T_y,a_n)\vert \neq 1$, 
\end{center}
the construction of $B^*(z_j, a'_n)$ stops.
\\
Else, let $v_j(0)$ be the unique vertex of $B^*(z_j, a'_n-2a_n)\cap B_{\cM_n}(T_y,a_n)$. If 
\begin{center}
$\tx((B^*(z_j, a'_n-2a_n)\cup Q_j)\setminus\{v_j(0)\}) >\tx(Q_j)$, 
\end{center}
the construction of $B^*(z_j, a'_n)$ stops.
\\
Else, for $k=a'_n-2a_n, \ldots, a'_n-1$ successively, while $\tx(B^*(z_j, k)\cup Q_j) =\tx(B^*(z_j, a'_n-2a_n)\cup Q_j)$, add the neighbours of $B^*(z_j,k)$ to $B^*(z_j,a'_n)$ (in green in Figure 3). Then, the construction of $B^*(z_j, a'_n)$ is completed. In this case only, and if 
\begin{center}
$\tx(B^*(z_j, a'_n)\cup Q_j)=\tx(B^*(z_j, a'_n-2a_n)\cup Q_j)$, 
\end{center}
say that $B^*(z_j,a'_n)$ is a \textbf{joining ball}. In other words, we obtain a joining ball if, revealing the offspring up to generation $a'_n$ of $z_j$, the $(a'_n-2a_n)$ offspring of $z_j$ intersects $\partial T_y$ at a unique vertex $v_j(0)$, and no cycle is discovered in the whole construction (except when $B^*(z_j,a'_n)$ reaches $\partial B_{\cM_n}(T_y,a_n)$ at $v_j(0)$, if $T_x$ and $T_y$ were already connected in $Q_j$ by $B^*(z_{j'},a'_n)$ for some $j'<j$).
\\
Write $J:=\{j\leq \vert \partial T_x\vert, \text{$B^*(z_j,a'_n)$ is a joining ball}\}$ and $\cS'(x,y):=\cS(x,y)\cap \{\vert J\vert \geq \log^{\gamma -3\kappa -18}n\}$ ($\cS(x,y)$ was define above (\ref{eqn:aprimendef})).
\\

\includegraphics[scale=0.8]{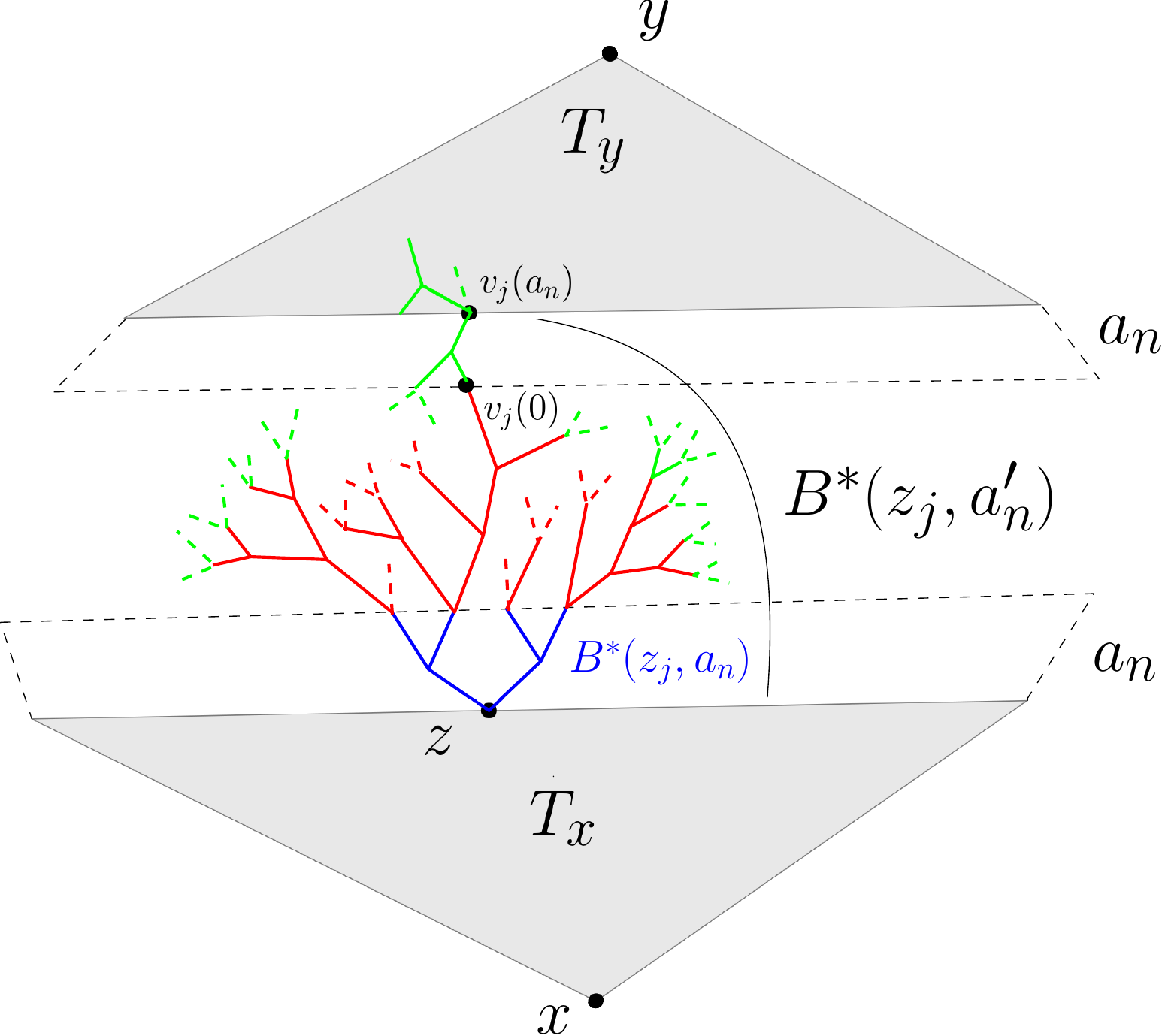}

\begin{center}
Figure 3. Illustration of a joining ball $B^*(z_j,a'_n)$. Here $a_n=2$ and $a'_n=9$. Dashed lines represent subtrees that have not been fully pictured. The blue tree has total height $a_n$, the red trees $a'_n-3a_n$, and the green trees $2a_n$.
\end{center}

\begin{lemma}\label{lem:joiningball}
Fix $\gamma >3\kappa +18$.
$$ \dP_{ann}(\cS'(x,y))\underset{n\rightarrow +\infty}{\longrightarrow}\eta(h)^2.$$
\end{lemma}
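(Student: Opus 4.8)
\textbf{Proof strategy for Lemma~\ref{lem:joiningball}.}

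The plan is to condition on the event $\cS(x,y)$ (which has $\dP_{ann}$-probability $\to\eta(h)^2$ by Proposition~\ref{prop:explo1vertex} together with the disjointness statement \ref{C7}, whose failure has probability $o(1)$ by Lemma~\ref{lem:matchings} applied to the $o(\sqrt n)$ vertices seen), and then show that conditionally on $\cS(x,y)$ the number $\vert J\vert$ of joining balls is at least $\log^{\gamma-3\kappa-18}n$ with probability $1-o(1)$. Since $\cS'(x,y)=\cS(x,y)\cap\{\vert J\vert\geq \log^{\gamma-3\kappa-18}n\}$, this yields the claim. Throughout, the key point is that the construction of the balls $B^*(z_j,a'_n)$ depends only on the pairing of half-edges of $\cM_n$, not on $\psimn$, so the Gaussian field never enters this lemma; and the unmatched half-edges at the moment we start building $B^*(z_j,a'_n)$ are, conditionally on everything revealed so far, paired uniformly — this is exactly the regime where Lemma~\ref{lem:matchings} applies.

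First I would estimate the probability that a fixed index $j$ is \textbf{good}, i.e.\ that $B^*(z_j,a'_n)$ is a joining ball. Starting from $z_j\in\partial T_x$, we grow its subtree; at each of the $a'_n-a_n = \Theta(\log\log n)$ generations we pair $\Theta((d-1)^k)$ half-edges, the total being $\Theta((d-1)^{a'_n})=\Theta(\log^\gamma n)$ pairings, which is $\mathrm{polylog}(n)$. By Lemma~\ref{lem:matchings}a, the probability that any of these pairings creates a cycle or hits the already-seen set $Q_j$ (of size $O(\sqrt n\,\mathrm{polylog}\,n)$, hence $o(\sqrt n)$) before generation $a'_n-2a_n$ is $O(\mathrm{polylog}(n)\cdot\sqrt n/n)=o(1)$; similarly $j$ is spoiled with probability $o(1)$. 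Conditionally on none of this, the number of vertices of the $(a'_n-2a_n)$-th generation of $B^*(z_j)$ landing in $\partial B_{\cM_n}(T_y,a_n)$ is, by (\ref{eqn:binomdifferent}), stochastically close to $\mathrm{Bin}\bigl((d-1)^{a'_n-2a_n},\,\Theta(n^{1/2}b_n)/n\bigr)$: the number of half-edges available at generation $a'_n-2a_n$ is $\Theta((d-1)^{a'_n-2a_n})=\Theta(\log^{\gamma}n\cdot\log^{-2\kappa}n\cdot\mathrm{const})$ and $\vert\partial B_{\cM_n}(T_y,a_n)\vert=\Theta(n^{1/2}b_n(d-1)^{a_n})=\Theta(n^{1/2}\log^{-6}n)$ on $\cS(x,y)$. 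The probability this Binomial equals exactly $1$ is $\Theta\bigl((d-1)^{a'_n-2a_n}\cdot n^{-1/2}\log^{-6}n\bigr)=\Theta(\log^{\gamma-2\kappa-6}n\cdot n^{-1/2})$; and given that it equals $1$, the final $2a_n$ generations around $v_j(0)$ are again built with $\mathrm{polylog}(n)$ pairings, so introduce a new cycle with probability $o(1)$. Hence $\dP_{ann}(j\text{ good}\mid Q_j,\,\cS(x,y))=\Theta(\log^{\gamma-2\kappa-6}n\cdot n^{-1/2})$, uniformly in $j\leq\vert\partial T_x\vert$.

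Next, since $\vert\partial T_x\vert=\vert O_{k_0-1}\vert\geq n^{1/2}b_n=n^{1/2}\log^{-2\kappa-6}n\cdot\mathrm{const}$ on $\cS(x,y)$ (using $a_n=\lfloor\kappa\log_{d-1}\log n\rfloor$, so $(d-1)^{-a_n}=\Theta(\log^{-\kappa}n)$, whence $b_n=\Theta(\log^{-2\kappa-6}n)$), the conditional expectation of $\vert J\vert$ is $\geq \vert\partial T_x\vert\cdot\Theta(\log^{\gamma-2\kappa-6}n\cdot n^{-1/2})=\Theta(\log^{\gamma-4\kappa-12}n)$, which diverges once $\gamma>4\kappa+12$. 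To upgrade from expectation to a high-probability lower bound I would use a second-moment / Chebyshev argument: writing $\vert J\vert=\sum_j\mathbbm 1\{j\text{ good}\}$, I condition on $Q_j$ when handling the $j$-th term, so the indicators are not independent but are exchangeably weakly dependent; the covariance of the $j$-th and $j'$-th indicators ($j<j'$) is controlled because revealing $B^*(z_j,a'_n)$ only enlarges the seen set by $\mathrm{polylog}(n)$ vertices, which changes the success probability for $j'$ by a $1+o(1)$ factor (again via Lemma~\ref{lem:matchings}a, the extra intersection probability being $\mathrm{polylog}(n)\cdot\mathrm{polylog}(n)/n=o(1)$ of the main term). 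Thus $\mathbb E[\vert J\vert^2]\leq(1+o(1))\mathbb E[\vert J\vert]^2+\mathbb E[\vert J\vert]$, and since $\mathbb E[\vert J\vert]\to\infty$, Chebyshev gives $\vert J\vert\geq\tfrac12\mathbb E[\vert J\vert]\geq\log^{\gamma-3\kappa-18}n$ w.h.p.\ for $n$ large (the crude exponent $\gamma-3\kappa-18<\gamma-4\kappa-12$ leaves ample slack, absorbing all the $\Theta(\cdot)$ constants). Combining, $\dP_{ann}(\cS'(x,y))=\dP_{ann}(\cS(x,y))-o(1)\to\eta(h)^2$.

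\textbf{Main obstacle.} The delicate part is the dependence between the events $\{j\text{ good}\}$: because the balls are built sequentially and the half-edge pairings are globally constrained, one must verify that each newly constructed ball perturbs the environment only by $\mathrm{polylog}(n)$ vertices and that this perturbation is negligible against the $\Theta(n^{1/2})$ denominator in Lemma~\ref{lem:matchings} — uniformly over all $\mathrm{poly}\log$-many balls and all of the (up to $\sqrt n\,\mathrm{polylog}$) vertices already seen. Bookkeeping the accumulation of "bad" events (cycles, premature hits of $T_y$, spoiling, balls of the wrong shape) across $\Theta(\log^\gamma n)$ pairings per ball and $\leq\sqrt n$ balls, while keeping every error $o(1)$ relative to the (shrinking, $n^{-1/2}\mathrm{polylog}$) per-ball success probability, is where the generous choice $\gamma>3\kappa+18$ is consumed; this is the analogue of the back-of-the-envelope computation preceding the lemma, made rigorous.
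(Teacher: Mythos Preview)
Your overall architecture is right and matches the paper: reduce to $\cS(x,y)$ (probability $\to\eta(h)^2$), then show $\vert J\vert\geq\log^{\gamma-3\kappa-18}n$ with conditional probability $1-o(1)$ by estimating the per-ball success probability and exploiting the large number of candidates $z_j$. Two points, one minor and one structural.

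\textbf{Minor arithmetic.} You write $b_n=\Theta(\log^{-2\kappa-6}n)$, but $b_n=(d-1)^{-a_n}\log^{-6}n=\Theta(\log^{-\kappa-6}n)$: you squared $(d-1)^{-a_n}$ by mistake. This matters because with your exponent you get $\mathbb E[\vert J\vert]=\Theta(\log^{\gamma-4\kappa-12}n)$, and $\gamma-4\kappa-12\geq\gamma-3\kappa-18$ requires $\kappa\leq 6$, whereas $\kappa$ must be taken large. With the correct $b_n$ the expected count is $\Theta(\log^{\gamma-3\kappa-12}n)$ and the slack to $\log^{\gamma-3\kappa-18}n$ is genuine.

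\textbf{Concentration step.} Here you diverge from the paper. The paper does \emph{not} use a second moment; it uses one-sided stochastic domination by a binomial. Concretely, it first proves (its Step~1) two global a~priori bounds valid with probability $1-O(n^{-3})$: the total intersection $\vert B^*\cap B_{\cM_n}(T_y,a_n)\vert$ and the total number of spoiled $z_j$ are each $\leq\log^{3\gamma}n$. On that event it shows (its Step~2) that for every unspoiled $j$, conditionally on the full past $\cF_j$, the success probability is $\geq n^{-1/2}\log^{\gamma-2\kappa-10}n$; hence $\vert J\vert$ dominates a $\mathrm{Bin}(n^{1/2}\log^{-\kappa-7}n,\,n^{-1/2}\log^{\gamma-2\kappa-10}n)$ and a direct tail bound finishes. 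This needs only a \emph{lower} bound on the conditional success probability.

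Your Chebyshev route needs the covariances $\mathrm{Cov}(\mathbf 1_j,\mathbf 1_{j'})$ to sum to $o(\mathbb E[\vert J\vert]^2)$, which forces the conditional success probability $\dE[\mathbf 1_{j'}\mid\cF_{j'-1}]$ to be essentially \emph{constant} (up to $1+o(1)$) across realizations. Your justification --- ``revealing $B^*(z_j,a'_n)$ enlarges the seen set by $\mathrm{polylog}(n)$ vertices'' --- bounds the effect of a \emph{single} earlier ball, but $Q_{j'}$ accumulates up to $\vert\partial T_x\vert\cdot\log^\gamma n=\Theta(n^{1/2}\mathrm{polylog}\,n)$ vertices from \emph{all} previous balls, which is the same order as the target set $D$. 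What saves the day is that only $O(\log^{3\gamma}n)$ of those cumulative vertices actually land in $B_{\cM_n}(T_y,a_n)$ and hence affect $\vert D\vert$; but establishing that is exactly the paper's Step~1 saturation bound, which you have not isolated. Once you do, you could complete either the Chebyshev argument or (more simply) the paper's binomial domination. So the approach is salvageable, but the covariance control as written has a gap precisely at the ``accumulation of bad events'' you flag in your last paragraph: you need the global saturation/spoiling bounds \emph{before} you can claim the per-ball probabilities are stable to $1+o(1)$.
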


\begin{proof}
Denote $\cF^{*(n)}(x)$ (resp. $\cF^{*(n)}(y)$) the event $\cF^{*(n)}$ for $x$ (resp. $y$). Remark that the realization of $\cF^{*(n)}(x)$ (resp. $\cF^{*(n)}(y)$) only depends on the version of $\phid$ defined by $(\xi_{x,v})_{v\in \Td}$ (resp. by $(\xi_{y,v})_{,v\in \Td}$) and not on the pairings of $\cM_n$. Hence, $\cF^{*(n)}(x)$ and $\cF^{*(n)}(y)$ are independent. 
As in the proof of Lemma~\ref{lem:explo1vertex}, we get that 
\begin{center}
$ \dP_{ann}(\cF^{*(n)}(x)\cap \cF^{*(n)}(y))=  \dP_{ann}(\cF^{*(n)}(x)) \dP_{ann}( \cF^{*(n)}(y))\rightarrow \eta(h)^2$ 
\end{center}
and $\dP_{ann}(\mathcal{F'}^{*(n)}(x)\cup \mathcal{F'}^{*(n)}(y))\rightarrow 1-\eta(h)^2$. 
\\
Moreover, $\dP_{ann}(\text{\ref{C1} or \ref{C7} happens})\rightarrow 0$. Indeed, by Remark~\ref{rem:explorationsize}, less than $2dn^{1/2}\log^{-3} n$ half-edges are revealed during the explorations from $x$ and $y$, which allows to control \ref{C7} as we did for \ref{C1} in (\ref{eqn:C1happensexplo}). Thus, 
\begin{align*}
\limsup_{n\rightarrow +\infty}\vert \dP_{ann}(\cS'(x,y)) -\eta(h)^2 \vert \leq & \limsup_{n\rightarrow +\infty}\dP_{ann}(\cS(x,y)\cap\{\vert J\vert < \log^{\gamma-3\kappa-18}n \})
\end{align*}

\noindent


\noindent
and it remains to prove that 
\begin{equation}\label{eqn:joiningballs}
\limsup_{n\rightarrow +\infty}\dP_{ann}(\cS(x,y)\cap\{\vert J\vert <\log^{\gamma-3\kappa-18}n\}) =0.
\end{equation}
We proceed in two steps: in step 1, we control the number of spoiled vertices, and the number of vertices of $\partial B_{\cM_n}(T_y,a_n)$ that are hit when building the $B^*(z_j,a'_n)$'s (if a large proportion of those vertices are in $Q_j$, then it significantly affects the probability that $B^*(z_j,a'_n)$ is a joining ball). In step 2, we estimate the probability that for a given $j$, $B^*(z_j,a'_n)$ is a joining ball, provided that the bounds of step 1 hold. This gives a binomial lower bound for $\vert J\vert$. 
\\
\textbf{Step 1.} By Remark~\ref{rem:explorationsize} and (\ref{eqn:andef}), $\vert \partial B_{\cM_n}(T_x,a_n)\vert+\vert \partial B_{\cM_n}(T_y,a_n)\vert\leq 2n^{1/2}\log^{-1}n$. 
Note also that for every $j\leq \vert \partial T_x\vert$, $B^*(z_j,a'_n)$ contains less than $(d-1)^{a'_n}\leq \log^{\gamma}n$ half-edges. Hence:
\begin{equation}\label{eqn:explototaledges}
\text{at every moment of the exploration, less than $n^{1/2}\log^{\gamma}n$ half-edges have been seen.}
\end{equation}
\noindent
Let 
\begin{equation}\label{eqn:defBstar}
B^*:= \cup_{j\leq \vert \partial T_x\vert}B^*(z_j,a'_n). 
\end{equation}
To reveal the edges of $B^*$, one proceeds to at most $n^{1/2}\log^{\gamma}n$ pairings of half-edges by (\ref{eqn:explototaledges}). Any pairing that results in an edge $e$ between some $B^*(z_j,k)$ and $B_{\cM_n}(T_y,a_n)$ then leads to at most 
\begin{center}
$1+(d-1)+\ldots +(d-1)^{2a_n}\leq 3(d-1)^{2a_n}\leq \log^{2\kappa +1}n \leq \log^{\gamma -1}n$
\end{center}
vertices of $B^*(z_j,a'_n)\cap B_{\cM_n}(T_y,a_n)$, since the construction of $B^*(z_j,a'_n)$ stops if such an edge happens at distance less than $a'_n-2a_n$ of $z_j$ (and recall that we choose $\gamma >3\kappa +18>2\kappa +2$).

\noindent
Thus, by (\ref{eqn:binomdominationatinfinitydiff}) with $k=\lfloor \log^{2\gamma+1}n\rfloor$, $m<n^{1/2}\log^{\gamma}n$ and $m_1+m_0+m_E< n^{1/2}\log^{\gamma}n$ (due to \eqref{eqn:explototaledges}), for $n$ large enough:
\begin{equation}\label{eqn:saturationjoiningballs}
\dP_{ann}(\cS(x,y)\cap\{\vert  B_{\cM_n}(T_y,a_n) \cap B^* \vert \geq \log^{3\gamma}n\})\leq 0.99^{\log^2n}\leq n^{-3}.
\end{equation}

\noindent
Let $N$ be the total number of spoiled vertices. By (\ref{eqn:binomdominationatinfinitydiff}) with the same parameters,
\begin{equation}\label{eqn:spoliationjoiningballs}
\dP_{ann}(\cS(x,y)\cap \{N \geq \log^{3\gamma}n\})\leq n^{-3}.
\end{equation}

\noindent
\textbf{Step 2.} Recall the definition of $B^*_j$ from (\ref{eqn:BjRjdef}). For $j\leq m$, denote
\begin{equation}\label{eqn:defSj}
\cS_j:=\cS(x,y)\cap \{ \vert B_{\cM_n}(T_y,a_n) \cap B^*_j \vert \leq \log^{3\gamma }n\} \cap \{z_j\text{ is not spoiled}\},
\end{equation}
\noindent
and let $\cF_j$ be the sigma-algebra of the whole exploration until $B^*_{j-1}$ has been constructed.
Suppose that for every $j\geq 1$ and every $\cF_j$-measurable event $\cE_j\subseteq \cS_j$, 
\begin{equation}\label{eqn:p1p2p3}
\dP_{ann}(B^*(z_j,a'_n)\text{ is a joining ball }\vert\, \cE_j)\geq n^{-1/2}\log^{\gamma-2\kappa-10}n.
\end{equation}
On $\cE:=\{\vert  B_{\cM_n}(T_y,a_n) \cap B^* \vert < \log^{3\gamma}n\} \cap \{N < \log^{3\gamma}n\}$, the number of $j$'s such that $\cS_j$ holds is at least 
\begin{center}
$\vert \partial T_x\vert - \log^{3\gamma}n\geq n^{1/2}\log^{-\kappa-7}n$
\end{center}
by (\ref{C5}) and (\ref{eqn:bndef}). Thus, if $Z\sim \emph{\text{\emph{Bin}}}\left(\lfloor n^{1/2}\log^{-\kappa-7}n\rfloor,n^{-1/2}\log^{\gamma-2\kappa-10}n\right)$,
\[
\dP_{ann}\left(\cS(x,y)\cap \{\vert J \vert \leq \log^{\gamma-3\kappa-18}n\}  \right)\leq \dP(Z\leq \log^{\gamma-3\kappa-18}n)+\dP_{ann}(\cS(x,y)\cap \cE^c).\]
For large enough $n$, $\dP_{ann}(\cS(x,y)\cap \cE^c)=o(n^{-2})$ by (\ref{eqn:saturationjoiningballs}) and (\ref{eqn:spoliationjoiningballs}). Moreover, one checks easily (using $\gamma >3\kappa +18$ and (\ref{eqn:binomiallittlethings})) that for $n$ large enough, for all $k\leq \log^{\gamma-3\kappa-18}n$:
\begin{align*}
\dP(Z=k)&\leq (\lfloor n^{1/2}\log^{-\kappa-7}n\rfloor)^k (n^{-1/2}\log^{\gamma-2\kappa-10}n)^k(1-n^{-1/2}\log^{\gamma-2\kappa-10}n)^{ n^{1/2}\log^{-\kappa-7}n\,\,/2} 
\\
&\leq \exp\left(k\log(\log^{\gamma-3\kappa-17}n)- (\log^{\gamma-3\kappa-17}n)/2 \right)
\\
&\leq 1/n.
\end{align*}
This yields (\ref{eqn:joiningballs}). Hence, it only remains to prove (\ref{eqn:p1p2p3}).

Remark that $\dP_{ann}(B^*(z_j,a'_n)\text{ is a joining ball }\vert \, \cE_j) \geq p_1p_2p_3$ where:
\begin{itemize}
\item $p_1:=\dP_{ann}(\widehat{\cE}_1\vert \cE_j)$ and $\widehat{\cE}_1:={\cE_j}\cap\{$no cycle is created and no connection to $Q_j$ is made when revealing $B^*(z_j,a'_n-2a_n-1)\}$,

\item $p_2:=\dP_{ann}(\widehat{\cE}_2\vert \widehat{\cE}_1)$ where $\widehat{\cE}_2:={\widehat{\cE}_1}\cap\{$exactly one edge connects $B^*(z_j,a'_n-2a_n-1)$ and $D:=\partial B_{\cM_n}(T_y,a_n)\setminus \{B_{\cM_n}(B_{\cM_n}(T_y,a_n)\cap B^*_j,2a_n)\}\,\}\cap\{$no cycle is created and no connection to $\partial B_{\cM_n}(T_y,a_n)\cup B^*_j$ is made when revealing the other edges of $B^*(z_j,a'_n-2a_n)\}$,

\item $p_3:=\dP_{ann}(\widehat{\cE}_3\vert \widehat{\cE}_2)$ where we set $\widehat{\cE}_3:={\widehat{\cE}_2}\cap\{$no cycle is created and no connection to $\partial B_{\cM_n}(T_y,a_n)\cup B^*_j$ is made when revealing the remaining edges of $B^*(z_j,a'_n)\}$.
\end{itemize}
\noindent
This definition of $D$ guarantees that $B^*(z_j,a'_n)$ will not intersect a previously realized joining ball when growing the subtree from $v_j(0)$ in $B_{\cM_n}(T_y,a_n)$.

\noindent 
(\ref{eqn:binomdominationdiff}) with $k=1$, $m_0,m\leq \log^{\gamma}n$ and $m_E,m_1\leq n^{1/2}\log^{\gamma}n$ due to (\ref{eqn:explototaledges}) yields for $n$ large enough:
\begin{equation}\label{eqn:p1p3}
p_i\geq 1-C(1)\log^{\gamma}n\,\frac{\max(n^{1/2}\log^{\gamma}n,2\log^{\gamma}n)}{n}\geq 1-n^{-1/3}
\end{equation}
for $i\in \{1,3\}$. Therefore, $p_1p_3\geq 1/2$ for $n$ large enough.
\\
On $\widehat{\cE}_1$, reveal the pairings of the half-edges of $\partial B^*(z_j,a'_n-2a_n-1)$ one by one. $\widehat{\cE}_2$ holds if:
\begin{itemize}
\item a given half-edge is matched to a half-edge of $D$, which has probability at least $ \frac{\vert D\vert}{dn}$, and
\item each other half-edge is matched to a half-edge that had not been seen before (by (\ref{eqn:explototaledges}), for each half-edge this happens with probability at least $1-\frac{n^{1/2}\log^{\gamma}n}{dn-n^{1/2}\log^{\gamma}n}\geq 1-\frac{n^{1/2}\log^{\gamma}n}{n} $). 
\end{itemize}
Since $ \partial B^*(z_j,a'_n-2a_n-1)$ has $(d-1) \vert\partial B^*(z_j,a'_n-2a_n-1)\vert$ unpaired half-edges, 
\begin{center}
$p_2\geq (d-1)\vert \partial B^*(z_j,a'_n-2a_n-1) \vert \frac{\vert D\vert}{dn} \left(1-\frac{n^{1/2}\log^{\gamma}n}{n}\right)^{\vert\partial B^*(z_j,a'_n-2a_n-1)\vert-1} $ 
\end{center}
\noindent
By (\ref{eqn:andef}) and (\ref{eqn:aprimendef}), one checks easily that on $\widehat{\cE}_1$,
\begin{center}
$ \log^{\gamma -2\kappa -1}n\leq  \vert\partial B^*(z_j,a'_n-2a_n-1)\vert\leq \log^{\gamma}n$,
\end{center}
and that on $\cS_j$ (defined in (\ref{eqn:defSj})), 
\begin{center}
$\vert D\vert\geq \frac{\vert \partial B_{\cM_n}(T_y,a_n)\vert}{2}\geq n^{1/2}\log^{-7}n$
\end{center}
by (\ref{eqn:bndef}) and \ref{C5}. Hence for $n$ large enough,
\begin{align*}
p_2&\geq (d-1)\log^{\gamma -2\kappa -1}n \,\,\frac{n^{1/2}\log^{-7}n}{dn} \left(1-\frac{n^{1/2}\log^{\gamma}n}{dn}\right)^{\log^{\gamma}n}
\geq \frac{1}{2}n^{-1/2}\log^{\gamma-2\kappa -9}n.
\end{align*}
\noindent
With (\ref{eqn:p1p3}), this entails for $n$ large enough  (uniformly on $j$ and on $\cE_j$):
\begin{center}
$\dP_{ann}(B^*(z_j,a'_n)\text{ is a joining ball } \vert\, \cS_j)\geq p_1p_2p_3\geq n^{-1/2}\log^{\gamma-2\kappa-10}n$.
\end{center}
Then (\ref{eqn:p1p2p3}) follows, so that the proof of the Lemma is complete.
\end{proof}

\subsubsection{The field $\psimn$ on the joint exploration}\label{subsubsec:jointexploGFF}
\noindent
Suppose that we are on $\cS'(x,y)$. By Proposition~\ref{prop:gffann}, we can realize $\psimn$ on $T_x$ as in (\ref{eqn:recursivegffgraph}) with the $(\xi_{x,v})_{v\in \Td}$ (hence we first reveal the remaining edges of $\cM_n$). Then we can realize it in a similar way on $T_y$ with the $(\xi_{y,v})_{v\in \Td}$, letting recursively 
\begin{center}
$\psimn(y_{k,i})= \dE^{\cM_n}[\psimn(y_{k,i})\vert \sigma(A_{k,i})] + \xi_{y,\Phi(y_{k,i})}\sqrt{\text{Var}(\psimn(y_{k,i})\vert \sigma(A_{k,i}))}$
\end{center} 
where $A_{k,i}:=T_x\cup \{y_{\ell,j}\vert (\ell,j) \prec (k,i)\}$, $\prec$ being the lexicographical order on $\dN^2$, and $\Phi$ is the isomorphism between $T_y$ and $\kT_y$.

\noindent
Recall that $J=\{j\geq 1, \, B^*(z_j,a'_n)\text{ is a joining ball}\}$ and that for $j\in J$, we denote $v_j(0)$ the unique vertex of $B^*(z_j,a'_n-2a_n)\cap B_{\cM_n}(T_y,a_n)$. Since no cycle is discovered when revealing $B^*(z_j,a'_n)\setminus B^*(z_j,a'_n-2a_n)$, the intersection of $B^*(z_j,a'_n-a_n)$ and  $T_y$ is a unique vertex $v_j(a_n)$, which is in the $a_n$-offspring of $v_j(0)$ in the tree $B^*(z_j,a'_n)$ rooted at $z_j$. Then we realize $\psimn$ on $B^*(z_j, a'_n-2a_n)$ and on the shortest path $P_j$ from $v(0)$ to $v(a_n)$ as in (\ref{eqn:recursivegffgraph}), via a family of i.i.d. $\cN(0,1)$ random variables $(\xi_{j,k,i})_{k,i\geq 0}$. In the tree $T_j:=B^*(z_j, a'_n-2a_n)\cup P_j$ with root $z_j$, denoting $z_{j,k,i}$ the $i$-th vertex at generation $k$ and 
\begin{equation}\label{eqn:Ajkidef}
A_{j,k,i}:=T_x\cup T_y\cup \{\cup_{j'<j} T'_j\}\cup \{y_{j,k',i'}\, \vert \, (k',i')\prec (k,i)\}
\end{equation}
the set of vertices where $\psimn$ has already been revealed before $z_{j,k,i}$, we let
\begin{equation}\label{eqn:gffjoiningball}
\psimn(z_{j,k,i})=\dE^{\cM_n}[\psimn(z_{j,k,i})\vert \sigma(A_{j,k,i})] +\xi_{j,k,i}\sqrt{\text{Var}(\psimn(y_{j,k,i})\vert \sigma(A_{j,k,i}))}.
\end{equation}
Write $\mathcal{S}^*(x,y)\subseteq \cS'(x,y)$ the event that there exists $j_0\geq 1$ and a path from $z_{j_0}$ to $v_{j_0}(a_n)$ such that $\psimn(v)\geq h$ for every vertex $v$ of that path. In particular, on $\mathcal{S}^*(x,y)$, $x$ and $y$ are in the same connected component of $\Enh$. Recall the definitions of $\kappa$ (\ref{eqn:andef}) and $\gamma$ (Lemma~\ref{lem:joiningball}).

\begin{proposition}\label{prop:exploendjoin}
If $\kappa$ and $\gamma/\kappa$ are large enough, then 
$$\dP_{ann}(\mathcal{S}^*(x,y))\underset{n\rightarrow +\infty}{\longrightarrow} \eta(h)^2.$$
\end{proposition}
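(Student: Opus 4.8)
The plan is to combine Lemma~\ref{lem:joiningball}, which already yields $\dP_{ann}(\cS'(x,y))\to\eta(h)^2$, with the statement that, conditionally on $\cS'(x,y)$, the field $\psimn$ percolates at level $h$ through at least one of the $\ge\log^{\gamma-3\kappa-18}n$ joining balls with probability $1-o(1)$. Since $\cS^*(x,y)\subseteq\cS'(x,y)$, it suffices to prove $\dP_{ann}(\cS'(x,y)\setminus\cS^*(x,y))\to 0$. First I would intersect with the event $\cB_n$ that $\cM_n$ is a good graph (Proposition~\ref{prop:goodgraph}), that $\max_{V_n}\vert\psimn\vert\le\log^{2/3}n$ (Lemma~\ref{lem:maxgff}), and that the comparison of Proposition~\ref{prop:explo1vertex} holds for both explorations from $x$ and from $y$; this costs only $o(1)$, and on $\cB_n\cap\cS(x,y)$ one has $T_x\subseteq\cC_x^{\cM_n,h}$, $T_y\subseteq\cC_y^{\cM_n,h}$ and in fact $\psimn\ge h+\tfrac12\log^{-1}n$ on $T_x\cup T_y$ (the vertices there carry a $\phid$-value $\ge h+\log^{-1}n$ by construction of the explorations, and $\psimn$ differs from it by at most $\tfrac12\log^{-1}n$ on $\cB_n$). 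In particular $\psimn(z_j)\ge h+\log^{-3}n$ and $\psimn(v_j(a_n))\ge h$ for every $j\in J$.

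The core step is a per-ball estimate. Reveal $\psimn$ on the joining balls one at a time, in the order of $J$, via the recursion~(\ref{eqn:gffjoiningball}) with the independent families $(\xi_{j,k,i})$, after having revealed the pairings of $\cM_n$ and $\psimn$ on $T_x\cup T_y$. Write $r:=a'_n-a_n\le(\gamma-\kappa+1)\log_{d-1}\log n$, fix a small $\varepsilon\in(0,\lambda_h-1)$, and let $\cP_j$ be the event that some path from $z_j$ to $v_j(a_n)$ inside $B^*(z_j,a'_n)$ carries $\psimn\ge h$. The goal is to show that, on $\cB_n\cap\cS'(x,y)$, for every $j\in J$ and every realization of the revelation performed before $B^*(z_j,\cdot)$,
\[
\dP_{ann}(\cP_j\mid\mathrm{past})\ \ge\ q_n\ :=\ c_\varepsilon\bigl((\lambda_h-2\varepsilon)/(d-1)\bigr)^{r}
\]
for a constant $c_\varepsilon>0$. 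The argument: the width-$2a_n$ collar around the core of $B^*(z_j,a'_n)$, the tree-excess clauses built into its construction and the failure of~\ref{C1} guarantee that throughout the recursion the already-revealed set $A$ satisfies $\vert A\vert\le n^{1/2}\log^\gamma n$ and $\tx(B_{\cM_n}(A,a_n))=\tx(A)$; as also $\max_A\vert\psimn\vert\le\log^{2/3}n$, Proposition~\ref{prop:couplinggffsexplo} applies at each step, and telescoping the errors $\log^{-3}n$ exactly as in the proof of Proposition~\ref{prop:explo1vertex} shows that $\psimn$ on the core of the ball stays within $\log^{-3}n$, uniformly, of a fresh GFF $\phi'$ on an isomorphic subtree of $\Td^+$ whose root value is $\psimn(z_j)$. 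Hence $\cP_j$ is implied by the conjunction of: the comparison being good (probability $\ge 1-n^{-2}$ over the fresh $\xi$'s); $\phi'\ge h+\log^{-3}n$ along the path from its root down to depth $r-2$; $\psimn(v_j(a_n))\ge h$; and $\psimn(u)\ge h$ for the last interior vertex $u$ of the path (at depth $r-1$), which has the two already-revealed neighbours $v_j(a_n)\in T_y$ and the depth-$(r-2)$ vertex. Since $\psimn(z_j)\ge h+\log^{-3}n$, monotonicity (Lemma~\ref{lem:monotonicityphid}) and the homogeneity of $\Td^+$ bound the second probability from below by $\dP^{\Td}_{h+\log^{-3}n}(w\in\cC_{\circ}^{h+\log^{-3}n,+})=\dE^{\Td}_{h+\log^{-3}n}[\vert\cZ_{r-2}^{h+\log^{-3}n,+}\vert]/(d-1)^{r-2}$ for a fixed depth-$(r-2)$ vertex $w$; the large-deviation bound of Proposition~\ref{prop:Chlargedevgrowthrate} in its $\Chplus$ version, the continuity $\lambda_{h+\log^{-3}n}\to\lambda_h$ (Proposition~\ref{prop:thm43adapted}), and the fact that the survival probability of $\cC_{\circ}^{h+\log^{-3}n,+}$ stays bounded below as $n\to\infty$ (as in the proof of Lemma~\ref{lem:explo1vertex}, using continuity of $\eta$ from~\cite{ACregultrees} and $h<h_\star$) make this $\ge c_\varepsilon'((\lambda_h-2\varepsilon)/(d-1))^{r-2}$. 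Finally, given the already-revealed values, $\psimn(u)$ is Gaussian with variance bounded below and mean a bounded positively-weighted combination of its two neighbour values (both $\ge h$), so $\psimn(u)\ge h$ has conditional probability $\ge c''>0$; combining, and absorbing the negligible $n^{-2}$, gives the claim.

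Then I would combine the per-ball bounds. The joining balls are vertex-disjoint and the Gaussian families used on distinct balls are independent, and the lower bound $q_n$ holds for \emph{every} realization of the past — in particular conditionally on $\bigcap_{j'\in J,\,j'<j}\cP_{j'}^c$; hence on $\cB_n\cap\cS'(x,y)$,
\[
\dP_{ann}\Bigl(\textstyle\bigcap_{j\in J}\cP_j^c\ \Bigm|\ \text{pairings and }\psimn\text{ on }T_x\cup T_y\Bigr)\ \le\ (1-q_n)^{\vert J\vert}\ \le\ e^{-q_n\vert J\vert}.
\]
On $\cS'(x,y)$ one has $\vert J\vert\ge\log^{\gamma-3\kappa-18}n$ while $q_n\ge c_\varepsilon\log^{-(\gamma-\kappa+1)(1-\log_{d-1}(\lambda_h-2\varepsilon))}n$, so $q_n\vert J\vert\ge c_\varepsilon\log^{E}n$ with $E=(\gamma-3\kappa-18)-(\gamma-\kappa+1)\bigl(1-\log_{d-1}(\lambda_h-2\varepsilon)\bigr)$; since $1<\lambda_h-2\varepsilon<d-1$ we have $\log_{d-1}(\lambda_h-2\varepsilon)\in(0,1)$, so $E>0$ as soon as $\kappa$ and $\gamma/\kappa$ are large enough (depending only on $h$ and $d$). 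Then $q_n\vert J\vert\to\infty$, the displayed probability $\to 0$, and taking expectations $\dP_{ann}(\cB_n\cap\cS'(x,y)\setminus\cS^*(x,y))\to 0$; with Lemma~\ref{lem:joiningball} this gives $\dP_{ann}(\cS^*(x,y))\to\eta(h)^2$.

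I expect the main obstacle to be the rigorous application of Proposition~\ref{prop:couplinggffsexplo} along the core of each joining ball: one must verify at every step of the recursion~(\ref{eqn:gffjoiningball}) that the $a_n$-neighbourhood of the currently revealed set still has the same tree excess — which is precisely what the $2a_n$-collar and the tree-excess clauses in the construction of the $B^*(z_j,a'_n)$ are designed to guarantee, together with \ref{C1} failing — and one must carefully handle the two endpoints $z_j\in\partial T_x$ and $v_j(a_n)\in T_y$ of the path, whose $\psimn$-values are frozen before the ball is revealed; in particular the last interior vertex of $P_j$ has two already-revealed neighbours and thus falls outside the scope of Proposition~\ref{prop:couplinggffsexplo}, and must be treated by a direct conditional-Gaussian estimate. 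The rest is bookkeeping of the approximation error $\log^{-3}n$ against the safety margin in the level, together with the continuity of $h'\mapsto\lambda_{h'}$ and $h'\mapsto\eta(h')$.
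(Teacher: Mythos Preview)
Your overall strategy coincides with the paper's: reduce to $\dP_{ann}(\cS'(x,y)\setminus\cS^*(x,y))\to 0$ via Lemma~\ref{lem:joiningball}, establish a uniform conditional lower bound $q_n$ on the probability of percolating through each joining ball, and conclude by a $(1-q_n)^{|J|}$ product. The paper packages the per-ball bound as Lemma~\ref{lem:jointTj}.

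There is, however, a genuine gap in your per-ball estimate. You propose to run the comparison of Proposition~\ref{prop:couplinggffsexplo} along the path from $z_j$ all the way down to depth $r-2=a'_n-a_n-2$, handling by a direct Gaussian estimate only the single vertex at depth $r-1$ (the one with two already-revealed \emph{neighbours}). But the obstruction is not ``two revealed neighbours''; it is ``a second revealed vertex within distance $a_n$''. For every $v_j(k)$ with $1\le k\le a_n-1$, the set $A$ at the moment of revealing $\psimn(v_j(k))$ contains both $v_j(k-1)$ and $v_j(a_n)\in T_y$, and these lie at distance $a_n-k+1\le a_n$ through the not-yet-revealed vertices $v_j(k),\dots,v_j(a_n-1)$; hence $\tx(B_{\cM_n}(A,a_n))>\tx(A)$ (the $a_n$-neighbourhood either merges the $T_x$- and $T_y$-components or closes a cycle). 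Concretely, the SRW from $v_j(k)$ can hit $v_j(a_n)\in A$ before escaping to distance $a_n$, so $\dE^{\cM_n}[\psimn(v_j(k))\mid\sigma(A)]$ carries a term of order $(d-1)^{-(a_n-k)}\psimn(v_j(a_n))$ on top of $\tfrac{1}{d-1}\psimn(v_j(k-1))$; for $k$ close to $a_n$ this is of constant size, so your claimed bound $|\psimn-\phi'|\le\log^{-3}n$ fails on $P_j$, and the implication ``$\phi'\ge h+\log^{-3}n\Rightarrow\psimn\ge h$'' is not justified there.

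The paper's fix is exactly what is needed: split the path at $v_j(0)$. Part~1 of Lemma~\ref{lem:jointTj} applies Proposition~\ref{prop:couplinggffsexplo} only on $B^*(z_j,a'_n-2a_n)$ and, via Proposition~\ref{prop:Chlargedevgrowthrate} and cylindrical symmetry (as you do), gives $\dP(v_j(0)\in\cC_{z_j}\mid\text{past})\ge\log^{(\gamma-2\kappa)(K_8/2-1)}n$. Part~2 then treats \emph{every} vertex of $P_j$ by a direct conditional-Gaussian computation via Proposition~\ref{prop:condgff}, taking both close revealed vertices $v_j(k-1)$ and $v_j(a_n)$ into account, and obtains a uniform per-step lower bound $(d-1)^{-K_9}$, hence $\dP(\psimn\ge h\text{ on }P_j\mid\cdots)\ge\log^{-K_9\kappa}n$. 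Multiplying the two parts and balancing exponents is precisely what forces both ``$\kappa$ large'' and ``$\gamma/\kappa$ large''. Once you extend your single direct estimate to all $a_n$ vertices of $P_j$, your argument becomes the paper's.
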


\begin{proof}[Proof of Proposition~\ref{prop:exploendjoin}.]
Let $\gamma >3\kappa +18$. By Lemma~\ref{lem:joiningball}, 
\begin{center}
$\limsup_{n\rightarrow +\infty}\dP_{ann}(\cS^*(x,y)) \leq \lim_{n\rightarrow +\infty}\dP_{ann}(\cS'(x,y))  = \eta(h)^2.$
\end{center}
Let $\cE_n:=\{\text{$\cM_n$ is not a good graph}\}\cup \{\max_{z\in V_n}\vert\psimn(z) \vert \geq \log^{2/3}n \}$. By Proposition~\ref{prop:goodgraph} and Lemma~\ref{lem:maxgff}, $\dP_{ann}(\cE_n)\rightarrow 0$.
Therefore, it is enough to show that 
\begin{center}
$\limsup_{n\rightarrow +\infty}\dP_{ann}(\cE_n^c\cap\,( \cS'(x,y)\setminus \cS^*(x,y))\,)=0$.
\end{center}
By a straightforward adaptation of the reasoning below (\ref{eqn:growthsuccessful}),
\begin{center}
$\lim_{n\rightarrow +\infty}\dP_{ann}(\cE_n^c\cap\,( \cS'(x,y)\setminus \cS''(x,y))\,)=0$,
\end{center}
where $\cS''(x,y):=\cS'(x,y)\cap \{\forall z\in T_x\cup T_y,\, \psimn(z)\geq h+(\log^{-1}n)/2 \}$. Hence, we are left with proving that
\begin{equation}\label{eqn:joinSprimenotstar}
\limsup_{n\rightarrow +\infty}\dP_{ann}(\cE_n^c\cap\,( \cS''(x,y)\setminus \cS^*(x,y))\,)=0.
\end{equation}
\noindent
We use again a binomial argument. Fix a realization of $\cM_n$ which is a good graph (recall that $x$ and $y$ have already been fixed at the beginning of Section~\ref{subsubsec:jointexplo}). For $j\in J$ in increasing order, generate the GFF on $T_j$ as in (\ref{eqn:gffjoiningball}). Denote $E_j$ the event that $z_j$ and $v_j(a_n)$ are in the same connected component of $\Enh \cap T_j$. Note that on $\cS''(x,y)$, $T_x\subseteq \cC_x^{\cM_n,h}$ and $T_y\subseteq \cC_y^{\cM_n,h}$, so that $\cS''(x,y) \cap (\cup_{j\in J}E_j)\subseteq \cS^*(x,y)$. 
\\ 
Note that $A_{j,0,1}=T_x\cup T_y\cup \{\cup_{j'<j}T_{j'}\}$ by \eqref{eqn:Ajkidef}. By Lemma~\ref{lem:jointTj} below, if $n$ is large enough, then for any good graph $\cM_n$, for any $j\in J$ and any event $\cE'_j\subseteq \cS''(x,y)\cap \{\max_{z\in A_{j,0,1}}\vert \psimn(z)\vert \leq \log^{2/3}n\}$ that is measurable w.r.t. the exploration until the revealment of $\psimn$ on $T_{j-1}$, we have 
\begin{center}
$\dP^{\cM_n}(E_j\, \vert \cE'_j )\geq \log^{\gamma(\cxiii/3 -1)}n.$
\end{center}
Then, letting $Z\sim \text{Bin}(\lfloor \log^{\gamma-3\kappa-18}n \rfloor,  \log^{\gamma(\cxiii/3 -1)}n)$, we have
\begin{center}
$\dP_{ann}(\cE_n^c\cap\,( \cS''(x,y)\setminus \cS^*(x,y))\,)\leq \dP_{ann}(\cE_n)+ \dP(Z=0).$
\end{center}
If $\kappa$ and $\gamma/\kappa$ are large enough so that 
$\gamma-3\kappa -18+\gamma(K_8/3-1)=\gamma K_8/3-3\kappa-18>0$,
we have $\lim_{n\rightarrow +\infty}\dP(Z=0)=0$. Since $\dP_{ann}(\cE_n)\rightarrow 0$, this yields (\ref{eqn:joinSprimenotstar}).
\end{proof}
\noindent 
It remains to prove the following. For a good graph $\cM_n$, denote $\cF^{\cM_n}_j$ the sigma-algebra of the exploration until the revealment of $\psimn$ on $T_{j-1}$. In particular, note that $\cF'_j$ contains the information on $\sigma(A_{j,0,1})$, and that the structure of $J$ is $\cF'_j$-measurable.
\begin{lemma}\label{lem:jointTj}
Let $\cxiii:= \log_{d-1}((1+\lambda_h)/2)$.  For $n$ large enough, we have for any good graph $\cM_n$, any $j\in J$ and any $\cF^{\cM_n}_j$-measurable event $\cE'_j\subseteq \cS''(x,y)\cap \{\max_{z\in A_{j,0,1}}\vert \psimn(z)\vert \leq \log^{2/3}n\}$:
\begin{equation}\label{eqn:joinTj}
\dP^{\cM_n}(E_j\, \vert \cE'_j)\geq \log^{\gamma(\cxiii/3 -1)}n.
\end{equation}
\end{lemma}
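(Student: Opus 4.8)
The plan is to reveal $\psimn$ on $T_j$ in breadth-first order from $z_j$ via~(\ref{eqn:gffjoiningball}), and to couple it with a GFF $\phid$ on the isomorphic tree $\fT_j\subset\Td$, seeded by $\phid(\Phi(z_j)):=\psimn(z_j)=:a$; on $\cE'_j\subseteq\cS''(x,y)$ one has $a\geq h+\tfrac12\log^{-1}n\geq h$. On the joining-ball event, Proposition~\ref{prop:couplinggffsexplo} applies at every vertex of $T_j$ except the $O(1)$ vertices of the whisker $P_j$ adjacent to $T_y$ (in fact only the neighbour of $v_j(a_n)$ on $P_j$): its hypotheses hold because $\cM_n$ is a good graph, $|A_{j,k,i}|\leq n^{1/2}\log^{\gamma}n$, the construction of the joining ball preserves the tree excess of the relevant $a_n$-neighbourhoods, and $\max_{A_{j,k,i}}|\psimn|\leq\log^{2/3}n$ follows by induction from $\cE'_j$ once one also conditions on the normals of~(\ref{eqn:gffjoiningball}) being bounded by $\tfrac12\log^{2/3}n$. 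Summing the per-step errors $\log^{-3}n$ geometrically along $T_j$ gives $\sup_{v\in T_j}|\psimn(v)-\phid(\Phi(v))|\leq\log^{-2}n$ outside $\dP^{\cM_n}$-probability $n^{-2}$. Writing $h':=h+\log^{-1}n$ and letting $P$ be the path of length $a'_n-a_n$ in $\fT_j$ from $\Phi(z_j)$ to $\Phi(v_j(a_n))$, on this event $\{\phid\geq h'\ \text{on}\ P\setminus\{\Phi(z_j)\}\}$ forces $\psimn\geq h$ along the whole $T_j$-path from $z_j$ to $v_j(a_n)$ save possibly at the one exceptional vertex, which is dispatched by a direct estimate contributing a positive constant factor. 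Thus it suffices to show $\dP^{\Td}_a(\phid\geq h'\ \text{on}\ P\setminus\{\Phi(z_j)\})\geq(\log n)^{\gamma(\cxiii/3-1)+o(1)}$.

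Split $P$ at depth $r:=a'_n-2a_n$, where it leaves the complete $(d-1)$-ary ball $B^*(z_j,r)$ and enters the whisker $P_j$. By the domain Markov property of $\phid$ (Proposition~\ref{prop:recursivegfftrees}) and monotonicity in the value at the depth-$r$ vertex of $P$,
\[
\dP^{\Td}_a\bigl(\phid\geq h'\ \text{on}\ P\setminus\{\Phi(z_j)\}\bigr)\ \geq\ g(h')\cdot\dP^{\Td}_a\bigl(\phid\geq h'\ \text{on depths}\ 1,\dots,r\ \text{of}\ P\bigr),
\]
where $g(h')$ is the probability that the GFF stays $\geq h'$ over $a_n$ consecutive steps starting from a vertex of value $h'$. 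Iterating $\phid(y)=\tfrac1{d-1}\phid(\overline{y})+\sqrt{\tfrac{d}{d-1}}\,\xi_y$, each step keeps the value $\geq h'$ with conditional probability at least $p_0:=\dP(\cN(0,1)\geq(d-2)h'/\sqrt{d(d-1)})\in(0,1)$, so $g(h')\geq p_0^{\,a_n}\geq(\log n)^{-\kappa\beta}$ for a constant $\beta=\beta(d,h)>0$ and $n$ large.

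The bulk factor is the heart of the proof. The event $\{\phid\geq h'\ \text{on depths}\ 1,\dots,r\ \text{of}\ P\}$ says exactly that $\Phi(v_j(0))$ belongs to the connected component of $\Phi(z_j)$ in $(\{\Phi(z_j)\}\cup E_{\phid}^{\geq h'})$ restricted to $B_{\fT_j}(\Phi(z_j),r)$. Since $\phid$ on $\Td$ is invariant under root-fixing automorphisms and $B_{\fT_j}(\Phi(z_j),r)$ is a complete $(d-1)$-ary tree of depth $r$, its $(d-1)^r$ boundary vertices are exchangeable, so this probability equals $\dE^{\Td}_a[N_r]/(d-1)^r$, with $N_r$ the size of generation $r$ of that component, hence is $\geq\dE^{\Td}_h[N_r]/(d-1)^r$ by Lemma~\ref{lem:monotonicityphid}. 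To bound $\dE^{\Td}_h[N_r]$ from below: under $\dP^{\Td}_h$ the component (with the root added, as for $\cC_\circ(h,\cdot)$) reaches every depth with probability at least $\dP^{\Td}_h(|\cC_\circ(h,\delta_0)|=+\infty)=:c_5>0$ for a small fixed $\delta_0$ with $h+\delta_0<h_\star$ --- using $h'\leq h+\delta_0$ for $n$ large, monotonicity in the level, and the uniform supercritical survival of Remark~\ref{rem:uniformsupercriticsurvival} --- and on this event $N_r\geq(\lambda_{h'}-\varepsilon)^r$ outside probability $e^{-Cr}$ by Proposition~\ref{prop:Chlargedevgrowthrate} (at level $h'$, admissible since its constants are uniform over levels near $h$; cf.\ also Proposition~\ref{prop:thm43adapted} and Corollary~\ref{cor:expomomentsheight}). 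Since $\lambda_{h'}\to\lambda_h>(1+\lambda_h)/2$, taking $\varepsilon$ small and $n$ large gives $\dE^{\Td}_h[N_r]\geq\tfrac{c_5}{2}((1+\lambda_h)/2)^r$, whence
\[
\dP^{\Td}_a\bigl(\phid\geq h'\ \text{on depths}\ 1,\dots,r\ \text{of}\ P\bigr)\ \geq\ \tfrac{c_5}{2}\,(d-1)^{r(\cxiii-1)}\ \geq\ c'(\log n)^{(\gamma-2\kappa)(\cxiii-1)},
\]
using $\cxiii=\log_{d-1}\!\bigl((1+\lambda_h)/2\bigr)$, $r\leq(\gamma-2\kappa)\log_{d-1}\log n+O(1)$ and $\cxiii-1<0$.

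Combining the three estimates, $\dP^{\cM_n}(E_j\mid\cE'_j)\geq c''(\log n)^{(\gamma-2\kappa)(\cxiii-1)-\kappa\beta}-n^{-2}$, which for $n$ large is $\geq\tfrac12 c''(\log n)^{(\gamma-2\kappa)(\cxiii-1)-\kappa\beta}$. Since $\cxiii\in(0,1)$ one has $(\gamma-2\kappa)(\cxiii-1)-\kappa\beta-\gamma(\cxiii/3-1)=\tfrac23\gamma\cxiii-\kappa(2\cxiii-2+\beta)>0$ whenever $\gamma/\kappa$ exceeds a constant depending only on $(d,h)$ --- exactly the regime assumed in Proposition~\ref{prop:exploendjoin} --- so the prefactor is absorbed and~(\ref{eqn:joinTj}) follows. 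I expect the main obstacle to be the bulk estimate: securing the clean lower bound $\dE^{\Td}_h[N_r]\gtrsim\lambda_{h'}^r$ on the conditional mean of generation $r$, with the level $h'=h+\log^{-1}n$ drifting down to $h$ and the root pinned at the near-critical value $h$, which requires the growth-rate and exponential-moment results of Section~\ref{sec:Td} in a form uniform over levels and root values in a neighbourhood of $h$. A secondary, more delicate point is the local control of $\psimn$ at the exceptional vertex of $P_j$ adjacent to $T_y$, where Proposition~\ref{prop:couplinggffsexplo} is unavailable.
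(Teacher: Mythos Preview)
Your overall architecture matches the paper's --- split the path from $z_j$ to $v_j(a_n)$ at $v_j(0)$, handle the bulk $B^*(z_j,a'_n-2a_n)$ via cylindrical symmetry and the growth-rate input from Section~\ref{sec:Td}, and handle the whisker $P_j$ step by step --- but there is a genuine gap in your treatment of $P_j$.

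You claim that Proposition~\ref{prop:couplinggffsexplo} applies at every vertex of $P_j$ except the single neighbour $v_j(a_n-1)$ of $T_y$, checking only the ``unique neighbour in $A$'' requirement. But the hypothesis $\tx(B_{\cM_n}(A,a_n))=\tx(A)$ fails at \emph{every} whisker vertex $v_j(k)$, $1\le k\le a_n-1$: once $v_j(0)\in A_k$ (and a fortiori $v_j(k-1)\in A_k$), the ball $B_{\cM_n}(A_k,a_n)$ contains the entire path $v_j(0),v_j(1),\dots,v_j(a_n)$, with $v_j(a_n)\in T_y\subset A_k$; hence expanding by $a_n$ either connects two components of $A_k$ or (if a previous $T_{j'}$ already joins $T_x$ and $T_y$) creates a new cycle, and the tree excess jumps by one in either case. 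The effect is not cosmetic: for $k$ close to $a_n$, a SRW from $v_j(k)$ hits $v_j(a_n)\in A_k$ with probability of order $(d-1)^{-(a_n-k)}$, which is a constant for small $a_n-k$. The conditional mean of $\psimn(v_j(k))$ therefore acquires a term comparable to $\psimn(v_j(a_n))$, and your per-step coupling error is $O(1)$, not $O(\log^{-3}n)$. So the inference ``$\phid\ge h'$ on the whisker in $\Td$ $\Rightarrow$ $\psimn\ge h$ on the whisker in $\cM_n$'' is unjustified, and your whisker factor $g(h')\ge p_0^{\,a_n}$, though correct on $\Td$, does not transfer.

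The paper does not try to couple on $P_j$. Its Part~2 instead bounds, for each $k$, the conditional mean and variance of $\psimn(v_j(k))$ directly from Proposition~\ref{prop:condgff}: the only vertices of $A_k$ within distance $a_n$ of $v_j(k)$ are $v_j(k-1)$ and $v_j(a_n)$, and both satisfy $\psimn\ge h$ on the conditioning event, giving $\dE^{\cM_n}[\psimn(v_j(k))\mid\sigma(A_k)]>-|h|-1$ and $\Var^{\cM_n}(\psimn(v_j(k))\mid\sigma(A_k))>(d-1)^{-1}$, hence $\dP^{\cM_n}(\psimn(v_j(k))\ge h\mid\cdot)\ge(d-1)^{-K_9}$. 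Iterating over $a_n$ steps yields the factor $\log^{-K_9\kappa}n$, of the same shape as your $g(h')$ but proved for $\psimn$ rather than $\phid$. Your bulk estimate via $\dE^{\Td}_a[N_r]/(d-1)^r$ is fine and essentially the paper's Part~1; replacing your coupling argument on $P_j$ by this direct mean/variance bound would close the gap.
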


\begin{proof} All the inequalities in this proof hold for $n$ large enough, uniformly in the choice of a good graph $\cM_n$, $j\in J$ and $\cE'_j\in \cF^{\cM_n}$. We proceed in two steps. First, we prove that
\begin{equation}\label{eqn:joinuntilvjO}
\dP^{\cM_n}(v_j(0)\in\cC_{z_j} \, \vert \cE'_j   )\geq \log^{(\gamma-2\kappa)(K_8/2-1)}n ,
\end{equation} 
where $\cC_{z_j}$ is the connected component of $z_j$ in $\Enh\cap B^*(z_j,a'_n-2a_n)$. Second, we show that for some constant $\cxiv>0$ (uniquely depending on $d$ and $h$),
\begin{equation}\label{eqn:joinuntilvjn}
\dP^{\cM_n}(\forall v\in P_j, \psimn(v)\geq h\, \,\vert \,(\cE'_j\cap \{v_j(0)\in\cC_{z_j} \})\, )\geq \log^{-\cxiv \kappa}n.
\end{equation}
We prove that both hold for $n$ large enough, uniformly in $v\in T_j$ and on $\cE'_j$. 
\\
If $\gamma/\kappa$ is large enough, (\ref{eqn:joinuntilvjO}) and (\ref{eqn:joinuntilvjn}) imply \eqref{eqn:joinuntilvjO}, since in this case, we have
\begin{center}
$\dP^{\cM_n}(E_j\, \vert\cE'_j )\geq \log^{(\gamma-2\kappa)(K_8/2-1) -K_9\kappa}n \geq \log^{\gamma(K_8/3-1)}n $.
\end{center}

\noindent
\textbf{Part 1:} proof of (\ref{eqn:joinuntilvjO}).
\\
Since $\vert A_{j,k,i}\vert\leq n^{2/3}$ and $\tx(B_{\cM_n}(A_{j,k,i},a_n))=\tx(A_{j,k,i})$ for all $k,i\geq 0$, we can apply Proposition~\ref{prop:couplinggffsexplo} as below (\ref{eqn:growthsuccessful}) to bound the difference between $\psimn$ on $\cC_{z_j}$ and $\phid$ on an isomorphic subtree of $\Td$, with the following coupling: $\phid(\circ):=\psimn(z_j)$, and then $\phid$ is defined as in Proposition~\ref{prop:recursivegfftrees} via $(\xi_{j,k,i})_{k,i\geq 0}$. Recall that on $\cS''(x,y)$, we have that $\psimn(z_j)\geq h+(\log^{-1}n)/2$. By Proposition~\ref{prop:Chlargedevgrowthrate}, for any $\delta >0$ and for large enough $n$,
\[
\min_{a\geq h+(\log^{-1}n)/2}\dP^{\Td}_a\left(\vert \cZ_{a'_n-2a_n}^{h+(\log^{-1}n)/2,+}\vert \geq (\lambda_{h}-\delta)^{a'_n-2a_n}\right) \geq \frac{p\dP^{\Td}(\cE^+)}{2}
\]
where $\dP^{\Td}(\cE^+)>0$ (recall (\ref{eqn:thm43})) and 
\begin{center}
$p:= \min_{a\geq h+(\log^{-1}n)/2}\dP^{\Td}_a(\exists v\in B_{\Td^+}(\circ,1), \, \phid(z)\geq h+1)>0$. 
\end{center}
Note in particular that for $\delta'>\frac{\log^{-1}n}{2}$ such that $\lambda_{h+\delta'}>\lambda_h -\delta$ (such $\delta'$ exists by Proposition~\ref{prop:thm43adapted}, if $n$ is large enough), $\cZ_{a'_n-2a_n}^{h+\delta',+}\subseteq \cZ_{a'_n-2a_n}^{h+(\log^{-1}n)/2,+}$. Since $\cM_n$ is a good graph an $\cE'_j\subseteq \{\max_{z\in A_{j,0,1}}\vert \psimn(z)\vert\leq \log^{2/3}n\} $, we can apply
Proposition~\ref{prop:couplinggffsexplo} as below \eqref{eqn:growthsuccessful} to bound the difference between $\psimn$ on $B^*(z_n,a'_n-2a_n)$ and $\phid$ on $\Td$ , and we get:
\[
\dP_{ann}\left(\vert \partial\cC_{z_j} \vert \geq (\lambda_{h}-\delta)^{a'_n-2a_n}\,\,\bigg| \cE'_j\, \right) \geq \frac{p\dP(\cE^+)}{2}+o(1)\geq \frac{p\dP(\cE^+)}{3}.
\]
\noindent
By cylindrical symmetry of $B_{\Td^+}(\circ, a'_n-2a_n)$, we even have
\begin{align*}
\dP^{\cM_n}\left(v(0)\in\partial\cC_{z_j}\,\,\bigg|\cE'_j\, \right)  \geq  & \frac{p\dP(\cE^+)}{3}\,\frac{(\lambda_{h}-\delta)^{a'_n-2a_n}}{\vert \partial B_{\Td^+}(\circ, a'_n-2a_n)\vert} \geq  \frac{p\dP(\cE^+)}{3} \left(\frac{\lambda_{h}-\delta}{d-1}\right)^{a'_n-2a_n}.
\end{align*}
\noindent
Since $K_8=\log_{d-1}((1+\lambda_h)/2)$, taking $\delta$ small enough yields (\ref{eqn:joinuntilvjO}).

\noindent
\textbf{Part 2:} proof of (\ref{eqn:joinuntilvjn}).
\\
Denote $v_j(1), \ldots, v_j(a_n-1)$ the vertices from $v_j(0)$ to $v_j(a_n)$ on the path $P_j$. Remark that it suffices to prove that there exists a constant $\cxiv>0$ such that for $n$ large enough, for every $k\in \{1, \ldots, a_n\}$, 
\begin{equation}\label{eqn:joinvjvjplusone}
\dP^{\cM_n}\left(\psimn(v_j(k))\geq h\vert\,(\cE'_j\cap \{\psimn(v_j(k-1))\geq h\})\,\right)\geq (d-1)^{-K_9}.
\end{equation}
In the notation of (\ref{eqn:gffjoiningball}), $v_j(k)=y_{j,k+a'_n-2a_n,1}$ for $1\leq k\leq a_n$. Write $A_k:=A_{j,k+a'_n-2a_n,1}$. Suppose that for $n$ large enough and all $k\in \{1, \ldots, a_n\}$, on $\cE'_j\cap \{\psimn(v_j(k-1))\geq h\} $:
\begin{equation}\label{eqn:expconnexionlastline}
\dE^{\cM_n}[\psimn(v_j(k))\vert \sigma(A_k)] > -\vert h\vert -1,\,\text{ and}
\end{equation}
\begin{equation}\label{eqn:varconnexionlastline}
\text{Var}^{\cM_n}(\psimn(v_j(k))\vert \sigma(A_k))>\frac{1}{d-1}.
\end{equation}
Then (\ref{eqn:joinvjvjplusone}) holds with 
\begin{center}
$\cxiv:=-\log_{d-1} \dP(Y \geq  (\vert h\vert+\frac{\vert h\vert +1}{d-1})/\sqrt{d-1} \,)$, 
\end{center}
where $Y\sim \cN(0,1)$. Thus, it is enough to establish (\ref{eqn:expconnexionlastline}) and (\ref{eqn:varconnexionlastline}). 
\\
For $k\geq 1$, note that by construction of $B^*(z_j, a'_n)$, $v_j(k-1)$ and $v_j(a_n)$ are the only vertices of $\partial A_{k}$ at distance less than $a_n$ of $v_j(k)$. Let $(X_{s})_{s\geq 0}$ be a discrete time SRW started at $v_j(k)$, and $\tau:=\inf \{s\geq 0, \, d_{\cM_n}(v_j(k),X_s)\geq a_n\}$. Write $H$ for the hitting time of $A_k$ by $(X_s)$. Letting
\begin{center}
$a_1:=\bP_{v_j(k)}^{\cM_n}(X_{H}=v_j(k-1),H<\tau)$ and $a_2:=\bP_{v_j(k)}^{\cM_n}(X_{H}=v_j(a_n),H<\tau)$, 
\end{center}
we get as in the proof of Proposition~\ref{prop:couplinggffsexplo} that for $\psimn(A_k)$ in $\cE'_j\cap \{\psimn(v_j(k-1))\geq h\} $:
\[
\dE^{\cM_n}[\psimn(v_j(k))\vert \sigma(A_k)] > a_1\psimn(v_j(k-1))+a_2\psimn(v_j(a_n))-\log^{-1}n.
\]
Since $0\leq a_1+a_2\leq 1$ and $\min(\psimn(v_j(k-1)), \psimn(v_j(a_n)))\geq h \geq -\vert h \vert$, (\ref{eqn:expconnexionlastline}) follows.
\\
Using Proposition~\ref{prop:condgff}, we split $V:=\text{Var}^{\cM_n}(\psimn(v_j(k))\vert \sigma(A_{k}))$ in the following way: 
\begin{align*}
V=&\,\gmn\left(v_j(k),v_j(k)\right)-\bE_{v_j(k)}^{\cM_n}\left[\gmn\left(v_j(k),X_{H}\right)\mathbf{1}_{\{H<\tau \}}\right]-\bE_{v_j(k)}^{\cM_n}\left[\gmn\left(v_j(k),X_{H}\right)\mathbf{1}_{\{H\geq\tau \}}\right]
\\
&+\frac{\bE_{v_j(k)}^{\cM_n}[H]}{\bE_{\pi_n}^{\cM_n}[H]}\bE_{\pi_n}^{\cM_n}\left[\gmn\left(v_j(k),X_{H}\right)\right].
\end{align*}
By (\ref{eqn:greenfunctionGn}), (\ref{eqn:greenfunctionGnapprox1}) and (\ref{eqn:greenfunctionGnapprox2}), if $\kappa$ is large enough, for $n$ large enough,
\begin{align*}
\gmn\left(v_j(k),v_j(k)\right)\hspace{-0.5mm}-a_1\gmn\left(v_j(k),v_j(k-1)\right)\hspace{-0.5mm}-&a_2\gmn\left(v_j(k),v_j(a_n)\right)
\\
> &\frac{d-1}{d-2}-\frac{a_1+a_2}{d-2}-\log^{-1} n
\\
\geq &\frac{1}{d-2}-\log^{-1} n.
\end{align*}
As below (\ref{eqn:condvarsplit}), we get that 
$$
\left\vert \bE_{v_j(k)}^{\cM_n}\left[\gmn\left(v_j(k),X_{H}\right)\mathbf{1}_{\{H<\tau \}}\right]\hspace{-0.7mm} -a_1\gmn\left(v_j(k),v_j(k-1)\right)\hspace{-0.4mm}-a_2\gmn\left(v_j(k),v_j(a_n)\right)\right\vert\hspace{-0.7mm} \leq \hspace{-0.7mm}\log^{-1}n
$$
\noindent
and that
$$
\left\vert \bE_{v_j(k)}^{\cM_n}\left[\gmn\left(v_j(k),X_{H}\right)\mathbf{1}_{\{H\geq\tau \}}\right] -\frac{\bE_{v_j(k)}^{\cM_n}[H]}{\bE_{\pi_n}^{\cM_n}[H]}\bE_{\pi_n}^{\cM_n}\left[\gmn\left(v_j(k),X_{H}\right)\right]\right\vert \leq \log^{-1} n.
$$

\noindent
These three inequalities imply that for $\psimn(A_k)$ in $ \cE'_j\cap \{\psimn(v_j(k-1))\geq h\} $:
\begin{center}
$
\text{Var}^{\cM_n}(\psimn(v_j(k))\vert \sigma(A_{k}))\geq \frac{1}{d-2}-3\log^{-1}n>\frac{1}{d-1}.$
\end{center}
This shows (\ref{eqn:varconnexionlastline}) and the proof is complete.
\end{proof}

\subsection{Average number of connections in $\Enh$}\label{subsec:averageconnec}
Write $x\overset{h}{\leftrightarrow}y$ if  $x$ and $y$ are in the same connected component of $\Enh$, for $x,y\in V_n$.
In this section, we prove (\ref{eqn:mainthm}) of Theorem~\ref{thm:maingff} via an argument on the number of pairs of vertices such that $x\overset{h}{\leftrightarrow}y$.
Let $S_n$ be the set of pairs of distinct $x,y\in V_n$ such that $x\overset{h}{\leftrightarrow}y$. Let $A_n\subset V_n$ be the set of vertices $x$ such that $\vert \cC_x^{\cM_n, h}\vert \leq n^{1/2}$.
\\
We first suppose that the following two lemmas hold, and show (\ref{eqn:mainthm}). Then, we derive them from Propositions~\ref{prop:explo1vertexaborted} and \ref{prop:exploendjoin}, using a second moment argument.
\begin{lemma}\label{lem:connexionsupperbound}
For every $\varepsilon >0$, $\lim_{n\rightarrow +\infty}\dP_{ann}\left(\vert A_n \vert \geq (1-\eta(h)-\varepsilon)n\right)=1$.
\end{lemma}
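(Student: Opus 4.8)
The plan is to run a second moment argument on the slightly smaller random set
\[
B_n:=\{x\in V_n:\cA(x)\text{ holds}\},
\]
where $\cA(x)$ is the event from Section~\ref{subsec:explo1vertexaborted} that the lower exploration from $x$ is aborted and $\cC_x^{\cM_n,h}\subseteq T_x$. On $\cA(x)$ the tree $T_x$ has died out before generation $\log_{\lambda_h}n$, each generation having fewer than $n^{1/2}b_n$ vertices, so exactly as in Remark~\ref{rem:explorationsize} one gets $|\cC_x^{\cM_n,h}|\le|T_x|<n^{1/2}$ for $n$ large; hence $B_n\subseteq A_n$, and it suffices to prove $|B_n|\ge(1-\eta(h)-\varepsilon)n$ with $\dP_{ann}$-probability tending to $1$. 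By exchangeability of the vertices of $\cM_n$, $\dE_{ann}[|B_n|]=n\,\dP_{ann}(\cA(x))=(1-\eta(h)+o(1))n$ by Proposition~\ref{prop:explo1vertexaborted}, which takes care of the first moment.

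For the second moment, fix distinct $x,y$ and run the (lower version of the) joint exploration of Section~\ref{subsubsec:jointexplo}, which uses the independent families $(\xi_{x,v})_{v\in\Td}$, $(\xi_{y,v})_{v\in\Td}$ generating two independent copies $\phid^{(x)},\phid^{(y)}$ of the GFF on $\Td$. The key observation is that $\cA(x)$ is contained in the purely tree-intrinsic event
\[
\cA^{\phi}(x):=\big\{\text{the cluster of }\circ\text{ in }\{v\in\Td:\phid^{(x)}(v)\ge h-\log^{-1}n\}\text{ has height }<\log_{\lambda_h}n\big\}:
\]
being aborted means the exploration stopped solely because its tree became empty (condition~\ref{C4}), so neither a cycle (\ref{C1}) nor, in the joint exploration, a collision (\ref{C7}) ever interfered, and therefore $T_x$ traced out $\Chlnminus$ exactly up to the generation at which the latter died out, which by~\ref{C6} happens before $\log_{\lambda_h}n$. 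Since $\phid^{(x)}$ and $\phid^{(y)}$ are independent, $\cA^{\phi}(x)$ and $\cA^{\phi}(y)$ are independent, whence
\[
\dP_{ann}(\cA(x)\cap\cA(y))\le\dP_{ann}(\cA^{\phi}(x))\,\dP_{ann}(\cA^{\phi}(y)).
\]
Moreover $\dP_{ann}(\cA^{\phi}(x))\to1-\eta(h)$: the lower bound is $\dP_{ann}(\cA^{\phi}(x))\ge\dP_{ann}(\cA(x))\to1-\eta(h)$, and the upper bound follows from
\[
\dP_{ann}(\cA^{\phi}(x))=1-\dP^{\Td}\big(\cZ^{h-\log^{-1}n}_{\lceil\log_{\lambda_h}n\rceil}\neq\emptyset\big)\le1-\dP^{\Td}\big(\cZ^{h}_{\lceil\log_{\lambda_h}n\rceil}\neq\emptyset\big)\le1-\eta(h),
\]
the first inequality because the level set at height $h-\log^{-1}n$ contains the one at height $h$, the second because $\{\cZ^h_k\neq\emptyset\}\supseteq\{|\Ch|=+\infty\}$.

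Combining, $\dE_{ann}[|B_n|^2]=\sum_x\dP_{ann}(\cA(x))+\sum_{x\neq y}\dP_{ann}(\cA(x)\cap\cA(y))\le n+n(n-1)(1-\eta(h)+o(1))^2=(1-\eta(h))^2n^2+o(n^2)$, while $\dE_{ann}[|B_n|]^2=(1-\eta(h))^2n^2+o(n^2)$, so $\mathrm{Var}(|B_n|)=o(n^2)$; Chebyshev's inequality then gives $\dP_{ann}(|B_n|<(1-\eta(h)-\varepsilon)n)\le4\,\mathrm{Var}(|B_n|)/(\varepsilon^2n^2)\to0$ for $n$ large (using $\dE_{ann}[|B_n|]\ge(1-\eta(h)-\varepsilon/2)n$ eventually), and $|A_n|\ge|B_n|$ concludes. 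The step I expect to need the most care is the sandwiching $\cA(x)\subseteq\cA^{\phi}(x)$, i.e. separating the graph-dependent stopping rules (\ref{C1}, and \ref{C7} in the pairwise exploration) from the tree-intrinsic one (\ref{C4}); it is precisely this separation that makes the product bound on $\dP_{ann}(\cA(x)\cap\cA(y))$ exact rather than merely approximate. Everything else reduces to Proposition~\ref{prop:explo1vertexaborted}, elementary set inclusions on $\Td$, and Chebyshev.
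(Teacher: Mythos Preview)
Your argument is correct and follows the same second-moment scheme as the paper: pass to the subset of vertices whose lower exploration aborts (your $B_n$ is the paper's $A'_n$, with the containment $\cC_x^{\cM_n,h}\subseteq T_x$ made explicit), invoke Proposition~\ref{prop:explo1vertexaborted} for the first moment, and apply Chebyshev.

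Where you differ is in the pairwise estimate. The paper runs the joint lower exploration with the extra stopping rule \ref{C7}, argues $\dP_{ann}(\text{\ref{C7} happens})=o(1)$, and then realizes $\psimn$ on the (disjoint) regions $R_x\cup R_y$ to conclude $|\dP_{ann}(x,y\in A'_n)-(1-\eta(h))^2|\le\varepsilon$. You instead observe that the abort event itself forces the deterministic inclusion $\cA(x)\subseteq\cA^{\phi}(x)$ into a purely tree-side event measurable with respect to $(\xi_{x,v})_{v\in\Td}$ alone; since the two $\xi$-families are independent, this immediately yields $\dP_{ann}(\cA(x)\cap\cA(y))\le\dP(\cA^{\phi}(x))\dP(\cA^{\phi}(y))\le(1-\eta(h))^2$, with your clean sandwich on $\dP(\cA^{\phi}(x))$ giving the matching upper bound exactly rather than up to $o(1)$. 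This is a small but genuine economy: the collision estimate and the comparison of $\psimn$ to $\phid$ on $T_y$ are not needed for the second-moment upper bound. Both proofs share the same mild looseness about how the auxiliary $\xi$-families for \emph{all} $x$ sit on one common probability space alongside a single $\psimn$; your final paragraph correctly flags the only step that needs care, namely checking that ``aborted'' rules out \ref{C1} (and in the pairwise setting \ref{C7}) so that $T_x$ really is an isomorphic copy of $\Chlnminus$.
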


\begin{lemma}\label{lem:connexionslowerbound}
For every $\varepsilon >0$, $\lim_{n\rightarrow +\infty}\dP_{ann}(\vert S_n\vert\geq (\eta(h)^2/2-\varepsilon)n^2)=1$.  
\end{lemma}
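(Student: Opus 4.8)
The plan is to run a second-moment argument on
\[
\tilde N_n:=\sum_{\substack{x,y\in V_n\\ x\neq y}}\mathbf 1_{\mathcal S^*(x,y)},
\]
where $\mathcal S^*(x,y)$ is the event of Proposition~\ref{prop:exploendjoin}. Since every unordered pair $\{x,y\}$ with $x\overset{h}{\leftrightarrow}y$ is counted at most twice in $\tilde N_n$ and $\mathcal S^*(x,y)\subseteq\{x\overset{h}{\leftrightarrow}y\}$, we have $\tilde N_n\leq 2|S_n|$. All the events $\mathcal S^*(x,y)$ are realized on one probability space: together with $\cM_n$ we carry an array $(\xi_{z,v})_{z\in V_n,\,v\in\Td}$ of i.i.d.\ $\cN(0,1)$ variables --- the family $(\xi_{z,v})_v$ driving $\phid$ in the exploration from $z$ and, via Proposition~\ref{prop:gffann}, the values of $\psimn$ read off along that exploration --- plus, for each ordered pair, an independent i.i.d.\ $\cN(0,1)$ family for its joining balls. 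Because the joint law of $\cM_n$, $\psimn$ and this array is invariant under relabelling of the vertices, $\dP_{ann}(\mathcal S^*(x,y))$ does not depend on the ordered pair, and Proposition~\ref{prop:exploendjoin} gives
\[
\dE_{ann}[\tilde N_n]=n(n-1)\,\dP_{ann}(\mathcal S^*(x_0,y_0))=(\eta(h)^2+o(1))\,n^2 .
\]

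For the second moment I would split $\dE_{ann}[\tilde N_n^2]=\sum_{(x,y),(x',y')}\dP_{ann}(\mathcal S^*(x,y)\cap\mathcal S^*(x',y'))$ according to whether $\{x,y\}\cap\{x',y'\}=\emptyset$. The $O(n^3)$ terms with a shared vertex contribute $o(n^4)$ trivially. For the $n^4(1+o(1))$ terms with $x,y,x',y'$ pairwise distinct I claim $\dP_{ann}(\mathcal S^*(x,y)\cap\mathcal S^*(x',y'))=\eta(h)^4+o(1)$, uniformly. To see this, perform first the joint exploration from $x,y$; on $\mathcal S^*(x,y)$, by Remark~\ref{rem:explorationsize} and Step~1 of the proof of Lemma~\ref{lem:joiningball}, this reveals only $O(n^{1/2}\log^{\gamma}n)=o(\sqrt n)$ half-edges and vertices and fixes $\psimn$ on $o(\sqrt n)$ vertices. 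Then run the joint exploration from $x',y'$ (using $(\xi_{x',\cdot})$, $(\xi_{y',\cdot})$ and the joining-ball variables of $(x',y')$), with the extra stopping rule that it aborts if it touches the region explored from $x,y$. This second exploration again reveals only $o(\sqrt n)$ half-edges, each of which lands in the already-revealed region with probability $o(\sqrt n)/n$, so by a union bound through Lemma~\ref{lem:matchings} the two explorations are disjoint with probability $1-o(1)$; moreover the events $\mathcal F^{*(n)}(x'),\mathcal F^{*(n)}(y')$ governing its success depend only on $(\xi_{x',\cdot}),(\xi_{y',\cdot})$ and are independent of the first exploration. Conditionally on the first exploration and on disjointness, the second one is then a joint exploration from $x',y'$ on a multigraph with at most $o(\sqrt n)\ll n\log^{-8}n$ frozen edges and $o(\sqrt n)$ frozen values of $\psimn$; since Lemma~\ref{lem:matchings} only needs $m_E+m<n/2$ and Proposition~\ref{prop:couplinggffsexplo} only needs $|A|\leq n\log^{-8}n$, the proofs of Lemmas~\ref{lem:explo1vertex}, \ref{lem:joiningball}, \ref{lem:jointTj} and of Proposition~\ref{prop:exploendjoin} apply verbatim, giving $\dP_{ann}(\mathcal S^*(x',y')\mid\text{first exploration})=\eta(h)^2+o(1)$ on that event. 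Hence $\dP_{ann}(\mathcal S^*(x,y)\cap\mathcal S^*(x',y'))=(\eta(h)^2+o(1))^2$ and $\dE_{ann}[\tilde N_n^2]=(\eta(h)^4+o(1))n^4$.

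It then follows that $\Var_{ann}(\tilde N_n)=\dE_{ann}[\tilde N_n^2]-\dE_{ann}[\tilde N_n]^2=o(n^4)$, so Chebyshev's inequality gives $\tilde N_n/n^2\convprob\eta(h)^2$ and therefore $\dP_{ann}\big(|S_n|\geq \tilde N_n/2\geq(\eta(h)^2/2-\varepsilon)n^2\big)\to1$ for every $\varepsilon>0$, which is the claim (indeed slightly more, with $\eta(h)^2$ in place of $\eta(h)^2/2$). The main obstacle is the uniform four-point estimate: one has to check that freezing a sub-$\sqrt n$ set of edges and of field values ahead of the second exploration really leaves its success probability asymptotically equal to $\eta(h)^2$ --- i.e.\ that every binomial estimate behind Proposition~\ref{prop:exploendjoin} still has the slack $m_E=o(\sqrt n)$ permitted by Lemma~\ref{lem:matchings}, that $o(\sqrt n)$ conditioned Gaussian values keep $\psimn$ within $\log^{-1}n$ of $\phid$ through Proposition~\ref{prop:couplinggffsexplo} (whose hypothesis $|A|\le n\log^{-8}n$ is met with room to spare), and that the auxiliary Gaussians attached to $x'$ and $y'$ are untouched by the exploration from $x$ and $y$.
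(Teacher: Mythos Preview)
Your overall plan --- a second moment on $\tilde N_n=\sum\mathbf 1_{\mathcal S^*(x,y)}$, reduced to the four-point estimate $\dP_{ann}(\mathcal S^*(x,y)\cap\mathcal S^*(x',y'))\to\eta(h)^4$ --- is exactly the paper's. But the four-point argument as written contains an arithmetic slip that breaks it: you assert $O(n^{1/2}\log^{\gamma}n)=o(\sqrt n)$, which is false (recall $\gamma>3\kappa+18>0$). Once the joining balls are built, the joint exploration from $(x,y)$ has revealed on the order of $n^{1/2}\log^{\gamma}n\gg\sqrt n$ half-edges (this is precisely~(\ref{eqn:explototaledges})). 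When you then run the tree exploration from $x'$ with the ``abort on contact'' rule, it performs $\Theta(n^{1/2}\log^{-3}n)$ pairings (Remark~\ref{rem:explorationsize}) into a frozen set of size $\Theta(n^{1/2}\log^{\gamma}n)$; the expected number of hits is $\Theta(\log^{\gamma-3}n)\to\infty$, so the union bound through Lemma~\ref{lem:matchings} gives nothing and disjointness fails with probability bounded away from zero.

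The paper avoids this by a different ordering: it first runs only the four \emph{tree} explorations from $x,y,w,t$ (each revealing at most $n^{1/2}\log^{-3}n$ vertices, so pairwise collisions genuinely have probability $o(1)$), and only afterwards builds joining balls from $\partial T_x$ to $\partial T_y$ and then from $\partial T_w$ to $\partial T_t$, with $Q_j$ enlarged to include everything revealed so far. The joining-ball estimates of Lemma~\ref{lem:joiningball} are designed to tolerate a frozen set of size $n^{1/2}\log^{\gamma}n$, so the second batch of balls is unaffected by the first. Finally $\psimn$ is revealed on $T_x,T_y,T_w,T_t$ and on both batches of joining balls in that order, and Proposition~\ref{prop:couplinggffsexplo} applies throughout. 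Your scheme is repaired by adopting this ordering; as stated, the sequential ``full $(x,y)$ then full $(x',y')$'' execution does not give the disjointness you need.
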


\begin{proof}[Proof of (\ref{eqn:mainthm})]
Fix $\varepsilon >0$. Remark that every connected component of $\Enh$ is either included in $A_n$, or does not intersect $A_n$. Let $(\widehat{\gamma}^{(n)}_i)_{i\geq 1}$ (resp. $(\gamma^{(n)}_i)_{i\geq 1}$) be the sizes of the connected components in $A_n$ (resp. not in $A_n$), listed in decreasing order of size (break ties arbitrarily). 
\\
Let $\cE_n:=\{\vert A_n \vert \geq (1-\eta(h)-\varepsilon)n\}\cap\{\vert S_n \vert \geq (\eta(h)^2/2-\varepsilon)n^2\}$.
On $\cE_n$, we have that 
\begin{center}
$\sum_{i\geq 1} \widehat{\gamma}^{(n)}_i(\widehat{\gamma}^{(n)}_i-1)\,+\sum_{i\geq 1}\gamma^{(n)}_i(\gamma^{(n)}_i-1)=2\vert S_n\vert\geq \eta(h)^2n^2 -2\varepsilon n^2$.
\end{center}
Moreover, we have by definition of $A_n$:
\begin{center}
$\sum_{i\geq 1} \widehat{\gamma}^{(n)}_i(\widehat{\gamma}^{(n)}_i-1) \leq \sum_{i\geq 1} \widehat{\gamma}^{(n)}_i\sqrt{n}\leq n^{7/4}$. 
\end{center}
Thus, for $n$ large enough, 
$$\gamma^{(n)}_1(\vert V_n \vert - \vert A_n\vert)\geq \sum_{i\geq 1}\gamma^{(n)}_i(\gamma^{(n)}_i-1)\geq (\eta(h)^2-3\varepsilon)n^2.$$
But $\vert V_n \vert -\vert A_n \vert \leq (\eta(h)+\varepsilon)n$, so that 
$$\gamma^{(n)}_1\geq \frac{\eta(h)^2-3\varepsilon}{\eta(h)+\varepsilon}n\geq ((\eta(h)-4\eta(h)^{-1}\varepsilon) n.$$
Since $\gamma^{(n)}_1$ is the cardinality of a set included in $V_n \setminus A_n$, one has $\gamma^{(n)}_1\leq (\eta(h)+\varepsilon)n$. Note that for $n$ large enough, $\gamma^{(n)}_1>\sqrt{n}\geq \widehat{\gamma}^{(n)}_1$. Therefore, $\gamma^{(n)}_1=\vert\cC^{(n)}_1\vert$, and we have on $\cE_n$:
$$((\eta(h)-4\eta(h)^{-1}\varepsilon) n \leq\vert\cC^{(n)}_1 \vert \leq  (\eta(h)+\varepsilon)n $$.
By Lemmas~\ref{lem:connexionsupperbound} and \ref{lem:connexionslowerbound}, $\lim_{n\rightarrow +\infty}\dP_{ann}(\cE_n)=1$. Since $\varepsilon$ was arbitrary, the proof is complete. 
\end{proof}

\begin{remark}\label{rem:secondcompofirstbound}
Note that we have $\vert \cC^{(n)}_2\vert =\max(\gamma^{(n)}_2,\widehat{\gamma}^{(n)}_1)$. Since on $\cE_n$,  $\widehat{\gamma}^{(n)}_1\leq \sqrt{n}$ and 
$\gamma^{(n)}_2\leq \vert V_n\vert -\vert A_n \vert -\gamma^{(n)}_1\leq (1+4\eta(h)^{-1})\varepsilon n$, 
we get that $\vert \cC^{(n)}_2\vert /n\cvpann 0$.
\end{remark}

\begin{proof}[Proof of Lemma~\ref{lem:connexionsupperbound}] Let  $A'_n$ be the set of vertices such that their lower exploration (Section~\ref{subsec:explo1vertexaborted}) is aborted. By Remark~\ref{rem:explorationsize}, for $n$ large enough, $A'_n\subseteq A_n$, and it is enough to prove the result for $A'_n$ instead of $A_n$.
\\
Let $\varepsilon \in (0,1)$. By Proposition~\ref{prop:explo1vertexaborted}, for $n$ large enough and every $x\in V_n$, we have
\begin{equation}\label{eqn:abortedprobaapprox}
\vert \dP_{ann}(x\in A'_n)- (1-\eta(h))\vert\leq\varepsilon.
\end{equation}

\noindent
We claim that for $n$ large enough, for all distinct $x,y\in V_n$,
\begin{equation}\label{eqn:variancenbaborted}
\vert \Cov_{ann}(\mathbf{1}_{x\in A'_n},\mathbf{1}_{y\in A'_n}) \vert \leq 4\varepsilon.  
\end{equation}
\noindent 
Indeed, $\Cov_{ann}(\mathbf{1}_{x\in A'_n},\mathbf{1}_{y\in A'_n}) =\dP_{ann}(x,y\in A'_n) -\dP_{ann}(x\in A'_n)\dP_{ann}(y\in A'_n)$. On one hand, by (\ref{eqn:abortedprobaapprox}), we have 
\begin{center}
$ \vert \dP_{ann}(x\in A'_n)\dP_{ann}(y\in A'_n)-(1-\eta(h))^2\vert\leq 2\varepsilon +\varepsilon^2\leq 3\varepsilon$.
\end{center}
On the other hand, perform successively the lower explorations from $x$ and then from $y$ as in Section~\ref{subsec:explo1vertexaborted} (with the additional condition \ref{C7}). We get $\dP_{ann}(\text{\ref{C7} happens})=o(1)$ as in (\ref{eqn:C1happensexplo}). Then, revealing $\psimn$ on $R_x\cup R_y$ and comparing it to $\phid$ as below (\ref{eqn:growthsuccessful}), we obtain
\begin{center}
$\vert\dP_{ann}(x,y\in A'_n) -(1-\eta(h))^2\vert \leq \varepsilon$. 
\end{center}
This shows (\ref{eqn:variancenbaborted}). We now apply Bienaymé-Chebyshev's inequality:
\begin{align*}
\dP_{ann}(\vert A'_n\vert \leq (1-\eta(h)-2\varepsilon^{1/4} )n ) &\leq \dP_{ann}(\vert\,\vert A'_n\vert -\dE_{ann}[\vert A'_n\vert]\,\vert \geq \varepsilon^{1/4} n)
\\
&\leq \frac{1}{\sqrt{\varepsilon} n^2}\left(\sum_{x,y\in V_n}\Cov_{ann}(\mathbf{1}_{x\in A'_n},\mathbf{1}_{y\in A'_n}) \right)
\\
&\leq \frac{n +n(n-1)4\varepsilon}{\sqrt{\varepsilon} n^2}
\\
&\leq 5\sqrt{\varepsilon}
\end{align*}
for $n$ large enough. Since $\varepsilon$ can be taken arbitrarily small, the proof is complete.
\end{proof}

\begin{proof}[Proof of Lemma~\ref{lem:connexionslowerbound}]
Let $\varepsilon\in (0,1)$. Denote $S_n^*$ the set of pairs $x,y\in V_n$ such that $\cS^*(x,y)$ holds. Since $S^*_n\subseteq S_n$, it is enough to prove the Lemma for $S^*_n$ instead of $S_n$. First, by Proposition~\ref{prop:exploendjoin}, 
\begin{center}
$\dE_{ann}[\vert S^*_n\vert ]\geq (\eta(h)^2-\varepsilon)\frac{n(n-1)}{2}\geq (\eta(h)^2/2-\varepsilon)n^2$ 
\end{center}
for large enough $n$. Second, we show that for $n$ large enough and all distinct $x,y,w,t \in V_n$,
\begin{equation}\label{eqn:variancenbsuccess}
\vert \Cov_{ann}(\mathbf{1}_{\cS^*(x,y)},\mathbf{1}_{\cS^*(w,t)}) \vert \leq 2\varepsilon,
\end{equation}
\noindent 
from which we conclude by a second moment computation as in Lemma~\ref{lem:connexionsupperbound}.
We have 
\begin{center}
$ \Cov_{ann}(\mathbf{1}_{\cS^*(x,y)},\mathbf{1}_{\cS^*(w,t)}) =\dP_{ann}(\cS^*(x,y) \cap \cS^*(w,t))-\dP_{ann}(\cS^*(x,y))\dP_{ann}(\cS^*(w,t))$.
\end{center}
By Proposition~\ref{prop:exploendjoin}, for $n$ large enough, 
\begin{equation}\label{eqn:covarconnexion}
\vert\dP_{ann}(\cS^*(x,y))\dP_{ann}(\cS^*(w,t)) -\eta(h)^4\vert\leq \varepsilon.
\end{equation}
Now, perform successively the exploration of Section~\ref{subsec:explo1vertexsuccess} from $x$, then from $y$, then from $z$ and finally from $t$ (via an array of i.i.d. standard normal variables $(\xi_{u,v})_{u\in \{x,y,w,t\},v\in \Td}$). We add the following condition: for any $u\in \{x,y,w,t\}$, the exploration from $v$ is stopped as soon as it meets a vertex seen in a previous exploration. The probability that this happens is $o(1)$ by Remark~\ref{rem:explorationsize} and (\ref{eqn:binomdominationdiff}), since $o(\sqrt{n})$ vertices and half-edges are revealed during these four explorations. Therefore, as for (\ref{eqn:growthsuccessful}), we get that for $n$ large enough, 
\begin{equation}\label{eqn:fourexplosuccess}
\dP_{ann}(\text{the explorations from $x,y,z,t$ are all successful})\in(\eta(h)^4-\varepsilon/2,\eta(h)^4+\varepsilon/2).
\end{equation}
\noindent
If these explorations are successful, develop balls from $\partial T_{x}$ to $\partial T_y$ as described in Section \ref{subsec:connexion}, with $Q_j:=R_x\cup R_y\cup R_w\cup R_t\cup B^*_j$ for $z_j\in \partial T_x$. Then do the same from $\partial T_w$ to $\partial T_t$, this time with $Q_j:=R_x\cup R_y\cup (\cup_{z\in \partial T_x}B^*(z,a'_n))\cup R_w\cup R_t\cup B^*_j$ for $z_j\in \partial T_w$. Finally, reveal $\psimn$ on $T_x,T_y,T_w,T_t$ and on the joining balls from $T_x$ to $T_y$ and from $T_w$ to $T_t$, in that order. 
\\
One can adapt readily the proof of Lemma~\ref{lem:joiningball} to show that with $\dP_{ann}$-probability $1-o(1)$, if the four explorations are successful then there are at least $\log^{\gamma-3\kappa -18}n$ joining balls from $\partial T_{x}$ (resp. $\partial T_w$) to $\partial T_y$ (resp. $\partial T_t$). Note in particular that the 
estimates of (\ref{eqn:explototaledges}), (\ref{eqn:saturationjoiningballs}) and (\ref{eqn:spoliationjoiningballs}) still hold. It is also straightforward to adapt the proof of Proposition~\ref{prop:exploendjoin},  
and we finally have 
\begin{center}
$\vert\dP_{ann}(\cS^*(x,y) \cap \cS^*(w,t) ) -\dP_{ann}(\text{the explorations from $x,y,w,t$ are all successful}) \vert\leq \varepsilon/2$.
\end{center} 
Together with (\ref{eqn:covarconnexion}) and (\ref{eqn:fourexplosuccess}), this yields (\ref{eqn:variancenbsuccess}).
\end{proof}

\section{Uniqueness of the giant component}\label{sec:uniqueness}

\noindent
In this Section, we prove (\ref{eqn:secondcompo}). We start by the lower bound in Section~\ref{subsec:uniquenesslowerbound}, showing the existence $\dP_{ann}$-w.h.p. of a component (different from $\cC_1^{(n)}$) having $\Theta(\log n)$ vertices. 
\\
Then, to show that $\vert\cC_2^{(n)}\vert=O(\log n)$ $\dP_{ann}$-w.h.p., we perform an exploration of a new kind, starting from some $x\in V_n$. It consists of three phases (Sections~\ref{subsec:phase1} to \ref{subsec:phase3}), during which we 
assign a \textbf{pseudo-GFF} $\psimnt$ to the vertices that we visit. $\psimnt$ is defined via a recursive construction that mimics Proposition~\ref{prop:recursivegfftrees}, as long as there are no cycles (like $\phid$ on $\kT_x$ in Section~\ref{sec:exploration}). If we meet one cycle, which can happen in the first phase (Section~\ref{subsec:phase1}, we use an \textit{ad hoc} modification of these recursive formulas. Finally, in Section~\ref{subsec:secondcomporevealpsin}, we reveal the true values of $\psimn$ one by one on the set of vertices we have explored via Proposition~\ref{prop:gffann}, and check that they are close to $\psimnt$. We show that either $\vert\cC_x^{\cM_n,h}\vert = O(\log n)$, or $\vert\cC_x^{\cM_n,h}\vert =\Theta(n)$, in which case $\cC_x^{\cM_n,h}=\cC_1^{(n)}$ by Remark~\ref{rem:secondcompofirstbound}. 
Contrary to Section~\ref{sec:exploration}, we need this alternative to hold for \emph{every} $x\in V_n$, $\dP_{ann}$-w.h.p. By a union bound, it is enough to prove that 
\begin{equation}\label{eqn:secondcompoalternativeforexplo}
\text{for $x\in V_n$, $\dP_{ann}(\{\vert\cC_x^{\cM_n,h}\vert = O(\log n)\}\cup\{\vert\cC_x^{\cM_n,h}\vert =\Theta(n)\})=o(1/n)$.} 
\end{equation}
Let us sketch this exploration in the lines below.
\\
\textbf{First phase (Section \ref{subsec:phase1}).} We explore the connected component $\cC$ of $x\in V_n$ in the set $\{y\in V_n, \psimnt(y)\geq h-n^{-a}\}$ for some constant $a>0$. More precisely, we give a mark to each vertex $y$ such that $\vert\psimnt(y)-h\vert \leq n^{-a}$, and explore each connected component $\cC'$ of $\cC\setminus \cM$, where $\cM$ is the set of marked vertices, until 
\\
- (i) $\cC'$ is fully explored and has no more than $O(\log n)$ vertices, or 
\\
- (ii) $\lfloor \cxv\log n\rfloor$ vertices of $\cC'$ have been seen but not yet explored, for some constant $\cxv$ fixed in the second phase. 
\\
We replace $a_n$ of (\ref{eqn:andef}) by a ``security radius'' $r_n=\Theta(\log n)$. Adapting Proposition~\ref{prop:couplinggffsexplo} (see Lemma~\ref{lem:couplinggffadapted}), this will allow us in Section~\ref{subsec:secondcomporevealpsin} to bound the difference between $\psimnt$ and $\psimn$ by $n^{-a}$ with probability $1-o(1/n)$, so that for every connected component $\cC'$ of $\cC\setminus \cM$, either $\cC'\subseteq\cC_x^{\cM_n,h}$ or $\cC'\cap \cC_x^{\cM_n,h}=\emptyset$.
\\
If we kept $a_n$, $\psimnt$ would approximate $\psimn$ only with precision $\log^{-\Theta(1)}n$. With probability $\Theta(1/n)$, there would be too many vertices $y$ such that $\vert \psimnt(y)-h\vert \leq \log^{-\Theta(1)}n$, hence for which we cannot know by anticipation whether they will be in $\cC_x^{\cM_n,h}$ or not.
\\
Moreover, we do not have $\dP(\text{\ref{C1} happens})=O(1/n)$ as soon as the number of vertices explored goes to infinity with $n$. We will need to accept the possible occurrence of one cycle. When this happens, we have to define $\psimnt$ in a slightly different manner. In Section~\ref{subsec:secondcomporevealpsin}, we need a variant of Lemma~\ref{lem:couplinggffadapted} to control the difference between $\psimnt$ and $\psimn$ in that case (Lemma~\ref{lem:couplinggffadapted1cycle}).
\\
\textbf{Second phase (Section~\ref{subsec:phase2}).} If (i) happens for every component $\cC'$, the exploration is over. In this case, $\vert \cC\vert=O(\log n)$ (see \ref{D2} and Proposition~\ref{prop:blockexplo}) and thus $\vert\cC_x^{\cM_n,h}\vert =O(\log n)$. For each $\cC'$ such that (ii) happens, we explore its $\lfloor \cxv\log n\rfloor$ remaining vertices, this time in a fashion similar to Section~\ref{subsec:explo1vertexsuccess}. Each of these explorations has a probability bounded away from 0 to be successful. If $\cxv$ is large enough, with probability at least $1-o(1/n)$, at least one of these explorations is successful, and has a boundary of size $\Theta( n^{1/2}b_n)$.
\\
\textbf{Third phase (Section~\ref{subsec:phase3}).} For every $\cC'$ such that (ii) happens, we show that the successful exploration of the second phase is connected to a positive proportion of the vertices of $V_n$, via an adaptation of the joint exploration in Section~\ref{subsec:connexion}. This yields (\ref{eqn:secondcompoalternativeforexplo}).

\subsection{Lower bound}\label{subsec:uniquenesslowerbound}
In this section, we prove the existence of $\cz>0$ such that
\begin{equation}\label{eqn:secondcompolowerboundd}
\dP_{ann}(\vert\cC_2^{(n)}\vert\geq \cz^{-1}\log n)\rightarrow 1.
\end{equation}
\noindent
To do so, we first show that if $\cz$ is large enough, then with probability at least $n^{-1/4}$, $\Ch$ consists of a unique ``line'' $L_n$ of length $\lfloor 2\cz^{-1}\log n\rfloor$ (each vertex of $\Ch$ having one child, except the last one which has no children, Lemma~\ref{lem:secondcompolowerboundtree}). Thus, if we take $n^{1/3}$ vertices of $V_n$ and assign to each of them an independent copy of $\Ch$, the probability that at least one of them is isomorphic to $L_n$ is $1-o(1)$. Then, we realize the component of the corresponding vertex in $E^{\geq h}$, and check that it is indeed isomorphic to $L_n$ with $\dP_{ann}$-probability $1-o(1)$, so that $\vert\cC_2^{(n)}\vert \geq \lfloor 2 \cz^{-1}\log n\rfloor\geq  \cz^{-1}\log n$.
\begin{lemma}\label{lem:secondcompolowerboundtree}
For $n\geq 1$, let $\cE_{\text{line},n}$ be the event that $\Ch$ has $m:=\lfloor 2\cz^{-1} \log n\rfloor$ vertices $x_1, \ldots, x_m$ with $x_1=\circ$ and for $2\leq i\leq m$ $x_i$ is the child of $x_{i-1}$, and that for $1\leq i\leq m$, for any other child $y$ of $x_i$ in $\Td$,   $\phid(x_i)\in [h+1,h+2]$, $\phid(y)<h-1$. If $\cz$ is large enough, then for $n$ large enough, 
\begin{center}
$\dP^{\Td}(\cE_{\text{line},n})\geq n^{-1/4}$.
\end{center}
\end{lemma}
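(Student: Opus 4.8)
The plan is to use the recursive construction of $\phid$ from Proposition~\ref{prop:recursivegfftrees} and estimate directly the probability of the event $\cE_{\text{line},n}$, which is a cylinder event depending only on the values of $\phid$ on the ball $B_{\Td}(\circ,m)$ together with the children of the $x_i$'s. First I would fix a deterministic ``line'' $x_1=\circ, x_2, \dots, x_m$ in $\Td$ (say always the leftmost child), and lower bound $\dP^{\Td}$ of the event that: (a) $\phid(x_i)\in[h+1,h+2]$ for all $1\le i\le m$; (b) $\phid(y)<h-1$ for every child $y$ of $x_i$ other than $x_{i+1}$ (and for all $d-1$ children of $x_m$); and (c) no vertex below such a $y$ matters --- but in fact $\cE_{\text{line},n}$ as stated only constrains the values on the $x_i$'s and their immediate children, so $\Ch$ automatically equals $\{x_1,\dots,x_m\}$ once (a) and (b) hold, since all the ``escape'' children are strictly below $h$. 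So it suffices to bound $\dP^{\Td}$ of (a) and (b).

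The key step is to reveal the $\xi$'s generation by generation along the line. Conditionally on $\phid(x_{i-1})=a\in[h+1,h+2]$, the value $\phid(x_i)=\sqrt{d/(d-1)}\,\xi_{x_i}+\frac{1}{d-1}a$; since $\frac{1}{d-1}a$ lies in a bounded interval (of length $1/(d-1)$ inside $[\frac{h+1}{d-1},\frac{h+2}{d-1}]$, which is a bounded set depending only on $d,h$), there is a constant $q_1=q_1(d,h)>0$ such that $\dP(\phid(x_i)\in[h+1,h+2]\mid \phid(x_{i-1})=a)\ge q_1$ uniformly over $a\in[h+1,h+2]$ --- this is just the Gaussian mass of a fixed-length interval whose center ranges over a fixed compact set. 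Similarly, for each of the (at most $d-1$) other children $y$ of $x_i$, conditionally on $\phid(x_i)=a\in[h+1,h+2]$ we have $\phid(y)=\sqrt{d/(d-1)}\,\xi_y+\frac{1}{d-1}a$ and $\dP(\phid(y)<h-1\mid \phid(x_i)=a)\ge q_2$ for a constant $q_2=q_2(d,h)>0$, again uniformly over the relevant range of $a$. These children events are conditionally independent given $\phid(x_i)$, and the events at distinct generations chain via the Markov structure of the recursive construction. Hence, also using $\dP(\phid(\circ)\in[h+1,h+2])\ge q_0>0$, we get
\[
\dP^{\Td}(\cE_{\text{line},n})\ \ge\ q_0\,\big(q_1\,q_2^{\,d-1}\big)^{m}\ =\ q_0\,e^{-cm}
\]
for a constant $c=c(d,h)=-\log(q_1 q_2^{d-1})>0$. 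Since $m=\lfloor 2\cz^{-1}\log n\rfloor\le 2\cz^{-1}\log n$, this lower bound is at least $q_0\, n^{-2c/\cz}$, and choosing $\cz>8c$ makes the exponent $2c/\cz<1/4$, so $\dP^{\Td}(\cE_{\text{line},n})\ge q_0 n^{-2c/\cz}\ge n^{-1/4}$ for $n$ large enough.

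I expect the only mildly delicate point to be bookkeeping the conditional independence: one must reveal the Gaussians $\xi_{x_i}$ and $\xi_y$ in the order dictated by the recursive construction (root first, then each generation), and check that after conditioning on the event up to $x_{i-1}$, the variable $\xi_{x_i}$ and the $\xi_y$ for $y$ a child of $x_i$ are still i.i.d.\ standard normals independent of the past --- which is immediate from Proposition~\ref{prop:recursivegfftrees} since each $\xi$ is used exactly once and $\phid(x_i)$, $\phid(y)$ are affine functions of a fresh $\xi$ plus a past-measurable quantity. The uniform (in $a$) lower bounds $q_1,q_2$ are the ``main obstacle'' in the sense that they are the one quantitative input, but they are routine: a Gaussian density is bounded below on any compact interval, and $\frac{a}{d-1}$ ranges over the compact set $[\frac{h+1}{d-1},\frac{h+2}{d-1}]$, so both $q_1$ and $q_2$ are strictly positive constants depending only on $d$ and $h$. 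Everything else is the geometric-decay computation above.
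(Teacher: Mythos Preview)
Your proof is correct and follows essentially the same approach as the paper's: both use the recursive construction of Proposition~\ref{prop:recursivegfftrees} to chain uniform (over $a\in[h+1,h+2]$) positive lower bounds on the one-step conditional probabilities along the line, obtaining a bound of the form $c_0\,e^{-cm}$ and then choosing $\cz$ large enough so that $2c/\cz<1/4$. The only cosmetic difference is that the paper packages the per-generation probabilities into three constants $p,p',p''$ (distinguishing the root, interior vertices, and the last vertex), whereas you separate the ``next-in-line'' probability $q_1$ from the ``escape-child'' probability $q_2$; your slightly loose count $(q_1 q_2^{\,d-1})^m$ is a valid lower bound since it only introduces extra factors in $(0,1)$.
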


\begin{proof}
Let $v_1, \ldots,v_d$ be the children of $\circ$ in $\Td$. 
Remark that for $n$ large enough,
\begin{center}
$\dP_{ann}(\cE_{\text{line},n})\geq\dP^{\Td}(\phid(\circ)\in [h+1,h+2]) pp'^{\lfloor2 \cz^{-1}\log n\rfloor}p''$, where
\end{center}
\begin{center}
$p:=\inf_{a\in [h+1,h+2]}\dP^{\Td}_a(\{\phid(v_1)\in [h+1,h+2]\} \cap \{\forall  i\in \{2, \ldots, d\}, \phid(v_i)<h-1\}) $,
\end{center}
\begin{center}
$p':=\inf_{a\in [h+1,h+2]}\dP^{\Td}_a(\{ \phid(v_1)\in [h+1,h+2]\}\cap \{\forall i\in \{2, \ldots, d-1\}, \, \phid(v_i)<h-1 \}) $,
\end{center}
\begin{center}
$p'':=\inf_{a\in [h+1,h+2]}\dP^{\Td}_a(\forall i\in \{1, \ldots, d-1\}, \phid(v_i)<h-1) $.
\end{center}
Using Proposition~\ref{prop:recursivegfftrees}, one checks that $p,p',p''>0$. Taking $\cz>-8\log p'$ yields the~result.
\end{proof}

\begin{proof}[Proof of \eqref{eqn:secondcompolowerboundd}.]
Fix $\cz>12\log(d-1)$ large enough such that the conclusion of Lemma~\ref{lem:secondcompolowerboundtree} holds. 
Let $x_1, \ldots, x_{\lfloor n^{1/3}\rfloor}\in V_n$. For all $1\leq i\leq \lfloor n^{1/3}\rfloor$, attach a family $(\xi^{(i)}_y)_{y\in \Td}$ of i.i.d.~$\cN(0,1)$ variables, these families being themselves independent. For each $i$, let $\Ch(i)$ be the associated realization of $\Ch$ built with the family $(\xi^{(i)}_y)$, using Proposition~\ref{prop:recursivegfftrees}. 
\\
Then, pick a realization of $\cM_n$, and let $\cE_{1,n}$ be the event that $\cM_n$ is a good graph and that the balls $B_{\cM_n}(x_i,\lfloor 2\cz^{-1}\log n\rfloor+2a_n)), 1\leq i\leq \lfloor n^{1/3}\rfloor$ are disjoint and have no cycle. 
By Proposition~\ref{prop:goodgraph} and \eqref{eqn:binomdominationdiff} with $m_0=\lfloor n^{1/3}\rfloor$, $m\leq \lfloor n^{1/3}\rfloor\times (d-1)^{\lfloor 2\cz^{-1}\log n\rfloor+2a_n+10}$ and $k=1$, $\dP_{ann}(\cE_{1,n})=1-o(1)$.
\\
On $\cE_{1,n}$, realize $\psimn$ on $B_{\cM_n}(x_i,\lfloor 2\cz^{-1}\log n\rfloor)$ for $i=1, \ldots, \lfloor n^{1/3}\rfloor$ successively, using the random variables $(\xi^{(i)}_y)_{y\in B_{\Td}(\circ,\lfloor 2\cz^{-1}\log n\rfloor )}$ and the construction of Proposition~\ref{prop:gffann}. For $1\leq i\leq   \lfloor n^{1/3}\rfloor$, let $\Phi^{(i)}$ be a rooted isomorphism between $B_{\cC_{x_i}^{\cM_n,h}}(x_i,\lfloor 2\cz^{-1}\log n\rfloor)$ and $ B_i:=B_{\Ch(i)}(x_i,\lfloor 2\cz^{-1}\log n\rfloor)$.  Let $\cE_{2,n}:=\{\sup_{1\leq i\leq   \lfloor n^{1/3}\rfloor, y\in B_i}\vert \psimn((\Phi^{(i)})^{-1}(y)) -\phid(y)\vert <1/2  \}$. A direct adaptation of the reasoning below \eqref{eqn:growthsuccessful} to show that $\dP_{ann}(\cE_{2,n})=1-o(1)$. 
\\
Let $\cE_{3,n}:=\{\exists i_0\in \{1, \ldots, \lfloor n^{1/3}\rfloor\}, B_{i_0}\text{ is isomorphic to }L_n\}$. By Lemma~\ref{lem:secondcompolowerboundtree}, we have that $\dP_{ann}(\cE_{3,n})\geq 1- (1-n^{-1/4})^{\lfloor n^{1/3}\rfloor}=1-o(1)$. 
\\
Finally, by \eqref{eqn:mainthm}, we have $\dP_{ann}(\cE_{4,n})=1-o(1)$, with $\cE_{4,n}=\{\vert \cC^{(n)}_1\vert \geq 10\log n\} $. 
\\
All in all, we have shown that $\dP_{ann}(\cE_{1,n}\cap \cE_{2,n}\cap \cE_{3,n}\cap \cE_{4,n})=1-o(1)$. And on $\cE_{1,n}\cap \cE_{2,n}\cap \cE_{3,n}\cap \cE_{4,n}$ for $n$ large enough, $\vert \cC_{x_{i_0}}^{\cM_n,h}\vert\geq K_0^{-1}\log n$ and $\cC_{x_{i_0}}^{\cM_n,h}\neq \cC^{(n)}_1$, so that $\vert \cC_{2}^{(n)}\vert\geq K_0^{-1}\log n$. 
\end{proof}

\subsection{First phase}\label{subsec:phase1}
In this section, we define the first phase of the exploration, and show that it is successful with $\dP_{ann}$-probability $1-n^{-5/4}$ (Proposition~\ref{prop:blockexplo}). 
\\
Let $a>0$. For every $n\in \dN$, define
\begin{equation}\label{eqn:rndef}
r_n:=\lfloor 0.05 \log_{d-1}n\rfloor
\end{equation}  Let $\delta\in (0,h_{\star}-h)$ and $\ell\in \dN$ be such that the conclusion of Remark~\ref{rem:uniformsupercriticsurvival} holds.
\\
\\
\textbf{The first phase of the exploration.} Let $x\in V_n$. Let $\cM$ be the set of marked vertices. Initially, $\cM=\{x\}$.
While $\cM\neq \emptyset$, pick $y\in \cM$ in an arbitrary way and proceed to its \textbf{subexploration}, as detailed below. 
\\
There are three possible scenarios, according to the number of cycles discovered during the first phase (zero, one, or more).
\\
\textbf{I -} We first assume that we do not meet any cycle throughout the first phase of the exploration of $x$. 
\\
Assume we have picked some vertex $y\in \cM$. We now define the \textbf{subexploration from} $y$. Let $T_y$ be the subexploration tree, that we will build by adding subtrees of depth $\ell$ in a breadth-first way. Initially $T_y=\{y\}$.
\\
While $1\leq \vert \partial T_{y}\vert\leq \cxv\log n$, perform a \textbf{step}: take $y_1\in \partial T_{y}$ of minimal height and if $y_1\neq x$, let $\overline{y_1}$ be its only neighbour where $\psimnt$ has already been defined. Note that if $y_1\neq y$, $\overline{y_1}$ is the parent of $y_1$ in $T_y$. Reveal all the edges of $B_{\cM_n}(y_1,\overline{y_1},r_n+\ell)$, where we recall that $B_{\cM_n}(y_1,\overline{y_1},r_n+\ell)$ is the graph obtained by taking all paths of length $r_n+\ell$ starting at $y_1$ and not going through $\overline{y_1}$. Since we suppose that no cycle arises, $B_{\cM_n}(y_1,\overline{y_1},\ell)$ is a tree, that we root at $y_1$. 
\\
If $y_1=y=x$, replace $B_{\cM_n}(y_1,\overline{y_1},r_n+\ell)$ and $B_{\cM_n}(y_1,\overline{y_1},\ell)$  by $B_{\cM_n}(x,r_n+\ell)$ and $B_{\cM_n}(x,\ell)$ respectively.  Let $\psimnt(x)\sim \cN(0, \frac{d-1}{d-2})$. If $\phid(\circ)< h-n^{-a}$, then the whole exploration (not only the first phase) is over. 
\\
We construct $T_y(y_1)$, the subtree of $B_{\cM_n}(y_1,\overline{y_1},\ell) $ in $\{z, \psimnt(z)\geq h+n^{-a} \}$. We start with $T_y(y_1)=\{y_1\}$.
\\
For $k=1,2,...\ell-1$ successively, denote $y_{k,1}, \ldots, y_{k,m}$ the children of the $(k-1)$-th generation of $T_y(y_1)$. Let $(\xi_{y_1,k,i})_{k,i\geq 0}$ be an array of i.i.d. variables of law $\cN(0,1)$, independent of everything else.  Set
\begin{equation}\label{eqn:phidellblock}
\psimnt(y_{k,i}):=\frac{1}{d-1}\psimnt(\overline{y_{k,i}})+\sqrt{\frac{d}{d-1}}\xi_{y_1,k,i}.
\end{equation}
Add $y_{k,i}$ to $T_y(y_1)$ if $\psimnt(y_{k,i})\geq h+n^{-a}$, and give a mark to $y_{k,i}$ (and thus add it to $\cM$) if $h-n^{-a}\leq \psimnt(y_{k,i})<h+n^{-a}$.
\\
Finally, include $T_y(y_1)$ in $T_y$, add the vertices of $\partial T_y(y_1)$ to $N_y$ and take $y_1$ away from $\partial T_y$. The step is then over.
\\
\\
If $\vert \partial T_y\vert\not\in [1,\cxv\log n]$, the subexploration is finished. Say that it is \textbf{fertile} if $\vert \partial T_y\vert>\cxv\log n$, and \textbf{infertile} else (hence if $\partial T_y=\emptyset$).
\\
\\

\noindent
\textbf{II -} Suppose now that a unique cycle $C$ arises in the subexploration of some vertex $y$, when revealing the pairings of $B_{\cM_n}(y_1,\overline{y_1},r_n+\ell)$ in the step from $y_1$, for some $y\in V_n$ and $y_1\in V_n\setminus\{x\}$ (we treat the special case $x=y_1$ below). Let $m:=\vert C\vert$ be the number of vertices in $C$. There are two cases.
\\
\textbf{Case 1:}\label{case1}
When $C$ is discovered, there are already $k$ consecutive vertices $y_1, \ldots, y_k$ of $C$ where $\psimnt$ has been defined, for some $1\leq k \leq m-1$. Reveal $B_{\cM_n}(C,r_n)$. Denote $z_1, \ldots, z_{m-k}$ the remaining vertices of $C$, such that $z_1\neq y_2$ is a neighbour of $y_1$, and $z_i$ is a neighbour of $z_{i-1}$ for $i\geq 2$. Give a mark to $z_1, \ldots, z_{m-k}$ and $y_1$. Take $y_1$ away from $\partial T_y$. If $y_k$ was in $\partial T_{\widetilde{y}}$ for some $\widetilde{y}$ whose subexploration was performed previously, take it away from that set. 
\vspace{10mm}

\includegraphics[scale=0.8]{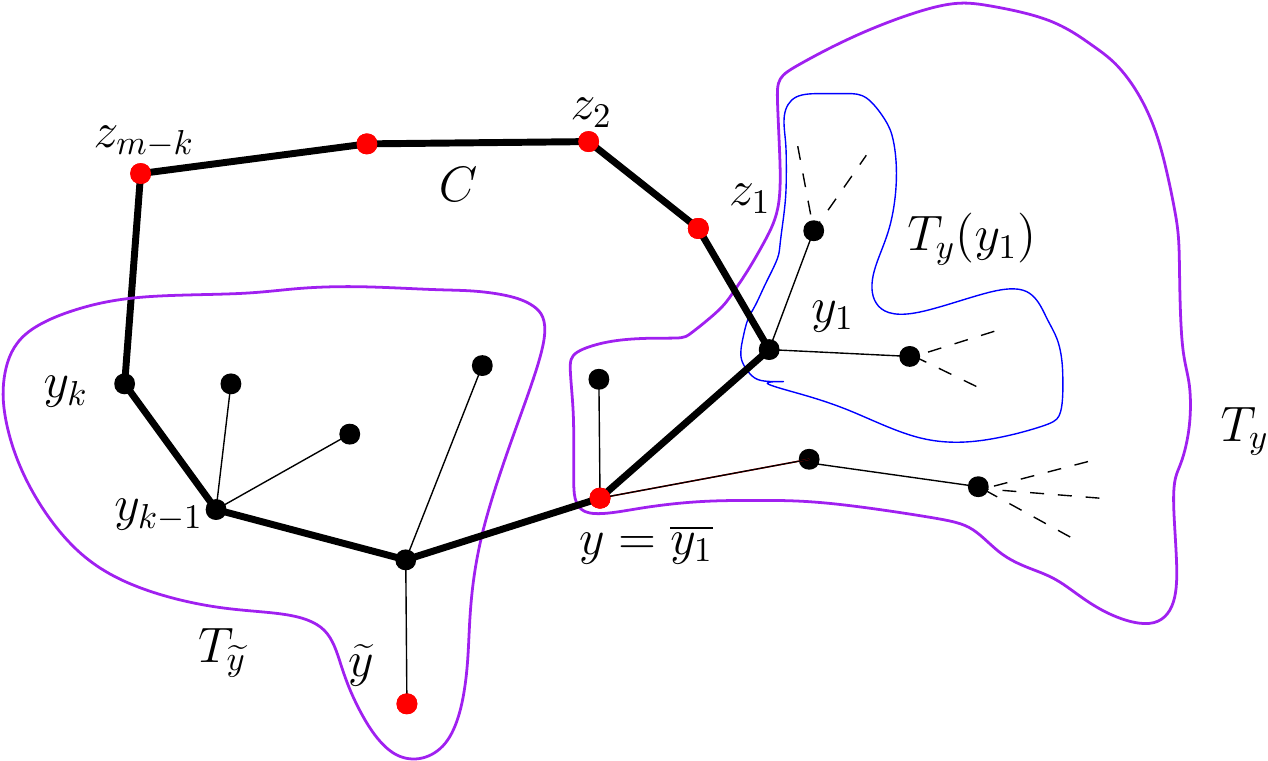}

\begin{center}
Figure 4. Case 1. Marked vertices are in red. $C$ consists of the thick edges. $T_y$ and $T_{\widetilde{y}}$ are delimited by the purple contours. Remark that we could have $y=\widetilde{y}$ (it is not the case here).
\end{center}

\noindent
We now define $\psimnt$ on the $z_i$'s. To do so, we mimic a recursive construction of the GFF on $G_m$, the infinite connected $d$-regular graph $G_m$ having a unique cycle $C_m$ of length $m$ (such a construction always exists on a transient graph, see for instance Lemma 1.2 in~\cite{RodriguezSznitman}). 
$G_m$ consists of a cycle $C_m$ of length $m$, with $d-2$ copies of $\Td^+$ attached to each vertex of $C_m$, thus it is clear that the SRW is transient and that the Green function $G_{G_m}$ and the GFF are well-defined. 
\\
Let $u_k,\ldots, u_1, v_1,\ldots, v_{m-k}$ be the vertices of $C_m$, listed consecutively. Let $U\hspace{-1mm}:=\hspace{-1mm}G_m\hspace{-0.2mm}\setminus\{u_1, \ldots, u_k\}$, $(X_j)_{j\geq 0}$ a SRW on $G_m$ and recall that $T_U$ is the exit time of $U$. Define
\begin{center}
$\alpha: =\bP_{v_1}^{G_m}(X_{T_U}\hspace{-1mm}=u_1,T_U\hspace{-1mm}<\hspace{-1mm}+\infty)$, $\beta: =\bP_{v_1}^{G_m}(X_{T_U}\hspace{-1mm}=u_k,T_U\hspace{-1mm}<\hspace{-1mm}+\infty)\mathbf{1}_{\{k>1\}}$
\vspace{5mm}

and $\gamma:=\bE_{v_1}^{G_m}[\sum_{j=0}^{T_U-1}\mathbf{1}_{\{X_j=v_1\}}],$
\end{center}
and let $(\xi_{i})_{i\geq 1}$ be a family of i.i.d standard normal variables, independent of everything else. Define
\begin{center}
$\psimnt(z_1):=\alpha\psimnt(y_1)+\beta\psimnt(y_k)+\sqrt{\gamma}\xi_{1}$. 
\end{center}
Then for $i\geq 2$, define recursively 
\begin{equation}\label{eqn:blockexplocase1}
\psimnt(z_i):=\alpha_{m-(k+i-1)}\psimnt(z_{i-1})+\beta_{m-(k+i-1)}\psimnt(y_k)+\sqrt{\gamma_{m-(k+i-1)}}\xi_{i}
\end{equation}
\noindent
where we set 
\begin{center}
$U_i:=G_m\setminus\{u_1, \ldots, u_k,v_1, \ldots,v_{i-1}\}$, $\alpha_{m-(k+i-1)}:=\bP_{v_i}^{G_m}(X_{T_{U_i}}=v_{i-1},\,T_{U_i}<+\infty)$, 
\end{center}
\begin{center}
$\beta_{m-(k+i-1)}:=\bP_{v_i}^{G_m}(X_{T_{U_i}}=u_k,\,T_{U_i}<+\infty)$ and $\gamma_{m-(k+i-1)}:=\bE_{v_i}^{G_m}[\sum_{j=0}^{T_{U_i}-1}\mathbf{1}_{\{X_j=v_i\}}]$. 
\end{center}

\noindent
\textbf{Case 2:}
$\psimnt$ has not been defined on any vertex of $C$. There exists a unique path of consecutive vertices $y_1, \ldots, y_j$ for some $j\geq 2$ such that $y_j\in C$, and for $2\leq i \leq j-1$, $\psimnt(y_i)$ has not been defined and $y_i\not \in C$. 
Reveal $B_{\cM_n}(\{y_1 \ldots, y_{j-1}\}\cup C,r_n)$.  Give a mark to the vertices of $\{y_1, \ldots, y_{j-1}\}\cup C$. 
Take $y_1$ away from $\partial T_y$. 

\includegraphics[scale=0.8]{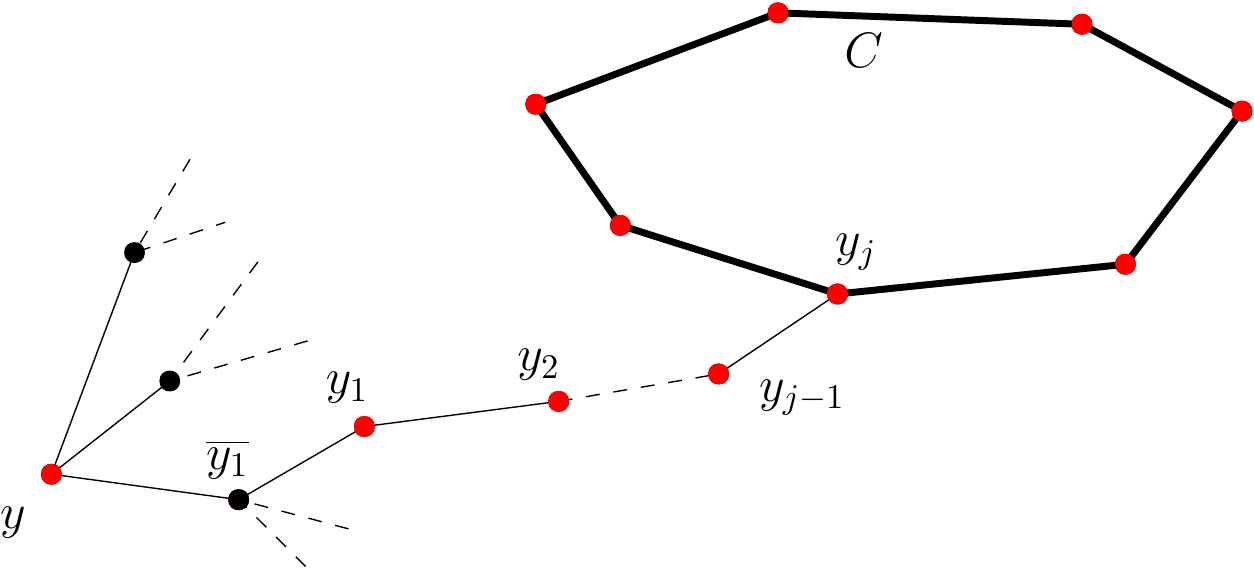}\label{figure5}

\begin{center}
Figure 5. Case 2. Marked vertices are in red.
\end{center}

\noindent
We define $\psimnt$ on $\{y_1, \ldots, y_{j-1}\}\cup C$ in a way similar to Case 1. Let $(\xi_i)_{i\geq 1}$ be a sequence of i.i.d. standard normal variables, independent of everything else. For $i=1, 2, \ldots, j$, set
\begin{equation}\label{eqn:blockexplocase2}
\psimnt(y_i):=\alpha'_{j-i}\psimnt(y_{i-1})+\sqrt{\gamma'_{j-i}}\xi_{i},
\end{equation}
\noindent
where $\alpha'_{j-i}:=\bP_{z}^{G_m}(H_{\{z'\}}<+\infty)$ and $\gamma'_{j-i}:=\bE_z^{G_m}[\sum_{l=0}^{H_{\{z'\}}-1}\mathbf{1}_{\{X_l=z\}}]$, $z,z'$ being two neighbours in $G_m$ such that $z$ (resp. $z'$) is at distance $j-i$ (resp. $j-i+1$) of $C_m$. Then, define $\psimnt$ on $C$ as in Case 1 with $k=1$. 
\\
\\
If $C$ is discovered while revealing $B_{\cM_n}(x,r_n+\ell)$, give a mark to all vertices of $C$, and, if $x\not\in C$, all vertices on the unique shortest path from $x$ to $C$. Let $k:=d_{\cM_n}(x,C)$. Let $u_k\in G_m$ be at distance $k$ of the cycle $C_m$. Let $\psimnt(x)\sim \cN(0, G_{G_m}(u_k,u_k))$. If $\phid(\circ)< h-n^{-a}$, then the whole exploration (not only the first phase) is over. 
\\
\\
\textbf{After the discovery of $C$.} Assume that no other cycle will be discovered during the first phase of the exploration from $x$. 
Resume the subexploration from $y$, by picking a new vertex $y'\in \partial T_y$ (recall that we have taken $y_1$ away from $\partial T_y$) and proceeding to the step from $y'$, and so on until the subexploration from $y$ is over. After that, as long as $\cM\neq \emptyset$, pick a vertex $y''$ in $\cM$ and proceed to its subexploration as described in \textbf{I} (recall that no new cycle is discovered), with the following amendment, if $y''$ has several neighbours $y''_1, \ldots, y''_k$ (for some $k\geq 2$) on which $\psimnt$ has already been defined (see for instance $y_2$ or $y_j$ in Figure~\ref{figure5}). In this case, in the step from $y''$ (the first step of the subexploration from $y''$), reveal only $\cap_{1\leq i\leq k}B_{\cM_n}(y'',y''_i,r_n+\ell) $ instead of $B_{\cM_n}(y'',\overline{y''},r_n+\ell) $ (not by the way that $\overline{y''}$ is not properly defined). In words, perform the subexploration only in the direction of non-marked vertices. 
\\
\\
\textbf{III -} If a second cycle arises, the whole exploration from $x$ is over, and is not successful. 
\\
\\
We have now fully described how the first phase can unfold. Say that the first phase is successful if at some point, the following holds:
\begin{enumerate}[label=D\arabic*]
\item\label{D1} $\cM=\emptyset$,
\item\label{D2} $\psimnt$ has been defined on at most $\lfloor \cxvi\log n\rfloor$ vertices, and
\item\label{D3}  at most one cycle has been discovered.
\end{enumerate}
\noindent
Denote $\cS_{1}(x)$ this event. If all subexploration trees were infertile, then the whole exploration from $x$ is over, and said to be successful. Denote $\cS_{1,\text{stop}}(x)$ this event.

\begin{proposition}\label{prop:blockexplo}
Fix $a>0$. For any fixed value of $\cxv>0$, if $\cxvi$ is large enough, then for large enough $n$ and every $x\in V_n$, $\dP_{ann}(\cS_{1}(x))\geq 1-n^{-5/4}$.
\end{proposition}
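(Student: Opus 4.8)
The plan is to control each of the three failure modes for \ref{D1}--\ref{D3} separately and show that each has $\dP_{ann}$-probability $o(n^{-5/4})$. The overall picture is that, conditionally on the pseudo-GFF $\psimnt$ behaving like $\phid$ (which it does as long as at most one cycle has been seen, by construction), the set of explored vertices is dominated by a union of supercritical-looking branching trees, each pruned at level $\cxv\log n$, so that the first phase stops quickly with overwhelming probability unless it ``escapes'' (i.e. some subexploration becomes fertile), in which case the exploration simply moves to the second phase --- but \emph{either} way, the first phase is \emph{successful} in the sense of $\cS_1(x)$, because $\cS_1(x)$ only requires \ref{D1}--\ref{D3}, not that every subtree be infertile. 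So the bound $\dP_{ann}(\cS_1(x))\geq 1-n^{-5/4}$ reduces to bounding the probabilities of: (a) two or more cycles appear (violating \ref{D3}); (b) $\psimnt$ gets defined on more than $\lfloor\cxvi\log n\rfloor$ vertices before $\cM$ empties out or a subtree becomes fertile (violating \ref{D2}); (c) the process fails to terminate the first phase at all, which cannot really happen but should be argued away via the same size control.

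First I would handle \ref{D3}. During the whole first phase, at most $\lfloor\cxvi\log n\rfloor$ vertices receive a value of $\psimnt$, and around each such vertex we reveal a ball of radius $r_n+\ell=\Theta(\log n)$, so at most $O((\log n)(d-1)^{r_n+\ell})=O(n^{0.05}\log n)=O(n^{1/10})$ half-edges are paired throughout. Hence the probability of discovering even one cycle is $O(n^{1/10}\cdot n^{1/10}/n)=O(n^{-4/5})$ by \eqref{eqn:binomdominationdiff} with $k=1$, and the probability of discovering \emph{two} cycles is $O((n^{1/10})^2\cdot(n^{1/10})^2/n^2)\times\binom{2}{\cdot}=O(n^{-8/5})=o(n^{-5/4})$: we apply \eqref{eqn:binomdominationdiff} with $k=2$, $m\leq n^{1/10}$, $m_1\leq n^{1/10}$. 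This is clean and does not depend on $\cxvi$. For \ref{D2}, the point is that each subexploration is stopped the moment $|\partial T_y|$ exceeds $\cxv\log n$ (then that subtree is fertile and the first phase will still be declared successful as long as the \emph{total} vertex count stays below $\cxvi\log n$); so the only way to violate \ref{D2} is to reveal $\psimnt$ on many vertices \emph{without} any single subexploration becoming fertile. This is where the comparison with $\Ch$ and its exponential tail (Proposition~\ref{prop:expomomentsCh}) enters: each subexploration tree $T_y$, while no cycle is present, is stochastically dominated by a level-set cluster of the GFF on $\Td$ above $h-n^{-a}$, and on the event that it is infertile it must die out; the vertices it contributes are then bounded by the size of a subcritical-conditioned $\Ch$-type cluster, which has exponential tails. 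Using Remark~\ref{rem:uniformsupercriticsurvival} (uniform survival) together with Proposition~\ref{prop:expomomentsCh}, the probability that more than $c$ disjoint infertile subtrees together with one possibly-fertile one accumulate more than $\lfloor\cxvi\log n\rfloor$ vertices, before either $\cM$ empties or a fertile subtree appears, is at most (number of subtrees)$\times$(tail of cluster size) $\le \poly(n)\cdot o(\exp(-\cvii\cxvi'\log n))$, which is $o(n^{-5/4})$ once $\cxvi$ is chosen large enough depending on $\cvii$ and $\cxv$. One has to be slightly careful that the ``marked'' vertices (those with $|\psimnt-h|\le n^{-a}$) and the $\cM$-queue are handled: each marked vertex has probability $O(n^{-a})$ of being marked, so in $\poly(n)$ vertices the number of marked vertices is small, and each spawns at most $d-1$ new subexplorations; this only inflates the count by a constant factor.

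The one genuinely delicate point --- the main obstacle --- is the one-cycle case (case II): when a cycle $C$ of length $m$ is discovered, $\psimnt$ is defined on the cycle and on the attaching path via the Green-function coefficients $\alpha,\beta,\gamma$ (and $\alpha_\cdot,\beta_\cdot,\gamma_\cdot$) of the auxiliary graph $G_m$, and one must check that the resulting values are still (stochastically) controlled by a GFF-on-$\Td$-type field so that the cluster emanating from $C$ still has exponential tails for its size. Here I would argue that $G_m$ is itself a transient $d$-regular graph whose GFF, restricted to the cones hanging off $C_m$, is again given by the $\phid$-type recursion with the \emph{same} branching ratio $1/(d-1)$ and variance $d/(d-1)$ as soon as one is one step away from $C_m$ (this is exactly the domain-Markov/recursive structure of Proposition~\ref{prop:recursivegfftrees} applied to $G_m$), so the subexplorations \emph{after} the cycle are dominated by $\Ch$-clusters just as before; the only new vertices introduced ``all at once'' are the $\le m\le r_n+\ell=O(\log n)$ vertices of $C$ and the attaching path, a negligible additive term. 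The finitely many coefficients on $C_m$ itself are bounded Green-function quantities, hence the conditional variances are bounded above and below by constants, and the conditional means are bounded affine combinations of already-revealed values of $\psimnt$, which are themselves $O(\log^{2/3}n)$ w.h.p. (or one can restrict to such an event at cost $o(1/n)$ via Lemma~\ref{lem:maxgff}); so no blow-up occurs. Assembling: $\dP_{ann}(\cS_1(x)^c)\le \dP(\text{two cycles})+\dP(\text{\ref{D2} fails}, \le 1\text{ cycle})\le o(n^{-8/5})+o(n^{-5/4})=o(n^{-5/4})$ for $\cxvi$ large, which is the claim. I would also note explicitly that $\cS_1(x)$ does \emph{not} require the first phase to stop, so no termination argument beyond the size bound \ref{D2} is needed; and that the choice of $\cxvi$ depends only on $d,h$ (through $\cvii$) and on $\cxv$, as asserted.
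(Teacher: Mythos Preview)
Your decomposition into (a)~two cycles, (b)~too many vertices, (c)~termination is sound in spirit, and your treatment of the cycle part \ref{D3} is essentially the paper's (though note a minor circularity: you bound the number of paired half-edges by assuming \ref{D2}, which you are trying to prove; the paper avoids this by bounding the probability of two cycles \emph{before $n^{3/10}$ vertices have been seen}, unconditionally on \ref{D2}).

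There is, however, a genuine gap in your treatment of \ref{D2}. You write that ``the only way to violate \ref{D2} is to reveal $\psimnt$ on many vertices without any single subexploration becoming fertile'' and later speak of the count ``before $\cM$ empties or a fertile subtree appears''. This misreads the exploration: the first phase does \emph{not} stop when a fertile subtree appears; it continues picking marked vertices until $\cM=\emptyset$, and \emph{every} subexploration (fertile or not) contributes to the vertex count in \ref{D2}. So you must bound the size of fertile subtrees as well. Your tool, Proposition~\ref{prop:expomomentsCh}, controls $\vert\Ch\vert$ on the event $\{\vert\Ch\vert<\infty\}$ and hence bounds infertile $T_y$'s, but says nothing about a fertile $T_y$: its underlying cluster may be infinite, and although it is stopped when $\vert\partial T_y\vert>\cxv\log n$, the boundary can oscillate below $\cxv\log n$ for an arbitrarily long time before crossing. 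There is no deterministic bound on the number of steps (hence on $\vert T_y\vert$) from the stopping rule alone.

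The paper fills this gap with a different, and in fact simpler, mechanism. It does not separate fertile from infertile trees. Instead it observes that the increment of $\vert\partial T_y\vert$ at each step stochastically dominates $\rho_{\ell,h,\delta}-1$ (Remark~\ref{rem:uniformsupercriticsurvival}), a bounded variable with \emph{positive} mean. Hence $\vert\partial T_y\vert$ dominates a random walk with positive drift, and for any subexploration the probability that it lasts more than $j$ steps is at most $\dP(S_{j-1}\le\cxv\log n+2)\le ce^{-c'j}$ once $j\gtrsim\log n$. This bounds the number of steps, hence the size, of every subtree uniformly. Combined with the paper's ``first $\lfloor K\log n\rfloor$ subexplorations'' device to break the marks/subexplorations circularity (your treatment of which is too vague), this yields \ref{D2} with $\cxvi>K^2 d^\ell$. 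Your Proposition~\ref{prop:expomomentsCh} route can be made to work for infertile trees, but to close the argument you would still need the positive-drift random-walk bound for fertile ones, at which point the infertile case becomes redundant.
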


\begin{proof}
In a nutshell, the argument is as follows. We first show that with probability $1-o(n^{-5/4})$, we see at most one cycle after having explored $n^{3/10}$ vertices (which will turn out to be more than the number of vertices seen in the first phase). Then, we show that for some large constant $K>0$, with probability $1-o(n^{-5/4})$, the first $K\log n$ subexplorations encompass less than $K^2\log n$ vertices. To this end, we use the fact that during any subexploration, say of some vertex $y$, the increment of $\vert\partial T_y\vert$ during a step of the subexploration from $y$ has a positive expectation if $\ell$ is large enough, since $\Ch$ has a positive probability to have an exponential growth (see Lemma~\ref{lem:uniformsupercritical} and Remark~\ref{rem:uniformsupercriticsurvival}, this is why we explore subtrees of depth $\ell$ instead of individual vertices). Finally, we show that  with probability $1-o(n^{-5/4})$, the first $K\log n$ subexplorations generate less than $K\log n$ marks. In particular, note that except for cycles, each vertex where $\psimnt$ is defined has a chance $O(n^{-a})$ to get a mark, and a cycle brings less than $2d^{\ell}r_n=\Theta( \log n)$ marked vertices. This in turn ensures that there are indeed no more than $K\log n$ subexplorations in the first phase. 
%
%
\\
\\
We now proceed to the proof itself. Let $\cE_{1,n}$ be the event that two cycles are discovered before $n^{3/10}$ vertices have been seen during the exploration from $x$. By (\ref{eqn:binomdominationdiff}) with $k=2$, $m_0=1$, $m_E=0$ and $m\leq n^{3/10}$, 
\begin{equation}\label{eqn:blockexplosuccess1}
\dP_{ann}(\cE_{1,n})\leq n^{-4/3}.
\end{equation}

\noindent
Next, let $K>0$ be a constant; we bound the number $N$ of steps during the first $\lfloor K\log n\rfloor$ subexplorations (or on all subexplorations if there are less than $\lfloor K\log n\rfloor$ of them). Let $\cE_{2,n}:=\{N\geq K^2\log n\}$, we now show that if $K$ is large enough, then we have for $n$ large enough:
\begin{equation}\label{eqn:blockexplosuccess2}
\dP_{ann}(\cE_{2,n})\leq n^{-2}.
\end{equation}
Suppose that we perform the subexploration from some vertex $y$. Let $y_1\in \partial T_y$, with $y_1\neq y$. If no cycle arises when revealing the $(r_n+\ell)$-offspring of $y_1$, then  $\vert\partial T_y(y_1)\vert-1$, the increment of $\vert \partial T_y\vert$ during that step, dominates stochastically $\rho_{\ell,h,\delta}-1$, by definition of $\rho_{\ell,h,\delta}$ at Remark~\ref{rem:uniformsupercriticsurvival}, as soon as $n^{-a}<\delta$. If a cycle arises (which happens at most once on $\cE_{1,n}^c$), at most two vertices are marked and taken away from $\partial T_y$. 
If the subexploration is not over after $j$ steps, then $\vert \partial T_y\vert \stdomi S_{j-1}-2$, where $S_{j-1}$ is the sum of $j-1$ i.i.d. random variables of law $\rho_{\ell,h, \delta}-1$, hence taking values in the bounded interval $[0,d^{\ell}]$, and with a positive expectation by Remark~\ref{rem:uniformsupercriticsurvival}. The $-1$ in $j-1$ comes from the fact that we do not include the step from $y$ (as it might be that we only explore a reduced number of vertices once we have discovered a cycle, if $y$ is a marked vertex with at least two marked neighbours). The $+2$ comes from the possibility that up to two vertices can be taken away from $\partial T_y$ if there is a cycle (see Case 1 in particular). Hence
\begin{center}
$\dP_{ann}(\text{the subexploration from $y$ lasts more than $j$ steps})\leq \dP(S_{j-1}\leq \cxv\log n+2)$ 
\end{center}
By the exponential Markov inequality, there exist constants $c,c'>0$ such that
\begin{center}
for every $j\geq 2\cxv(\dE[\rho_{\ell,h,\delta}]-1)^{-1}\log n$, $\,\,\,\,\,\dP(S_{j-1}\leq \cxv\log n +2)\leq c e^{-c'j}$.
\end{center} 
Therefore, $N$ is stochastically dominated by a sum $S$ of $\lfloor K\log n\rfloor$ i.i.d. variables of some law $\mu$ (independent of $n$) such that 
\begin{center}
for every $j\geq 2\cxv(\dE[\rho_{\ell,h,\delta}]-1)^{-1}\log n$, $\,\,\,\,\mu([j,+\infty))\leq ce^{-c'j}$. 
\end{center}
Hence, letting $\cE_{2,n}:=\{N\geq K^2\log n\}$, by the exponential Markov inequality,
\begin{center}
$\dP_{ann}(\cE_{2,n}\cap \cE_{1,n}^c)\leq\dP_{ann}(S\geq K\lfloor K\log n\rfloor)\leq  (\dE[e^{c'Y/2}]e^{-c'K\mu/2})^{\lfloor K\log n\rfloor} $
\end{center} 
for large enough $n$ and for $Y\sim \mu $. Thus \eqref{eqn:blockexplosuccess2} follows.

\noindent
Finally, let $\cE_{3,n}$ be the event that more than $3r_n+ \frac{K}{2}\log n$ marks are given during the first $\lfloor K\log n\rfloor$ subexplorations. Suppose that for $n$ large enough, 
\begin{equation}\label{eqn:blockexplosuccess3}
\dP_{ann}(\cE_{1,n}^c\cap \cE_{2,n}^c \cap \cE_{3,n})\leq n^{-2}.
\end{equation}
\noindent
Taking $K>1$, on $\cE_{1,n}^c\cap \cE_{2,n}^c\cap \cE_{3,n}^c$ (which holds with $\dP_{ann}$-probability $1-o(n^{-5/4})$ by (\ref{eqn:blockexplosuccess1}), (\ref{eqn:blockexplosuccess2}) and (\ref{eqn:blockexplosuccess3})), less than $1+3r_n+ \frac{K}{2}\log n\leq \lfloor K\log n\rfloor$ vertices receive a mark during the first $\lfloor K\log n\rfloor $ subexplorations, so that $\cM=\emptyset$ after at most $\lfloor K\log n\rfloor$ subexplorations. Moreover, in a step of a subexploration, less than $ d^{\ell}$ vertices are added to the subexploration tree. Hence if $\cxvi>K^2 d^{\ell}$, on $\cE_{2,n}^c$, \ref{D2} holds. In addition, less than $(d-1)^{2r_n}\leq n^{1/10}$ new vertices are seen in a step of a subexploration. When a cycle $C$ is revealed, there are at most $2r_n$ new vertices in $C$ (and on the path leading to $C$, in Case 2), so that less than $3r_n (d-1)^{r_n}\leq n^{1/10}$ new vertices are seen. Therefore, on $\cE_{1,n}^c\cap\cE_{2,n}^c$, less than $n^{3/10}$ vertices are seen during the first $\lfloor K\log n\rfloor$ subexplorations. Thus, on $\cE_{1,n}^c\cap \cE_{2,n}^c\cap \cE_{3,n}^c$, conditions \ref{D1}, \ref{D2} and \ref{D3} are satisfied and this yields the conclusion.
\\
Therefore, it only remains to show \eqref{eqn:blockexplosuccess3}, which we now do. When a cycle $C$ appears, our construction implies that less than $3r_n$ vertices receive a mark. When performing $m$ steps in a subexploration, the number of marked vertices obtained is stochastically dominated by a binomial random variable $\text{Bin}(m, d^{\ell}n^{-a})$. Indeed, at each step, we reveal $\psimnt$ (and thus $\phid$) on less than $d^{\ell}$ vertices. And for any vertex $y\in \Td\setminus\{\circ\}$, by Proposition~\ref{prop:recursivegfftrees},
$$
\max_{a'\geq h}\dP^{\Td}(\phid(y)\in [h-n^{-a},h+n^{-a}]\, \vert\, \phid(\overline{y})=a')\leq \frac{2n^{-a}}{\sqrt{2\pi d/(d-1)}}\leq n^{-a}.
$$
\noindent
Hence, if $Z\sim \text{Bin}(\lceil K^2\log n\rceil, C_2d^{\ell}n^{-a})$,
$$
\dP_{ann}(\cE_{1,n}^c\cap \cE_{2,n}^c \cap \cE_{3,n})\leq \dP\left(Z\geq  \frac{K}{2}\log n\right)\leq {\lceil K^2\log n \rceil\choose \frac{K}{2}\log n} (C_2d^{\ell}n^{-a})^{\frac{K}{2}\log n}.
$$
But by (\ref{eqn:binomiallittlethings}), for large enough $n$, ${\lceil K^2\log n \rceil\choose \frac{K}{2}\log n}\leq\lceil K^2\log n \rceil^{\frac{K}{2}\log n} $ . This yields \eqref{eqn:blockexplosuccess3}, and the conclusion follows.
\end{proof}

\subsection{Second phase}\label{subsec:phase2}
\noindent
If $\cS_{1,\text{stop}}(x)$ holds, i.e. all subexploration trees are infertile, the exploration is over. In this Section, we suppose that $\cS_{1}(x)\setminus\cS_{1,\text{stop}}(x)$ holds. For every fertile tree, we perform an exploration similar to that of Section~\ref{subsec:explo1vertexsuccess} from each vertex of its boundary, and show that with probability at least $1-n^{-6/5}$, at least one exploration per fertile tree is successful, and hence has a boundary of size $\Theta(n^{1/2}b_n)$ (Proposition~\ref{prop:secondphase}). We illustrate this second phase in Figure 6 below. 
\\
Let $T_1, \ldots T_m$ be the fertile subexploration trees for some positive integer $m$. For every $q\in \{1, \ldots,  m\}$, denote $y_{q,1}, y_{q,2},\ldots$ the vertices of $\partial T_q$. For $q=1,2,\ldots$ successively, we perform the explorations from $y_{q,i}$, $i\geq 1$ as defined in Section~\ref{subsec:explo1vertexsuccess} (using an array of independent standard normal variables $(\xi_{y_{q,i},k,j})_{k,j\geq 0}$). If $y$ is the $j$-th vertex of the $k$-th generation in the exploration tree of $y_{q,i}$, let recursively 
$$\psimnt(y):=\frac{\psimnt(\overline{y})}{d-1}+\sqrt{\frac{d}{d-1}}\xi_{y_{q,i},k,j},
$$
so that $\psimnt$ plays the role of $\phid$ on $\Td$ in Section~\ref{subsec:explo1vertexsuccess}. We implement three modifications:
\begin{itemize}
\item we do not explore towards $\overline{y_{q,i}}$, the parent of $y_{q,i}$ in $T_q$ (hence we generate $\psimnt$ as the GFF on a subtree of $\Td^+$ instead of $\Td$),
\item we do not stop the exploration if $\psimnt(y_{q,i})<h+\log^{-1}n$ (we only know \textit{a priori} from the first phase that $\psimnt(y_{q,i})\geq h+n^{-a}$), and
\item we stop the exploration if it meets a vertex already discovered in the first phase or during the previous exploration of some $y_{q',j}$ (thus with $q'<q$, or $q'=q$ and $j<i$).
\end{itemize}
A vertex $y_{q,i}$ whose exploration is successful is \textbf{back-spoiled} if one vertex of its exploration is seen later during the exploration of $y_{q',j}$. Let 
\begin{center}
$\cE_{4,n}:=\{\exists q\leq m, \,\text{all successful explorations of $y_{q,1}, y_{q,2},\ldots,$ are back-spoiled}\}$ 
\end{center}
On $\cS_2(x):= ( \cS_{1}(x)\setminus  \cS_{1,\text{stop}}(x))\cap \cE_{4,n}^c$, say that the second phase is successful.

\vspace{-20mm}
\includegraphics[scale=0.5]{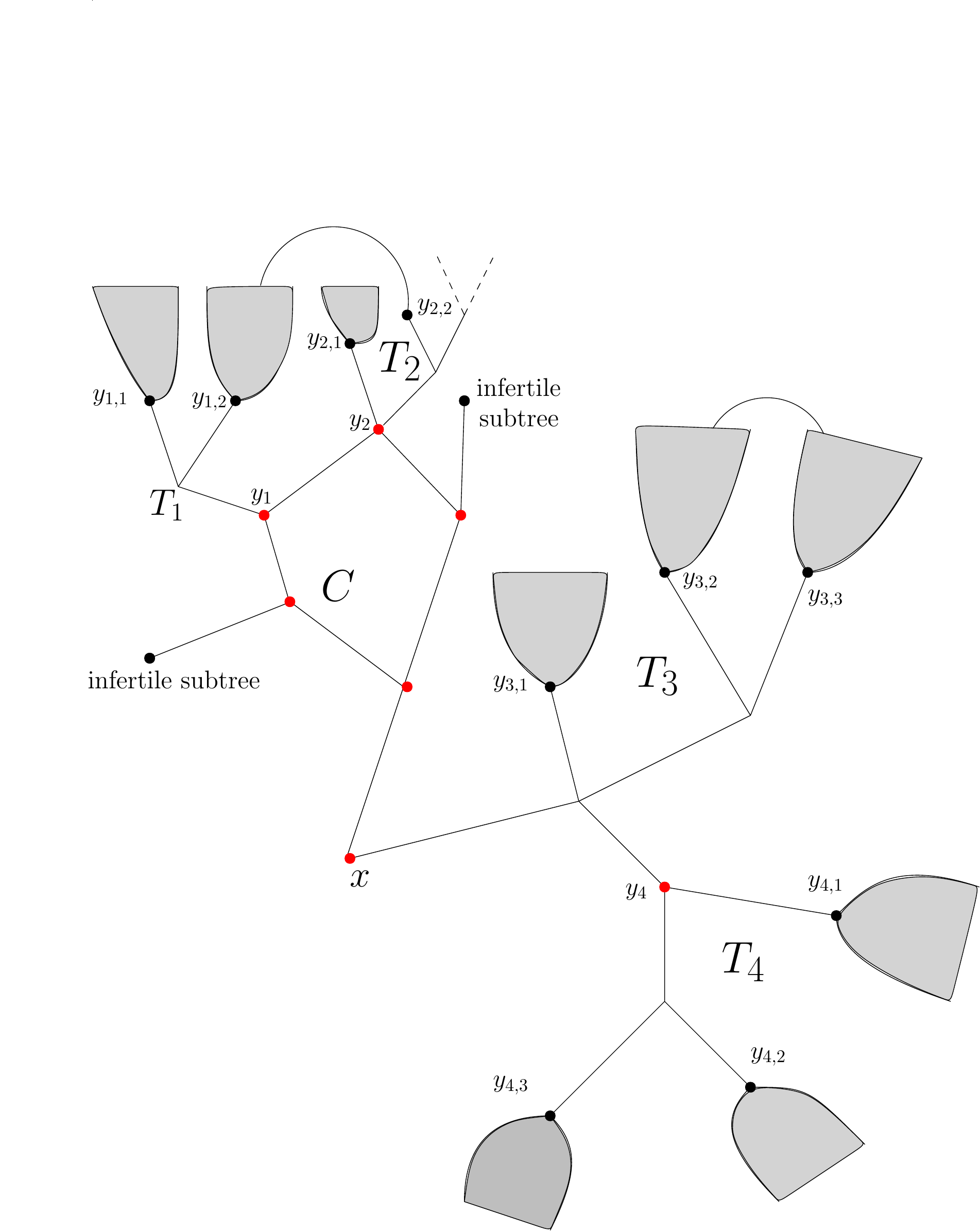}

\begin{center}
Figure 6. Marked vertices are in red. There are four fertile subexploration trees ($T_1$ rooted at $y_1$, $T_2$ rooted at $y_2$, $T_3$ rooted at $x$ and $T_4$ rooted at $y_4$), and two infertile ones. Lightgray areas correspond to the explorations of the $y_{q,i}$'s in the second phase. $y_{2,2}$ is spoiled by $y_{1,2}$. $y_{3,2}$ is back-spoiled by $y_{3,3}$. Each of the subxploration trees will be either included in $\cC_x^{\cM_n,h}$ or have no common vertex with $\cC_x^{\cM_n,h}$, depending on the value of $\psimn$ on the marked vertices.
\end{center}

\begin{proposition}\label{prop:secondphase}
If $\cxv$ is large enough (depending only on $d$ and $h$), then for $n$ large enough and every $x\in V_n$,
\begin{equation}\label{eqn:secondphase}
\dP_{ann}(\cS_2(x)\cup \cS_{1,\emph{stop}}(x) )\geq 1- n^{-6/5}
\end{equation}
\end{proposition}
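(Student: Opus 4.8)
The plan is to condition on $\cS_1(x)\setminus\cS_{1,\mathrm{stop}}(x)$ (the event that the first phase is successful and produces at least one fertile subexploration tree), which by Proposition~\ref{prop:blockexplo} carries all but $n^{-5/4}$ of the probability, and show that conditionally on this event, $\cS_2(x)$ fails with probability $o(n^{-5/4})$, uniformly over realizations of the first phase. Since $\cS_2(x)\cup\cS_{1,\mathrm{stop}}(x)$ is exactly the disjoint union of $\cS_{1,\mathrm{stop}}(x)$ (all trees infertile) with $\cS_2(x)$ (at least one fertile tree, and for every fertile tree some non-back-spoiled successful exploration of a boundary vertex), it is enough to bound $\dP_{ann}(\cS_1(x)\setminus\cS_{1,\mathrm{stop}}(x))\cap \cE_{4,n})$. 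By the union bound over the number $m$ of fertile trees --- note $m\le \lfloor K_{11}\log n\rfloor$ on $\cS_1(x)$ by \ref{D2} --- it suffices to prove that for a single fixed fertile tree $T_q$, the probability that \emph{all} successful explorations of $y_{q,1},y_{q,2},\ldots$ are back-spoiled is $o(n^{-11/5})$, say.

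The main estimate is the following. Recall that $|\partial T_q|>K_{10}\log n$ by fertility. For each boundary vertex $y_{q,i}$ run the Section~\ref{subsec:explo1vertexsuccess}-type exploration, with the three modifications listed before the proposition. Since $\psimnt(y_{q,i})\ge h+n^{-a}\ge h+\log^{-1}n$ for $n$ large (so the root is above threshold), and the recursive construction of $\psimnt$ on the exploration tree is literally that of $\phid$ on $\Td^+$, the event that the exploration from $y_{q,i}$ is successful is --- up to the $o(1)$ correction from the probability of creating a cycle or hitting a previously seen vertex, which is $O(n^{-1/2}\mathrm{polylog}\,n)$ by Remark~\ref{rem:explorationsize} and \eqref{eqn:binomdominationdiff} --- bounded below by $\dP^{\Td}(\mathcal F^{*(n)}$ for $\Chplus$ started above $h)$. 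By Lemma~\ref{lem:explo1vertex} (applied to $\Chplus$, as noted at the end of Section~\ref{subsec:expogrowthCh}) and Remark~\ref{rem:uniformsupercriticsurvival}, this is bounded below by a constant $p_0=p_0(d,h)>0$, uniformly in the conditioning from the first phase and from the earlier explorations (the exploration of $y_{q,i}$ only reveals the Gaussian array $(\xi_{y_{q,i},k,j})$ and $O(\sqrt n\,\mathrm{polylog}\,n)$ fresh half-edge pairings, independent of everything previously revealed). Hence, letting $I_i$ be the indicator that the exploration of $y_{q,i}$ is successful, we have a stochastic lower bound: the number of successful explorations among the first $\lfloor K_{10}\log n\rfloor/2$ boundary vertices dominates a $\mathrm{Bin}(\lfloor K_{10}\log n\rfloor/2,\,p_0/2)$ variable (the $p_0/2$ absorbing the $o(1)$ corrections). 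So with probability $1-o(n^{-11/5})$ there are at least $c\log n$ successful explorations in $T_q$, provided $K_{10}$ (hence the length $K_{10}\log n$ guaranteed by \ref{C5}-type stopping in the second phase) is large enough --- this is where the hypothesis ``$K_{10}$ large (depending only on $d$ and $h$)'' is used.

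It remains to handle back-spoiling: a successful exploration of $y_{q,i}$ is back-spoiled only if one of its $R_{y_{q,i}}$ vertices (at most $n^{1/2}\log^{-3}n$ of them by Remark~\ref{rem:explorationsize}) is seen in a \emph{later} exploration. The total number of vertices seen across all explorations in the second phase is at most (number of boundary vertices over all $m$ trees)$\times n^{1/2}\log^{-3}n = O(n^{1/2}b_n\cdot K_{11}\log n)\times n^{1/2}\log^{-3}n = O(n\,\mathrm{polylog}^{-1}n)$; more carefully, summing the $\Theta(n^{1/2}b_n)$ boundary sizes over $m=O(\log n)$ trees gives at most $n^{1/2}\mathrm{polylog}(n)$ boundary vertices in total, so the second phase reveals $o(\sqrt n\,\mathrm{polylog}\,n)\cdot n^{1/2}=o(n)$ half-edges. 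By \eqref{eqn:binomdominationdiff}, the probability that a given successful exploration of $y_{q,i}$ is back-spoiled by any of the $O(n^{1/2}\mathrm{polylog}\,n)$ subsequently-revealed half-edges is $O(\mathrm{polylog}(n)/\sqrt n)$, which is $o(1)$. Crucially, conditionally on the Gaussian arrays (so on which explorations are successful) these back-spoiling events for distinct $i$ are governed by fresh pairings and one can bound the probability that \emph{all} of $c\log n$ successful explorations in $T_q$ are back-spoiled by $(O(\mathrm{polylog}(n)/\sqrt n))^{c\log n}$ --- or, more robustly, by noting that even a single \emph{non}-back-spoiled successful exploration suffices and the chance of never getting one among $c\log n$ tries is at most $(o(1))^{c\log n}=o(n^{-11/5})$. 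Combining, $\dP_{ann}(\cE_{4,n}\cap(\cS_1(x)\setminus\cS_{1,\mathrm{stop}}(x)))\le m\cdot o(n^{-11/5})=o(n^{-6/5})$, and together with Proposition~\ref{prop:blockexplo} this gives \eqref{eqn:secondphase}. The main obstacle, and the place needing genuine care, is the uniformity of the lower bound $p_0$ on the success probability of an individual exploration \emph{conditionally on the first phase and on all earlier explorations}: one must check that the first-phase conditioning (which reveals $\psimnt$, hence values of $\phid$-type variables, only on marked vertices and subexploration trees, and reveals only $n^{3/10}$ half-edges) does not bias the fresh Gaussian array or the fresh pairings used in the exploration of $y_{q,i}$ --- this follows from Proposition~\ref{prop:gffann}'s prescription that choices may depend on the past but not on unused normal variables, but it should be spelled out, together with the fact that on the good-graph event the coupling of Proposition~\ref{prop:couplinggffsexplo} still applies since only $o(n\log^{-8}n)$ vertices have been revealed.
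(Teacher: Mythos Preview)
Your overall strategy matches the paper's: establish a uniform positive lower bound on the success probability of each second-phase exploration, use a binomial argument to get $\Theta(\log n)$ successes per fertile tree, control back-spoiling, and conclude. However, two steps contain genuine gaps.

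First, the inequality $n^{-a}\ge\log^{-1}n$ is false for large $n$ (it goes the other way, since $a>0$ is a fixed constant). So the root $y_{q,i}$ is only known to satisfy $\psimnt(y_{q,i})\ge h+n^{-a}$, which is \emph{strictly weaker} than $h+\log^{-1}n$. Consequently, neither Lemma~\ref{lem:explo1vertex} (an unconditional statement about $\Chlnplus$) nor Remark~\ref{rem:uniformsupercriticsurvival} (about infinite survival, not about reaching size $n^{1/2}b_n$ within $\log_{\lambda_h}n$ generations) yields the required uniform lower bound $p_0$ over all root values $a\in[h+n^{-a},\infty)$. The paper handles this in the proof of \eqref{eqn:secondphaseexplosuccess} by first using a constant probability $p'>0$ that some child of $y_{q,i}$ has $\psimnt\ge h+\delta$ for a fixed $\delta\in(0,h_\star-h)$, and then applying the large-deviation growth estimate of Proposition~\ref{prop:Chlargedevgrowthrate} from that child.

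Second, the back-spoiling events for distinct successful explorations are \emph{not} governed by disjoint fresh pairings: the back-spoiling of $y_{q,i_1}$ and of $y_{q,i_2}$ (with $i_1<i_2$) both depend on the pairings made in the explorations of $y_{q,j}$ for $j>i_2$. Hence the product bound $(o(1))^{c\log n}$ does not follow from the individual $o(1)$ bounds. The fix---and this is exactly what the paper does in \eqref{eqn:secondphasespoil}---is to observe that since the regions $R_{y_{q,i}}$ are pairwise disjoint, back-spoiling all $c\log n$ successful explorations would require at least $c\log n$ half-edge collisions in total, and the total number of such collisions is dominated by a binomial of mean $o(1)$; by \eqref{eqn:binomdominationatinfinitydiff} the probability of $\Theta(\log n)$ collisions is $o(n^{-2})$. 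Equivalently: bound the total number of spoiled plus back-spoiled vertices by $\lfloor 1000\log n\rfloor$ with probability $1-n^{-2}$, and take $K_{10}$ large enough that the binomial number of successes exceeds this. (Minor: your size computation conflates $|\partial T_q|=\Theta(\log n)$ with $|\partial T_{y_{q,i}}|=\Theta(n^{1/2}b_n)$, and by \ref{D2} one has $m\le 2K_{11}/K_{10}=O(1)$, not merely $O(\log n)$; these do not affect the final estimates but should be cleaned up.)
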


\begin{proof}
Say that $y_{q,i}$ is \textbf{spoiled} if it is met during the previous exploration of some $y_{q',j}$. Define $\cE_{5,n}:=\{\text{at least $\lfloor 1000\log n\rfloor$ vertices are spoiled or back-spoiled}\}$. We claim that for $n$ large enough,
\begin{equation}\label{eqn:secondphasespoil}
\dP_{ann}(\cE_{5,n})\leq n^{-2},
\end{equation}
and for some constant $\cviii>0$ (that only depends on $d$ and $h$), for all $q\leq m$ and $i\leq \vert \partial T_q\vert$, for every event $\cE_{q,i}\subseteq \{y_{q,i}\text{ is not spoiled}\}$ that is measurable w.r.t. the whole exploration until $y_{q,i-1}$ (or $y_{q-1,\vert \partial T_{q-1}\vert}$ if $i=1$) has been explored:
\begin{equation}\label{eqn:secondphaseexplosuccess}
\dP_{ann}(\text{the exploration from $y_{q,i}$ is successful } \vert\, \cE_{q,i})\geq \cviii.
\end{equation}
On $\cE_{5,n}^c$,  there are at least $(\cxv-1000)\log n$ non-spoiled vertices on each $\partial T_q$ (and at most $(\cxv+1)\log n$ since each step of a subexploration brings less than $\log n$ vertices to $\partial T_q$). And, if more than $\lfloor 1000\log n\rfloor $ vertices of each $\partial T_q$ are successful, one of them will be successful and not back-spoiled, thus fulfilling the requirement of $\cS_2(x)$. 
\\
By (\ref{eqn:secondphaseexplosuccess}), if $\cxv$ is large enough, for $n$ large enough, the probability that no more than $\lfloor 1000\log n\rfloor$ explorations from $\partial T_q$ are successful is at most 
\begin{center}
${\lfloor(\cxv+1)\log n\rfloor \choose \lfloor 1000\log n\rfloor}(1-\cviii)^{(\cxv-1000)\log n}\leq \left(\frac{(\cxv+1)^{\cxv+1}}{1000^{1000}(\cxv-999)^{\cxv-999}}(1-\cviii)^{\cxv-1000}\right)^{\log n}\leq n^{-4}$
\end{center}
by Stirling's formula. Hence by a union bound on $1\leq q \leq m$, noticing that $m \leq 2\cxvi/\cxv$ for $n$ large enough by \ref{D2}, we have:
\begin{center}
$\dP_{ann}((\cS_2(x)\cup \cS_{1, \text{stop}}(x))^c)\leq \dP_{ann}(\cS_1(x)^c)+\dP_{ann}(\cE_{5,n}^c)+mn^{-4}\leq 2n^{-5/4}$
\end{center}
by Proposition~\ref{prop:blockexplo} and (\ref{eqn:secondphasespoil}), and this concludes the proof. Hence, it remains to establish (\ref{eqn:secondphasespoil}) and (\ref{eqn:secondphaseexplosuccess}).

\noindent
\textbf{Proof of (\ref{eqn:secondphasespoil}).} Note that by \ref{D2}, by Remark~\ref{rem:explorationsize} and by (\ref{eqn:rndef}),
\begin{equation}\label{eqn:firsttwophasestotalvertices}
\text{less than $n^{1/2}\log^{-1}n$ vertices and half-edges have been seen in the first two phases.}
\end{equation}
In particular, less than $n^{1/2}\log^{-1}n$ edges are built during second phase. Since by \ref{D2}, we have that $\vert\{y_{q,i}, q\leq m, i\leq \vert \partial T_q\vert\}\leq \cxvi\log n$, each new edge has a probability at most 
\begin{center}
$\cxvi\log n /(n-n^{1/2}\log^{-1} n)\leq 2\cxvi n^{-1}\log n$ 
\end{center}
to spoil a vertex. Thus, the number of spoiled vertices is stochastically dominated by a random variable $Z\sim \text{Bin}(n^{1/2}\log^{-1}n,2\cxvi n^{-1}\log n)$. For $n$ large enough,
\begin{center}
$\dP(Z\geq 10)\leq {n^{1/2}\log^{-1}n \choose 10 } \left( \frac{2\cxvi \log n}{n}\right)^{10}\leq n^5 \frac{\log^{20}n}{n^{10}}\leq n^{-3}$.
\end{center}
\noindent
Moreover, by (\ref{eqn:binomdominationatinfinitydiff}) with $k=\lfloor 999\log n\rfloor$ and $m_0,m_1,m_E,m\leq n^{1/2}\log^{-1}n$ due to (\ref{eqn:firsttwophasestotalvertices}), 
\begin{center}
$\dP_{ann}(\text{more than $\lfloor 999\log n\rfloor$ vertices are back-spoiled})\leq n^{-3}$.
\end{center}
(\ref{eqn:secondphasespoil}) follows.

\noindent
\textbf{Proof of (\ref{eqn:secondphaseexplosuccess}).} 
By (\ref{eqn:firsttwophasestotalvertices}) and (\ref{eqn:binomdominationdiff}) with $k=1$ and $m_0,m_1,m_E,m\leq n^{1/2}\log^{-1}n$, for $n$ large enough,
$$
\dP_{ann}(\text{a cycle is created during the exploration from $y_{q,i}$})\leq \log^{-1}n.
$$
\noindent
The law of $\psimnt$ on the exploration tree from $y_{q,i}$ is that of $\phid$ on an isomorphical subtree of $\Td^+$ (and not $\Td$, since we do not explore towards $\overline{y_{q,i}}$), with $\phid(\circ)=\psimnt(y_{q,i})$. Denote $\cC_{\circ}^{n}$ the connected component of $\circ$ in $E_{\phid}^{\geq h+\log^{-1}n,+}\cup \{\circ\}$, and $\cZ_k$ its $k$-th generation for every $k\geq 0$. Then for any event $\cE_{q,i}\subseteq\{y_{q,i}\text{ is not spoiled} \}$ that only depends on the exploration before we inspect $y_{q,i}$,
\begin{center}
$\dP_{ann}(\text{the exploration from $y_{q,i}$ is successful}\vert \cE_{q,i})\geq p_n-\log^{-1}n$,
\end{center}
where
$p_n:=\min_{b\geq h+n^{-a}}\dP^{\Td}_b(\exists k\leq \log_{\lambda_{h}}n, \vert \cZ_k\vert \geq n^{1/2}b_n)$ (recall that $\psimnt(y_{q,i})\geq h+n^{-a}$). 
\\
Let $\delta \in (0,h_{\star}-h)$. Clearly, there exists $p'>0$ such that for $n$ large enough, 
\begin{center}
$\min_{b\geq h+n^{-a}}\dP^{\Td}_b(\exists v\in \cZ_1,\, \phid(v)\geq h+\delta)>p'$. 
\end{center}
For $\varepsilon >0$ small enough so that $\log_{d-1}(\lambda_{h+\delta}-\varepsilon) \geq (3\log_{d-1}\lambda_h)/4$ (such $\varepsilon$ exists by continuity of $h'\mapsto \lambda_{h'}$, Proposition~\ref{prop:thm43adapted}), for $n\in \dN$,
\begin{align*}
p''_n&:=\min_{b\geq h+\delta}\dP^{\Td}_b(\exists k\leq \log_{\lambda_h}\hspace{-1mm}n\,-\hspace{-1mm}1, \,\vert \cZ_k^{h+\log^{-1}n,+}\vert \geq n^{1/2}b_n) \geq \min_{b\geq h+\delta}\dP^{\Td}_b(\vert \cZ_{\lfloor\log_{\lambda_h}\hspace{-1mm}n\rfloor\,-1}^{h+\delta,+}\vert \geq n^{1/2}b_n)
\\
&\geq \min_{b\geq h+\delta}\dP^{\Td}_b(\vert \cZ_{\lfloor\log_{\lambda_h}\hspace{-1mm}n\rfloor\,-1}^{h+\delta,+}\vert \geq (\lambda_{h+\delta}-\varepsilon)^{\lfloor\log_{\lambda_h}\hspace{-1mm}n\rfloor\,-1}). 
\end{align*}
By Proposition~\ref{prop:Chlargedevgrowthrate}, $\liminf_{n\rightarrow +\infty}p''_n=:p''>0$. Since $p_n\geq p'p''_n$ for all $n\geq 1$, 
\begin{center}
$\dP_{ann}(\text{the exploration from $y_{q,i}$ is successful}\vert \text{$y_{q,i}$ is not spoiled})\geq p_n-\log^{-1}n\geq \frac{p'p''}{2}$ 
\end{center}
for $n$ large enough, and we can take $\cviii=\frac{p'p''}{2}$. This shows (\ref{eqn:secondphaseexplosuccess}). 
\end{proof}

\subsection{Third phase}\label{subsec:phase3}

\noindent
Suppose now that we are on $\cS_{2}(x)$. For $1\leq q\leq m$, denote $y_q$ one vertex of $\partial T_q$ whose exploration was successful and not back-spoiled in the second phase, $T_{y_q}$ its exploration tree, and $B_q$ the boundary of $T_{y_q}$. In this section, we connect each $B_q$ to $\Theta(n)$ vertices in a fashion similar to Section~\ref{subsec:connexion}. However, revealing the GFF on a positive proportion of the vertices would prevent us to use an approximation of the GFF as in Proposition~\ref{prop:couplinggffsexplo}. 
\\
To circumvent this difficulty, denote $R_{1,2}$ the set of vertices seen in the first two phases, and partition $V_n\setminus R_{1,2}$ arbitrarily in sets $D_1, D_2, \ldots, D_r$ for some $r\in \dN$ such that 
\begin{equation}\label{eqn:cxdef}
\vert D_1\vert =\vert D_2\vert = \ldots =\vert D_{r-1}\vert=\lfloor \cx\log n\rfloor,
\end{equation}
for some constant $\cx>0$ to be determined in Proposition~\ref{prop:thirdphaseconnexion}. We will connect each $B_q$ to a positive proportion of the vertices of $D_1$ only, with $\dP_{ann}$-probability $1-n^{-3}$ (Proposition~\ref{prop:thirdphaseconnexion}), before revealing the GFF on the vertices of the first two phases, and on the connection from $D_1$ to $B_q$ in Section~\ref{subsec:secondcomporevealpsin}. The result will follow by symmetry of the $D_i$'s and a union bound on $1\leq i \leq r-1$. 
\\

\noindent
\textbf{The exploration.} 
\\
\textbf{1) The $w$-explorations.}
Let $w_1, \ldots, w_{\lfloor \cx\log n\rfloor}$ be the vertices of $D_1$. We proceed successively to the $w$\textbf{-explorations} of $w_1, w_2, \ldots$, i.e. for $i\geq 1$, we perform the exploration from $w_i$ as in Section \ref{subsec:explo1vertexsuccess}, but stop it if we reach a vertex seen in the first two phases or in the $w$-exploration of some $w_j,\,j\leq i-1$. In particular, if $w_i$ was discovered during the exploration of some $w_j, j\leq i-1$, say then that $w_i$ is $w$-\textbf{spoiled} and do not proceed to its $w$-exploration. Denote $R_w$ the set of vertices seen during all the $w$-explorations.
\\
For $i\geq 1$, if we explore $w_i$ and \ref{C5} happens, say that the $w$-exploration from $w_i$ is $w$\textbf{-successful}. Let $s_0$ be the number of $w$-successful vertices. Let $w_{i_1}, \ldots,w_{i_{s_0}}$ be the $w$-successful vertices with $i_1<\ldots<i_{s_0}$. Let $T_{w_{i_j}}$ be the exploration tree of $w_{i_j}$, for $j\in \{1, \ldots, s_0\}$. Take away
\begin{itemize}
\item from each $\partial T_{w_{i_j}}$: the vertices that are seen in the $w$-exploration of some $w_i$, for $i>i_j$, 
\item from each $B_q$: the vertices $z$ such that $B_{\cM_n}(z,\overline{z},a_n)$  intersects $R_w$.
\end{itemize}
\noindent 
Say that those vertices are \textbf{$w$-back-spoiled}. 

\noindent
\textbf{2) The joining balls.} For $q=1, \ldots, m$ successively,
we develop balls from $B_q$ to the $\partial T_{w_{i_j}}$'s, $1\leq j\leq s_0$, with a few modifications w.r.t the construction of Section \ref{subsubsec:jointexplo}: let $z_{1,q}, z_{2,q}, \ldots$ be the vertices of $B_q$. For $z_{i,q}\in B_q$, let 
\begin{center}
$B^*_{i,q}:=\cup_{(i',q'): q'<q\text{ or } q'=q,i'<i}B^*(z_{i',q'},a'_n)$, 
\end{center}
and let $R_{i,q}:= R_{1,2}\cup R_w\cup B^*_{i,q}$ be the vertices seen before building $B^*(z_i,a'_n)$. 
\\
Replace $B^*_j$, $Q_j$ and $B_{\cM_n}(T_y,a_n)$ of Section \ref{subsec:connexion} by $B^*_{i,q}$, $R_{i,q}$ and $\cup_{j=1}^{s_0} B_{\cM_n}(T_{w_{i_j}},a_n)$ respectively.
\\
Say that $B^*(z_{i,q},a'_n)$ is a $J$\textbf{-joining ball} if it hits $B_{\cM_n}(T_{w_{i_J}},a_n)$ at one vertex after $a'_n-2a_n$ steps, and no other intersection with vertices seen previously is created.

\begin{proposition}\label{prop:thirdphaseconnexion}
For $n$ large enough, we have
\begin{equation}\label{eqn:thirdphasejoiningballs}
\dP_{ann}(\cS_2(x)\cap \{\exists (J,q), \text{ there are less than $\log^{\gamma-3\kappa -18}n$ $J$-joining balls from $B_q$} \})\leq n^{-4}
\end{equation}
and if $\cx$ is large enough, then for large enough $n$:
\begin{equation}\label{eqn:wsuccessful}
\dP_{ann}(\cS_2(x)\cap \{s_0\leq \log n\})\leq n^{-4}.
\end{equation}
\end{proposition}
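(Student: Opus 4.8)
## Strategy

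The plan is to prove the two estimates in Proposition~\ref{prop:thirdphaseconnexion} by essentially repeating the arguments of Section~\ref{subsec:connexion} (the joint exploration and Lemma~\ref{lem:joiningball}), but keeping track of the fact that we now perform $\Theta(\log n)$ explorations from $D_1$ rather than a single one from $y$, and that all the vertices seen in the first two phases must be treated as ``forbidden''. The key point that makes everything work is \eqref{eqn:firsttwophasestotalvertices}: only $n^{1/2}\log^{-1}n$ vertices and half-edges are seen in the first two phases, so the first two phases contribute a negligible amount to the ``collision budget'' of the binomial estimates of Lemma~\ref{lem:matchings}. Throughout we work on $\cS_2(x)$, on which each $B_q$ has size $\Theta(n^{1/2}b_n)$ by Proposition~\ref{prop:secondphase} and \ref{C5}, and there are at most $m\leq 2\cxvi/\cxv$ of them by \ref{D2}.

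For \eqref{eqn:wsuccessful}: each $w$-exploration from $w_i\in D_1$, conditionally on $w_i$ not being $w$-spoiled and on the exploration so far, is $w$-successful with probability at least $\cviii>0$ (the same estimate as \eqref{eqn:secondphaseexplosuccess}, since the $w$-exploration is the exploration of Section~\ref{subsec:explo1vertexsuccess} with the extra stopping rule of hitting previously seen vertices, and the probability to create a cycle or hit a previously seen vertex is $o(1)$ by \eqref{eqn:binomdominationdiff} and \eqref{eqn:firsttwophasestotalvertices} since only $o(\sqrt n)$ vertices are revealed in total). The number of $w$-spoiled vertices is stochastically dominated by $\text{Bin}(n^{1/2}\log^{-1}n, 2\cx n^{-1}\log n)$ hence is at most, say, $10$ with probability $1-o(n^{-4})$, exactly as in the proof of \eqref{eqn:secondphasespoil}. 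So on an event of probability $1-o(n^{-4})$ there are at least $\lfloor\cx\log n\rfloor -10$ non-$w$-spoiled vertices in $D_1$, and $s_0$ stochastically dominates $\text{Bin}(\lfloor\cx\log n\rfloor-10,\cviii)$; by the Chernoff bound \eqref{eqn:binomiallittlethings}, $\dP(\text{Bin}(\lfloor\cx\log n\rfloor-10,\cviii)\leq \log n)\leq n^{-4}$ as soon as $\cx$ is large enough (e.g. $\cx\cviii>3$), which gives \eqref{eqn:wsuccessful}.

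For \eqref{eqn:thirdphasejoiningballs}: fix a pair $(J,q)$. We repeat Lemma~\ref{lem:joiningball} verbatim with $\partial T_x$ replaced by $B_q$, with $\partial T_y$ replaced by $\partial T_{w_{i_J}}$ (of size $\Theta(n^{1/2}b_n)$ on $\cS_2(x)\cap\{s_0>\log n\}$), and with $Q_j$ replaced by $R_{i,q}$. Since the total number of half-edges seen up to and including the whole third phase is still $\leq n^{1/2}\log^{\gamma}n$ — the first two phases add only $n^{1/2}\log^{-1}n$ by \eqref{eqn:firsttwophasestotalvertices}, the $w$-explorations add $\leq \lfloor\cx\log n\rfloor\cdot n^{1/2}\log^{-3}n$ by Remark~\ref{rem:explorationsize}, and the joining balls add $\leq |B_q|\cdot\log^{\gamma}n$ — the estimates \eqref{eqn:saturationjoiningballs}, \eqref{eqn:spoliationjoiningballs} on saturation and spoilage of $\cup_q B_{\cM_n}(T_{w_{i_J}},a_n)$ hold with the bound $n^{-3}$, and the per-ball joining-ball probability bound $p_1p_2p_3\geq n^{-1/2}\log^{\gamma-2\kappa-10}n$ of \eqref{eqn:p1p2p3} goes through unchanged (the only inputs are \eqref{eqn:binomdominationdiff}, \eqref{eqn:binomdominationatinfinitydiff} with the same orders of magnitude, and $|B_q|\geq n^{1/2}\log^{-7}n$, $|\partial T_{w_{i_J}}|\geq n^{1/2}\log^{-7}n$). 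Hence, exactly as in the displayed binomial computation at the end of the proof of Lemma~\ref{lem:joiningball}, the number of $J$-joining balls from $B_q$ stochastically dominates $\text{Bin}(\lfloor n^{1/2}\log^{-\kappa-7}n\rfloor, n^{-1/2}\log^{\gamma-2\kappa-10}n)$ up to an event of probability $o(n^{-4})$, and $\dP(\text{that Bin}\leq\log^{\gamma-3\kappa-18}n)\leq n^{-5}$ for $n$ large. Finally we take a union bound over the at most $m\cdot s_0\leq 2\cxvi\cx\log n/\cxv$ pairs $(J,q)$ (and over the $w$-back-spoiling event, which removes $o(\log n)$ vertices from each $B_q$ and each $\partial T_{w_{i_j}}$ by the same binomial estimate, hence does not affect the orders of magnitude), which costs only a factor $O(\log n)$ and yields \eqref{eqn:thirdphasejoiningballs}.

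The main obstacle is bookkeeping rather than a genuinely new idea: one must be careful that the definition of $D$ in the $p_2$-estimate (the vertices of $\cup_j B_{\cM_n}(T_{w_{i_j}},a_n)$ that are far from previously constructed joining balls) still leaves $\Theta(n^{1/2}\log^{-7}n)$ available vertices after one subtracts the $w$-back-spoiled vertices and those already hit by earlier $B^*(z_{i',q'},a'_n)$'s — this is where \eqref{eqn:saturationjoiningballs}, \eqref{eqn:spoliationjoiningballs} (adapted to the new $R_{i,q}$) are used — and that the union bound over $(J,q)$ is over a set of only polylogarithmic size, so the per-pair bound $o(n^{-4})$ is more than enough. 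All the probabilistic inputs are already available from Lemmas~\ref{lem:matchings}, \ref{lem:joiningball} and Propositions~\ref{prop:secondphase}, \ref{prop:Chlargedevgrowthrate}; the proof is a careful transcription.
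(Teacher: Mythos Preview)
Your proposal is correct and follows essentially the same route as the paper: adapt Lemma~\ref{lem:joiningball} to the third-phase setting by replacing the total-vertex budget with $n^{1/2}\log^{\gamma+2}n$ (your $n^{1/2}\log^{\gamma}n$ is also fine), reprove the saturation/spoilage bounds~\eqref{eqn:saturationjoiningballs}--\eqref{eqn:spoliationjoiningballs} with this larger $R_{i,q}$, get the same per-ball lower bound $n^{-1/2}\log^{\gamma-2\kappa-10}n$, and take a union bound over the polylogarithmically many pairs $(J,q)$; for~\eqref{eqn:wsuccessful} the paper uses $\eta(h)/2$ rather than $\cviii$ as the per-vertex success probability, but either works. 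One small slip: the number of $w$-back-spoiled vertices is controlled by the paper at $\log^2 n$ (via~\eqref{eqn:binomdominationatinfinitydiff} with $k=\log^2 n$), not $o(\log n)$ as you write --- but since $\log^2 n \ll n^{1/2}b_n$ your conclusion that this ``does not affect the orders of magnitude'' is unaffected.
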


\begin{proof} 
\textbf{Proof of (\ref{eqn:thirdphasejoiningballs}).}
We adapt the proof of Lemma~\ref{lem:joiningball}. Since there are less than $\log^2n$ $w_i$'s (and $\vert R_{1,2}\vert$ is controlled by (\ref{eqn:firsttwophasestotalvertices})), we can replace (\ref{eqn:explototaledges}) by
\begin{equation}\label{eqn:thirphasetotaledges}
\text{less than $n^{1/2}\log^{\gamma +2}n$ vertices are seen during the three phases}.
\end{equation}
\noindent 
Let $B^*:=\cup_{q=1}^{m}\cup_{z\in B_q}B^*(z,a'_n)$. (\ref{eqn:saturationjoiningballs}) becomes
\begin{equation}\label{eqn:thirdphasesaturationjoiningballs}
\dP_{ann}(\cS_2(x)\cap \{\,\vert B^* \cap(\cup_{j=1}^{s_0} B_{\cM_n}(T_{z_{i_j}},a_n)) \vert\geq\log^{3\gamma}n\})\leq n^{-5}
\end{equation}

\noindent
Let $N$ be the number of vertices of $\cup_{q=1}^{m}B_q$ that are spoiled, i.e. the vertices $z$ such that $B^*(z,a_n)=B_{\cM_n}(z,\overline{z},a_n)$ is hit by a previously constructed $B^*(z',a'_n)$. (\ref{eqn:spoliationjoiningballs}) becomes
\begin{equation}\label{eqn:thirdphasespoiledinthejunction}
\dP_{ann}(\cS_2(x)\cap \{N\geq \log^{3\gamma }n\})\leq n^{-5}.
\end{equation}

\noindent
In addition, by (\ref{eqn:binomdominationatinfinitydiff}) with $k=\log^2n$, $m_0=1$, $m_1,m_E,m\leq n^{1/2}\log^{-1/2}n$, 
\begin{equation}\label{eqn:backspoiledzis}
\dP_{ann}\left(\cE_{6,n} \right)\leq n^{-5},
\end{equation}

\noindent
where $\cE_{6,n}:=\cS_2(x)\cap\left\lbrace\text{more than $\log^2 n$ vertices are $w$-back-spoiled} \right\rbrace $. 
Then, define 
\begin{center}
$\cS_{i,q}:=\cS_2(x)\cap\cE_{6,n}^c\cap \{ \vert (\cup_{j=1}^{s_0}B_{\cM_n}(T_{w_{i_j}},a_n)) \cap B^*_{i,q} \vert \leq \log^{3\gamma}n\}\cap \{z_{i,q}\text{ is not spoiled}\}$. 
\end{center}
It is straightforward to adapt the proof of (\ref{eqn:p1p2p3}) to get that for every $1\leq J\leq s_0$, the probability that $B^*(z_{i,q},a'_n)$ is a $J$-joining ball, conditionally on $\cS_{i,q}$, is at least $ n^{-1/2}\log^{\gamma-2\kappa-10}n$. On $\cS_2(x)\cap \{N\leq \log^{3\gamma}n\}\cap \cE_{6,n}^c $, at least 
\begin{center}
$n^{1/2}b_n-\log^2n- \log^{3\gamma}n\geq n^{1/2}\lfloor\log^{-\kappa-7}n\rfloor$ 
\end{center}
$z_{i,q}$'s are neither spoiled nor $w$-back-spoiled by (\ref{eqn:bndef}). As in the end of the proof of Lemma~\ref{lem:joiningball}, if $\gamma >3\kappa +18$, we get that for $n$ large enough: for every $(J,q)\in (\{1, \ldots, s_0\}\cap \{1, \ldots, m\}) $ and $Z\sim \text{Bin}(n^{1/2}\lfloor\log^{-\kappa-7}n\rfloor,n^{-1/2}\log^{\gamma-2\kappa-10}n )$:
\begin{align*}
&\dP_{ann}(\cS_2(x)\cap \{\text{there are less than $\log^{\gamma-3\kappa -18}n$  $J$-joining balls from $B_q$}\} )
\\
&\leq \dP_{ann}\left(\cS_2(x)\cap ( \{\,\vert B^* \cap(\cup_{j=1}^{s_0} B_{\cM_n}(T_{z_{i_j}},a_n)) \vert\geq\log^{3\gamma}n\}  \cup \{N\geq \log^{3\gamma }n\}\cup \cE_{6,n})\right)
\\
&+ \dP(Z\leq  \log^{\gamma-3\kappa -18}n)
\\
& \leq 3n^{-5}+\log^{\gamma}n\max_{k \leq  \log^{\gamma-3\kappa -18}n}\dP(Z=k)\,\,\,\text{ by (\ref{eqn:thirdphasesaturationjoiningballs}), (\ref{eqn:thirdphasespoiledinthejunction}) and (\ref{eqn:backspoiledzis})}
&\leq  4n^{-5}
\end{align*}
as below \eqref{eqn:p1p2p3}.
Since $s_0\leq \vert D_1\vert \leq \log^2n$ by (\ref{eqn:cxdef}) and $m\leq \cxvi\log n$ by \ref{D2}, a union bound on $(J,q)$ yields (\ref{eqn:thirdphasejoiningballs}).
\\

\noindent
\textbf{Proof of (\ref{eqn:wsuccessful}).} We now estimate the probability that at most $\log n$ $w$-explorations are $w$-successful. Note that by Remark~\ref{rem:explorationsize} and (\ref{eqn:firsttwophasestotalvertices}),
\begin{equation}\label{twophasesandwexplototaledges}
\vert R_{1,2} \vert +\vert R_w\vert \leq n^{1/2}\log^{-1/2}n.
\end{equation}

\noindent 
Hence if $C>0$ is large enough, by (\ref{eqn:binomdominationatinfinitydiff}) with $k=\lfloor C\log n\rfloor$ and $m_0,m_1,m_E,m\leq n^{1/2}\log^{-1/2}n$,
\begin{equation}\label{eqn:spoiledzis}
\dP_{ann}\left(\cS_2(x)\cap \left\lbrace\text{more than $C\log n$ $w_i$'s are $w$-spoiled}\right\rbrace\right)\leq n^{-5}.
\end{equation}
\noindent
Moreover, for every $i\geq 1$, conditionally on the fact that $w_i$ is not $w$-spoiled,  the probability that the $w$-exploration from $w_i$ is stopped because it reaches a vertex of $R_{1,2}$ or a vertex seen in the exploration of some $w_j$, $j<i$ is $o(1)$ by (\ref{eqn:binomdominationdiff}) with $k=1$, $m_0=1$, $m_1,m_E,m\leq  n^{1/2}\log^{-1/2}n$. Hence, a straightforward adaptation of the proof of (\ref{eqn:growthsuccessful}) yields
\begin{equation}\label{eqn:zexplorationfirst}
\dP_{ann}(\text{the exploration from $w_i$ is $w$-successful }\vert \,\cS_{2}(x)\cap\{w_i\text{ is not $w$-spoiled}\})\geq \eta(h)/2.
\end{equation}

\noindent
Take $\cx >3C$. By (\ref{eqn:spoiledzis}), (\ref{eqn:zexplorationfirst}) and (\ref{eqn:cxdef}), if $Z\sim \text{Bin}( \lfloor (\cx\log n)/2\rfloor, \eta(h)/2)$ and $n$ is large~enough,
\begin{center}
$\dP_{ann}(\cS_2(x)\cap\{s_0\leq \log n\})\hspace{-1mm}\leq n^{-5}+\dP(Z\leq \log n).
$
\end{center}
\noindent
One checks via Stirling's formula that if $\cx$ is large enough, then for large enough $n$, 
\begin{align*}
\dP(Z\leq \log n)&\leq 2\log n \max_{ k\leq \log n}\dP(Z=k)\leq 2\log n{\cx \log n\choose \log n}(1-\eta(h)/2)^{(\cx-1)\log n } \leq n^{-5},
\end{align*}
and (\ref{eqn:wsuccessful}) follows.

\end{proof}

\subsection{Revealing $\psimn$ on the three phases}\label{subsec:secondcomporevealpsin}

\noindent
Let $R_1^{\psi}$ (resp. $R_2^{\psi}$) be the set of vertices where $\psimnt$ has been defined during the first (resp. second) phase, and $R_3^{\psi}$ be the set of vertices in the $w$-successful $w$-explorations and on the $J$-joining balls, for all $1\leq J\leq m$, on which we will realize $\psimn$ on the third phase. 
\\
By Proposition~\ref{prop:gffann}, we can realize $\psimn$ jointly with $\cM_n$ by\begin{itemize}
\item proceeding to the three phases of the exploration from $x$,
\item revealing the remaining pairings of half-edges of the $\cM_n$,
\item defining $\psimn$ on $R_1^{\psi}\cup R_2^{\psi}$, in the same order as $\psimnt$ has been defined, using the same standard normal variables: we let 
\begin{center}
$\psimn(x):=\psimnt(x)\sqrt{\frac{d-2}{d-1}}\sqrt{\gmn(x,x)}$ 
\end{center}
if $B_{\cM_n}(x,r_n+\ell)$ has no cycle, and $\psimn(x)=\psimnt(x)\sqrt{\gmn(x,x)/G_{G_m}(u_k,u_k)}$ if it has one cycle $C$, and $d_{\cM_n}(x,C)=k$. For every $y$, we define the event 
\\
$A_y:=\{z\in V_n,\, \psimnt(z)\text{ was defined before }\psimnt(y)\}$ and
\begin{center}
$\psimn(y):=\dE^{\cM_n}[\psimn(y)\vert \sigma(A_y)]+\xi_y\sqrt{\Var(\psimn(y)\vert \sigma(A_y))},$
\end{center}
for every $y\in R_1^{\psi}$, where $\xi_y$ is the normal variable used when defining $\psimnt(y)$,
\item revealing $\psimn$ on $R_3^{\psi}$, and finally on $V_n\setminus (R_1^{\psi}\cup R_2^{\psi}\cup R_3^{\psi})$.
\end{itemize} 

\noindent
Let $\cE_{7,n}:=\{\cM_n\text{ is a good graph}\}\cap \{\max_{z\in V_n}\vert \psimn(z)\vert \leq \log^{2/3}n\} $,
\\
$\cS_1^{\psi}(x):= \cS_1(x)\cap  \{\sup_{y\in R_1^{\psi}}\vert \psimn(y)-\psimnt(y)\vert \leq n^{-a}/2\}$,
\\
$\cS_2^{\psi}(x):= \cS_2(x)\cap \cS_1^{\psi}(x)\cap \{\sup_{y\in R_2^{\psi}}\vert \psimn(y)-\psimnt(y)\vert \leq (\log^{-1}n)/2\} $, 
and
\\
$\cS_{3,i}^{\psi}(x):= \cS_2^{\psi}(x)\cap  \{\forall q\in \{1, \ldots, m\}\text{ at least $\log n$ vertices of $D_i$ are connected to $T_q$ in $\Enh$}\} $ for every $i\geq 1$. 

\noindent
Suppose that for $a>0$ (defined in the beginning of Section~\ref{subsec:phase1}) small enough (depending only on $d$ and $h$), and for $n$ large enough:
\begin{equation}\label{eqn:firstphasepsinapprox}
\dP_{ann}((\cS_1(x)\setminus \cS_1^{\psi}(x))\cap \cE_{7,n})\leq n^{-3}, 
\end{equation}

\begin{equation}\label{eqn:secondphasepsinapprox}
\dP_{ann}((\cS_2(x)\setminus \cS_2^{\psi}(x))\cap \cE_{7,n})\leq n^{-3}, 
\end{equation}
and for every $ 1\leq i\leq r-1$,
\begin{equation}\label{eqn:thirdphasepsinapprox}
\dP_{ann}((\cS_2(x)\setminus \cS_{3,i}^{\psi}(x))\cap \cE_{7,n})\leq n^{-3}.
\end{equation}
Letting $\cS_{1,\text{stop}}^{\psi}(x):= \cS_{1,\text{stop}}(x)\cap  \cS_1^{\psi}(x)$, (\ref{eqn:firstphasepsinapprox}), (\ref{eqn:secondphasepsinapprox}), (\ref{eqn:thirdphasepsinapprox}) and (\ref{eqn:secondphase}) imply that 
\begin{equation}\label{eqn:threephasespsin}
\dP_{ann}((\cS^{\psi}_{1,\text{stop}}(x) \cup (\cap_{i= 1}^{r-1}\cS_{3,i}^{\psi}(x))\,)\cap \cE_{7,n})\geq 1-n^{-7/6}.
\end{equation}
On $\cS^{\psi}_{1,\text{stop}}(x) \cup (\cap_{i=1}^{r-1}\cS_{3,i}^{\psi}(x))$, we have the following alternative:
\begin{itemize}
\item either $\cC_x^{\cM_n,h}$ contains a subexploration tree $T_q$ whose exploration was fertile, the exploration from $y_q$ is successful and connected to at least $\log n$ vertices of every $D_i$, $1\leq i\leq r-1$ in $\Enh$;
\item or $\cC_x^{\cM_n,h}$ contains no such tree, and $\cC_x^{\cM_n,h}\subseteq R_1^{\psi}$, so that $\vert \cC_x^{\cM_n,h}\vert \leq \cz\log n$, where we take $\cz \geq \cxvi$.
\end{itemize}
Note that the second case comprises $\cS_{1,\text{stop}}^{\psi}(x)$ but is a priori not included in it: there could exist fertile subexploration trees not connected to $x$ in  $\Enh$ if $\psimn$ is below $h$ on the appropriate marked vertices. 
\\
In the first case, $\cC_x^{\cM_n,h}$ contains at least $\log n$ vertices of each $D_i$, $1\leq i\leq r-1$, so that by (\ref{eqn:firsttwophasestotalvertices}) and (\ref{eqn:cxdef}) for $n$ large enough:
$$\vert \cC_x^{\cM_n,h}\vert \geq (r-1) \log n\geq \log n \,\,\frac{n-\vert R_{1,2}\vert-\vert D_r\vert}{\cx\log n} \geq \frac{n}{2\cx}.$$
\noindent
Letting thus $\cxi:=(2\cx)^{-1}$, for every $n$ large enough, we have on $\cS^{\psi}_{1,\text{stop}}(x) \cup (\cap_{i= 1}^{r-1}\cS_{3,i}^{\psi}(x))$:
$$\vert \cC_x^{\cM_n,h}\vert\leq \cz\log n\text{ or } \vert \cC_x^{\cM_n,h}\vert\geq \cxi n.$$

\noindent
By (\ref{eqn:threephasespsin}) and a union bound on all $x\in V_n$, 
\begin{center}
$ \dP_{ann}(K_0\log n\leq\vert \cC_2^{(n)}\vert \leq \cxi n)\leq n^{-1/6}+\dP_{ann}(\cE_{7,n}^c)$. 
\end{center}
By Remark~\ref{rem:secondcompofirstbound}, $\vert \cC_2^{(n)}\vert \leq \cxi n$ $\dP_{ann}$-w.h.p. By Proposition~\ref{prop:goodgraph} and Lemma~\ref{lem:maxgff}, we have $\dP_{ann}(\cE_{7,n}^c)\rightarrow 0$ so that
\begin{center}
$\dP_{ann}(\vert \cC_2^{(n)}\vert \leq K_0\log n )\rightarrow 1$,
\end{center}
yielding (\ref{eqn:secondcompo}). Hence, it remains to show (\ref{eqn:firstphasepsinapprox}), (\ref{eqn:secondphasepsinapprox}) and (\ref{eqn:thirdphasepsinapprox}).

\subsubsection{The field $\psimn$ on the first phase: proof of (\ref{eqn:firstphasepsinapprox})}

\begin{proposition}\label{prop:blockexplogoodapprox}
Let $a,\cz, \ell$ be such that Proposition~\ref{prop:blockexplo} holds with $\cxvi=\cz$ and $\cxv$ as in Proposition~\ref{prop:secondphase}, and such that the conclusion of Lemmas~\ref{lem:couplinggffadapted} and~\ref{lem:couplinggffadapted1cycle} hold. Then for $n$ large enough, (\ref{eqn:firstphasepsinapprox}) holds. 
%
%

\end{proposition}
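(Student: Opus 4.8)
The plan is to reveal $\psimn$ on $R_1^{\psi}$ vertex by vertex in the order in which $\psimnt$ was defined (as described just above), and to show inductively that the discrepancy $\vert \psimn(y)-\psimnt(y)\vert$ stays below $n^{-a}/2$ with probability $1-o(n^{-3})$, provided $\cE_{7,n}$ holds and $a$ is chosen small enough. The key point is that on $\cS_1(x)$ the set $R_1^{\psi}$ has at most $\lfloor \cxvi\log n\rfloor = \lfloor \cz \log n\rfloor$ vertices by \ref{D2}, and contains at most one cycle by \ref{D3}; moreover the recursive formulas \eqref{eqn:phidellblock}, \eqref{eqn:blockexplocase1}, \eqref{eqn:blockexplocase2} that define $\psimnt$ are exactly the ``tree'' (resp. ``one-cycle graph $G_m$'') recursions, while $\psimn$ is defined by the genuine recursion of Proposition~\ref{prop:gffann} with the \emph{same} Gaussian variables $\xi_y$. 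So at each step the difference $\psimn(y)-\psimnt(y)$ is controlled by (i) the difference of the conditional expectations, bounded via Lemma~\ref{lem:couplinggffadapted} (or Lemma~\ref{lem:couplinggffadapted1cycle} for the step(s) affected by the cycle) by roughly $\vert \psimn(\overline{y})-\psimnt(\overline{y})\vert/(d-1)$ plus an error term of order $n^{-b}$ for some $b>0$ coming from the $r_n=\Theta(\log n)$ security radius, and (ii) the difference $\big(\sqrt{\Var^{\cM_n}(\psimn(y)\mid\sigma(A_y))}-\sqrt{d/(d-1)}\,\big)\xi_y$, which on $\cE_{7,n}$ is at most $n^{-b}\log^{2/3}n$ outside an event of probability $n^{-\Delta}$ by the exponential Markov inequality for $\xi_y$, exactly as in \eqref{eqn:diffgffgraphtree}.

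First I would set up the hypotheses of Lemma~\ref{lem:couplinggffadapted} (and its one-cycle variant): on $\cS_1(x)\cap\cE_{7,n}$ we have $\vert R_1^{\psi}\vert \le \cz\log n \le n\log^{-8}n$, the tree-excess condition $\tx(B_{\cM_n}(A_y,r_n))=\tx(A_y)$ holds because the first phase reveals the $(r_n+\ell)$-neighbourhood of each explored vertex and stops at the second cycle (so apart from the at-most-one cycle $C$, which Lemma~\ref{lem:couplinggffadapted1cycle} is designed to handle, no cycle intrudes within distance $r_n$ of $A_y$), and $\max_{z\in A_y}\vert\psimn(z)\vert \le \log^{2/3}n$ on $\cE_{7,n}$. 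Then, writing $D_y := \vert\psimn(y)-\psimnt(y)\vert$ and $D := \max_{z\in R_1^\psi}D_z$ over the $\overline{y}$ already revealed, the recursion gives $D_y \le D_{\overline{y}}/(d-1) + n^{-b} + \vert\text{Var-error}\vert\,\vert\xi_y\vert$ for the tree steps, and $D_y\le \alpha D_{y_1}+\beta D_{y_k}+(\text{error})$ with $\alpha+\beta<1$ (since $G_m$ is transient, the exit probabilities sum to less than $1$) for the cycle steps in Case~1, and similarly in Case~2. Iterating along the (at most $\cz\log n$)-deep ancestral chain, the geometric factor $(d-1)^{-1}<1$ keeps the accumulated errors bounded by a fixed multiple of $n^{-b}\log n\cdot\log^{2/3}n$, which for $b>0$ is $\le n^{-a}/2$ once $a<b$. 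The root is handled separately using $\gmn(x,x)\approx (d-1)/(d-2)$ (resp. $\approx G_{G_m}(u_k,u_k)$) from \eqref{eqn:greenfunctionGnapprox1} and the definition $\psimn(x)=\psimnt(x)\sqrt{(d-2)/(d-1)}\sqrt{\gmn(x,x)}$, which makes the root discrepancy $O(\log^{-6}n\cdot\log^{2/3}n)\le n^{-a}/2$ outside an event of probability $n^{-\Delta}$.

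To conclude I would take a union bound: the bad event $\{D_y > n^{-a}/2\text{ for some }y\in R_1^\psi\}\cap\cE_{7,n}$ is contained, up to the $\cz\log n$ Gaussian-tail events each of probability $n^{-\Delta}$ (choose $\Delta=4$), in the event that the deterministic recursion bound above is violated, which does not occur for $n$ large. Hence $\dP_{ann}((\cS_1(x)\setminus\cS_1^\psi(x))\cap\cE_{7,n}) \le \cz\log n\cdot n^{-4} \le n^{-3}$ for large $n$, which is \eqref{eqn:firstphasepsinapprox}. The main obstacle is making Lemma~\ref{lem:couplinggffadapted1cycle} deliver a clean contraction bound across the cycle: one must check that the ``exit probabilities'' $\alpha,\beta,\alpha_i,\beta_i$ and local times $\gamma,\gamma_i$ defined via the SRW on $G_m$ are close to their $\cM_n$-counterparts (the genuine conditional expectation/variance of $\psimn$ at the $z_i$'s given $\sigma(A_y)$) up to $n^{-b}$ — this uses the $r_n$-security radius around $C$ together with the Green-function comparison \eqref{eqn:greenfunctionGn}, \eqref{eqn:greenfunctionGnapprox1}, \eqref{eqn:greenfunctionGnapprox2}, and the fact that a SRW started near $C$ exits the $r_n$-ball before hitting a far-away already-revealed vertex with probability $1-n^{-b}$ — but once that approximation is in place, the contraction is automatic from $\alpha+\beta<1$ and the tree-recursion factor $(d-1)^{-1}$.
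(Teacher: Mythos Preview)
Your approach is essentially the paper's: reveal $\psimn$ in the same order as $\psimnt$ using the same Gaussians, invoke Lemma~\ref{lem:couplinggffadapted} (tree steps) and Lemma~\ref{lem:couplinggffadapted1cycle} (cycle steps) to get a per-step error of order $n^{-2a}(1+|\xi_y|)$, bound $|\xi_y|$ by an exponential Markov inequality, and union-bound over the at most $\cz\log n$ vertices. The paper phrases the accumulation slightly differently (if the total discrepancy exceeds $n^{-a}$ then some step must increase the running maximum by at least $n^{-a}\log^{-2}n$), but this is equivalent to your contraction/summation argument.

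There is one genuine slip: your treatment of the root. You write that the root discrepancy is ``$O(\log^{-6}n\cdot\log^{2/3}n)\le n^{-a}/2$'', but $\log^{-16/3}n$ is \emph{not} $\le n^{-a}$ for any $a>0$. The bound \eqref{eqn:greenfunctionGnapprox1} only uses a $\Theta(\log\log n)$ tree-like neighbourhood and cannot deliver polynomial precision. What you need is the analogue of \eqref{eqn:greenfunctionGnapprox1} with radius $r_n=\Theta(\log n)$: since on $\cS_1(x)$ the first phase reveals $B_{\cM_n}(x,r_n+\ell)$ and it is tree-like (or has one cycle), the same computation as in the proof of Proposition~\ref{prop:goodgraph} (or of the variance in Lemma~\ref{lem:couplinggffadapted1cycle} for the one-cycle case) gives $\big|\gmn(x,x)-\tfrac{d-1}{d-2}\big|\le n^{-2a}$, respectively $\big|\gmn(x,x)-G_{G_m}(u_k,u_k)\big|\le n^{-2a}$, for $a$ small enough. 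With this corrected bound the root discrepancy is $O(n^{-2a}|\xi_\circ|)$, and the argument goes through. (A minor related remark: $\cE_{7,n}$ controls $|\psimn|$, not $|\xi_y|$; the $|\xi_y|$ bound comes solely from the Gaussian tail, as you in fact use.)
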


%
%

\noindent
To prove Proposition~\ref{prop:blockexplogoodapprox}, we need two variants of Proposition~\ref{prop:couplinggffsexplo}, whose proofs are postponed to the~\hyperref[appn]{Appendix}, Section~\ref{subsec:AppendixSection7}. The first consists in replacing the ``security radius'' $a_n=\Theta(\log\log n)$ by $r_n=\Theta(\log n)$.

\begin{lemma}\label{lem:couplinggffadapted}
If $a>0$ is small enough (depending only on $d$ and $h$), then the following holds for $n$ large enough. Assume that $\cM_n$ is a good graph, that $A\subset V_n$ satisfies 
\begin{itemize}
\item $\vert A\vert \leq n^{2/3}$,
\item $\text{\emph{tx}}(B_{\cM_n}(A,r_n))=\text{\emph{tx}}(A)$, and
\item $ \max_{z\in A}\vert\psimn(z)\vert\leq \log^{2/3}n$.
\end{itemize}
For every $y\in \partial B_{\cM_n}(A,1)$, writing $\overline{y}$ for the unique neighbour of $y$ in $A$, we have:
\begin{equation}\label{eqn:condexpapproxadapted}
\left\vert\dE^{\cM_n}[\psimn(y)\vert \sigma(A_y)]-\frac{1}{d-1}\psimn(\overline{y})\right\vert \leq n^{-2a}
\end{equation}

and

\begin{equation}\label{eqn:condvarapproxadapted}
\left\vert \emph{\text{Var}}^{\cM_n}(\psimn(y)\vert \sigma(A_y))-\frac{d}{d-1}\,\,\right\vert\leq n^{-2a}.
\end{equation}

\end{lemma}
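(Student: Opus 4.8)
The plan is to rerun the proof of Proposition~\ref{prop:couplinggffsexplo} with the security radius $a_n=\Theta(\log\log n)$ replaced throughout by $r_n=\Theta(\log n)$ (see~\eqref{eqn:rndef}), so that every error of the form $(d-1)^{-a_n}=\log^{-\Theta(1)}n$ becomes $(d-1)^{-r_n}=n^{-\Theta(1)}$. As there, the starting point is Proposition~\ref{prop:condgff}:
\[
\dE^{\cM_n}[\psimn(y)\mid\sigma(A)]=\bE_y^{\cM_n}[\psimn(X_{H_A})]-\frac{\bE_y^{\cM_n}[H_A]}{\bE_{\pi_n}^{\cM_n}[H_A]}\,\bE_{\pi_n}^{\cM_n}[\psimn(X_{H_A})],
\]
together with the corresponding formula for the variance, with $\gmn(y,y)$ and $\gmn(y,\cdot)$ playing the roles of $0$ and $\psimn$. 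The hypothesis $\tx(B_{\cM_n}(A,r_n))=\tx(A)$ says that $B_{\cM_n}(A,r_n)$ is $A$ with a forest attached; in particular the cone from $y$ out of $\overline y$ is, up to distance $r_n/2$ from $\overline y$, a tree in which every interior vertex has $d-1$ children, and $\overline y$ is the only vertex of $A$ that the SRW from $y$ can reach without leaving this cone.

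First I would treat the \emph{local} part. Let $\rho$ be the first time the walk from $y$ reaches distance $r_n/2$ from $A$. A gambler's‑ruin computation on $\Td$ (the distance‑to‑$A$ coordinate is a biased walk with down‑step probability $1/d$) gives $\bP_y^{\cM_n}(H_A<\rho)=\frac1{d-1}+O((d-1)^{-r_n})=:p$, and on $\{H_A<\rho\}$ one has $X_{H_A}=\overline y$. Splitting $\bE_y^{\cM_n}[\psimn(X_{H_A})]$ and the variance formula along $\{H_A<\rho\}$ versus its complement, and inserting the local Green‑function values $\gmn(y,y)=\frac{d-1}{d-2}+O((d-1)^{-r_n})$ and $\gmn(y,\overline y)=\frac1{d-2}+O((d-1)^{-r_n})$ — which I would obtain by rerunning the relevant part of Proposition~\ref{prop:goodgraph} with the tree‑like radius $r_n$ rather than $\Theta(\log\log n)$, giving error $n^{-\Theta(1)}$ instead of the $\log^{-6}n$ of~\eqref{eqn:greenfunctionGnapprox1}--\eqref{eqn:greenfunctionGnapprox2}, or simply comparing with~\eqref{eqn:greenonTd} — the local terms produce $\frac1{d-1}\psimn(\overline y)$ and $\frac{d-1}{d-2}-\frac1{d-1}\cdot\frac1{d-2}=\frac d{d-1}$, each up to $n^{-\Theta(1)}$.

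The remaining point, and the main obstacle, is that the \emph{far} contribution $\bE_y^{\cM_n}[\psimn(X_{H_A})\mathbf 1_{\{H_A\ge\rho\}}]$ must be cancelled by $\frac{\bE_y^{\cM_n}[H_A]}{\bE_{\pi_n}^{\cM_n}[H_A]}\bE_{\pi_n}^{\cM_n}[\psimn(X_{H_A})]$ up to $n^{-\Theta(1)}$, each term being a priori only $O(\max_{z\in A}|\psimn(z)|)=O(\log^{2/3}n)$. This reduces to the estimate: \emph{for every $w$ at distance $\ge r_n/2$ from $A$ inside the forest $B_{\cM_n}(A,r_n)$ and every $f\colon A\to\dR$,}
\[
\big|\bE_w^{\cM_n}[f(X_{H_A})]-\bE_{\pi_n}^{\cM_n}[f(X_{H_A})]\big|\le\|f\|_\infty\,n^{-\Theta(1)},\qquad \bE_w^{\cM_n}[H_A]=(1+n^{-\Theta(1)})\,\bE_{\pi_n}^{\cM_n}[H_A].
\]
Proving this is delicate precisely because $r_n$ is, for small $d$, only a small fraction of the mixing time $\tmix=\Theta(\log n)$, so the walk from $w$ is not mixed by the time it has moved distance $r_n$. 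The resolution I have in mind combines: (i) since $|A|\le n^{2/3}$, for a $\ciii$‑expander $\bE_{\pi_n}^{\cM_n}[H_A]\asymp n/|A|\ge n^{1/3}\gg\tmix$, and after $t:=\tmix\log n=n^{o(1)}$ steps the chain is within $n^{-\Theta(1)}$ of $\pi_n$ in total variation; (ii) the walk from $w$ \emph{does not reach} $A$ within $t$ steps except with probability $n^{-\Theta(1)}$: indeed each time it enters $B_{\cM_n}(A,\lfloor r_n/4\rfloor)$ it must, to hit $A$ during that sojourn, descend a tree of depth $\approx r_n/4$ before leaving, which by gambler's ruin on $\Td$ has probability $O((d-1)^{-r_n/4})=n^{-\Theta(1)}$, and there are at most $t=n^{o(1)}$ such sojourns. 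Feeding (i)--(ii) into the optional‑stopping identity $g(w)=\bE_w^{\cM_n}[g(X_{H_A\wedge t})]$ for the $\|f\|_\infty$‑bounded harmonic extension $g(x)=\bE_x^{\cM_n}[f(X_{H_A})]$, together with $|\bE_{\pi_n}^{\cM_n}[g]|\le\|f\|_\infty$, yields the first inequality; the hitting‑time statement follows from the same inputs by coupling the $w$‑ and $\pi_n$‑started walks after mixing and using the standard exponential concentration of $H_A$ around $\bE_{\pi_n}^{\cM_n}[H_A]$ on $\ciii$‑expanders.

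Finally, the key estimate gives $\bE_y^{\cM_n}[H_A]=(1-p+n^{-\Theta(1)})\bE_{\pi_n}^{\cM_n}[H_A]$ (the portion of $H_A$ before time $\rho$ is $O(r_n)$, negligible) while $\bP_y^{\cM_n}(H_A\ge\rho)=1-p$, so the two far terms cancel up to $O(\log^{2/3}n\cdot n^{-\Theta(1)})$; the variance case is identical, now with $f=\gmn(y,\cdot)$ and $\|f\|_\infty\le\gmn(y,y)=O(1)$ by~\eqref{eqn:greenfunctionGn}. Since $\log^{2/3}n=n^{o(1)}$, every error is $\le n^{-2a}$ once $a>0$ is small enough (depending only on $d$, chosen to beat the smallest of the exponents above — the one coming from $(d-1)^{-r_n/4}$); the off‑by‑one adjustments in the forest geometry, the elementary gambler's‑ruin estimates, and the re‑derivation of~\eqref{eqn:greenfunctionGnapprox1}--\eqref{eqn:greenfunctionGnapprox2} with error $n^{-\Theta(1)}$ are routine.
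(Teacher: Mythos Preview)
Your proposal is correct and takes essentially the same route as the paper: rerun the proof of Proposition~\ref{prop:couplinggffsexplo} with $r_n$ in place of $a_n$, so that every error of the form $(d-1)^{-a_n}$ becomes $n^{-\Theta(1)}$; your mixing-plus-repulsion treatment of the ``far'' part (run $\Theta(\log^2 n)$ steps to mix, use geometric repulsion to rule out hitting $A$ meanwhile) is exactly what the paper does via Lemma~\ref{lem:repulsion} and the bound $\bE_{\pi_n}[H_A]\ge n^{1/3}/4$.

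One small caveat on the variance: your plan to insert $\gmn(y,y)=\tfrac{d-1}{d-2}+O((d-1)^{-r_n})$ and $\gmn(y,\overline y)=\tfrac{1}{d-2}+O((d-1)^{-r_n})$ by rerunning Proposition~\ref{prop:goodgraph} requires $B_{\cM_n}(y,r_n)$ itself to be tree-like, which the hypothesis $\tx(B_{\cM_n}(A,r_n))=\tx(A)$ does \emph{not} guarantee when $A$ carries a cycle near $\overline y$. The paper sidesteps this by working with the killed Green function in the cone, $G^{T_y}_{\cM_n}(y,y)$, via the identity~\eqref{eqn:lem14Aba}: this quantity depends only on the cone $T_y$ (which \emph{is} tree-like by hypothesis), equals the corresponding $\Td$-quantity, and gives $d/(d-1)+O((d-1)^{-r_n})$ directly. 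In the actual application (Proposition~\ref{prop:blockexplogoodapprox}, case~I) $A$ is a forest and your shortcut goes through, but for the lemma as stated you should use the killed-Green-function route --- which is presumably what you had in mind by ``simply comparing with~\eqref{eqn:greenonTd}''.
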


\noindent
The second variant of Proposition~\ref{prop:couplinggffsexplo} corresponds to Case 1 and Case 2 of the first phase of the exploration.
Let us introduce some notations before stating the Lemma. 
Recall that for $m\geq 3$, $G_m$ is the connected (and infinite) $d$-regular graph with a unique cycle $C_m$ of length $m$. Recall the definitions of $\alpha_k,\beta_k,\gamma_k, \alpha'_k$ and $\gamma'_k$ from (\ref{eqn:blockexplocase1}) and (\ref{eqn:blockexplocase2}).
\\
For $k\geq 0$, let $z_{k}$ be a vertex at distance $k$ of the cycle $C_m$ in $G_m$, and $\overline{z_{k}}$ be a neighbour of $z_{k}$ at distance $k+1$ of $C_m$. Note that $B_{G_m}(z_k,\overline{z_{k}},r_n)$ (the subgraph of $G_m$ obtained by taking all paths of length $r_n$ starting at $z_{k}$ and not going through $\overline{z_{k}}$) contains $C_m$ if and only if $k\leq r_n-\lceil m/2\rceil$.

\begin{lemma}\label{lem:couplinggffadapted1cycle}
If $a>0$ is small enough (depending only on $d$ and $h$), then the following holds for $n$ large enough (uniformly in $m\geq 3$).
\\
Assume that $\cM_n$ is a good graph, that $A\subseteq V_n$ is such that 
\begin{itemize}
\item $\vert A\vert \leq n^{2/3}$,
\item $A$ is a tree, and
\item $ \max_{z\in A}\vert\psimn(z)\vert\leq \log^{2/3}n$.
\end{itemize}

\noindent
\textbf{Case 1.} Let $y\in V_n$, and suppose that
\begin{itemize}
\item[--] $y$ has a neighbour $\overline{y}$ in $A$, 
\vspace{-2mm}
\item[--] for some $1\leq k<m$, there exists $\widehat{y}$ in $A$, a path $P$ of length $m-k$ from $y$ to $\widehat{y}$ whose only vertex in $A$ is $\widehat{y}$, and a path $P'$ in $A$ of length $k-1$ from $\widehat{y}$ to $\overline{y}$, so that $C:=P\cup P'\cup (\overline{y},y)$ is a cycle of length $m$ (and $\widehat{y}=\overline{y}$ if $k=1$), and 
\vspace{-2mm}
\item[--] $\emph{\tx}(B_{\cM_n}(A\cup C,r_n))=1$. 
\end{itemize}
Then
\begin{equation}\label{eqn:condexpapproxadaptedcase1}
\left\vert\dE^{\cM_n}[\psimn(y)\vert \sigma(A)]- \alpha_{k}\psimn(\overline{y})-\beta_k\psimn(\widehat{y})\right\vert \leq n^{-2a}
\end{equation}
and
\begin{equation}\label{eqn:condvarapproxadaptedcase1}
\left\vert \emph{\text{Var}}^{\cM_n}(\psimn(y)\vert \sigma(A))-\gamma_k\,\,\right\vert\leq n^{-2a}.
\end{equation}

\noindent
\textbf{Case 2.} Let $y\in V_n$, and suppose that
\begin{itemize}
\item[--] $y$ has a unique neighbour $\overline{y}$ in $A$, 
\vspace{-2mm}
\item[--] for some $1\leq k\leq r_n-\lceil m/2\rceil$, $B_{\cM_n}(y,\overline{y},r_n)$ is isomorphic to $B_{G_m}(z_k,\overline{z_{k}},r_n)$, and
\vspace{-2mm}
\item[--] $\emph{\tx}(B_{\cM_n}(A\cup P\cup C,r_n))=1$, where $C$ is the cycle in $B_{\cM_n}(y,\overline{y},r_n)$ and $P$ the path from $y$ to $C$.
\end{itemize}
Then
\begin{equation}\label{eqn:condexpapproxadaptedcase2}
\left\vert\dE^{\cM_n}[\psimn(y)\vert \sigma(A)]- \alpha'_{k}\psimn(\overline{y})\right\vert \leq n^{-2a}
\end{equation}
and
\begin{equation}\label{eqn:condvarapproxadaptedcase2}
\left\vert \emph{\text{Var}}^{\cM_n}(\psimn(y)\vert \sigma(A))-\gamma'_k\,\,\right\vert\leq n^{-2a}.
\end{equation}

\end{lemma}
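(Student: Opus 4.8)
The plan is to prove Lemma~\ref{lem:couplinggffadapted1cycle} by following the same route as Proposition~\ref{prop:couplinggffsexplo} (whose proof is deferred to the Appendix), with the infinite tree $\Td$ replaced by the one-cycle graph $G_m$, and with the security radius $a_n$ replaced by $r_n$. The starting point is Proposition~\ref{prop:condgff}: conditionally on $\sigma(A)$, $\psimn(y)$ is Gaussian with the conditional expectation and variance expressed through the SRW started at $y$ and its hitting time $H_A$ of $A$. The strategy is to compare this SRW on $\cM_n$ with the SRW on $G_m$, using the hypothesis $\tx(B_{\cM_n}(A\cup C,r_n))=1$ (resp.\ $\tx(B_{\cM_n}(A\cup P\cup C,r_n))=1$), which guarantees that the $r_n$-neighbourhood of $y$ in $\cM_n$, after removing $A$, is isomorphic to the corresponding neighbourhood in $G_m\setminus\{u_1,\dots,u_k\}$ (resp.\ $G_m$ with the appropriate vertices removed). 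On this common neighbourhood the two walks can be coupled until they exit a ball of radius $r_n$ around $y$.

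The key steps, in order, are as follows. First, introduce the stopping time $\tau:=\inf\{s\ge 0:\ d_{\cM_n}(y,X_s)\ge r_n\}$ and split $\dE^{\cM_n}_y[\psimn(X_{H_A})]$ and $\Var^{\cM_n}(\psimn(y)\mid\sigma(A))$ into the contribution of the event $\{H_A<\tau\}$ and that of $\{H_A\ge\tau\}$, exactly as in the proof of Proposition~\ref{prop:couplinggffsexplo}. Second, on $\{H_A<\tau\}$ the walk has stayed in the isomorphic neighbourhood, so $X_{H_A}$ is one of the ``entry'' vertices of $A$ reachable from $y$ within distance $r_n$: in Case~1 these are exactly $\overline y$ and $\widehat y$ (the two ends of the cycle $C$, as $v_j(k-1)$ and $u_k$ are in the notation of the main text), and $\bP^{\cM_n}_y(X_{H_A}=\overline y,H_A<\tau)$, $\bP^{\cM_n}_y(X_{H_A}=\widehat y,H_A<\tau)$ equal $\alpha_k,\beta_k$ up to an error coming from the (small) probability of reaching distance $r_n$; in Case~2 only $\overline y$ is reachable and the relevant probability is $\alpha'_k$. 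Third, bound the ``escape'' terms: using \ref{expander2} and \ref{expander1} (via \eqref{eqn:greenfunctionGn}), $\bP^{\cM_n}_y(\tau\le H_A)$ is at most $(d-1)^{-cr_n}=n^{-\Theta(1)}$, and since $\max_{z\in A}|\psimn(z)|\le\log^{2/3}n$ and $\max_z|\gmn(y,z)|$ is bounded, the escape contributions to both the mean and the variance are $O(n^{-\Theta(1)})$; also the term $\tfrac{\bE^{\cM_n}_y[H_A]}{\bE^{\cM_n}_{\pi_n}[H_A]}\bE^{\cM_n}_{\pi_n}[\cdots]$ is negligible because $\bE^{\cM_n}_y[H_A]$ is $O(1)$ while $\bE^{\cM_n}_{\pi_n}[H_A]=\Theta(n)$ by the spectral gap bound. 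Fourth, identify the main (non-escape) term with the $G_m$ quantities: on the isomorphic neighbourhood, $\gmn(y,y)$, $\gmn(y,\overline y)$, $\gmn(y,\widehat y)$ are within $\log^{-6}n$ of $G_{G_m}(u_k,u_k)$ etc.\ — this requires a one-cycle analogue of \eqref{eqn:greenfunctionGnapprox1}–\eqref{eqn:greenfunctionGnapprox2}, which follows by the same random-walk-decomposition argument since the walk from $y$ sees exactly the $G_m$ geometry up to distance $r_n$ and the ``tail'' contribution beyond $r_n$ is $O((d-1)^{-r_n})$. Collecting, the conditional expectation is $\alpha_k\psimn(\overline y)+\beta_k\psimn(\widehat y)+O(n^{-\Theta(1)})$ and the conditional variance is $\gamma_k+O(n^{-\Theta(1)})$; choosing $a>0$ small enough that $n^{-2a}$ dominates all these polynomial-in-$n$ error terms (the dominant one being $\log^{2/3}n\cdot(d-1)^{-cr_n}=n^{-0.05c\log(d-1)+o(1)}$, recall $r_n=\lfloor 0.05\log_{d-1}n\rfloor$) gives \eqref{eqn:condexpapproxadaptedcase1}–\eqref{eqn:condvarapproxadaptedcase2} and likewise \eqref{eqn:condexpapproxadaptedcase2}–\eqref{eqn:condvarapproxadaptedcase2}.

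The main obstacle I expect is bookkeeping the geometry near the cycle: in Case~1 the walk from $y$ can go ``around'' the cycle and hit $A$ at $\widehat y$ from the far side, so one must check carefully that the coupling with $G_m$ remains exact until time $\tau$ — i.e.\ that $\tx(B_{\cM_n}(A\cup C,r_n))=1$ really does force $B_{\cM_n}(y,r_n)\setminus A$ to be graph-isomorphic to the corresponding subgraph of $G_m\setminus\{u_1,\dots,u_k\}$, including the $d-2$ tree-hangings on each cycle vertex, with the isomorphism matching $\overline y\leftrightarrow u_1$ and $\widehat y\leftrightarrow u_k$. Once this structural identification is in place, the probabilistic estimates are the same as in Proposition~\ref{prop:couplinggffsexplo}. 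A secondary point of care is that the constants $\alpha_k,\beta_k,\gamma_k$ are defined on $G_m$ with a \emph{finite} cycle length $m$ that may itself grow with $n$; but since the walk started at $y$ only feels the cycle when it has wound around it, and the relevant quantities $\alpha_k$, $\beta_k$ (which vanish unless $k$ is small) and $\gamma_k$ are all $O(1)$ uniformly in $m$, the estimates are uniform in $m\ge 3$ as claimed, provided $r_n\ge\lceil m/2\rceil$ in Case~2 (which is exactly the stated hypothesis $k\le r_n-\lceil m/2\rceil$).
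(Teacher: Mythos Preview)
Your overall route is correct and matches the paper's: start from Proposition~\ref{prop:condgff}, split according to whether the walk exits a ball of radius $r_n$ before hitting $A$, and compare the local picture to the infinite one-cycle graph $G_m$. However, your third step contains a genuine error that breaks the argument as written.

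You claim that $\bP^{\cM_n}_y(\tau\le H_A)\le (d-1)^{-cr_n}$ and that $\bE^{\cM_n}_y[H_A]=O(1)$. Both are false. In Case~2, the only neighbour of $y$ in $A$ is $\overline y$, and the entire cone $B_{\cM_n}(y,\overline y,r_n)$ is disjoint from $A$. A SRW started at $y$ therefore escapes to $\partial B_{\cM_n}(y,\overline y,r_n)$ without touching $A$ with probability $\approx 1-\alpha'_k$, which is bounded away from zero (indeed close to $\tfrac{d-2}{d-1}$ when $k$ is moderate). Consequently $\bE^{\cM_n}_y[H_A]$ is of the same order as $\bE^{\cM_n}_{\pi_n}[H_A]$, not $O(1)$: on the escape event the walk mixes and then hits $A$ from stationarity. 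The same happens in Case~1 with escape probability $\approx 1-\alpha_k-\beta_k$.

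The point you are missing is that the two ``large'' terms
\[
\bE^{\cM_n}_y\!\bigl[\psimn(X_{H_A})\mathbf{1}_{H_A>\tau}\bigr]
\quad\text{and}\quad
\frac{\bE^{\cM_n}_y[H_A]}{\bE^{\cM_n}_{\pi_n}[H_A]}\,\bE^{\cM_n}_{\pi_n}\!\bigl[\psimn(X_{H_A})\bigr]
\]
do not vanish separately; they \emph{compensate} each other up to $O(n^{-3a})$. This is exactly the content of \eqref{eqn:condexpsplit1}--\eqref{eqn:condexpsplit2} in the proof of Proposition~\ref{prop:couplinggffsexplo} (and the paper's \eqref{eqn:condexpadaptedcase2compens} here): once the walk has left the $r_n$-neighbourhood, Lemma~\ref{lem:repulsion} with $s=r_n$ and the spectral gap force its law after $\lfloor\log^2 n\rfloor$ further steps to be within $n^{-\Theta(1)}$ of $\pi_n$, so $\bE^{\cM_n}_y[\psimn(X_{H_A})\mathbf{1}_{H_A>\tau}]\approx\bP^{\cM_n}_y(H_A>\tau)\,\bE^{\cM_n}_{\pi_n}[\psimn(X_{H_A})]$, and likewise $\bE^{\cM_n}_y[H_A]/\bE^{\cM_n}_{\pi_n}[H_A]\approx\bP^{\cM_n}_y(H_A>\tau)$ via the argument around \eqref{eqn:ratioExEpi}. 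The same compensation occurs in the variance computation. Once you replace your third step by this cancellation argument, the rest of your sketch goes through.
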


\begin{proof}[Proof of Proposition~\ref{prop:blockexplogoodapprox}] We proceed as below (\ref{eqn:growthsuccessful}) in the proof of Proposition~\ref{prop:explo1vertex}. Denote 
\begin{center}
$\cE_n:=\{\exists y\in R_1^{\psi},\vert \psimn(y) -\psimnt(y)\vert \geq n^{-a} \}\cap \cE_{7,n}\cap \cS_1(x)$.
\end{center}
%
On $\cE_n$, $\psimnt$ is defined on at most $\cz\log n$ vertices by~\ref{D2} (and our choice of $\cxvi=\cz$), so that by the triangle inequality, either $\vert \psimnt(x)-\psimn(x)\vert \geq n^{-a}\log^{-2}n$, or there exists $y$ such that 
\begin{center}
$\vert \psimnt(y) - \psimn(y)\vert \geq n^{-a}\log^{-2}n+\sup_{\overline{y}\in R_y^{\psi}}\vert \psimnt(\overline{y}) - \psimn(\overline{y})\vert$, 
\end{center}
where $R_y^{\psi}$ is the set of vertices where $\psimnt$ has been defined before $\psimnt(y)$. Let
\begin{center}
$\cE(y):=\{\vert \psimnt(y) - \psimn(y)\vert \geq n^{-a}\log^{-2}n+\sup_{\overline{y}\in R_y^{\psi}}\vert \psimnt(\overline{y}) - \psimn(\overline{y})\vert \}\cap \cE_n$
\end{center}
For $y\neq x$, we can apply Lemma~\ref{lem:couplinggffadapted1cycle} (if $\psimnt(y)$ is defined in Case 1 or Case 2) or Lemma~\ref{lem:couplinggffadapted} (in the other cases), so that
$$
\dP_{ann}(\cE(y))\leq \dP_{ann}(n^{-2a}\vert \xi_y\vert \geq n^{-a}\log^{-2}n-n^{-2a})\leq \dP_{ann}(\vert \xi_y\vert \geq n^{a/2})\leq n^{-4}
$$
by the exponential Markov inequality, and $\dP_{ann}(\cE(x))\leq n^{-4}$ by the same argument, where we set $\cE(x):=\{\vert \psimnt(x)-\psimn(x)\vert \geq n^{-a}\log^{-2}n\}\cap \cE_n$. For the particular case that there is a cycle $C$ in $B_{\cM_n}(x,r_n+\ell)$, at distance $k\geq 0$ from $x$, a computation similar to that of the variance in Lemma~\ref{lem:couplinggffadapted1cycle} shows that for $a$ small enough (depending on $d$ and $h$ only), $\max_{k\geq 0}\vert G_{G_m}(u_k,u_k) -\gmn(x,x)\vert \leq n^{-2a}$.
Hence, we get by a union bound on $y\in R_1^{\psi}$ that for large enough $n$, 
$$
\dP_{ann}(\cE_n)\leq n^{-4} \cz\log n\leq n^{-3}.
$$
The conclusion follows.
\end{proof}

\subsubsection{The field $\psimn$ on the second phase: proof of (\ref{eqn:secondphasepsinapprox}) }
\noindent
It is enough to show that for each $y_{q,i}$ whose exploration in the second phase is successful, 
\begin{center}
$\dP_{ann}\left(\lbrace \sup_{ z\in T_{y_{q,i}}\setminus\{y_{q,i}\} }\vert \psimnt(z)-\psimn(z)\vert \geq \frac{\log^{-1}n}{2} \rbrace\cap \cE_{7,n}\right)\leq n^{-4} $,
\end{center}
where $T_{y_{q,i}}$ is its exploration tree, and to conclude by a union bound on $y_{q,i}$. 
This follows from a straightforward adaptation of the reasoning below (\ref{eqn:growthsuccessful}). Note that the $n^{-3}$ in the RHS of (\ref{eqn:diffgffgraphtree}) can be replaced by any polynomial in $n$. 

\subsubsection{The field $\psimn$ on the third phase: proof of (\ref{eqn:thirdphasepsinapprox})}
\noindent
By symmetry, it is enough to consider the case $i=1$. Following readily the argument of the proof of (\ref{eqn:joinTj}), we get that the probability that $\Enh$ percolates through a given $J$-joining ball is at least $\log^{\gamma(\cxiii/3 -1)}n$, for any $J$. By (\ref{eqn:thirdphasejoiningballs}) and (\ref{eqn:wsuccessful}), and a union bound on every couple $(J,q)\in \{1, \ldots, s_0\}\times \{1, \ldots m\}$, 
\begin{center}
$\dP_{ann}((\cS_2(x)\setminus \cS^{\psi}_{3,1}(x))\cap \cE_{7,n}) \leq 2n^{-4}+s_0m\dP(Z=0)$,
\end{center}
where $Z\sim \text{Bin}(\lfloor\log^{\gamma - 3\kappa-18}n\rfloor, \log^{\gamma(\cxiii/3 -1)}n )$. If $\kappa$ and $\gamma/\kappa$ are large enough, then
$$\dP(Z=0)=(1-\log^{\gamma(\cxiii/3 -1)}n )^{\lfloor\log^{\gamma - 3\kappa-18}n\rfloor}\leq n^{-4}$$
for $n$ large enough,  and (\ref{eqn:thirdphasepsinapprox}) follows.

\section{Properties of $\cC_1^{(n)}$}\label{sec:properties}
\subsection{The local limit: proof of Theorem~\ref{thm:locallimit}}
\begin{proof}[Proof of Theorem~\ref{thm:locallimit}]
The proof mimics the reasoning of Lemma~\ref{lem:connexionsupperbound}. Let $k\geq 0$ and let $T$ be a rooted tree of height $k$, with no vertex of degree more than $d$. Let $x\in V_n$. We perform an exploration as in Section~\ref{subsec:explo1vertexsuccess}. Denote $\cS_T(x)$ the event that the exploration is successful, that $\cS_T(x)\subseteq\cC_x^{\cM_n,h}$ and that $B_{\cM_n}(x,k)$ is isomorphic to $T$. We claim that
\begin{equation}\label{eqn:locallimexpect}
\dP_{ann}(\cS_T(x)) \underset{n\rightarrow +\infty}{\longrightarrow} \dP^{\Td}(B_{\Td}(\circ,k)=T, \vert \Ch\vert =+\infty).
\end{equation}
The proof goes as those of Lemma~\ref{lem:explo1vertex} and Proposition~\ref{prop:explo1vertex}. In the proof of Lemma~\ref{lem:explo1vertex}, replace $\cF^{(n)}_j$ by $\cF^{(n)}_{T,j}:=\cF^{(n)}_j\cap \{B_{\Ch}(\circ,k)=T\}$ and $\mathcal{F'}_j^{(n)} $ by $\mathcal{F'}_{T,j}^{(n)}:=\mathcal{F'}_j^{(n)}   \cup \{ B_{\Ch}(\circ,k)\neq T\}$, for every $j\geq 1$. We also check easily that 
\begin{center}
$\dP^{\Td}(\Chlnplus\cap B_{\Td}(\circ,k)=\Chlnminus\cap B_{\Td}(\circ,k))\rightarrow 1$
\end{center}
in order to determine $B_{\cM_n}(x,k)$ $\dP_{ann}$-w.h.p., as Proposition~\ref{prop:explo1vertex} only ensures thatwe have w.h.p.~the inclusion $\Chlnplus\cap B_{\Td}(\circ,k)\subseteq B_{\cM_n}(x,k)$.
Moreover, we get as for (\ref{eqn:variancenbaborted}):
\begin{equation}\label{eqn:locallimcovar}
\sup_{x,y\in V_n}\vert \Cov_{ann}(\cS_T(x),\cS_T(y))\vert \underset{n\rightarrow +\infty}{\longrightarrow}0.
\end{equation}
\noindent
Let $\varepsilon>0$. Applying Bienaym\'e-Chebyshev's inequality as in Lemma~\ref{lem:connexionsupperbound}, we get
\begin{equation}\label{eqn:locallimstx}
\dP_{ann}( \vert \,\vert\cS_T\vert - \dP^{\Td}(B_{\Td}(\circ,k)=T, \vert \Ch\vert =+\infty)n\vert \leq \varepsilon n)\underset{n\rightarrow +\infty}{\longrightarrow}1,
\end{equation}
where $\cS_T$ is the set of vertices $x\in V_n$ such that $\cS_T(x)$ holds. Let $\cS\subset V_n$ be the set of vertices such that their exploration is successful. By Theorem~\ref{thm:maingff} and a reasoning as in the proof of Lemma~\ref{lem:connexionsupperbound}, with $\dP_{ann}$-probability $1-o(1)$,
\vspace{-3mm}

\begin{enumerate}[label=(\roman*)]
\item\label{i} $\vert \cC_2^{(n)}\vert \leq n^{1/3}$, so that $\cS_T\subseteq \cS\cap  \cC_1^{(n)}$,
\item\label{ii} $\vert \,\vert \cC_1^{(n)}\vert -\eta(h)n\vert\leq \varepsilon n$,
\item\label{iii} $\vert \,\vert \cS\vert -\eta(h)n\vert\leq \varepsilon n$.
\end{enumerate}
By \ref{i}, $\cS_T =\cS\cap V_n^{(T)}$. Thus,
$\vert \cS_T\vert \leq \vert V_n^{(T)}\vert \leq \vert \cS_T\vert + \vert \cC_1^{(n)}\cap\cS ^c\vert$, so that by \ref{ii}, \ref{iii} and (\ref{eqn:locallimstx}), 
$$
(\dP^{\Td}(B_{\Td}(\circ,k)=T, \vert \Ch\vert =+\infty)-\varepsilon)n \leq  \vert V_n^{(T)}\vert\leq (\dP^{\Td}(B_{\Td}(\circ,k)=T, \vert \Ch\vert =+\infty)+3\varepsilon)n.
$$
Moreover, $\vert \,\vert \cC_1^{(n)}\vert -\eta(h)n\vert\leq \varepsilon n $ by \ref{ii}, so that for $\varepsilon $ small enough,
$$
\frac{\dP^{\Td}(B_{\Td}(\circ,k)=T, \vert \Ch\vert =+\infty)}{\eta(h)}-\sqrt{\varepsilon}\leq \frac{\vert V_n^{(T)}\vert}{\vert \cC_1^{(n)}\vert} \leq \frac{\dP^{\Td}(B_{\Td}(\circ,k)=T, \vert \Ch\vert =+\infty)}{\eta(h)}+\sqrt{\varepsilon}.
$$
Since $\eta(h)=\dP^{\Td}(\vert\Ch\vert =+\infty)$, we have
\begin{align*}
\frac{\dP^{\Td}(B_{\Td}(\circ,k)=T, \vert \Ch\vert =+\infty)}{\eta(h)} &=\dP^{\Td}(B_{\Td}(\circ,k)=T \,\vert\,\, \vert \Ch\vert =+\infty).
\end{align*}
And since we can take $\varepsilon$ arbitrarily small, the conclusion follows.

\end{proof}

\subsection{The core and the kernel: proof of (\ref{eqn:core}) and (\ref{eqn:kernel})}
We now prove (\ref{eqn:core}) and (\ref{eqn:kernel}) of Theorem~\ref{thm:globalstruct}, starting with (\ref{eqn:core}). Let $\ci$ (resp. $\cii$) be the probability that $\circ$ has at least 2 (resp. 3) children with an infinite offspring in $\Ch$, under $\dP^{\Td}$. Then \eqref{eqn:core} follows by a second moment argument as in Lemma~\ref{lem:connexionsupperbound} once we show that for $x,y\in V_n$,
\begin{equation}\label{eqn:coreexpectation}
\dP_{ann}(x\in \bC^{(n)})\underset{n\rightarrow +\infty}{\longrightarrow} \ci, \,\text{ and}
\end{equation}
\begin{equation}\label{eqn:corevariance}
\text{Cov}_{ann}(\mathbf{1}_{x\in \bC^{(n)}}, \mathbf{1}_{y\in \bC^{(n)}}) \underset{n\rightarrow +\infty}{\longrightarrow} 0.
\end{equation}

\begin{proof}[Proof of (\ref{eqn:coreexpectation})]
We proceed in three steps: first, we show that with probability $\simeq K_1$, the exploration of Section~\ref{subsec:explo1vertexsuccess} is successful for two children of $x$. Second, we prove that some vertex $w$ is connected to these two explorations, forming a cycle with $x$. Third, we adapt the lower exploration of Section~\ref{subsec:explo1vertexaborted} to show reciprocally that $x$ is not in $\bC^{(n)}$ with probability $\gtrsim 1-K_1$.
\\
\textbf{First step.}
For $x\in V_n$, we perform the exploration in Section~\ref{subsec:explo1vertexsuccess} from $x$, replacing \ref{C5} by the following condition: for every neighbour $v$ of $x$, stop exploring the subtree from $v$ at step $k+1$ if the $k$-offspring of $v$ has at least $n^{1/2}b_n$ vertices. Stop the exploration if this happens for at least two neighbours of $x$. In this case, say that the exploration is successful. 
\\
We adapt easily the proofs of Lemma~\ref{lem:explo1vertex} and Proposition~\ref{prop:explo1vertex} to show that for $x\in V_n$,
\begin{equation}\label{eqn:coreexplosuccess}
\dP_{ann}(\text{the exploration from $x$ is successful})\underset{n\rightarrow +\infty}{\longrightarrow}\ci. 
\end{equation}
\noindent
Indeed, $\ci$ is the probability that the realization of $\phid$ to which we couple $\psimn$ is such that $\circ$ has at least two children with an infinite offspring. Then, as in Lemma~\ref{lem:explo1vertex}, there is a probability $1-o(1)$ that the offspring of these children grows at an exponential rate close to $\lambda_h$ (Proposition~\ref{prop:Chlargedevgrowthrate}). Thus, letting $\cF^{(n)}_{v,k}:=\cup_{1\leq j\leq k}\{\text{the $j$-offspring of $v$ has at least $n^{1/2}b_n$ vertices}\}$ for every child $v$ of $\circ$, and $\cF^{(n)}_{\text{core},k}:=\cup_{v_1,v_2\text{ children of $\circ$}}(\cF^{(n)}_{v_1,k-1}\cap \cF^{(n)}_{v_2,k-1})$, we get that 
\begin{center}
$\dP_{ann}(\cup_{1\leq k \leq \lfloor \log_{\lambda_h}n\rfloor}\cF^{(n)}_{\text{core},k})\underset{n\rightarrow +\infty}{\longrightarrow}\ci$, and
\end{center}
\begin{center}
$\dP_{ann}(\cup_{1\leq k \leq \lfloor \log_{\lambda_h} n\rfloor}\{\text{at most one child of $\circ$ has a non-empty $k$-offspring}\})\underset{n\rightarrow +\infty}{\longrightarrow}1-\ci$.
\end{center}
As for (\ref{eqn:C1happensexplo}), we do not meet any cycle with $\dP_{ann}$-probability $1-o(1)$. This yields (\ref{eqn:coreexplosuccess}). 
\\
\textbf{Second step.}
If the exploration from $x$ is successful, let $x_1,x_2$ be two children of $x$ such that the exploration subtrees $T_{x_1}$ and $T_{x_2}$ from $x_1$ and $x_2$ satisfy $\min(\vert \partial T_{x_1}\vert, \, \vert\partial T_{x_2}\vert)\geq n^{1/2}b_n$. Then, let $K>0$ and let $w_1, \ldots, w_{\lfloor K\log n\rfloor}\in V_n$ be vertices that have not been met in the exploration from $x$. Proceed to their $w$-exploration as described in the first part of the construction in Section~\ref{subsec:phase3}. By Remark~\ref{rem:explorationsize}, $o(\sqrt{n})$ vertices are seen during the exploration from $x$ and the $w$-explorations. By (\ref{eqn:binomdominationdiff}) with $k=1$, $m_0,m_1,m_E,m=o(\sqrt{n})$, with $\dP_{ann}$-probability $1-o(1)$, no $w$-exploration intersects the exploration from $x$, and no $w_i$ is spoiled or back-spoiled.
\\
As in (\ref{eqn:zexplorationfirst}), we get that for each $w_i$, its $w$-exploration has probability at least $\eta(h)/2$ to be $w$-successful. Hence with $\dP_{ann}$-probability at least $1-(1-\eta(h)/2)^{\lfloor K\log n\rfloor}=1-o(1)$, there exists $i_0$ such that the $w$-exploration from $w_{i_0}$ is successful. Denote $\partial T_{w_{i_0}}$ the boundary of its exploration tree. Write 
\begin{align*}
\cS_{\text{core}}(x):=&\{\text{the exploration from $x$ is successful}\}\cap
\\
&\{\exists i_0~\geq ~1, \,\text{ the $w$-exploration from $w_{i_0}$ is successful}\}.
\end{align*} 
We have just shown that
\begin{equation}\label{eqn:coresuccess}
\liminf_{n\rightarrow +\infty}\dP_{ann}(\cS_{\text{core}}(x))\geq \liminf_{n\rightarrow +\infty}\dP_{ann}(\text{the exploration from $x$ is successful})\geq \ci.
\end{equation}

\noindent
Next, we grow joining balls from $\partial T_{x_1}$ to $\partial T_{w_{i_0}}$, and then from $\partial T_{x_2}$ to $\partial T_{w_{i_0}}$. We proceed as in the second part of the construction in Section~\ref{subsec:phase3} with $m=2$ and $s_0=1$. Similarly to (\ref{eqn:thirdphasejoiningballs}), we get that 
\begin{center}
$\dP_{ann}(\cS_{\text{core}}(x)\cap (\cS_{\text{core,joining,1}}\cup \cS_{\text{core,joining,2}} ))=o(1)$,
\end{center}
with $\cS_{\text{core,joining,i}}:=\{\text{there are less than $\log^{\gamma-3\kappa-18}n$ joining}\text{ balls from $\partial T_{x_i}$ to $\partial T_{w_{i_0}} $} \}$ for $i=1,2$.
\\
Finally, we reveal $\psimn$ on the exploration from $x$, on the $w$-exploration from $w_{i_0}$, on the joining balls from $\partial T_{x_1}$ to $\partial T_{w_{i_0}}$, and on the joining balls from $\partial T_{x_2}$ to $\partial T_{w_{i_0}}$, in that order. Denote $\cS_{\text{core,connect}}$ the event that there exists a joining ball $B_1$ from $\partial T_{x_1}$ to $\partial T_{w_{i_0}}$ and another $B_2$ from $\partial T_{x_2}$ to $\partial T_{w_{i_0}}$ such that $\min_{y\in T_{x_1}\cup T_{x_2}\cup  T_{w_{i_0}}\cup B_1\cup B_2}\psimn(y)\geq h$. As in the proof of Proposition~\ref{prop:exploendjoin}, we get that 
$\dP_{ann}(\cS_{\text{core}}(x)\cap \cS_{\text{core,connect}}^c)=o(1)$. Hence
\begin{center}
$\liminf_{n\rightarrow +\infty}\dP_{ann}(\text{$x,x_1,x_2,z$ are in a cycle of $\cC_x^{\cM_n}$ and $\vert \cC_x^{\cM_n}\vert \geq n^{1/3}$})$\\$\geq \liminf_{n\rightarrow +\infty}\dP_{ann}(\cS_{\text{core}}(x))$. 
\end{center}
If $\vert \cC_2^{(n)}\vert <n^{1/3}$, this cycle is in $\cC_1^{(n)}$, and thus in $\bC^{(n)}$ so that by (\ref{eqn:secondcompo}), for large enough $n$ and any $x\in V_n$, one has by (\ref{eqn:coresuccess}):
\begin{center}
$\liminf_{n\rightarrow +\infty}\dP_{ann}(x\in \bC^{(n)})\geq \liminf_{n\rightarrow +\infty}\dP_{ann}(\cS_{\text{core}}(x))\geq \ci$ . 
\end{center}
\noindent
\textbf{Third step.} For $x\in V_n$, turn the exploration into a lower exploration,  replacing $h+\log^{-1}n$ by $h-\log^{-1}n$ (as in Section \ref{subsec:explo1vertexaborted}). Say that the lower exploration from $x$ is aborted if for some $k\leq \log\log n$, at most one child of $x$ has a non-empty $(k-1)$-offspring. Let $\cA_{\text{core}}(x):=\{\text{the lower exploration from $x$ is aborted}\}$. We get as in the proof of (\ref{eqn:coreexplosuccess}):
\begin{equation}\label{eqn:coreaborted}
\dP_{ann}(\cA_{\text{core}}(x)) \underset{n\rightarrow +\infty}{\longrightarrow} 1-\ci. 
\end{equation}

\noindent
Moreover, revealing $\psimn$ on $T_x$, we can apply Proposition~\ref{prop:couplinggffsexplo} as below (\ref{eqn:growthsuccessful}) to get that

\begin{equation}\label{eqn:coreabortedpsimn}
\dP_{ann}(\cA_{\text{core}}(x)\cap \{\cC_{x}^{\cM_n,h}\cap B_{\cM_n}(x,\lfloor \log \log n\rfloor)\subseteq T_x\})\underset{n\rightarrow +\infty}{\longrightarrow} 1-\ci. 
\end{equation}
\noindent 
For each neighbour $y$ of $x$, denote $\cC_y$ its connected component in $\cC_x^{\cM_n}\setminus \{x\}$. If the exploration~is aborted and $\cC_{x}^{\cM_n,h}\cap B_{\cM_n}(x,\lfloor \log \log n\rfloor)\hspace{-0.8mm} \subseteq\hspace{-0.8mm}  T_x$, then $x$ has at most one neighbour $y$ such that $\cC_y\cup \{x\}$ is not a tree and $x\not \in \bC^{(n)}$. Thus 
$\liminf_{n\rightarrow +\infty}\dP_{ann}(x\not \in \bC^{(n)})\hspace{-1mm}\geq\hspace{-1mm} 1-\ci$ 
and (\ref{eqn:coreexpectation})~follows.
\end{proof}

\begin{proof}[Proof of (\ref{eqn:corevariance})]
By (\ref{eqn:coreexpectation}), for $x,y \in V_n$,
\begin{center}
$\dP_{ann}(x\in \bC^{(n)}) \dP_{ann}(y\in \bC^{(n)}) \underset{n\rightarrow +\infty}{\longrightarrow}\ci^2$. 
\end{center}
It remains to show that $\dP_{ann}(x,y\in \bC^{(n)})\underset{n\rightarrow +\infty}{\longrightarrow}\ci^2$.
\\
Perform the exploration from $x$ as in the beginning of the proof of (\ref{eqn:coreexpectation}), then do the same from $y$ (and stop the latter if it reaches a vertex of the exploration from $x$). Since $o(\sqrt{n})$ vertices are revealed during these explorations (see Remark~\ref{rem:explorationsize}), then by (\ref{eqn:binomdominationdiff}), the probability that the exploration from $y$ meets that of $x$ is $o(1)$. Thus by (\ref{eqn:coreexplosuccess}),
\begin{center}
$\dP_{ann}(\text{the explorations from $x$ and $y$ are both successful})\underset{n\rightarrow +\infty}{\longrightarrow}\ci^2.$
\end{center}
\noindent
Then, let $x_1,x_2$ (resp. $y_1,y_2$) be the children of $x$ (resp. $y$) whose exploration is successful. We complete the exploration in a fashion similar to that above (\ref{eqn:coresuccess}). Let $K>0$ and let $w_1, \ldots, w_{\lfloor K\log n\rfloor}\in V_n$ be vertices that have not been met in the explorations from $x$ and $y$, and proceed to their $w$-exploration. If there exists $i_0\geq 1$ such that the $w$-exploration from $w_{i_0}$ is successful, build joining balls from $\partial T_{x_1},\partial T_{x_2},\partial T_{y_1}$ and $\partial T_{y_2}$ to $T_{w_{i_0}}$. Finally, reveal $\psimn$ on $T_x$, on $T_y$, on $T_{w_{i_0}}$ and on the joining balls from $\partial T_{x_1},\partial T_{x_2},\partial T_{y_1}$ and $\partial T_{y_2}$, in that order. As in the proof of (\ref{eqn:coresuccess}) and below, we get that 
\begin{equation}\label{eqn:corevariancelowbound}
\liminf_{n\rightarrow +\infty}\dP_{ann}(x,y\in \bC^{(n)})\geq\ci^2.
\end{equation}

\noindent
Conversely, if we perform the lower explorations from $x$ and $y$ as defined in the end of the proof of (\ref{eqn:coreexpectation}), we easily get that
\begin{center}
$\dP_{ann}(\exists z\in \{x,y\},\text{ the lower exploration from $z$ is aborted})\underset{n\rightarrow +\infty}{\longrightarrow}1-\ci^2$.
\end{center} 

\noindent
Then, we reveal $\psimn$ on $T_z$. Following the reasoning below (\ref{eqn:growthsuccessful}), we get that $\dP_{ann}$-w.h.p., $B_{\cC_z^{\cM_n,h}}(z,\lfloor\log\log n\rfloor)\subseteq T_z$, and thus
\begin{center}
$\dP_{ann}(\{\text{the lower exploration from $z$ is aborted}\}\cap \{z\in \bC^{(n)}\})=o(1)$.
\end{center}
This yields 
\begin{center}
$\liminf_{n\rightarrow +\infty}\dP_{ann}(\exists z\in \{x,y\},z\not\in \bC^{(n)})\geq 1-\ci^2$.
\end{center}
\noindent
Together with (\ref{eqn:corevariancelowbound}), this concludes the proof.
\end{proof}


\noindent
This reasoning can be readily adapted to prove (\ref{eqn:kernel}), with a modification of the exploration (requiring that at least three children of $x$ have a successful exploration).

\subsection{The typical distance: proof of (\ref{eqn:typicaldistance})}
The proof of (\ref{eqn:typicaldistance}) goes as that of (\ref{eqn:mainthm}), with a slight modification of the explorations of Section~\ref{sec:exploration}. Those explorations were indeed stopped after at most $\log_{\lambda_h}n$ steps. But since around $\sqrt{n}$ vertices were explored, and since the growth rate of $\Ch$ is close to $\lambda_h$, we can expect that a successful exploration lasts in fact $(1/2+o(1))\log_{\lambda_h}n$ steps. Then, connecting two such explorations as in Proposition~\ref{prop:exploendjoin} (adding an additional distance $\Theta(\log\log n)=o(\log n)$) yields the typical distance (reciprocally, explorations lasting less steps will be too small to be connected).

\begin{proof}[Proof of (\ref{eqn:typicaldistance})]
Fix $\varepsilon \in (0,1)$. 
\\
\textbf{Upper bound.} In the exploration of Section~\ref{subsec:explo1vertexsuccess}, replace \ref{C6} by the following condition: stop the exploration if $k\geq (1/2+\varepsilon/3)\log_{\lambda_h}n$. By Proposition~\ref{prop:Chlargedevgrowthrate},
\begin{center}
$\lim_{n\rightarrow +\infty}\dP^{\Td}\left(\vert\cZ_{\lfloor   (1/2+\varepsilon/3)\log_{\lambda_h}n \rfloor}^{h+\log^{-1}n}\vert > n^{1/2}b_n \,\bigg| \,\vert \Chlnplus\vert =+\infty\right)=1$.
\end{center}
Thus, Propositions~\ref{prop:explo1vertex}, \ref{prop:exploendjoin} and Lemma~\ref{lem:connexionslowerbound} remain unchanged. And if $x,y$ are connected in $\Enh$ via successful explorations from $x$ and $y$ and joining balls from $\partial T_x$ to $\partial T_y$, then
\begin{center}
$d_{\Enh}(x,y)\leq 2(1/2+\varepsilon/3)\log_{\lambda_h}n +a'_n\leq (1+\varepsilon)\log_{\lambda_h}n$  
\end{center}
 for $n$ large enough. Then, Lemma~\ref{lem:connexionslowerbound} implies that
\begin{center}
$\dP_{ann}(\cE_{1,n})\underset{n\rightarrow +\infty}{\longrightarrow} 1$,
\end{center}
\noindent
where $\cE_{1,n}:= \{ \vert \{(x,y)\in V_n^2, d_{\Enh}(x,y)\leq(1+\varepsilon)\log_{\lambda_h}n\}\vert \geq (\eta(h)^2-\varepsilon)n^2\}$.
\\
We have to check that only $o(n^2)$ of the couples $(x,y)$ described in $\cE_{1,n}$ are not in $\cC_1^{(n)}$, and that $\vert \cC_1^{(n)}\vert/n$ is indeed close to $\eta(h)$. Note that by (\ref{eqn:mainthm}) and (\ref{eqn:secondcompo}),
\begin{center}
$\dP_{ann}(\cE_{1,n}\cap \cE_{2,n}\cap \cE_{3,n})\underset{n\rightarrow +\infty}{\longrightarrow} 1$,
\end{center}
\begin{center}
where $\cE_{2,n}:=\{ \forall i\geq 2, \,\vert \cC_i^{(n)}\vert \leq \cz \log n\}$ and $\cE_{3,n}:=\{ (\eta(h)-\varepsilon)n\leq\vert \cC_1^{(n)} \vert \leq (\eta(h)+\varepsilon)n \}$.
\end{center}
On $\cE_{2,n}$, we have $\vert \{(x,y)\in V_n^2\setminus (\cC_1^{(n)})^2,\, d_{\Enh}(x,y)\leq(1+\varepsilon)\log_{\lambda_h}n\}\vert \leq n^{3/2}$, so that on $\cE_{1,n}\cap \cE_{2,n}$, 
\begin{center}
$\vert \{(x,y)\in (\cC_1^{(n)})^2, d_{\cC_1^{(n)}}(x,y)\leq(1+\varepsilon)\log_{\lambda_h}n\}\vert \geq (\eta(h)^2-2\varepsilon)n^2$. 
\end{center}
Thus, on $\cE_{1,n}\cap \cE_{2,n}\cap \cE_{3,n}$:
\begin{align*}
\pi_{2,n}\left(\{(x,y)\in (\cC_1^{(n)})^2, d_{\cC_1^{(n)}}(x,y)\leq(1+\varepsilon)\log_{\lambda_h}n\}\right)&\geq \frac{\eta(h)^2-2\varepsilon}{(\eta(h)+\varepsilon)^2}\geq 1- \frac{2\varepsilon +2\eta(h)\varepsilon+\varepsilon^2}{(\eta(h)+\varepsilon)^2}
\\
&\geq 1 - \frac{(3+2\eta(h))}{\eta(h)^2}\varepsilon.
\end{align*}

\noindent
\textbf{Lower bound.} It remains to show that the typical distance in $\cC_1^{(n)}$ is at least $(1-\varepsilon)\log_{\lambda_h}n$. Modify the lower exploration of Section~\ref{subsec:explo1vertexaborted} by saying that it is aborted if
\begin{itemize}
\item \ref{C1} did not happen, and
\item it is stopped at some step $k\leq (1/2-\varepsilon/2)\log_{\lambda
_h}n$, or less than $n^{1/2-\varepsilon/10}$ vertices and half-edges have been seen at step $\lfloor(1/2-\varepsilon/2)\log_{\lambda_h}n \rfloor$.

\end{itemize}

\noindent 
By Lemma~\ref{lem:typicaldistaborted} below, 
\begin{equation}\label{eqn:exploaborteddiameter}
\dP_{ann}(\text{the lower exploration from $x$ is aborted})=1-o(1).
\end{equation}
\noindent
For $x,y\in V_n$, perform the lower exploration from $x$, then that from $y$, and stop it if it meets a vertex of the exploration from $x$. This happens with $\dP_{ann}$-probability $o(1)$ by (\ref{eqn:binomdominationdiff}) with $k=1$, $m_0,m_1,m,m_E=o(\sqrt{n})$ (recall Remark~\ref{rem:explorationsize}).Hence by (\ref{eqn:exploaborteddiameter}):
\begin{center}
$\hspace{-1mm}\hspace{-1mm}\hspace{-1mm}\hspace{-1mm}\dP_{ann}(\cE_{\text{ab}}(x,y))\hspace{-1mm}=\hspace{-1mm}1-o(1)$, with $\cE_{\text{ab}}(x,y)\hspace{-1mm}:=\hspace{-1mm}\{\text{the lower explorations from $x$ and $y$ are aborted}\}$, 
\end{center}
Then, reveal $\psimn$ on the exploration trees $T_x$ and $T_y$. Applying Proposition~\ref{prop:couplinggffsexplo} as below (\ref{eqn:growthsuccessful}), we get that: 
\begin{center}
$\dP_{ann}(\cE_{\text{ab}}(x,y)\cap\cE_x\cap\cE_y)=1-o(1)$, 
\end{center}
with $\cE_x\hspace{-1mm}:=\hspace{-1mm}\{B_{\cC_x^{\cM_n,h}}(x,\lfloor (1/2-\varepsilon/2)\log_{\lambda_h}n\rfloor)\hspace{-1mm}\subseteq T_x \}$ and $\cE_y\hspace{-1mm}:=\hspace{-1mm} \{B_{\cC_y^{\cM_n,h}}(y,\lfloor (1/2-\varepsilon/2)\log_{\lambda_h}n\rfloor)\hspace{-1mm}\subseteq T_y \}$. 
\\
\\
On $\cE_{\text{ab}}(x,y)\cap \cE_x\cap \cE_y$, if $x,y\in \cC_1^{(n)}$, then $d_{\cC_1^{(n)}}(x,y)\geq (1-\varepsilon)\log_{\lambda_h}n$. 
Therefore, for every $x,y\in V_n$, $\dP_{ann}(\cE_{x,y})=o(1)$, with 
\begin{center}
$\cE_{x,y}:=\{x,y\in \cC_1^{(n)}, d_{\cC_1^{(n)}}(x,y)<(1-\varepsilon)\log_{\lambda_h}n\}$. 
\end{center}
Similarly, for all distinct $x,y,z,t\in V_n$, we get that $\dP_{ann}(\cE_{x,y}\cap \cE_{z,t})=o(1)$, so that
\begin{center}
$\text{Cov}_{ann}(\mathbf{1}_{\cE_{x,y}},\mathbf{1}_{\cE_{z,t}})=o(1)$.
\end{center}
Thus by Bienaymé-Chebyshev's inequality, 
\begin{center}
$\dP_{ann}(\cE_{3,n}\cap\{\vert \{(x,y)\in V_n^2, \, d_{\cC_1^{(n)}}(x,y)\leq (1-\varepsilon)\log_{\lambda_h}n\} \vert \geq \varepsilon n^2\})\underset{n\rightarrow +\infty}{\longrightarrow} 1.$
\end{center}
\noindent
For $\varepsilon>0$ small enough and $n$ large enough, on 
\begin{center}
$\cE_{3,n}\cap\{\vert \{(x,y)\in V_n^2, \, d_{\cC_1^{(n)}}(x,y)\leq (1-\varepsilon)\log_{\lambda_h}n\} \vert \geq \varepsilon n^2\} $,
\end{center}
$\pi_{2,n}\left(\{(x,y)\in (\cC_1^{(n)})^2, d_{\cC_1^{(n)}}(x,y)\leq(1-\varepsilon)\log_{\lambda_h}n\}\right)\leq \frac{2\varepsilon}{\eta(h)^2}$. This concludes the proof of (\ref{eqn:typicaldistance}).
\end{proof}

It remains to establish the lemma below. 
\begin{lemma}\label{lem:typicaldistaborted}
We have 
\[
\lim_{n\rightarrow \infty}\dP_{ann}(\text{the lower exploration from $x$ is aborted})=1.
\]
\end{lemma}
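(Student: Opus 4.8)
## Proof plan for Lemma~\ref{lem:typicaldistaborted}

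The plan is to show that, with $\dP_{ann}$-probability $1-o(1)$, the lower exploration from $x$ satisfies both clauses of the definition of ``aborted'': first that \ref{C1} does not happen, and second that either the exploration stops at some step $k\leq (1/2-\varepsilon/2)\log_{\lambda_h}n$, or that by step $\lfloor(1/2-\varepsilon/2)\log_{\lambda_h}n\rfloor$ it has seen fewer than $n^{1/2-\varepsilon/10}$ vertices and half-edges. The key point is that the lower exploration is coupled with a realization of $\Chlnminus$ (a subtree of $\Td$), and up until step $\lfloor(1/2-\varepsilon/2)\log_{\lambda_h}n\rfloor$ we are revealing at most $O(n^{1/2-\varepsilon/2}(d-1)^{a_n}\log_{\lambda_h}n)=o(n^{1/2})$ vertices, so by \eqref{eqn:binomdominationdiff} (with $k=m_0=1$, $m_E=0$, $m=o(\sqrt n)$) the probability that \ref{C1} happens before step $\lfloor(1/2-\varepsilon/2)\log_{\lambda_h}n\rfloor$ is $o(1)$. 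Thus it suffices to control the growth of $\Chlnminus$ up to generation $\lfloor(1/2-\varepsilon/2)\log_{\lambda_h}n\rfloor$.

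The second step is purely a statement about the GFF on $\Td$. Let $K:=\lfloor(1/2-\varepsilon/2)\log_{\lambda_h}n\rfloor$. We want: $\dP^{\Td}(\cZ_K^{h-\log^{-1}n}=\emptyset \text{ or } |\cZ_K^{h-\log^{-1}n}|<n^{1/2-\varepsilon/10})\to 1$. Equivalently, the bad event is $\{|\cZ_K^{h-\log^{-1}n}|\geq n^{1/2-\varepsilon/10}\}$, and we must show its probability is $o(1)$. For $n$ large, $h-\log^{-1}n \geq h-\delta'$ for any fixed $\delta'>0$, so $\cZ_K^{h-\log^{-1}n}\subseteq\cZ_K^{h-\delta'}$ is dominated by the $K$-th generation of $\Chdminus$; pick $\delta'$ small enough that $\log\lambda_{h-\delta'}<(1-\varepsilon/4)^{-1}\log\lambda_h$, possible since $h'\mapsto\lambda_{h'}$ is a continuous homeomorphism onto $(1,d-1)$ (Proposition~\ref{prop:thm43adapted}). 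Then by the second statement of Proposition~\ref{prop:thm43adapted} applied to $h-\delta'$, w.h.p. $|\cZ_K^{h-\delta'}|<K\lambda_{h-\delta'}^K$. Now $K\lambda_{h-\delta'}^K = K\exp(K\log\lambda_{h-\delta'})\leq K\exp\bigl((1-\varepsilon/2)\log_{\lambda_h}n\cdot(1-\varepsilon/4)^{-1}\log\lambda_h\bigr)$; choosing $\varepsilon$ small relative to itself so the exponent is at most $(1-\varepsilon/8)\log n$, this is $\leq K n^{1-\varepsilon/8}$, which is... too large. So a crude first-moment/domination bound is not enough, and I must instead use the large-deviation control of Proposition~\ref{prop:Chlargedevgrowthrate}.

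Here is the corrected route for the second step. By Corollary~\ref{cor:expomomentsheight} and Proposition~\ref{prop:expomomentsCh}, $\dP^{\Td}(\cZ_K^{h-\log^{-1}n}\neq\emptyset)=\dP^{\Td}(|\Chlnminus|=+\infty)+o(1)=\eta(h-\log^{-1}n)+o(1)=\eta(h)+o(1)$ (using continuity of $h'\mapsto\eta(h')$ away from $h_\star$, Theorem~3.1 of \cite{ACregultrees}). On the complement $\{\cZ_K^{h-\log^{-1}n}=\emptyset\}$ (probability $1-\eta(h)+o(1)$) the exploration stops by step $K$ and the lemma's second clause is met via its first alternative. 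On $\{\cZ_K^{h-\log^{-1}n}\neq\emptyset\}$ we apply Proposition~\ref{prop:Chlargedevgrowthrate} (with $a=h-\log^{-1}n\geq h-\delta'$, noting $\chi_h$ is bounded on compacts so $K^{-1}\log\chi_h(a)=o(1)$): conditionally on $\cZ_K^{h-\log^{-1}n}\neq\emptyset$, with probability $1-\exp(-CK)=1-o(1)$ we have $K^{-1}\log|\cZ_K^{h-\log^{-1}n}|\leq\log(\lambda_h+\varepsilon')+o(1)$ for a small fixed $\varepsilon'$, hence $|\cZ_K^{h-\log^{-1}n}|\leq(\lambda_h+\varepsilon')^{K(1+o(1))}\leq(\lambda_h+\varepsilon')^{(1/2-\varepsilon/2)\log_{\lambda_h}n(1+o(1))}$. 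Choosing $\varepsilon'$ small enough that $(1/2-\varepsilon/2)\log_{\lambda_h}(\lambda_h+\varepsilon')<1/2-\varepsilon/3$ gives $|\cZ_K^{h-\log^{-1}n}|\leq n^{1/2-\varepsilon/3}<n^{1/2-\varepsilon/10}$ for $n$ large. Combining, with $\dP_{ann}$-probability $1-o(1)$ the GFF-on-$\Td$ side forces the second clause of ``aborted''. Since the number of vertices and half-edges seen is, by the coupling and the security radius $a_n$, at most $|\cZ_{\leq K}^{h-\log^{-1}n}|\cdot O((d-1)^{a_n})\cdot K = O(n^{1/2-\varepsilon/3}\,\mathrm{polylog}\,n)<n^{1/2-\varepsilon/10}$ on this event, the half-edge count is controlled too, and intersecting with the $o(1)$-probability failure of \ref{C1} completes the proof.

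The main obstacle I anticipate is bookkeeping the $o(1)$ slack coming from the shift $\log^{-1}n$ (passing from level $h-\log^{-1}n$ to a fixed level $h-\delta'$) and from the $(1+o(1))$ exponent in Proposition~\ref{prop:Chlargedevgrowthrate}, and making sure the chosen constants $\delta',\varepsilon'$ genuinely keep the final count below $n^{1/2-\varepsilon/10}$ while staying compatible with the requirement $k\leq(1/2-\varepsilon/2)\log_{\lambda_h}n$; this is routine but requires care in the order of quantifiers ($\varepsilon$ fixed first, then $\delta',\varepsilon'$ chosen small depending on $\varepsilon$, then $n\to\infty$). Everything else reduces to already-established statements (Propositions~\ref{prop:thm43adapted}, \ref{prop:expomomentsCh}, \ref{prop:Chlargedevgrowthrate}, Lemma~\ref{lem:matchings}, and the coupling set up in Section~\ref{subsec:explo1vertexaborted}).
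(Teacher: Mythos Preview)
Your approach is essentially the same as the paper's: control \ref{C1} via Lemma~\ref{lem:matchings}, then reduce to a statement about $\Chlnminus$ on $\Td$ and use the upper large-deviation bound of Proposition~\ref{prop:Chlargedevgrowthrate} (after dominating the level $h-\log^{-1}n$ by a fixed level $h-\delta'$ via continuity of $h'\mapsto\lambda_{h'}$). However, two points in your execution need fixing.

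First, in Proposition~\ref{prop:Chlargedevgrowthrate} the parameter $a$ is the value of $\phid(\circ)$ under $\dP_a^{\Td}$, \emph{not} the level; your sentence ``with $a=h-\log^{-1}n$'' conflates the two. Consequently, your claim that $K^{-1}\log\chi_h(a)=o(1)$ because ``$\chi_h$ is bounded on compacts'' is not justified: $a=\phid(\circ)$ is random and a.s.\ unbounded. The paper fixes this by using that $\chi_{h-\delta}(a)\leq ca$ for large $a$ (Proposition~2.1 of \cite{ACregulgraphs}) together with continuity on $[h,\infty)$, so that $\chi_{h-\delta}(a)\leq n^{\varepsilon/30}$ whenever $a\leq\log^2 n$, and handles $\{\phid(\circ)\geq\log^2 n\}$ separately (probability $\leq n^{-3}$ by a Gaussian tail bound).

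Second, you only bound the \emph{last} generation $|\cZ_K^{h-\log^{-1}n}|$, whereas the ``vertices and half-edges seen'' count is controlled by the whole ball $|B_{\Chlnminus}(\circ,K+a_n)|=\sum_{k\leq K}|\cZ_k^{h-\log^{-1}n}|\cdot O((d-1)^{a_n})$. Your final accounting ``$|\cZ_{\leq K}^{h-\log^{-1}n}|\cdot O((d-1)^{a_n})\cdot K$'' does not follow from bounding only $|\cZ_K|$: the intermediate generations need their own control. The paper applies the large-deviation bound at \emph{each} $k\in[\log\log n,K]$ to get $\dP^{\Td}(|\cZ_k^{h-\log^{-1}n}|\geq n^{1/2-\varepsilon/6})\leq e^{-Ck}+n^{-3}$, then sums over $k$ (using the trivial bound $|\cZ_k|\leq d(d-1)^{k-1}$ for $k<\log\log n$). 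Both gaps are routine to close along these lines, and once closed your argument coincides with the paper's.
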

\begin{proof}
Note first that  $\dP_{ann}(\text{\ref{C1} happens})=o(1)$ by (\ref{eqn:binomdominationdiff}) with $k=1$, $m_0=1$, $m_E=0$ and $m=o(\sqrt{n})$ by Remark~\ref{rem:explorationsize}. Therefore,  it is enough to prove that
\begin{center}
$\dP^{\Td}(\vert B_{\Chlnminus}(\circ,\lfloor (1/2-\varepsilon/2)\log_{\lambda_h}n\rfloor+a_n)\vert < n^{1/2-\varepsilon/8})\rightarrow 1,$
\end{center}
since if this event happens, less than $n^{1/2-\varepsilon/10}$ vertices and half-edges have been seen at step $\lfloor(1/2-\varepsilon/2)\log_{\lambda_h}n \rfloor$.
By (\ref{eqn:andef}), it suffices to show that 
\begin{equation}\label{eqn:exploaborteddiameterTd}
\dP^{\Td}(\vert B_{\Chlnminus}(\circ,\lfloor (1/2-\varepsilon/2)\log_{\lambda_h}n\rfloor)\vert < n^{1/2-\varepsilon/7})\rightarrow 1,
\end{equation}
To do so, we first prove that for $n$ large enough and every $\log\log n\leq k\leq (1/2-\varepsilon/2)\log_{\lambda
_h}n$,
\begin{equation}\label{eqn:exploaborteddiameterTdZh}
\dP^{\Td}\left(\vert \cZ_k^{h-\log^{-1}n}\vert \geq n^{1/2-\varepsilon/6}\right) \leq e^{-Ck}+ n^{-3}.
\end{equation}
Let $\delta>0$ such that $\lambda_h\leq \lambda_{h-\delta}\leq \lambda_h+\varepsilon/10$ ( $h'\mapsto \lambda_{h'}$ being continuous, Proposition~\ref{prop:thm43adapted}). Since $\cZ_k^{h-\log^{-1}n} \subseteq  \cZ_k^{h-\delta}$ for $n$ large enough and all $k$, a direct computation shows that 
\begin{align*}
\dP^{\Td}_a\left(\vert \cZ_k^{h-\log^{-1}n}\vert \geq n^{1/2-\varepsilon/5}\chi_{h-\delta}(a)\right) &\leq \dP^{\Td}_a\left(\log \vert \cZ_k^{h-\delta}\vert \geq \lambda_{h-\delta}(1+\varepsilon/5)k+\log\chi_{h-\delta}(a)\right).
\end{align*}
Then by Proposition~\ref{prop:Chlargedevgrowthrate}, there exists $C>0$ (depending on $\varepsilon$) such that for $n$ large enough, for every $\log\log n\leq k\leq (1/2-\varepsilon/2)\log_{\lambda
_h}n$ and $a\geq h$, 
\begin{align*}
\dP^{\Td}_a\left(\vert \cZ_k^{h-\log^{-1}n}\vert \geq n^{1/2-\varepsilon/5}\chi_{h-\delta}(a)\right) &
\leq e^{-Ck}.
\end{align*}
\noindent
By Proposition 2.1 of \cite{ACregulgraphs}, there exists $c>0$ such that for all $h'\leq h_{\star}$ and $a\geq d-1$, one has $\chi_{h'}(a)\leq ca^{1-\log_{d-1}\lambda_{h'}}\leq ca$. Since $\chi_{h'}$ is continuous on $[h,+\infty)$ (Lemma~\ref{lem:SZ16}), we have for $n$ large enough $\max_{h\leq a\leq \log^2n}\chi_{h-\delta}(a)<n^{\varepsilon/30}$, so that 
\begin{center}
$\dP^{\Td}\left(\vert \cZ_k^{h-\log^{-1}n}\vert \geq n^{1/2-\varepsilon/6}\right)  \leq e^{-Ck}+\dP^{\Td}(\phid(\circ)\geq \log^2 n)$
\end{center}
Using the exponential Markov inequality as in Lemma~\ref{lem:maxgff}, we get $\dP^{\Td}(\phid(\circ)\geq \log^2 n)\leq n^{-3}$. This yields (\ref{eqn:exploaborteddiameterTdZh}). Then, for $n$ large enough, this implies
\begin{align*}
 \dP^{\Td}(\vert B_{\Ch}(\circ,\lfloor (1/2-\varepsilon/2)&\log_{\lambda_h}n\rfloor)\vert \geq n^{1/2-\varepsilon/7})
\\
&\leq \dP^{\Td}(\exists k\in [\log\log n, (1/2-\varepsilon/2)\log_{\lambda_h}n], \vert \cZ_k^{h-\log^{-1}n}\vert \geq n^{1/2-\varepsilon/6})
\\
&\leq \sum_{k=\lfloor\log\log n \rfloor}^{\lfloor (1/2-\varepsilon/2)\log_{\lambda_h}n\rfloor}(e^{-Ck}+n^{-3})
\\
&\leq 1/\log\log n.
\end{align*}
(\ref{eqn:exploaborteddiameterTd}) and the conclusion follow.
\end{proof}

\subsection{The diameter: proof of (\ref{eqn:diameter})}
Recall that $D_1^{(n)}$ is the diameter of $\cC_1^{(n)}$. In Section~\ref{sec:uniqueness}, we have in fact proven that there exists a constant $\cxii>0$ such that for $\cE_n:=\{ \forall x\in \cC_1^{(n)},\,\vert B_{\cC_1^{(n)}}(x,\lfloor \cxii\log n\rfloor )\vert \geq \cxi n\}$, we have
\begin{center}
$\dP_{ann}(\cE_n)\underset{n\rightarrow +\infty}{\longrightarrow}1.$
\end{center}
Namely, one can take $\cxii =\cz+3{\log^{-1}\lambda_h}$. 
\\
Hence it is enough to show that on $\cE_n$, $D_1^{(n)}\leq 6K_{12}^{-1}K_{13}\log n$, which will imply (\ref{eqn:diameter}). We do this by a short deterministic argument.
\\
Let $x_1\in \cC_1^{(n)}$. If $\partial B_{\cC_1^{(n)}}(x_1,2\lfloor \cxii\log n\rfloor +1)=\emptyset$, then 
$D_1^{(n)}\leq 4\cxii\log n +2$. 
\\
Else, let $x_2\in \partial B_{\cC_1^{(n)}}(x_1,2\lfloor \cxii\log n\rfloor +1)$. For $i=1,2$, we have
\begin{center}
$\vert B_{\cC_1^{(n)}}(x_i, \lfloor \cxii\log n\rfloor) \vert \geq \cxi n$ and $ B_{\cC_1^{(n)}}(x_i, \lfloor \cxii\log n\rfloor) \subseteq B_{\cC_1^{(n)}}(x_1,4\lfloor \cxii\log n\rfloor) $. 
\end{center}
Moreover, $ B_{\cC_1^{(n)}}(x_1,\lfloor \cxii\log n\rfloor)\cap  B_{\cC_1^{(n)}}(x_2, \lfloor \cxii\log n\rfloor)=\emptyset$. Thus, we have 
\begin{center}
$\vert B_{\cC_1^{(n)}}(x_1,4\lfloor \cxii\log n\rfloor )\vert \geq 2\cxi n$. 
\end{center}
For $i\geq 2$, if $\partial B_{\cC_1^{(n)}}(x_1,(3i-4)\lfloor \cxii\log n\rfloor +1)=\emptyset$, then $D_1^{(n)}\leq 2(3i-4)\cxii\log n +2$. Else, let $x_{i}\in \partial B_{\cC_1^{(n)}}(x_1,(3i-4)\lfloor \cxii\log n\rfloor +1)$. As for $i=2$, we get that
$$
\vert B_{\cC_1^{(n)}}(x_1,(3i-2)\lfloor \cxii\log n\rfloor )\vert -  \vert B_{\cC_1^{(n)}}(x_1,(3i-5)\lfloor \cxii\log n\rfloor)\vert\geq \vert B_{\cC_1^{(n)}}(x_i, \cxii\log n)\vert\geq \cxi n.
$$
Since $\vert V_n\vert=n$, $\partial B_{\cC_1^{(n)}}(x_1,(3i_0-4)\lfloor \cxii \log n\rfloor +1)=\emptyset$ for some $i_0\leq \cxi^{-1}$, and
$$
D_1^{(n)}\leq 6\cxi^{-1}\cxii\log n.
$$
This shows (\ref{eqn:diameter}).

\begin{appendix}
\section{Appendix} \label{appn}
\subsection{Proof of Proposition~\ref{prop:goodgraph}}\label{subsec:AppendixSection2}

\begin{proof}[Proof of Proposition~\ref{prop:goodgraph}]
By Theorem 1 of~\cite{BollobasExpansion}, there exists $c=(d)>0$ such that $\cM_n$ has an expansion ratio at least $c(d)$ w.h.p., and thus satisfies~\ref{expander1} with $\ciii = (c(d)/d)^2/2 $ by the Cheeger bound (see for instance Theorem 2.4 in~\cite{Hoory}, the expansion ratio being defined in Definition 2.1 of~\cite{Hoory}). 
\\
As for \ref{expander2}, fix $\ciii >0$. For all $x\in V_n$, one obtains $B_{\cM_n}(x,\lfloor \ciii\log n\rfloor)$ by proceeding to at most $d(d-1)^{\lfloor\ciii\log n\rfloor}$ pairings of half-edges. If $\ciii$ is small enough, $d(d-1)^{\lfloor\ciii\log n\rfloor}<n^{1/5}-1$ so that by (\ref{eqn:binomdominationdiff}) with $m_0=1$, $m_E=0$, $m=d(d-1)^{\lfloor\ciii\log n\rfloor}$ and $k=2$, for $n$ large enough:
\begin{center}
$\dP(\tx(B_{\cM_n}(x,\lfloor \ciii\log n\rfloor))\geq 2)\leq C(2)\left(\frac{n^{2/5}}{n} \right)^2 \leq n^{-11/10}.$
\end{center}
By a union bound on $x\in V_n$, w.h.p. $\cM_n$ is such that for all $x\in V_n$, $\tx(B_{\cM_n}(x,\lfloor \ciii \log n\rfloor))\leq 1$. 
\\
In the remainder of the proof, we fix a realization of $\cM_n$ such that \ref{expander1} and \ref{expander2} hold. In particular, $\cM_n$ is connected and $\gmn$ is well-defined.
\\
Let us establish (\ref{eqn:greenfunctionGnapprox1}). Let $x\in V_n$ be such that $\tx (B_{\cM_n}(x,K_4\lfloor\log\log n\rfloor))=0$. Note that $U:=B_{\cM_n}(x,\lfloor \cvi\log\log n\rfloor)$ and $W:=B_{\Td}(\circ, \lfloor \cvi\log\log n\rfloor)$ are isomorphic. Then $\gmn^U(x,x)=\gtd^W(\circ,\circ)$, where we let 
\begin{center}
$\gmn^A(y,z):=\bE_y^{\cM_n}[\sum_{k=0}^{T_A}\mathbf{1}_{\{X_k =z\}}]$ for every $y,z\in V_n$ and $A\subsetneq V_n$. 
\end{center}
Recall that $T_A$ is the exit time of $A$ by the SRW $(X_k)_{k\geq 0}$. Similarly for every $B\subsetneq \Td$ and $y,z\in \Td$, we define
\begin{equation}\label{eqn:killedgreenfnTd}
\gtd^B(y,z):=\bE_y^{\Td}\left[\sum_{k=0}^{T_B}\mathbf{1}_{\{X_k =z\}}\right].
\end{equation}
 On one hand, by the strong Markov property applied to the exit time $T_W$, 
\begin{center}
$\gtd^W(\circ,\circ)=\gtd(\circ,\circ)-\bE_{\circ}^{\Td}[\gtd(\circ,X_{T_W})]=\frac{d-1}{d-2}-\bE_{\circ}^{\Td}[\gtd(\circ,X_{T_W})]$.
\end{center}
On the other hand, by Lemma 1.4 of~\cite{Abacherli}, for all $y,z\in V_n$ and $A\subsetneq V_n$:
\begin{equation}\label{eqn:lem14Aba}
\gmn^A(y,z)=\gmn(y,z) -\bE_y^{\cM_n}[\gmn(z,X_{T_A})] +\frac{\bE_y^{\cM_n}[T_A]}{n},
\end{equation} 
so that $\gmn^U(x,x)=\gmn(x,x) -\bE_x^{\cM_n}[\gmn(x,X_{T_U})] +\frac{\bE_{x}^{\cM_n}[T_U]}{n}$. Therefore, 
$$
\left\vert\gmn(x,x)-\frac{d-1}{d-2}\right\vert \leq \vert \bE_{\circ}^{\Td}[\gtd(\circ,X_{T_W})]\vert + \vert \bE_x^{\cM_n}[\gmn(x,X_{T_U})]\vert +\frac{\bE_x^{\cM_n}[T_U]}{n}.
$$

\noindent
By (\ref{eqn:greenonTd}) and (\ref{eqn:greenfunctionGn}), if $\cvi$ is large enough, then for large enough $n$:
\begin{center}
$ \vert \bE_{\circ}^{\Td}[\gtd(\circ,X_{T_W})]\vert + \vert \bE_x^{\cM_n}[\gmn(x,X_{T_U})]\vert\leq \log^{-7}n$. 
\end{center}
Note that $T_U$ is stochastically dominated by the hitting time $H$ of $\lfloor \cvi\log\log n\rfloor$ by a SRW $(Z_k)_{k\geq 0}$ on $\dZ$ starting at $0$, whose transition probabilities from any vertex are $\frac{d-1}{d}$ towards the right and $1/d$ towards the left. By Markov's exponential inequality, there exists a constant $c>0$ such that for $n$ large enough and every $k>n^{1/10}$, 
\begin{center}
$\dP(H\geq k)\leq \dP(Z_k\leq \lfloor \cvi\log\log n\rfloor)\leq \dP(Z_k\leq (\frac{d-2}{d}-1/100)k)\leq e^{-ck}$. 
\end{center}
Hence for $n$ large enough, $\bE_x^{\cM_n}[T_U]\leq\dE[H]\leq n^{1/10} +\sum_{k\geq n^{1/10}}ke^{-ck}\leq n^{1/2}$. Thus,
$$
\left\vert\gmn(x,x)-\frac{d-1}{d-2}\right\vert \leq \log^{-7}n+n^{-1/2}\leq \log^{-6}n
$$
for large enough $n$, and this yields (\ref{eqn:greenfunctionGnapprox1}). One proves (\ref{eqn:greenfunctionGnapprox2}) by the same reasoning.
\end{proof}

\subsection{Proof of Propositions~\ref{prop:thm43adapted},~\ref{prop:expomomentsCh} and~\ref{prop:Chlargedevgrowthrate}}\label{subsec:AppendixSection3}
\noindent
We start with the proof of Proposition~\ref{prop:thm43adapted}. We will need the following Lemma (whose proof is immediate from Propositions 3.1 and 3.3 of \cite{SZ2016} and Proposition 2.1 (ii) of \cite{ACregultrees}), from which $\lambda_h$ originates. 

\begin{lemma}\label{lem:SZ16}
Fix $h <h_{\star}$. There exist $\lambda_h>1$ and a function $\chi_h$ that is continuous with a positive minimum $\chi_{h,\min}$ on $[h,+\infty)$, that vanishes on $(-\infty,h)$ and such that
\[
M_k^h:=\lambda_h^{-k}\sum_{x\in  \cZ_k^{h,+}}\chi_h(\phid(x))
\]
is a martingale w.r.t. the filtration $\cF_k:=\sigma \left(\phid(x), x\in B_{\Td^+}(\circ,k)\right)$, $k\geq 0$, and has an a.s. limit $M_{\infty}^h$. 
\end{lemma}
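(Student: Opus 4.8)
\textbf{Proof strategy for Lemma~\ref{lem:SZ16}.}

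The statement is essentially a transcription of results already available in the literature, so the plan is to piece them together rather than prove anything from scratch. The construction of $\lambda_h$ and of the weight function $\chi_h$ goes back to the analysis of the level-set in \cite{SZ2016}: one studies the level-set percolation on $\Td^+$ as a multitype branching process whose type is the GFF value, and one looks for a left eigenfunction of the associated mean offspring operator. Concretely, define, for $a\in\dR$,
\begin{equation}\label{eqn:chidef}
(\cL_h f)(a):=(d-1)\,\dE\!\left[f\!\left(\tfrac{a}{d-1}+\sqrt{\tfrac{d}{d-1}}\,\xi\right)\mathbf{1}_{\{a/(d-1)+\sqrt{d/(d-1)}\,\xi\geq h\}}\right],
\end{equation}
where $\xi\sim\cN(0,1)$; this is the operator that encodes, via the recursive construction of Proposition~\ref{prop:recursivegfftrees}, how the ``$\chi$-weight'' of a generation is transmitted to the next. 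Proposition~3.1 of \cite{SZ2016} (together with Proposition~3.3 there) produces an eigenvalue $\lambda_h>1$ for $h<h_\star$ and an associated nonnegative eigenfunction $\chi_h$, i.e. $\cL_h\chi_h=\lambda_h\chi_h$; and Proposition~2.1(ii) of \cite{ACregultrees} gives the regularity statement, namely that $\chi_h$ is continuous on $[h,+\infty)$ with a strictly positive infimum $\chi_{h,\min}$, and is defined to vanish on $(-\infty,h)$. I would simply quote these, reproducing the definition \eqref{eqn:chidef} for the reader's benefit and stating the eigenvalue identity.

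The martingale property then follows by a direct one-line computation. Writing $\cF_k=\sigma(\phid(x):x\in B_{\Td^+}(\circ,k))$, condition $M_{k+1}^h$ on $\cF_k$: using the recursive construction of $\phid$ on $\Td^+$ (Proposition~\ref{prop:recursivegfftrees}), each $x\in\cZ_k^{h,+}$ with $\phid(x)\geq h$ has $d-1$ children in $\Td^+$, whose values are conditionally independent $\cN\!\big(\phid(x)/(d-1),\,d/(d-1)\big)$, and a child $y$ belongs to $\cZ_{k+1}^{h,+}$ iff $\phid(y)\geq h$. Hence
\begin{equation}\label{eqn:martcomp}
\dE\!\left[\sum_{y\in\cZ_{k+1}^{h,+}}\chi_h(\phid(y))\ \Big|\ \cF_k\right]=\sum_{x\in\cZ_k^{h,+}}(\cL_h\chi_h)(\phid(x))=\lambda_h\sum_{x\in\cZ_k^{h,+}}\chi_h(\phid(x)),
\end{equation}
where in the sum over $\cZ_k^{h,+}$ one may freely include or exclude vertices with $\phid(x)<h$ since $\chi_h$ vanishes there. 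Multiplying \eqref{eqn:martcomp} by $\lambda_h^{-(k+1)}$ gives $\dE[M_{k+1}^h\mid\cF_k]=M_k^h$. The process is clearly integrable (it is nonnegative and $\dE[M_k^h]=\dE[M_0^h]=\dE[\chi_h(\phid(\circ))]<\infty$ by continuity and at-most-linear growth of $\chi_h$, or alternatively by induction from \eqref{eqn:martcomp}), so by the nonnegative martingale convergence theorem it has an almost sure limit $M_\infty^h\geq 0$.

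There is no real obstacle here; the only thing to be slightly careful about is matching conventions between \cite{SZ2016} and \cite{ACregultrees}, in particular whether $\chi_h$ is normalized and whether the eigenvalue is stated for $\cL_h$ acting on functions on $[h,\infty)$ or on all of $\dR$, and whether the cone $\Td^+$ is rooted so that $\circ$ has $d-1$ or $d$ children. I would fix the convention $\phid(\circ)\sim\cN(0,\tfrac{d-1}{d-2})$ with $d-1$ offspring in $\Td^+$, as in Section~\ref{sec:Td}, and then \eqref{eqn:chidef}--\eqref{eqn:martcomp} are internally consistent. The positivity $\lambda_h>1$ for $h<h_\star$ is the substantive input and is exactly the content of the cited propositions (it is equivalent to supercriticality of the weighted branching process), so it is imported wholesale.
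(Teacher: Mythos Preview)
Your proposal is correct and matches the paper's approach exactly: the paper does not give an independent proof but simply states that the lemma is ``immediate from Propositions 3.1 and 3.3 of \cite{SZ2016} and Proposition 2.1 (ii) of \cite{ACregultrees}'', which are precisely the inputs you invoke. Your added one-step verification of the martingale property via \eqref{eqn:martcomp} and the appeal to nonnegative martingale convergence are a welcome elaboration, and your care about the $\Td^+$ convention (root with $d-1$ children) is consistent with Section~\ref{sec:Td}.
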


\begin{proof}[Proof of Proposition~\ref{prop:thm43adapted}]
We first establish that $\lim_{k\rightarrow +\infty}\dP^{\Td}(\vert\cZ_k^h\vert>\lambda_{h}^k/k^2)= \eta(h) $. Clearly, 
\begin{center}
$\limsup_{k\rightarrow +\infty}\dP^{\Td}(\vert\cZ_k^h\vert>\lambda_{h}^k/k^2)\leq \dP^{\Td}(\vert\Ch\vert =+\infty)=\eta(h).$
\end{center}
\noindent
Conversely, denote $\mathcal{E}^+=\{\vert\Chplus\vert =+\infty\}$ and $\mathcal{E}^+_k=\{ \vert\cZ_k^{h,+}\vert\geq\lambda_{h}^k/k^2\}$.
By Theorem 4.3 of \cite{ACregultrees}, 
\begin{equation}\label{eqn:thm43}
\lim_{k\rightarrow +\infty}\dP^{\Td}(\cE^+_k)=\dP^{\Td}(\cE^+)>0.
\end{equation} 
Hence, for any $\varepsilon >0$, for $k$ large enough, 
\begin{equation}\label{eqn:escapeoneend}
\dP^{\Td}(\cE^+_k)\geq \dP^{\Td}(\cE^+) -\varepsilon.
\end{equation}
Let $\Td^-$ be the cone from $\oc$ out of $\circ$, $\cC_{\oc}^{h,-}:=\cC_{\oc}^h\cap\Td^-$, and $\cZ_k^{h,-}:=\Chminus\cap \partial B_{\Td^-}(\oc,k)$ for $k\geq 1$.
Let $\mathcal{E}^-~=~\{\vert \cC_{\oc}^{h,-}\vert = +\infty\}$ and $\mathcal{E}^-_k=\{ \vert\cZ_{k-1}^{h,-}\vert\geq\lambda_{h}^k/k^2\}$. Define $\mathcal{E}:=\{\phid(\circ)\geq h\} \cap\{\text{ $\Chplus$ is finite}\}$ and $\mathcal{E}_k:=\{\phid(\circ)\geq h\}\cap\{\text{ $\cZ_k^{h,+}=\emptyset$}\}$. We have
\[
\dP^{\Td}(\cE^-_k\cap \cE_k)\geq \dP^{\Td}(\cE^- \cap \cE)-\dP^{\Td}(\cE^-\cap \cE\cap (\cE_k)^c) -\dP^{\Td}(\cE^- \cap \cE\cap (\cE^-_k)^c).
\]
Define $M_{\infty}^{h,-}$ on $\Chminus$ as $M_{\infty}^{h}$ on $\Chplus$.
From the proof of Theorem 4.3 in \cite{ACregultrees}, we get that $\dP^{\Td}(\{M_{\infty}^{h,-}>0 \} \cap (\cE_k^-)^c ) \rightarrow 0$. And, by Proposition~4.2 in~\cite{ACregultrees}, $\dP^{\Td}(\cE^-\cap \{M_{\infty}^{h,-}=0\})=0$, so that 
$\dP^{\Td}(\cE^- \cap (\cE_k^-)^c)\rightarrow 0$. Moreover, $(\mathcal{E}_k)_{k\geq 0}$ is an increasing sequence of events and $\mathcal{E}=\cup_{k\geq 1}\cE_k$, so that $ \dP^{\Td}(\cE \cap (\cE_k)^c)\rightarrow 0$. Hence, for $k$ large enough, 
\begin{equation}\label{eqn:escapeotherend}
\dP^{\Td}(\cE^-_k\cap \cE_k) \geq\dP^{\Td}(\cE^-\cap \cE) -2\varepsilon.
\end{equation}
\noindent
Note that $\{\vert\Ch\vert =+\infty\}=\cE^+\sqcup (\cE^-\cap \cE)$, so that $\dP(\cE^+)+ \dP(\cE^-\cap\cE)=\eta(h).$ And for all $k\geq 2$, $ \cE^+_k\sqcup (\cE^-_k \cap \cE_k)\subseteq \{\vert\cZ_k^h\vert>\lambda_{h}^k/k^2\}$, therefore, if $k$ is large enough, by (\ref{eqn:escapeoneend}) and (\ref{eqn:escapeotherend}), one has
\[
\dP^{\Td}(\vert\cZ_k^h \vert>\lambda_{h}^k/k^2)\geq \eta(h)-3\varepsilon.
\]
Since $\varepsilon>0$ was arbitrary, $\lim_{k\rightarrow +\infty}\dP^{\Td}(\vert\cZ_k^h\vert>\lambda_{h}^k/k^2)= \eta(h) $. 

\noindent
Now, we show that $\dP^{\Td}(\vert\cZ_k^h\vert<k\lambda_{h}^k )\rightarrow 1$.  For all $k\geq 1$, by definition of $M_k^h$,
\begin{center}
$M_k^h \geq \chi_{h,\min}\lambda_{h}^{-k} \lv \cZ_k^{h,+}\rv$.
\end{center}
From the proof of Proposition 3.3 in \cite{SZ2016}, $M_{\infty}^h$ is a.s. finite. Therefore, $k^{-1}\lambda_{h}^{-k} \lv \cZ_k^{h,+}\rv\rightarrow 0$ a.s., and
\[
\dP^{\Td}(\vert \cZ_k^{h,+} \vert \geq k\lambda_{h}^k/2 )\rightarrow 0.
\]
In the same way, $\dP^{\Td}(\vert \cZ_{k-1}^{h,-} \vert\geq k\lambda_{h}^k/2 )\rightarrow 0$. Since $\cZ_k^h \subseteq \cZ_k^{h,+}\cup \cZ_{k-1}^{h,-}$, we are done.
\\
The last part of the Proposition comes directly from Propositions 3.1 and 3.3 in~\cite{SZ2016}.
\end{proof}

\noindent
A crucial idea to prove Propositions~\ref{prop:expomomentsCh} and~\ref{prop:Chlargedevgrowthrate} is to make a finite scaling, in order to get a branching process that is uniformly supercritical w.r.t. to the value of $\phid(\circ)$. Indeed, the fact $\lambda_h>1$ does not ensure that the expected number of children of $\circ$ in $\Td^+$ (or even in $\Td$) conditionally on $\phid(\circ)=a$ is more than one for every $a\geq h$, in particular if $a$ is small. However, due to the exponential growth of $\Ch$ (and $\Chplus$) of Proposition~\ref{prop:thm43adapted}, it turns out that for $\ell \in \dN$ large enough, even conditionally on $\phid(\circ)=h$, the expected number of vertices in the $\ell$-offspring of $\circ$ is more than one, as stated in the Lemma below. 
\begin{lemma}\label{lem:uniformsupercritical}
There exists $\ell \in \dN$ such that for every $a\geq h$, 
$$\dE^{\Td}_a[\vert \cZ_{\ell}^h\vert ] \geq\dE^{\Td}_a[\vert \cZ_{\ell}^{h,+}\vert ] \geq \dE^{\Td}_h[\vert \cZ_{\ell}^{h,+}\vert ]>1.$$
\end{lemma}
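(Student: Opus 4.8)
\textbf{Proof strategy for Lemma~\ref{lem:uniformsupercritical}.}

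The plan is to use the monotonicity given by Lemma~\ref{lem:monotonicityphid} together with the exponential growth of $\Chplus$ from Proposition~\ref{prop:thm43adapted} to pin down a single value of $\ell$ that works for $a=h$, and then transfer it to all $a\geq h$ by monotonicity. First I would reduce to the cone. Since $\cZ_{\ell}^{h,+}\subseteq \cZ_{\ell}^h$ deterministically, the first inequality $\dE^{\Td}_a[\vert \cZ_{\ell}^h\vert]\geq \dE^{\Td}_a[\vert \cZ_{\ell}^{h,+}\vert]$ is immediate. For the second inequality I would invoke Lemma~\ref{lem:monotonicityphid}: the event (really the random variable) $\vert\cZ_{\ell}^{h,+}\vert$ is a nondecreasing functional of the field $(\phid(x))_{x\in\Td^+}$, because raising the field values can only enlarge the level-set cluster. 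Conditioning on $\phid(\circ)=a$ with $a\geq h$ stochastically dominates conditioning on $\phid(\circ)=h$ in the recursive construction of Proposition~\ref{prop:recursivegfftrees} (the explicit coupling $\phid^{(1)}(z)=\phid^{(2)}(z)+(a-h)(d-1)^{-\mathfrak h_T(z)}\geq \phid^{(2)}(z)$), hence $\dE^{\Td}_a[\vert\cZ_{\ell}^{h,+}\vert]\geq \dE^{\Td}_h[\vert\cZ_{\ell}^{h,+}\vert]$ for every $a\geq h$.

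So everything reduces to finding $\ell\in\dN$ with $\dE^{\Td}_h[\vert\cZ_{\ell}^{h,+}\vert]>1$. Here I would use the martingale $M_k^h=\lambda_h^{-k}\sum_{x\in\cZ_k^{h,+}}\chi_h(\phid(x))$ from Lemma~\ref{lem:SZ16}, or more directly Proposition~\ref{prop:thm43adapted}: with $\dP^{\Td}$-probability converging to $\dP^{\Td}(\cE^+)>0$ we have $\vert\cZ_k^{h,+}\vert\geq \lambda_h^k/k^2$, and since $\lambda_h>1$ this forces $\dE^{\Td}[\vert\cZ_k^{h,+}\vert]\geq (\dP^{\Td}(\cE^+)-o(1))\lambda_h^k/k^2\to+\infty$ as $k\to\infty$. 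Now I would relate the unconditional expectation $\dE^{\Td}[\vert\cZ_k^{h,+}\vert]$ to the conditioned one $\dE^{\Td}_h[\vert\cZ_k^{h,+}\vert]$. By the tower property, $\dE^{\Td}[\vert\cZ_k^{h,+}\vert]=\dE^{\Td}\big[\mathbf 1_{\{\phid(\circ)\geq h\}}\,\dE^{\Td}[\vert\cZ_k^{h,+}\vert\mid\phid(\circ)]\big]$ (the contribution of $\{\phid(\circ)<h\}$ vanishes since then $\circ\notin E_{\phid}^{\geq h}$ and $\cZ_k^{h,+}=\emptyset$), and by the monotonicity just established $\dE^{\Td}[\vert\cZ_k^{h,+}\vert\mid\phid(\circ)=a]\leq$ its value at... wait, it is nondecreasing in $a$, so I cannot bound it above by the value at $h$. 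Instead I would choose $\ell$ so that $\dE^{\Td}_h[\vert\cZ_{\ell}^{h,+}\vert]>1$ \emph{directly}: iterating the one-step recursion, for each fixed $a\geq h$ the sequence $k\mapsto \dE^{\Td}_a[\vert\cZ_k^{h,+}\vert]$ satisfies a sub/super-multiplicative relation via the branching structure; concretely, conditioning on generation $1$ and using that each surviving child $x$ (with $\phid(x)\geq h$) generates an independent copy of the cone process started from $\phid(x)\geq h$, monotonicity gives $\dE^{\Td}_h[\vert\cZ_{k+1}^{h,+}\vert]\geq \dE^{\Td}_h[\vert\cZ_1^{h,+}\vert]\cdot\dE^{\Td}_h[\vert\cZ_k^{h,+}\vert]$ if $\dE^{\Td}_h[\vert\cZ_1^{h,+}\vert]$ counts children with field $\geq h$, whence $\dE^{\Td}_h[\vert\cZ_k^{h,+}\vert]\geq \big(\dE^{\Td}_h[\vert\cZ_1^{h,+}\vert]\big)^k$; but this only helps if that one-step mean exceeds $1$, which is exactly what may fail. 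The correct route is: suppose for contradiction that $\dE^{\Td}_h[\vert\cZ_{\ell}^{h,+}\vert]\leq 1$ for \emph{every} $\ell$; then by monotonicity $\dE^{\Td}_a[\vert\cZ_{\ell}^{h,+}\vert]$ need not be $\leq 1$, so I must instead consider the field restricted to staying above $h$. Let me rephrase: define the \emph{thinned} process where a vertex $x$ at generation $k$ is kept iff $\phid(y)\in[h,\infty)$ along the ancestral line but I replace each $\phid(y)$ by the minimal admissible value. By Lemma~\ref{lem:monotonicityphid} the expected size of generation $\ell$ of this worst-case process is a lower bound for $\dE^{\Td}_a[\vert\cZ_{\ell}^{h,+}\vert]$ for \emph{all} $a\geq h$ simultaneously, and it is exactly $\dE^{\Td}_h$ of an $\ell$-step functional; and Proposition~\ref{prop:thm43adapted} applied at level $h$ shows this worst-case $\ell$-step mean tends to $\infty$, so it exceeds $1$ for $\ell$ large. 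This is the argument I would write out carefully.

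The main obstacle, then, is the last point: making rigorous that the \emph{worst-case} (field pinned to $h$ at every kept vertex) $\ell$-step expected generation size is simultaneously a lower bound for $\dE^{\Td}_a[\vert\cZ_{\ell}^{h,+}\vert]$ over all $a\geq h$, and that it grows unboundedly. The cleanest way is to define, recursively on $\Td^+$, a coupled field $\widetilde\phi$ by $\widetilde\phi(\circ)=h$ and $\widetilde\phi(y)=\max\big(h,\ \frac{1}{d-1}\widetilde\phi(\overline y)+\sqrt{d/(d-1)}\,\xi_y\big)$ on the event that $y$ is kept, using the same $\xi_y$'s as in Proposition~\ref{prop:recursivegfftrees}; this is dominated by the true $\phid$ under $\dP^{\Td}_a$ for any $a\geq h$ on the level-set cluster (a straightforward induction using $a\geq h$ and the coupling of Lemma~\ref{lem:monotonicityphid}), so its generation-$\ell$ cluster size is stochastically dominated by $\vert\cZ_{\ell}^{h,+}\vert$ under $\dP^{\Td}_a$. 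Since $\widetilde\phi$'s law does not depend on $a$ and dominates the field-pinned-at-$h$ dynamics from below at each step, its cluster has the exponential-growth property of Proposition~\ref{prop:thm43adapted} (with the same $\lambda_h$, as the growth rate is governed by the recursion away from the constraint), giving $\dE[\widetilde{\cZ}_{\ell}]\to\infty$. Picking $\ell$ with $\dE[\widetilde{\cZ}_{\ell}]>1$ finishes the proof. Everything else — the two easy inequalities, the reduction to $a=h$, the tower-property manipulations — is routine once this domination is set up correctly.
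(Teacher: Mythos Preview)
Your reductions are correct: the first inequality is set containment, and the second follows from Lemma~\ref{lem:monotonicityphid} exactly as you say. You also correctly isolate the only nontrivial point, namely finding $\ell$ with $\dE^{\Td}_h[\vert\cZ_{\ell}^{h,+}\vert]>1$, and you correctly diagnose why the unconditional estimate from Proposition~\ref{prop:thm43adapted} does not transfer directly to $\dP^{\Td}_h$ via the tower property (monotonicity goes the wrong way).

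The gap is in your resolution. Your $\widetilde\phi$ construction, once unpacked, is just the field under $\dP^{\Td}_h$ (on kept vertices the inner expression is already $\geq h$, so the $\max$ is inert), and your claim that ``its cluster has the exponential-growth property of Proposition~\ref{prop:thm43adapted}'' is precisely the statement you are trying to prove. Proposition~\ref{prop:thm43adapted} is an unconditional statement about $\dP^{\Td}$; nothing in it, nor the phrase ``the growth rate is governed by the recursion away from the constraint'', gives a lower bound on $\dP^{\Td}_h(\vert\cZ_k^{h,+}\vert\geq \lambda_h^k/k^2)$ or on $\dE^{\Td}_h[\vert\cZ_k^{h,+}\vert]$. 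If instead you meant the genuinely ``pinned-to-$h$'' Galton--Watson process (reset every kept vertex to type $h$), that process has offspring mean $(d-1)\dP(\cN(h/(d-1),d/(d-1))\geq h)$, which can be $\leq 1$, so its generation sizes do not blow up and it is too crude a lower bound.

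The paper closes this gap with a short two-step boosting argument that you are missing. First, pick $a_1$ large enough that $\dP^{\Td}(\phid(\circ)\geq a_1)<\varepsilon/2$, where $\varepsilon>0$ is a uniform lower bound on $\dP^{\Td}(\cE_k)$ for $\cE_k:=\{\vert\cZ_k^{h,+}\vert\geq\lambda_h^k/k^2\}$ and $k$ large (this exists by~(\ref{eqn:thm43})). By monotonicity of $a\mapsto\dP^{\Td}_a(\cE_k)$ and averaging, $\dP^{\Td}_{a_1}(\cE_k)\geq \int_{-\infty}^{a_1}\dP^{\Td}_b(\cE_k)\,\nu(db)\geq \dP^{\Td}(\cE_k)-\dP^{\Td}(\phid(\circ)\geq a_1)\geq\varepsilon/2$, hence $\dP^{\Td}_{a'}(\cE_k)\geq\varepsilon/2$ for every $a'\geq a_1$. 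Second, under $\dP^{\Td}_h$ there is a fixed $p>0$ probability that $\circ$ has a child $z$ in $\Chplus$ with $\phid(z)\geq a_1$. Combining, $\dE^{\Td}_h[\vert\cZ_\ell^{h,+}\vert]\geq p\cdot(\varepsilon/2)\cdot\lambda_h^{\ell-1}/(\ell-1)^2$, which exceeds $1$ for $\ell$ large. This averaging trick --- turning the unconditional growth into a conditional one above a high threshold, then spending one step to reach that threshold from $h$ --- is the missing idea.
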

\noindent
We will use it in the proofs of Propositions~\ref{prop:expomomentsCh} and~\ref{prop:Chlargedevgrowthrate}, looking at the branching process whose vertices are those of $\Ch$ at height $0$, $\ell$, $2\ell$, etc.

\begin{proof}[Proof of Lemma~\ref{lem:uniformsupercritical}] Write $\cE_k:=\{\vert \cZ_k^{h,+}\vert \geq \lambda_h^k/k^2\}$. By (\ref{eqn:thm43}), there exists $\varepsilon>0$ small enough such that for every $k\geq \varepsilon^{-1}$, $\dP^{\Td}(\cE_k)\geq \varepsilon$. For $a_1$ large enough, $\dP^{\Td}(\phid(\circ) \geq a_1)<\varepsilon/2$. Note that $\cE_k$ is an increasing event, so that by Lemma~\ref{lem:monotonicityphid}, the map $a'\mapsto \dP^{\Td}_{a'}(\cE_k)$ is non-decreasing. Therefore, for every $a'\geq a_1$ and $k\geq M$, if $\nu\sim \cN(0,\frac{d-1}{d-2})$ denotes the law of $\phid(\circ)$, 
\begin{equation}\label{eqn:uniformsupercritical1}
\dP^{\Td}_{a'}(\cE_k)\geq\dP^{\Td}_{a_1}(\cE_k)\geq \int_{-\infty}^{a_1}\dP^{\Td}_{b}(\cE_k)\nu(db) \geq \dP^{\Td}(\cE_k)-\dP^{\Td}(\phid(\circ) \geq a_1)\geq \varepsilon /2.
\end{equation}
From the construction of Proposition~\ref{prop:recursivegfftrees}, it is straightforward that 
\begin{equation}\label{eqn:uniformsupercritical2}
p:=\dP^{\Td}_h(\text{$\circ$ has one child $z$ in $\Chplus$, and }\phid(z)\geq a_1)>0.
\end{equation}
Hence for $\ell \in \dN$ large enough, 
$$
\dE^{\Td}_h[\vert \cZ_{\ell}^{h,+}\vert ] \geq \frac{p\varepsilon}{2}\times \frac{\lambda_h^{\ell-1}}{(\ell-1)^2} >1.
$$
In addition, for every $M\in \dR$, $\{\vert\{\cZ_{\ell}^{h,+}\vert\geq M\}$ is an increasing event. By Lemma~\ref{lem:monotonicityphid}, for every $a\geq h$, $\dE^{\Td}_a[\vert \cZ_{\ell}^{h,+}\vert ]\geq \dE^{\Td}_h[\vert \cZ_{\ell}^{h,+}\vert ]$. Since $ \cZ_{\ell}^{h,+}\subseteq \cZ_{\ell}^h$ a.s., the conclusion follows.
\end{proof}

\begin{remark}\label{rem:uniformsupercriticsurvival}
By a direct adaptation of this proof, in particular \eqref{eqn:uniformsupercritical1} and \eqref{eqn:uniformsupercritical2}, one has $\dP^{\Td}_a[\vert \Ch\vert=\infty ]\geq \dP^{\Td}_h[\vert \Ch\vert=\infty ]>0$ for all $a\geq h$. Also, the conclusion of the Lemma still holds if for a fixed $\delta\in [0,h_{\star}-h)$,  $\cZ_{\ell}^{h,+}$ now denotes the $\ell$-th generation of the connected component of $\circ$ in $(\{\circ\}\cup E_{\phid}^{\geq h+\delta})\cap \Td^+$: denoting $\rho_{\ell,h,\delta}$ the law of $\vert \cZ_{\ell}^{h,+}\vert$ conditionally on $\phid(\circ)=h$, we have $\dE^{\Td}_h[\cZ_{\ell}^{h,+}]=\dE[\rho_{\ell, h, \delta}]>1$ if $\ell$ is large enough (depending on $h$ and $\delta$).
\end{remark}

%

\begin{proof}[Proof of Proposition~\ref{prop:expomomentsCh}]
Fix $a\geq h$, and let $\ell\in \dN$ be such that the conclusion of Lemma~\ref{lem:uniformsupercritical} holds. Let $F_1:=\partial B_{\Ch}(\circ, \ell)$. For $j\geq 1$, if $F_j\neq\emptyset$, choose an arbitrary vertex $z_j\in F_j$. Let $O_j$ be the $\ell$-offspring of $z_j$ in $\Ch$ and let $F_{j+1}:=O_j\cup F_j\setminus\{z_j\}$. Thus, we explore $\Ch$ by revealing subtrees of height $\leq \ell$, so that at each step, we see at most $(d-1)+\ldots +(d-1)^{\ell}\leq d^{\ell+1}$ new vertices. Hence, if $\vert \Ch\vert\geq k$ for some $k\in \dN$, there will be at least $\lfloor k/d^{\ell+1}\rfloor$ steps before $\Ch$ is fully explored. 
\\
By Lemma~\ref{lem:monotonicityphid} (applied to $\{\vert \cZ_{\ell}^{h,+} \vert\geq k\}$ for every $k\geq 1$), for every $j\geq 1$, $\vert F_j\vert$ dominates stochastically a sum $S_{j}$ of $j$ i.i.d. random variables of law $\rho_{\ell,h} -1$, where 
\begin{equation}\label{eqn:rhoellhdef}
\text{$\rho_{\ell,h}$ is the law of $\vert \cZ_{\ell}^{h,+}\vert$ conditionally on $ \phid(\circ)=h$.}
\end{equation}
Therefore, 
\[
\dP^{\Td}_a(k\leq \vert \Ch \vert <+\infty)\leq \sum_{j=\lfloor k/d^{\ell+1}\rfloor}^{+\infty} \dP(S_{j}\leq 0).
\]
But a variable of law $\rho_{\ell,h}-1$ is bounded and has a positive expectation by Lemma~\ref{lem:uniformsupercritical}, therefore there exist $c,c'>0$ such that $\dP(S_j\leq 0)\leq c e^{-c'j}$ for all $j\geq 1$. Hence, 
\[
\dP^{\Td}_a(k\leq \vert \Ch \vert <+\infty)\leq c \sum_{j=\lfloor k/d^{\ell+1}\rfloor }^{+\infty}e^{-c'j}\leq \frac{c\exp(-c'\lfloor k/d^{\ell+1}\rfloor)}{1-e^{-c'}}
\]
and (\ref{eqn:expomoments}) follows.
\end{proof}

\begin{proof}[Proof of Proposition~\ref{prop:Chlargedevgrowthrate}] 
Let $\varepsilon >0$. By definition of $M_k^h$ and Lemma~\ref{lem:SZ16}, 
\begin{center}
$\{\vert \cZ_k^{h,+}\vert \geq \chi_h(a)(\lambda_h+\varepsilon)^k\}\subset \{M_k^h\geq \lambda_h^{-k}\chi_{h,\min}\chi_h(a)(\lambda_h+\varepsilon)^k \}$ 
\end{center}
so that by Markov's inequality, for any $a\geq h$,
\begin{align*}
\dP_a^{\Td}(\vert \cZ_k^{h,+}\vert \geq\chi_h(a) (\lambda_h+\varepsilon)^k)&\leq \dP_a^{\Td}\left(M_k^h\geq \chi_{h,\min}\chi_h(a)\left(\frac{\lambda_h+\varepsilon}{\lambda_h}\right)^k\right)
\\
&\leq \chi_{h,\min}^{-1}\left(\frac{\lambda_h}{\lambda_h+\varepsilon}\right)^k\chi_h(a)^{-1}\dE_a^{\Td}[M_k^h]
\\
&\leq \chi_{h,\min}^{-1}\left(\frac{\lambda_h}{\lambda_h+\varepsilon}\right)^k.
\end{align*}

\noindent
This yields the upper large deviation for $\cZ_k^{h,+}$ (and for $\cZ_k^{h}$, using the facts that $\cZ_k^{h} \subseteq\cZ_k^{h,+}\cup \cZ_{k-1}^{h,-}$ and that $\vert\cZ_{k-1}^{h,-}\vert$ and $\vert\cZ_{k-1}^{h,+}\vert$ have the same law). 
\\
It remains to prove that for some $C>0$ and $k$ large enough,
\begin{equation}\label{eqn:largedevlowerrate}
\max_{a\geq h}\dP^{\Td}_a(k^{-1}\log\vert \cZ_k^{h}\vert \leq\log(\lambda_h-\varepsilon)\,\vert \,\cZ_k^{h} \neq \emptyset)\leq \exp(-Ck).
\end{equation}
We proceed in two steps. We first initiate the growth of $\Ch$ by showing that if $\cZ_{\ell n}\neq \emptyset$, the probability that $\vert \cZ_{\ell n}\vert=o(n)$ decays exponentially with $n$, where $\ell$ is such that Lemma~\ref{lem:uniformsupercritical} holds. Then, if $\cZ_{\ell n}$ has at least $\Theta(n)$ vertices, each of them has a positive probability to have a $Kn$-offspring of size at least $\lambda_h^{Kn}/n^3\geq (\lambda_h-\varepsilon)^k$ with $k:=(K+\ell)n$ and for a large enough constant $K$, independently of the others vertices. Hence the probability that $\vert\cZ_{k}\vert \leq (\lambda_h-\varepsilon)^k$ decays exponentially with $n$, and thus with $k$. 
\\
\textbf{First step.} Recall the exploration of $\Ch$ of the proof of Proposition~\ref{prop:expomomentsCh}, but perform it in a breadth-first way: reveal first the $\ell$-offspring of $\circ$, then the $\ell$-offspring of each vertex of $\cZ_{\ell}^h$, then the $\ell$-offspring of each vertex of $\cZ_{2\ell}^h$, and so on. For $n\geq 1$, if $\cZ_{\ell n}^h\neq \emptyset$, let $j+1$ be the first step at which we explore the offspring of a vertex of $\cZ_{\ell n}$. Note that $j\geq n$. As in the proof of Proposition~\ref{prop:expomomentsCh}, there exist $\epsilon,c,c'>0$ such that $\dP(S_i\leq \epsilon i)\leq c e^{-c'i}$ for every $i\geq 1$. Hence, for every $n\geq 1$, 
\begin{equation}\label{eqn:largedevstep1}
\min_{a\geq h}\dP_a^{\Td}(\vert \cZ_{\ell n}^h\vert \geq \epsilon n\, \vert \cZ_{\ell n}^h\neq \emptyset)\geq 1-\sum_{i\geq n}ce^{-c'i} \geq 1-\frac{c}{1-e^{-c'}}e^{-c'n}. 
\end{equation}

\noindent
\textbf{Second step.} Let $K$ be a positive integer constant, and let $F$ be the set of vertices $z\in\cZ_{\ell n}^h$ such that the $Kn$-offspring of $z$ has at least $\lambda_h^{K n}/n^3$ vertices. This step mainly comes down to showing that the probability that $F$ is empty decays exponentially with $n$.
\\
Define the events
\begin{center}
$\cE_n:=\{\vert \cZ_{K n}^{h,+} \vert \geq \lambda_h^{K n}/n^3\}$ and $\cE'_n:=\{\vert \cZ_{K n-1}^{h,+} \vert \geq \lambda_h^{K n}/n^3\}$.
\end{center}
We first show that 
\begin{equation}\label{eqn:pminexpogrowth}
p:=\min_{a\geq h}\dP_a^{\Td}(\cE_n) >0.
\end{equation}
By (\ref{eqn:thm43}), $\liminf_{n\rightarrow +\infty} \dP^{\Td}(\cE'_n)=:p'\hspace{-1mm}>0$.
Let $a_1$ be such that $\dP(\phid(\circ)\hspace{-1mm}\geq\hspace{-1mm}a_1\hspace{-0.5mm})\hspace{-1mm}<\hspace{-1mm}p'/4$. For $n$ large enough,
\begin{center}
$\int_{a\geq h}\dP^{\Td}_a(\cE'_n)\nu(da)>p'/2$, hence $\int_{h}^{a_1}\dP^{\Td}_a(\cE'_n)\nu(da)>p'/4,$
\end{center}
where we recall that $\nu$ is the density of $\phid(\circ)$. Since $\cE'_n$ is an increasing event, by Lemma~\ref{lem:monotonicityphid}: 
\begin{center}
$\min_{a\geq a_1}\dP^{\Td}_{a}(\cE'_n)\geq p'/4$. 
\end{center}
By Lemma~\ref{lem:monotonicityphid} again, 
\begin{center}
$\min_{a\geq h} \dP^{\Td}_{a}(\exists z\in \cZ_1^{h,+},\, \phid(z)\geq a_1)= \dP^{\Td}_{h}(\exists z\in \cZ_1^{h,+},\, \phid(z)\geq a_1)=:p''>0$. 
\end{center}
Hence $p\geq p'p''/4$ and (\ref{eqn:pminexpogrowth}) is proved. Note that $\vert F\vert \stdomi\text{Bin}(\vert \cZ_{\ell n}^h\vert, p'')$. Thus by (\ref{eqn:largedevstep1}), 
$$\min_{a\geq h}\dP^{\Td}_a(\vert F\vert\geq 1 \vert \cZ_{\ell n}^h\neq \emptyset) \geq 1 - \dP_a^{\Td}(\vert \cZ_{\ell n}^h\vert \leq \epsilon n \vert \cZ_{\ell n}^h\neq \emptyset) -(1-p'')^{\epsilon j}\geq 1-ce^{-c'n}$$
for $n$ large enough, up to changing the values of the constants $c$ and $c'$. Therefore,
\[
\max_{a\geq h}\dP^{\Td}_a\left( \vert \cZ_{(K+\ell) n}^h \vert < \lambda_h^{K n}/n^3 \,\vert \, \cZ_{\ell n}^h \neq \emptyset\right) \leq ce^{-c'j}\leq ce^{-c'n}.
\]
If $K$ is large enough, then for $n$ large enough, $ \lambda_h^{Kn}/n^3 > (\lambda_h-\varepsilon)^{(K+\ell)(n+1)} $, so that
\[
\max_{a\geq h}\dP^{\Td}_a\left(\vert \cZ_{(K+\ell) n}^h \vert < (\lambda_h-\varepsilon)^{(K+\ell) (n+1)} \,\vert\, \cZ_{\ell n}^h \neq \emptyset  \right)\leq ce^{-c'n}.
\]
\noindent
We adjust the conditionning: $\{\cZ_{(K+\ell) n}^h\neq \emptyset \} \subset\{\cZ_{\ell n}^h\neq \emptyset  \}$, and $\vert \cZ_{(K+\ell) n}^h \vert < (\lambda_h-\varepsilon)^{(K+\ell) (n+1)} $ on $\{\cZ_{\ell n}^h\neq \emptyset  \}\setminus \{\cZ_{(K+\ell) n}^h\neq \emptyset \}$.
Therefore, there exists $n_0\geq 1$ such that for all $n\geq n_0$,
\begin{equation}\label{eqn:lowerlargedevendalmost}
\max_{a\geq h}\dP^{\Td}_a\left(\vert \cZ_{(K+\ell) n}^h \vert < (\lambda_h-\varepsilon)^{(K+\ell) (n+1)} \,\vert\, \cZ_{(K+\ell) n}^h \neq \emptyset  \right)\leq c\exp(-c'(K+\ell)(n+1)),
\end{equation}
where the new value of $c'$ depends on the constants $K$ and $\ell$. This yields (\ref{eqn:largedevlowerrate}) for large enough multiples of $K+\ell$. One can readily replace each $\cZ_m^h$ by $\cZ_m^{h,+}$ in this reasoning (for any $m\geq 1$), to get the same result for $\cZ_{(K+\ell) n}^{h,+}$ instead of $\cZ_{(K+\ell) n}^h$.
\\
It remains to show the result for non multiples of $K+\ell$. Let $k\geq (K+\ell) n_0$. Write $k=(K+\ell) n + m$, with $0\leq m\leq (K+\ell) -1$. Note that on  $\{\cZ_{k}^h \neq \emptyset\}$, $\cZ_{m}^h$ has at least one vertex whose $(k-m)$-offspring is not empty.
Hence
\begin{align*}
\max_{a\geq h}\dP^{\Td}_a\left(\vert \cZ_{k}^h \vert < (\lambda_h-\varepsilon)^{k} \,\vert\, \cZ_{k}^h \neq \emptyset  \right)&\leq \max_{a\geq h}\dP^{\Td}_a\left(\vert \cZ_{k-m}^{h,+} \vert < (\lambda_h-\varepsilon)^{k} \,\vert\, \cZ_{k-m}^{h,+} \neq \emptyset  \right)
\\
&\leq \max_{a\geq h}\dP^{\Td}_a\left(\vert \cZ_{k-m}^{h,+} \vert < (\lambda_h-\varepsilon)^{(k-m)+(K+\ell)} \,\vert\, \cZ_{k-m}^{h,+} \neq \emptyset  \right)
\\
&\leq c\exp(-c'(K+\ell )(n+1))
\\
&\leq ce^{-c'k},
\end{align*}
\noindent
where the third inequality comes from (\ref{eqn:lowerlargedevendalmost}). Adapting this last computation for $\cZ_k^{h,+}$ is immediate. This concludes the proof of (\ref{eqn:expomomentssize}).
\end{proof}

\subsection{Proof of Proposition~\ref{prop:couplinggffsexplo}}\label{subsec:AppendixSection4}

\noindent
In the proof, we will use the following two facts on tree excesses, that we use implicitely in other parts of the paper. First, for any subgraph $A$ of any finite graph $G$ and $R\in \dN$, $\tx(B_G(A,R))\geq \tx(A)$, with equality if and only if $B_G(A,R)$ has the same number of cycles and the same number of connected components as $A$. Second, if $G$ is connected, $\tx(G)=0$ if and only if $G$ is a tree, i.e. has no cycle.
\\
Let us give a short proof. For the first fact, it is enough to show that for any choice of $A$ and $G$, the sequence $(\tx(B_G(A,R)))_{R\geq 0}$ is non-decreasing. Since $B_G(A,R+1)=B_G(B_G(A,R),1)$, it is enough to check that $\tx(B_G(A,1))\geq \tx(A)$, without loss of generality. Let $a_1, \ldots, a_n$ be the vertices of $A$ for some $n\geq 1$. For $i=1, \ldots, n$, add to $A$ the neighbours of $a_i$ and the corresponding edges. This yields $B_G(A,1)$. Each time we add a neighbour $x$ of $a_i$, if $x$ was not already in $A$, then the edge $\{a_i,x\}$ was not either. Hence, the quantity $(\text{number of edges }-\text{number of vertices})$ is not decreasing during this process, so that $\tx(B_G(A,1))\geq \tx(A) $. As for the second fact, it is classical that if $G$ is a tree, then $\tx(G)=0$ (see e.g. Theorem 2.2 in~\cite{tree}). Reciprocally, if $G$ is connected, is not a tree and $\tx(G)=0$, then there is a spanning tree $T$ of $G$ such that $\tx(T)<0$ (since $T$ is obtained from $G$ by deleting no vertex and at least one edge on a cycle of $G$), but this contradicts the fact that if $T$ is a tree, then $\tx(T)=0$.
\\
\\
We will also need the lemma below, which is a consequence of Lemma~3.3 in \cite{CTW} and of the following observation. If $(X_j)_{j\geq 0}$ is a SRW on $\Td$, then the trajectory of its height $(\mathfrak{h}_{\Td}(X_j))_{j\geq 0}$ is distributed as a random walk on $\dN\cup \{0\}$ with transition probabilities $1/d$ towards the left neighbour and $(d-1)/d$ towards the right neighbour, and reflected at $0$.

\begin{lemma}[\textbf{Geometric repulsion}]\label{lem:repulsion}
Let $s\in \dN$, and  $A\subset V_n$ such that $\emph{\tx}(B_{\cM_n}(A,s))=\emph{\tx}(A)$. Let $x\in V_n\setminus B_{\cM_n}(A,s)$, let $(X_j)_{j\geq 0}$ be a SRW started at $x$ and $\tau$ its first hitting time of $A$. Then $\tau$ dominates stochastically a geometric random variable of parameter $(d-1)^{-s}$.
\end{lemma}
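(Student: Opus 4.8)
\textbf{Proof plan for Lemma~\ref{lem:repulsion} (Geometric repulsion).}
The plan is to reduce the statement to a one-dimensional random walk comparison, exactly as suggested by the observation preceding the lemma. First I would record the structural consequence of the hypothesis $\tx(B_{\cM_n}(A,s))=\tx(A)$: as recalled above, this forces $B_{\cM_n}(A,s)$ to have the same number of cycles and connected components as $A$, so in particular no cycle is created and no two components of $A$ merge when we inflate $A$ by $s$ steps. Consequently, for the starting vertex $x\in V_n\setminus B_{\cM_n}(A,s)$, the ball $B_{\cM_n}(x, s)$ is a tree (it cannot contain a cycle, else that cycle would lie in $B_{\cM_n}(A,2s)$ and, tracing back, would contradict $\tx$-stability at the relevant scales), and the portion of $\cM_n$ that the walk $(X_j)$ can see before its $s$-th step looks locally like a ball in $\Td$. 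More precisely, I would set $d_A(j):=d_{\cM_n}(X_j, A)$, with $d_A(0)\ge s+1$, and argue that the walk cannot reach $A$ before $d_A$ hits $0$.

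The key comparison is then: as long as the walk has not come within distance $s$ of $A$, each step of $(X_j)$ either increases or decreases $d_{\cM_n}(X_j,A)$ by exactly $1$, and — because in that regime the neighbourhood is tree-like and $A$ is ``far'' in the sense that there is a unique closest direction — the probability that the step decreases $d_A$ is at most $1/d$, while it increases with probability at least $(d-1)/d$; this is the content of the height-walk description before the lemma combined with the $\tx$-stability. I would actually invoke Lemma~3.3 of \cite{CTW} for the precise statement that the distance-to-$A$ process, run until absorption at $0$, stochastically dominates the reflected (at a level $s$, say) nearest-neighbour walk on $\{0,1,\dots\}$ with leftward probability $1/d$. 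The hitting time $\tau$ of $A$ is at least the hitting time of level $0$ by this dominated one-dimensional walk started at level $\ge s+1$; and the latter, since to travel $s$ units leftward the walk must in particular make $s$ consecutive favourable moves at some point, dominates a geometric random variable of parameter $(d-1)^{-s}$. (Concretely: partition time into blocks and observe that the probability that $s$ consecutive steps of the height walk are all leftward is $d^{-s}\le (d-1)^{-s}$, so the number of blocks before absorption stochastically dominates $\Geom((d-1)^{-s})$; I would present the cleaner version using the standard fact that for such a walk started at height $s$, $\Prob{\tau_0 \le k}$ decays geometrically and, after the obvious monotone coupling, $\tau \succeq \Geom((d-1)^{-s})$.)

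The main obstacle I anticipate is not probabilistic but bookkeeping: making rigorous the claim that ``before the walk gets within distance $s$ of $A$, the local picture is a tree in which $A$ occupies a single direction'', i.e. that $d_{\cM_n}(X_j,A)$ is genuinely a nearest-neighbour process with the stated drift. This needs the $\tx$-stability hypothesis to be used at the right scales — specifically, that $B_{\cM_n}(A,s)$ being $\tx$-stable lets one transfer to $B_{\cM_n}(x,s')$ for the relevant $s'$ — and it needs the observation that once the walk does enter $B_{\cM_n}(A,s)\setminus A$ there may be several geodesic directions to $A$, which is precisely why the comparison is only with a geometric lower bound and why one resets/regenerates the argument rather than claiming a sharp identity. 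Once that reduction is in place, the appeal to Lemma~3.3 of \cite{CTW} and the elementary geometric bound on the one-dimensional walk finish the proof.
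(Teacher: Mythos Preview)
Your high-level plan --- reduce to a one-dimensional biased walk via Lemma~3.3 of \cite{CTW} and the height-process observation --- is exactly the paper's. But you have the geometry inverted, and this is not mere bookkeeping.

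The hypothesis $\tx(B_{\cM_n}(A,s))=\tx(A)$ controls the $s$-neighbourhood \emph{of $A$}, not of $x$: it says $B_{\cM_n}(A,s)\setminus A$ is a forest hanging off $A$. It gives no information whatsoever about $B_{\cM_n}(x,s)$, which may well contain cycles; your argument that a cycle there would lie in $B_{\cM_n}(A,2s)$ fails because $x$ can be arbitrarily far from $A$. Likewise, the claim ``as long as the walk has not come within distance $s$ of $A$, the step decreases $d_A$ with probability at most $1/d$'' and the claim ``once the walk enters $B_{\cM_n}(A,s)\setminus A$ there may be several geodesic directions to $A$'' are both backwards. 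The correct picture is the opposite: when $D_j:=d_{\cM_n}(X_j,A)\in\{1,\dots,s\}$, the $\tx$-condition forces $X_j$ to have exactly one neighbour at distance $D_j-1$ from $A$ (and the remaining $d-1$ neighbours at distance $D_j+1$), so on this range $(D_j)$ is precisely the biased nearest-neighbour walk with leftward probability $1/d$; whereas for $D_j>s$ you have no control on the increments at all. The worst case outside the annulus is that the walk always steps toward $A$, but the bottleneck is traversing the tree-like annulus $B_{\cM_n}(A,s)\setminus A$, and it is \emph{there} --- not in the exterior --- that the height-process observation and the comparison of \cite{CTW} deliver the geometric repulsion.

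With the orientation corrected, the reduction goes through and your plan coincides with the paper's.
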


We give a shortened proof of Proposition~\ref{prop:couplinggffsexplo}, due to the many similarities with the proof of Proposition 2.7 in~\cite{ACregulgraphs}, and refer the reader to~\cite{ACregulgraphs} for details.
\begin{proof}[Proof of Proposition~\ref{prop:couplinggffsexplo}]
\noindent 
Let us first prove (\ref{eqn:condexpapprox}). By our assumptions on $A$ and $y$ $T_y:=B_{\cM_n}(y,\overline{y},a_n-1)$ is a tree rooted at $y$. Let $\partial T_y$ be the $(a_n-1)$-offspring of $y$ in $T_y$.  Let $\tau$ (resp.~$\tau'$) be the hitting time of $\partial T_y$ (resp.~$\partial T_y\cup \{y\}$ by a SRW $(X_j)_{j\geq 0}$.   By Proposition~\ref{prop:condgff},
\begin{equation}\label{eqn:condexpsplit}
\begin{split}
\dE^{\cM_n}[\psimn(y)\vert \sigma(A)]  = \bE_{y}^{\cM_n}\left[\psimn\left(X_{H_{A}}\right)\mathbf{1}_{H_A\leq \tau}\right] &+ \bE_{y}^{\cM_n}\left[\psimn\left(X_{H_{A}}\right)\mathbf{1}_{H_A>\tau}\right]
\\
&-\frac{\bE_{y}^{\cM_n}[H_{A}]}{\bE_{\pi_n}^{\cM_n}[H_{A}]}\bE_{\pi_n}\left[\psimn\left(X_{H_{A}}\right)\right].
\end{split}
\end{equation}
\noindent
By the reasoning leading to the domination of $U_{A,x}^{\cG_n}$ below (2.43) in~\cite{ACregulgraphs}, we get 
\[
\lv \bE_{y}^{\cM_n}\left[\psimn\left(X_{H_{A}}\right)\mathbf{1}_{H_A\leq \tau}\right] -\frac{1}{d-1}\psimn(\overline{y}) \rv \leq \log^{-4}n.
\]
In particular, it is worth noting that in our context, (2.33) in~\cite{ACregulgraphs} becomes 
\begin{equation}\label{eqn:leavingtree}
\left\vert\bP_y^{\cM_n}(H_A\leq \tau)-\frac{1}{d-1}\right\vert\leq \log^{-6}n \text{ and } 
\bE_y^{\cM_n}\left[\tau'\right]\leq 2\log n .
\end{equation} 
%
%
\noindent
Therefore, to establish (\ref{eqn:condexpapprox}), it is enough to show that
\begin{equation}\label{eqn:condexpsplit1}
\lv \bE_{y}^{\cM_n}\left[\psimn\left(X_{H_{A}}\right)\mathbf{1}_{H_A>\tau}\right] - \frac{d-2}{d-1} \bE_{\pi_n}^{\cM_n}\left[\psimn\left(X_{H_{A}}\right)\right]\rv \leq 3\log^{-5}n
\end{equation}
and 
\begin{equation}\label{eqn:condexpsplit2}
\lv\frac{d-2}{d-1}  - \frac{\bE_{y}^{\cM_n}[H_{A}]}{\bE_{\pi_n}^{\cM_n}[H_{A}]}\rv \bE_{\pi_n}^{\cM_n}\left[\psimn\left(X_{H_{A}}\right)\right] \leq \log^{-5}n.
\end{equation}

\noindent
We start with \eqref{eqn:condexpsplit1}. Following the proofs of (2.35) (using Lemma~\ref{lem:repulsion} with $s=a_n$, fixing $\kappa$ large enough, instead of Lemma 3.4 of~\cite{CTW}) and (2.36) in~\cite{ACregulgraphs}, and using the fact that $\max_{z\in A}\vert\psimn(z)\vert\leq \log^{2/3}n$, we get that 
\[
\sup_{z\in \partial T_y} \vert \bE_{z}^{\cM_n}\left[\psimn\left(X_{H_A}\right)\right] - \bE_{\pi_n}^{\cM_n}\left[\psimn\left(X_{H_A}\right)\right]\vert \leq  \log^{-6}n.
\]
Also, by \eqref{eqn:leavingtree}, $ \lv\bP_y^{\cM_n}(H_A> \tau) -\frac{d-2}{d-1}\rv \leq \log^{-6}n$ . Combining these two facts yields \eqref{eqn:condexpsplit1}.

\noindent
As for \eqref{eqn:condexpsplit2}, we have $\bE_y^{\cM_n}\left[H_A\right]=\bE_y^{\cM_n}\left[\tau'\right]+\sum_{z\in \partial T_y}\bP_y^{\cM_n}(X_{\tau'}=z)\bE_z^{\cM_n}\left[H_A \right].$ 
By (3.20) of \cite{CTW}, $\bE_{\pi_n}^{\cM_n}[H_{A}]\geq \frac{n}{4\vert A \vert}\geq \log^{8}n\,/4$. Combining this with \eqref{eqn:leavingtree}, we get
\[
\lv\frac{\bE_{y}^{\cM_n}[H_{A}]}{\bE_{\pi_n}^{\cM_n}[H_{A}]}-\sum_{z\in \partial T_y}p'_z\frac{\bE_z^{\cM_n}\left[H_A \right]}{\bE_{\pi_n}^{\cM_n}[H_A]}\rv \leq \frac{8\log n}{\log^{8}n}\leq \log^{-6}n,
\]
where $p'_z:=\bP_y^{\cM_n}(X_{\tau'}=z)$. By (\ref{eqn:leavingtree}), $\lv \sum_{z\in \partial T_y}p'_z-\frac{d-2}{d-1}\rv\leq \log^{-6}n.$ Therefore, 
\[
\lv\frac{\bE_{y}^{\cM_n}[H_{A}]}{\bE_{\pi_n}^{\cM_n}[H_{A}]}-\frac{d-2}{d-1}\rv   \leq     2\log^{-6}n +\max_{z\in \partial T_y}\lv\frac{\bE_z^{\cM_n}\left[H_A \right]}{\bE_{\pi_n}^{\cM_n}[H_A]} -1 \rv.
\]
Since $\max_{z\in A}\vert\psimn(z)\vert\leq \log^{2/3}n$,  (\ref{eqn:condexpsplit2}) and thus \eqref{eqn:condexpapprox} follows once we show that 
\begin{equation}\label{eqn:ratioExEpi}
\max_{z\in \partial T_y}\lv\frac{\bE_z^{\cM_n}\left[H_A \right]}{\bE_{\pi_n}^{\cM_n}[H_A]} -1 \rv\leq 5\log^{-6}n.
\end{equation}
Here lies the main difference with Proposition 2.7 in~\cite{ACregulgraphs}, which makes the stronger assumption that $\vert A\vert =O(\log n)$. 
For all $z\in \partial T_y$, following the first part of the proof of Proposition 3.5 in \cite{CTW}, we obtain the upper bound: 
%
%
\begin{equation}\label{eqn:CTW35upperbound}
\bE_z^{\cM_n}\left[H_A \right]\leq \log^2n+\hspace{-1mm}\sum_{z'\in V_n}\hspace{-1mm}\hspace{-1mm}\left(\pi_n(z')+e^{-\lambda_{\cM_n}\log^2n}\right)\bE_{z'}^{\cM_n}[H_A]\leq\hspace{-1mm} \log^2n+(1+ne^{-\lambda_{\cM_n}\log^2n})\bE_{\pi_n}^{\cM_n}[H_A].
\end{equation}
Recalling that $\bE_{\pi_n}^{\cM_n}[H_A]\geq \log^{8}n /4$; this yields 
\begin{equation}\label{eqn:CTW35upperboundthen}
\frac{\bE_z^{\cM_n}\left[H_A \right]}{\bE_{\pi_n}^{\cM_n}\left[H_A \right]}\leq \frac{4\log^2n}{\log^{8}n}+1+ne^{-\lambda_{\cM_n}\log^2n}\leq 1+ 5\log^{-6}n.
\end{equation}
\noindent
Conversely, the second part of the proof of~\cite{CTW} gives us that
\begin{align*}
\frac{\bE_z^{\cM_n}\left[H_A \right]}{\bE_{\pi_n}^{\cM_n}\left[H_A \right]}\geq &1 -ne^{-\lambda_{\cM_n}\log^2n}-\bP_z^{\cM_n}(H_A\leq \log^2n)(1+5\log^{-6}n).
\end{align*}
By Lemma~\ref{lem:repulsion} applied as below (\ref{eqn:condexpsplit2}), we have $\sup_{z\in \partial T_y}\bP_z^{\cM_n}(H_A\leq \log^2n)\leq \log^{-6}n$. Together with \ref{expander1} and \eqref{eqn:CTW35upperboundthen}, this yields \eqref{eqn:ratioExEpi} and 
the proof of (\ref{eqn:condexpapprox}) is complete (note that the required lower bounds on $\kappa$ given by Lemma~\ref{lem:repulsion} are uniform in $y$ and $A$). 

\noindent
We prove (\ref{eqn:condvarapprox}) in the same fashion. Since $H_A\geq \tau'$ a.s., (\ref{eqn:condvar}) yields
\begin{equation}\label{eqn:condvarsplit}
\begin{split}
\left\vert \text{Var}^{\cM_n}(\psimn(y)\vert \sigma(A))-\frac{d}{d-1}\right \vert &\leq  \lv\gmn(y,y) -\bE_{y}^{\cM_n}\left[\gmn\left(y,X_{H_A}\right)\mathbf{1}_{H_A= \tau'}\right]  -\frac{d}{d-1}\rv    
\\
+& \lv \bE_{y}^{\cM_n}\hspace{-1mm}\left[\gmn\hspace{-1mm}\left(y,X_{H_A}\right)\mathbf{1}_{H_A > \tau'}\right]  -  \frac{\bE_{y}^{\cM_n}[H_A]}{\bE_{\pi_n}[H_A]}\bE_{\pi_n}^{\cM_n}\hspace{-1mm}\left[\gmn\hspace{-1mm}\left(y,X_{H_A}\right)\right] \rv.
\end{split}
\end{equation}
\noindent
Following the reasoning at (2.47) and below in Proposition 2.7 of~\cite{ACregulgraphs}, one shows that the first term of the RHS is  $O(\log^{-5}n)$.
Then, we apply to the second term of the RHS of (\ref{eqn:condvarsplit}) the same reasoning as (\ref{eqn:condexpsplit1}) and (\ref{eqn:condexpsplit2}), the inequality $\max_{x,y\in V_n}\vert\gmn(x,y)\vert\leq\civ$ (by (\ref{eqn:greenfunctionGn}), since $\cM_n$ is a good graph) replacing the inequality $\max_{z\in A}\vert \psimn(z)~\vert~\leq~\log^{2/3}~n$.
\end{proof}

\subsection{Proof of Lemmas~\ref{lem:couplinggffadapted} and~\ref{lem:couplinggffadapted1cycle}}\label{subsec:AppendixSection7}

\begin{proof}[Proof of Lemma~\ref{lem:couplinggffadapted}] We follow the argument of Proposition~\ref{prop:couplinggffsexplo}, with a few adjustments.
\\
First, the bounds in the first inequality of (\ref{eqn:leavingtree}) and in (\ref{eqn:condexpsplit1}) are in fact $e^{-ca_n}$ for some constant $c>0$, and one can replace $a_n$ by $r_n$. 
\\
Second, the proof of (\ref{eqn:condexpsplit2}) can be adapted by noting the following two facts. On one hand, he condition $\vert A\vert \leq n^{2/3}$ implies that $\bE_{\pi_n}[H_A]\geq n^{1/3}/4$. On the other hand,  the second inequality of (\ref{eqn:leavingtree}) still holds for large enough $n$, using the same comparison with a biased random walk as in (2.33) in~\cite{ACregulgraphs}, since for $k\geq 0.3\log n$ and a constant $\gamma >0$ that depends neither on $k$ nor on $n$, $\dP(\tau'\geq k)\leq e^{-\gamma k}$.
\end{proof}

\begin{proof}[Proof of Lemma~\ref{lem:couplinggffadapted1cycle}]
We will only show (\ref{eqn:condexpapproxadaptedcase2}) and (\ref{eqn:condvarapproxadaptedcase2}). The other proofs are very similar and left to the reader.
\\
We start with the proof of (\ref{eqn:condexpapproxadaptedcase2}). We follow the proof scheme of (\ref{eqn:condexpapprox}). Let $\tau$ be the hitting time of $\partial B_{\cM_n}(y,\overline{y},r_n)\setminus \{y\}$ by a SRW $(X_k)_{k\geq 0}$. Note that $\{H_A\leq \tau\}\subseteq\{X_{H_A}=y\}$. We write
\begin{align*}
\dE^{\cM_n}[\psimn(y)\vert \sigma(A)]=&\bP_{y}^{\cM_n}(X_{H_A}=\overline{y},H_A\leq \tau)\psimn(\overline{y})+\bE_y^{\cM_n}[\psimn(X_{H_A})\mathbf{1}_{H_A>\tau}]\\&-\frac{\bE_y^{\cM_n}[H_A]}{\bE_{\pi_n}^{\cM_n}[H_A]}\bE_{\pi_n}^{\cM_n}[\psimn(X_{H_A})].
\end{align*} 

\noindent
Since $B_{\cM_n}(y,\overline{y},r_n)$ and $B_{G_m}(z_k,\overline{z_{k}},r_n)$ are isomorphic, 
\begin{center}
$\bP_{y}^{\cM_n}(X_{H_A}=\overline{y},H_A\leq \tau) =\bP_{z_{k}}^{G_m}(X_{H_{\partial B_{G_m}(z_k,\overline{z_{k}},r_n)}}=\overline{z_{k}})=:\alpha''_k$.
\end{center}
Since 
\begin{center}
$\{X_{H_{\partial B_{G_m}(z_k,\overline{z_{k}},r_n)}}=\overline{z_{k}}\}\subseteq \{H_{\{\overline{z_k}\}}<+\infty \}$, we have $\alpha''_k\leq \alpha'_k$. 
\end{center}
Reciprocally, on $ \{H_{\{\overline{z_{k}}\}}<+\infty \}\setminus \{X_{H_{\partial B_{G_m}(z_k,\overline{z_{k}},r_n)}}=\overline{z_{k}}\}$, a SRW starting at distance $r_n$ of $C_m$ has to reach $C_m$. As in the proof of (\ref{eqn:leavingtree}), a comparison with a biased SRW on $\dZ$ shows that this happens with a probability $O(e^{-cr_n})$ for some constant $c>0$ uniquely depending on $d$ and we get that if $a$ small enough, then for large enough $n$, $\vert\bP_{y}^{\cM_n}(X_{H_A}=\overline{y},H_A\leq \tau) -\alpha'_k \vert\leq n^{-3a}$.
\\
It remains to establish 
\begin{equation}\label{eqn:condexpadaptedcase2compens}
\bigg|\bE_y^{\cM_n}[\psimn(X_{H_A})\mathbf{1}_{H_A>\tau}]-\frac{\bE_y^{\cM_n}[H_A]}{\bE_{\pi_n}^{\cM_n}[H_A]}\bE_{\pi_n}^{\cM_n}[\psimn(X_{H_A})]\bigg| \leq n^{-3a}.
\end{equation}
To do so, one adapts the proofs of (\ref{eqn:condexpsplit1}) and (\ref{eqn:condexpsplit2}) exactly as in the proof of Lemma~\ref{lem:couplinggffadapted}: if $H_A>\tau$, $(X_k)_{k\geq 0}$ leaves $B_{\cM_n}(A\cup C\cup P,r_n)$ before hitting $A$. By Lemma~\ref{lem:repulsion} with $s=r_n$, if $a$ is small enough, $(X_k)$ does not hit $A$ within the next $\log^2n$ steps with probability at least 
\begin{center}
$(1-(d-1)^{-r_n})^{\log^2n}\geq 1-2\log^2n\,\,\,(d-1)^{-r_n}\geq 1-n^{-4a}$. 
\end{center}
Then, we use Corollary 2.1.5 of~\cite{Saloff-Coste1997} as in the proof of (2.36) of~\cite{ACregulgraphs}: 
after $\lfloor \log^2n\rfloor$ steps, the fact that $\cM_n$ is an expander (i.e. $\lambda_{\cM_n}>\ciii>0$) forces the empirical distribution of $X_k$ to be very close to the uniform distribution $\pi_n$. (\ref{eqn:condexpapproxadaptedcase2}) follows.
\\
For (\ref{eqn:condvarapproxadaptedcase2}), we follow the proof scheme of (\ref{eqn:condvarapprox}). If $\tau'$ is the exit time of $B_{\cM_n}(y,\overline{y},r_n)$, we have
\begin{equation}\label{eqn:condvaradaptedcase2split}
\begin{split}
\left\vert \text{Var}^{\cM_n}(\psimn(y)\vert \sigma(A))-\gamma'_k\right \vert &\leq  \lv\gmn(y,y) -\bE_{y}^{\cM_n}\left[\gmn\left(y,X_{H_A}\right)\mathbf{1}_{H_A= \tau'}\right]  -\gamma'_k\rv    
\\
 +& \lv \bE_{y}^{\cM_n}\left[\gmn\left(y,X_{H_A}\right)\mathbf{1}_{H_A > \tau'}\right]  -  \frac{\bE_{y}^{\cM_n}[H_A]}{\bE_{\pi_n}[H_A]}\bE_{\pi_n}^{\cM_n}\left[\gmn\left(y,X_{H_A}\right)\right] \rv.
\end{split}
\end{equation}
We deal with the second term of the RHS as (\ref{eqn:condexpadaptedcase2compens}) to show that it is $O(n^{-3a})$. As for the first term, we have as in (2.47) and below in~\cite{ACregulgraphs}:
\begin{center}
$\bE_{y}^{\cM_n}\left[\gmn\left(y,X_{H_A}\right)\mathbf{1}_{H_A= \tau'}\right]
=\bE_{y}^{\cM_n}\left[\gmn\left(y,X_{\tau'}\right)\right]-O(n^{-3a})$
\end{center}
if $a$ is small enough, by (\ref{eqn:greenfunctionGn}). Now, by (\ref{eqn:lem14Aba}) applied to $D:=B_{\cM_n}(y,\overline{y},r_n)$ (note that $T_{D}=\tau'$) and the second inequality of (\ref{eqn:leavingtree}) (which still holds, as remarked in the proof of Lemma~\ref{lem:couplinggffadapted}), we get 
\begin{center}
$\vert \gmn(y,y) - \bE_{y}^{\cM_n}\left[\gmn\left(y,X_{\tau'}\right)\right] -G_{\cM_n}^{D}(y,y) \vert  \leq \frac{\bE_y^{\cM_n}[\tau']}{n}=O(n^{-3a})$.
\end{center}
But $D$ is isomorphic to $E:=B_{G_m}(z_k,\overline{z_k},r_n)$, so that 
\begin{align*}
G_{\cM_n}^{D}(y,y)&=G_{G_m}^E(z_k,z_k)
\\
&=G_{G_m}(z_k,z_k)-\bP_{z_k}^{G_m}(T_E=\overline{z_k})G_{G_m}(z_k,\overline{z_k})-\bP_{z_k}^{G_m}(T_B\neq\overline{z_k})G_{G_m}(z_k,z)
\end{align*}
for any $z\in \partial B_{G_m}(E,1)\setminus\{\overline{z_k}\}$, by cylindrical symmetry of $B_{G_m}(E,1)$. One checks easily that if $a$ is small enough, then for $n$ large enough, 
\begin{center}
$ G_{\cM_n}^{D}(y,y)=G_{G_m}^E(z_k,z_k)=G_{G_m}(z_k,z_k)-\bP_{z_k}^{G_m}(T_E=\overline{z_k})G_{G_m}(z_k,\overline{z_k})+O(n^{-3a})$.
\end{center}
One easily adapts the reasoning leading to (\ref{eqn:leavingtree}), despite the presence of one cycle, to get 
\begin{center}
$\bP_{z_k}^{G_m}(T_E=\overline{z_k})=\bP_{z_k}^{G_m}(H_{\{\overline{z_k}\}}<+\infty)+O(n^{-3a})$ for $a$ small enough. 
\end{center}
Note indeed that $\{T_E=\overline{z_k}\}\subseteq \{H_{\{\overline{z_k}\}}<+\infty\}$. Reciprocally, if $z\in \partial B_{G_m}(E,1)\setminus\{\overline{z_k}\}$, a SRW starting at $z$ has a probability decaying exponentially with $r_n$ to reach $\overline{z_k}$, since there are at most two injective paths from $z$ to $\overline{z_k}$, and each contains at least $r_n-3$ vertices where the SRW has a positive probability (only depending on $d$) to enter a subtree isomorphic $\Td^+$ and to never leave it. 
\\
Since $\gamma'_k=G_{G_m}(z_k,z_k)- \bP_{z_k}^{G_m}(H_{\{\overline{z_k}\}}<+\infty)  G_{G_m}(z_k,\overline{z_k}) $, we obtain
\begin{center}
$\vert G_{\cM_n}^{D}(y,y)-\gamma'_k\vert =O(n^{-3a})$.
\end{center} 
All in all, we get that the first term of the RHS of (\ref{eqn:condvaradaptedcase2split}) is $O(n^{-3a})$, and (\ref{eqn:condvarapproxadaptedcase2}) follows.
\end{proof}

\end{appendix}

\end{document}